\theoremstyle{definition}
\newtheorem{theorem}{Theorem}[section]
\newtheorem{lemma}[theorem]{Lemma}
\newtheorem{proposition}[theorem]{Proposition}
\newtheorem{corollary}[theorem]{Corollary}
\newtheorem{notation-proposition}[theorem]{Notation-Proposition}
\newtheorem{proposition-notation}[theorem]{Proposition-Notation}
\newtheorem{calculation}[theorem]{Calculation}
\newtheorem{application}[theorem]{Application}
\theoremstyle{plain}
\newtheorem{thm}[theorem]{Theorem}
\newtheorem{cor}[theorem]{Corollary}
\theoremstyle{definition}
\newtheorem{construction}[theorem]{Construction}
\newtheorem{definition}[theorem]{Definition}
\newtheorem{notation}[theorem]{Notation}
\newtheorem{example}[theorem]{Example}
\newtheorem{remark}[theorem]{Remark}
\newtheorem{convention}[theorem]{Convention}
\newtheorem{openquiz}[theorem]{Open Problem}
\theoremstyle{plain}
\newtheorem*{exercise*}{Exercise}
\newtheorem*{proposition*}{Proposition}
\newtheorem*{corollary*}{Corollary}
\newtheorem*{lemma*}{Lemma}
\newtheorem*{theorem*}{Theorem}
\theoremstyle{definition}
\newtheorem*{definition*}{Definition}
\newtheorem*{rule*}{Rule}
\newtheorem*{notation*}{Notation}
\newtheorem*{remark*}{Remark}
\newtheorem*{construction*}{Construction}
\newtheorem*{examples*}{Examples}
\renewcommand{\Im}{\mathrm{Im}}
\newcommand{\bs}{\backslash}
\newcommand{\realizes}{\Vdash}
\newcommand{\N}{\mathbf{N}}
\newcommand{\E}{\mathcal{E}}
\newcommand{\F}{\mathcal{F}}
\newcommand{\xto}{\xrightarrow}
\renewcommand{\to}{\rightarrow}
\newcommand{\emto}{\hookrightarrow}
\newcommand{\onto}{\twoheadrightarrow}
\newcommand{\emfrom}{\hookleftarrow}
\newcommand{\xemfrom}[1]{\stackrel{#1}{\emfrom}}
\newcommand{\BI}{\Leftrightarrow}
\newcommand{\ri}{\rightarrow}
\newcommand{\RI}{\Rightarrow}
\newcommand{\LI}{\Leftarrow}
\newcommand{\Dom}{\mathrm{Dom}}
\newcommand{\dbl}{\llbracket}
\newcommand{\dbr}{\rrbracket}
\newcommand{\Sub}{\mathrm{Sub}}
\newcommand{\id}{\mathrm{id}}
\newcommand{\Ob}{\mathrm{Ob}}
\newcommand{\op}{\mathrm{op}}
\newcommand{\Set}{\mathbf{Set}}
\newcommand{\Eff}{\mathbf{Eff}}
\newcommand{\true}{\mathrm{true}}
\newcommand{\false}{\mathrm{false}}
\newenvironment{indentation}{\itemize\item[]}{\enditemize}
\newenvironment{indmath}{\itemize\item[]\math}{\endmath\enditemize}
\newcommand{\defn}{\textbf}
\newcommand{\term}{\emph}
\newcommand{\PN}{{\mathrm{P}\N}}
\newcommand{\ex}{\mathrm{ex}}
\newcommand{\fr}{\mathrm{fr}}
\newcommand{\ev}{\mathrm{ev}}
\newcommand{\Ev}{\mathrm{Ev}}
\renewcommand{\P}{\mathrm{P}}
\newcommand{\la}{\langle}
\newcommand{\ra}{\rangle}
\newcommand{\sr}{\stackrel}
\newcommand{\proves}{\vdash}
\newcommand{\leftadjto}{\dashv}
\newcommand{\pto}{\rightharpoonup}
\newcommand{\ol}{\overline}
\newcommand{\eqv}{\simeq}
\newcommand{\eqq}{\equiv}
\newcommand{\imto}{\rightarrowtail}
\newcommand{\sub}{\mathrm{sub}}
\newcommand{\en}{\ \&\ }
\newcommand{\incl}{\mathrm{incl}}
\newcommand{\PA}{\mathrm{PA}}
\newcommand{\negneg}{{\neg\neg}}
\newcommand{\Heytpre}{\mathbf{Heytpre}}
\newcommand{\ph}{{(-)}}
\renewcommand{\P}{\mathrm{P}}
\newcommand{\PPN}{{\P\P\N}}
\newcommand{\Sh}{\mathrm{Sh}}
\newcommand{\K}{\mathcal{K}}
\renewcommand{\L}{\mathcal{L}}
\renewcommand{\O}{\mathcal{O}}
\newcommand{\A}{\mathcal{A}}
\newcommand{\B}{\mathcal{B}}
\newcommand{\C}{\mathcal{C}}
\newcommand{\D}{\mathcal{D}}
\newcommand{\itpr}[1]{\dbl #1 \dbr}
\newcommand{\ext}{\mathrm{ext}}
\newcommand{\otherwise}{\text{otherwise}}
\renewcommand{\a}{\mathbf{a}}
\newcommand{\ClSub}{\mathrm{ClSub}}
\newcommand{\T}{\mathrm{T}}
\renewcommand{\r}{\mathrm{r}}
\renewcommand{\l}{\mathrm{l}}
\newcommand{\rl}{\mathrm{rl}}
\newcommand{\lr}{\mathrm{lr}}
\newcommand{\lomo}{{\lo/\mo}}
\newcommand{\loar}{{\lo/\ar}}
\newcommand{\lo}{\mathrm{lo}}
\newcommand{\mo}{\mathrm{mo}}
\newcommand{\ar}{\mathrm{ar}}
\newcommand{\gr}{\mathrm{gr}}
\newcommand{\PPsN}{{\mathrm{PP}^*\N}}
\newcommand{\PsPN}{{\P^*\PN}}
\newcommand{\nil}{\mathrm{Nil}}
\newcommand{\Lop}{\mathbf{Lop}}
\newcommand{\enc}[1]{{ \ulcorner #1 \urcorner }}
\newcommand{\singt}[1]{{ \{ #1 \} }}
\newcommand{\bbl}{\textbf{[}}
\newcommand{\bbr}{\textbf{]}}
\newcommand{\len}{\mathrm{len}}
\newcommand{\textif}{\text{if }}
\newcommand{\nab}[1]{\nabla_{#1}}
\newcommand{\ini}{\preceq}
\newcommand{\Out}{\mathrm{Out}}
\newcommand{\loF}{\lo'}
\newcommand{\loS}{\lo''}
\newcommand{\stil}{\textnormal{\textlbrackdbl}}
\newcommand{\stir}{\textnormal{\textrbrackdbl}}
\newcommand{\claim}{\emph}
\newcommand{\nabeq}{\stackrel{\nabla}{=}}
\newcommand{\nabin}{\stackrel{\nabla}{\in}}
\newcommand{\NNO}{N}
\newcommand{\Id}{\mathrm{Id}}
\newcommand{\tofrom}{\rightleftarrows}
\newcommand{\lex}{\mathrm{lex}}
\newcommand{\stage}{\mathrm{stage}}
\newcommand{\Pitts}{{\F^*}}
\newcommand{\isLeaf}{\mathrm{isLeaf}}
\newcommand{\PsN}{{\P^*\N}}
\newcommand{\cvg}{\textnormal{\textdownarrow}}
\newcommand{\ups}{\textnormal{\textuparrow}}
\newcommand{\Ptl}{\mathrm{Ptl}}
\newcommand{\mon}{\mathrm{mon}}
\newcommand{\lop}{\mathrm{lop}}
\newcommand{\cons}{\textnormal{:}}
\newcommand{\conc}{*}
\newcommand{\Mo}{\mathrm{Mo}}
\newcommand{\Lo}{\mathrm{Lo}}
\newcommand{\EffTrip}{\mathbf{ET}}
\newcommand{\ET}{\mathbf{ET}}
\newcommand{\st}{\mathrm{st}}
\newcommand{\dsim}{\approx}
\newcommand{\Preord}{\mathbf{Preord}}
\newcommand{\cha}{\mathrm{char}}
\newcommand{\el}{\textnormal{!`}}
\newcommand{\er}{\textnormal{!}}
\newcommand{\fst}{\mathrm{fwd}}
\newcommand{\snd}{\mathrm{bwd}}
\newcommand{\fwd}{\mathrm{fwd}}
\newcommand{\fn}{\mathrm{fn}}
\newcommand{\Nds}{\mathrm{Nds}}
\newcommand{\Lvs}{\mathrm{Lvs}}
\newcommand{\Nil}{\mathrm{Nil}}
\newcommand{\suc}{\mathrm{suc}}
\newcommand{\subtree}{\mathrm{Subtree}}
\newcommand{\Subtree}{\mathrm{Subtree}}
\newcommand{\Subsight}{\mathrm{Subsight}}
\newcommand{\softdefn}{\textit}
\newcommand{\Pdense}[1]{\P_{#1-\mathrm{dense}}}
\newcommand{\LET}{\mathsf{let}}
\newcommand{\IN}{\mathsf{in}}
\renewcommand{\implies}{$\RI$}
\newcommand{\lLeaf}{\mathrm{lLeaf}}
\title{Subtoposes of the Effective Topos}
\author{Sori Lee}
\date{August 2011\\[60pt]
Master's Thesis\\
Supervisor: Dr. J. van Oosten\\
Second examiner: Dr. B. van den Berg\\[240pt]
Utrecht University\\
Department of Mathematics}
\begin{document}

\maketitle

\tableofcontents

\addcontentsline{toc}{chapter}{\protect\numberline{}Introduction}
\chapter*{Introduction}
The effective topos $\Eff$ was discovered by Martin Hyland \cite{hyl82} in late 1970s.
It is an elementary topos whose internal logic is governed by Kleene's number realizability \cite{kle45}.
Because of this nature, it has been of interest for logic (constructivism) and computer science.
This thesis deals with a topos-theoretic aspect of the effective topos: we study its (geometric) subtoposes.

A distinguished example of subtopos of $\Eff$ is the category of sets, whose internal logic is of course classical mathematics.
It is the subtopos corresponding to the (Lawvere-Tierney) topology $\negneg$, and is the least non-degenerate subtopos.
This gives us a rudimentary motivation for studying subtoposes of $\Eff$: in the viewpoint of considering toposes as universes of mathematics, the non-degenerate subtoposes of $\Eff$ are the universes of mathematics between classical and `effective' mathematics.
A result that is sensible under this perspective has been known already since \cite{hyl82}: the poset of Turing degrees embeds order-reversingly into the poset of subtoposes of $\Eff$, with the degree of computable sets (the least degree) corresponding to $\Eff$ itself - so `taking some oracle for granted' gets one closer to classical mathematics, as one might have expected.\footnote{In Section \ref{sec:turing}, we will present a more complete overview of results we know about subtoposes given by Turing degrees.}

On the other hand, not much more than this is known.
Speaking of examples, the following is the list of previously known (non-trivial) subtoposes of $\Eff$ (as far as the author is aware of).
\begin{itemize}
 \item The category of sets
 \item The Turing degrees (references: \cite{hyl82} and \cite{phoa89})
 \item The Lifschitz subtopos (references: \cite{jvo91}, \cite{jvo96} and \cite{jvo08})
 \item An example by Andrew Pitts (reference: \cite[Example 5.8]{pitts81})
\end{itemize}
In this thesis, we attempt to make a little advance upon this situation.

In any topos, we can associate to any subobject of any object the least topology for which the subobject is dense, and any topology on the topos arises in this way.\footnote{This will be explained in Chapter \ref{chap:lop}.}
In the effective topos, we may pay special attention to those topologies given by subobjects of $\negneg$-sheaves: we will call them \emph{basic} topologies.
The canonical topologies $\id,\top,\negneg$, as well as the example of Pitts, are instances of basic topologies (Section \ref{sec:examples}).
Every subobject of a $\negneg$-sheaf is as usual given by a set $X$ and a function $R: X \to \PN$ (where $\PN$ is the power set of $\N$),
and for the purpose of studying the associated topology we may view it just as the collection $\{R(x) \mid x \in X\} \in \PPN$.
Internally in the effective topos, every topology will turn out to be an NNO-indexed join of basic\footnote{With
a suitable definition of when \emph{internally} a topology is `basic': Definition \ref{def:basic}.} topologies.
As consequence, we can represent every topology by a function $\N \to \PPN$:
this is the final kind of data we will use in this thesis to present concrete subtoposes of $\Eff$.

One fruit of our study is the establishment of new (basic) examples of subtoposes.
Specifically, we deal with the subtoposes given by the following collections $\in \PPN$.
\begin{itemize}
\item For an ordinal $\alpha < \omega$ and a number $m \in \N$ with $2m < \alpha$, we consider the collection
$\O_m^\alpha = \{A \subseteq \alpha \mid \text{$A$ is a co-$m$-ton in $\alpha$}\}$.
\item We consider the collection $\O^\omega = \{\text{the cosingletons in $\N$}\}$.
\end{itemize}
The reader can expect following kinds of results.
\begin{itemize}
\item The collections $\O_m^\alpha$ yield infinitely many subtoposes. (Subsection \ref{subs:between})
\item The collection $\O^\omega$ gives the largest proper \emph{basic} subtopos. (Proposition \ref{prop:atom})
\item No subtopos of the form $\O_m^\alpha$ is a subtopos of a non-recursive Turing degree subtopos. (Proposition \ref{OnlyRecEffe})
\item No non-recursive Turing degree subtopos is a subtopos of a basic subtopos. (Proposition \ref{prop:TuringBasic})
\item The example of Pitts is a subtopos of every subtopos given by an arithmetical Turing degree. (Subsection \ref{subs:arith})
\end{itemize}

It is crucial for making such calculations about subtoposes, to have an understandable representation of corresponding topologies.
The way we settle this issue will be the core of this thesis.
We shall study a particular representation, due to Andrew Pitts, in more depth.
We will introduce a tree-like structure, to be called \emph{sights}, that clarifies, given a function $j: \PN \to \PN$ representing a topology on $\Eff$ in the way of Pitts and a set $p \in \PN$, what it means for an element $z \in \N$ to belong to the set $j(p)$.
Namely, the number $z$ will turn out to be (a code of) a function acting on sights in an appropriate way.
(Section \ref{sec:sights})



The text consists of three chapters, of which the first two have the nature of overview while the third one is an exposition of the research done.
In Chapter 1, we review some theory of triposes, that we use to present the effective topos and its subtoposes.
In Chapter 2, we review some topos theory about Lawvere-Tierney topologies.
In Chapter 3, we explain the way we present subtoposes of $\Eff$, survey examples of subtoposes and prove results about them.

\subsection*{Prerequisites}
It will be assumed that the reader is familar with basics of categorical logic, elementary topos theory and recursion theory.

\subsection*{Acknowledgements}
I owe my deepest gratitude to my supervisor, Dr. Jaap van Oosten.
Needless to say, I have benefited enormously from his expertise, and have learned a lot.
Without his teaching and guidance, and without his encouragements and patience, this work would not have been possible.

I am grateful to my colleague friends Jeroen Goudsmit and Daniel van Dijk who have constantly given encouragements while I have been struggling with the thesis.

I thank my family for their prayers for me.

\addcontentsline{toc}{chapter}{\protect\numberline{}Preliminaries}
\chapter*{Preliminaries}
\section*{Notations and rules}
\paragraph{Formal langauge}
\begin{itemize}
 \item A formula (a mathematical object) in a formal language is always context-sensitive.
In other words, when we speak of a formula as mathematical object, the specification of free variables belongs to its data.

 \item We may use the notation $x,y,z . \cdots$ to indicate which are the free variables in the formula being described.
For example, one may write down a formula as follows.
$$p,q . (p \RI q) \RI (h(p) \RI h(q))$$
Using this notation (i.e. prepending a list of free variables) is not a requirement, but when this notation is used, we list \emph{all} free variables of the formula to avoid confusion.

 \item Given formal formulas $\phi,\psi$, we write $\phi \eqq \psi$ if they are equivalent (in an appropriate sense depending on the context). We may append a subscript to clarify the meaning: for example, if $M$ is some structure in which $\phi,\psi$ are interpreted, the notation $\phi \eqq_M \psi$ would mean that the interpretations of $\phi$ and $\psi$ in $M$ are (in an appropriate sense) equivalent.

 %
\end{itemize}

\paragraph{Recursion theory}
\begin{itemize}
 \item We denote the relation of Turing reducibility by $\leq_\T$.

 \item Let $1 \leq k \in \N$. Given $x_1,\ldots,x_k \in \N$, we denote as usual by $\la x_1,\ldots,x_k \ra$ the standard code of the $k$-tuple $(x_1,\ldots,x_k)$. Let $1 \leq i \leq k$. We denote by $\pi^k_i$ the `$i$-th projection' function $\N \to \N$, i.e. the function determined by the equation $\pi^k_i(\la x_1,\ldots,x_k \ra) = x_i$. Given $n \in \N$, we write $n_1 = \pi_1^2(n)$ and $n_2 = \pi_2^2(n)$.
\end{itemize}

\paragraph{Order theory}
\begin{itemize}
 \item Given a preorder $(P,\leq)$ and $x,y \in P$, we write $x \cong y$ if $x \leq y$ and $y \leq x$, and speak of $x$ and $y$ being \defn{isomorphic}.

 \item Let $P$ be a preorder, and let $x,y \in P$. We denote by $x \vee_P y$, if exists, an arbitrary join of $x$ and $y$ in $P$. The subscript may be omitted. Similarly we employ notations $\leq_P, \wedge_P, \RI_p, \top_P, \bot_P, \neg_P$, etc.
\end{itemize}
 
\paragraph{Set theory}
\begin{itemize}
 \item As usual, the function application notation $f(x)$ may just be denoted $f{x}$. We will make use of this simplication ubiquitously.

 \item Let $X$ be a set. We denote by $\P(X)$ the power set of $X$, and by $\P^*(X)$ the set $\P(X)\bs\{\emptyset\}$ of non-empty subsets of $X$.

 \item Let $X$ be a set. We denote by $X^*$ the set of finite tuples on $X$.

 \item Let $X$ be a set. Given a finite sequence $s$ in $X$ and an element $x \in X$, we denote by $x \cons s$ the sequence obtained by prepending $x$ to $s$ and by $s \cons x$ the sequence obtained by appending $x$ to $s$. Given sequences $s,s'$, we denote by $s \conc s'$ the result of concatenating $s'$ at the end of $s$.

 \item Let $X,Y$ be sets. We denote by $\Ptl(X,Y)$ the set of partial functions $X \pto Y$.
 
 \item (Dependent sum) Let $X$ be a set, and let $(Y_x \mid x \in X)$ be a family of sets. We denote by $\sum_{x \in X} Y_x$ the set $\{(x,y) \mid x \in X \en y \in Y_x\}$.
\end{itemize}
 
\paragraph{Miscellaneous}
\begin{itemize}
 \item We (very) often use brackets just as an alternative for parentheses (for visual reasons).
 
 \item The symbols $\wedge$ and $\vee$, when used as binary connectives, preceed $\RI$. For example, `$P \wedge Q \RI R$' stands for `$(P \wedge Q) \RI R$'.
 
 \item On the notation `$\BI$': `$P \BI Q$' means `$P \RI Q \wedge Q \RI P$'.

 \item In the ordinary context as well as internally in a topos, we allow ourselves not to distinguish notationally between a function $1 \to X$ and an element of $X$.
  
 

\end{itemize}

\subsection*{Possibly-undefined elements}

\begin{definition*}
Let $X$ be a set.
A \defn{possibly-undefined element} of $X$ is either an element of $X$ or something written $\uparrow$ (pronounce: \emph{undefined}).
Let $x$ be a possibly-undefined element of $X$.
If $x$ is an element of $X$, we say that $x$ is \defn{defined} and write $x\cvg$.
\end{definition*}

\begin{notation*}
Let $X$ be a set, and let $x,x'$ be possibly-undefined elements of $X$.
We write $x = x'$ if $x$ and $x'$ are both defined and are equal (as elements of $X$).
We write $x \eqv x'$ if either of $x,x'$ being defined implies $x = x'$. 
\end{notation*}

The following Notation reinterprets our usual notation of partial function application in terms of the notion of possibly-undefined elements.

\begin{notation*}
Let $f: X \pto Y$ be a partial function, and let $x \in X$.
We denote by $f(x)$ a possibly-undefined element: if $f$ is defined on $x$, then $f(x)$ denotes the value, and if $f$ is not defined on $x$, then $f(x)$ denotes $\uparrow$.
\end{notation*}

\subsection*{Recursion-theoretic operations on the set $\PN$}
\begin{notation*}
Given $A,B \in \PN$, define
\begin{quote}
$A \RI B = \{e \in \N \mid \text{for all $n \in A$ one has $e(n) \in B$}\}$ \\
$A \wedge B = \{\la a,b \ra \mid a \in A \en b \in B\}$ \\
$A \vee B = \{\la 0,a \ra \mid a \in A\} \cup \{\la 1,b \ra \mid b \in B\}.$
\end{quote}
Let $X$ be a set. We may denote the notation $\bigcap_{x \in X}$ as $\forall_{x \in X}$, and $\bigcup_{x \in X}$ as $\exists_{x \in X}$. 
\end{notation*}

\subsection*{Preassemblies}
Recursion theory provides a notion of when a function $\N^k \to \N$ is computable (effective).
In addition to this, we will need to speak of effectivity of functions between sets other than (powers of) $\N$.
In order to do this, we introduce the notion of \emph{pre-assemblies}.  

\begin{definition*}
A \defn{preassembly} is a pair $(X,E)$, where $X$ is a set and $E$ is a function $X \to \PN$.
The function $E$ is called the \defn{encoding function}.
Given $x \in X$, a number $\in E(x)$ is called a \defn{code} or \defn{index} of $x$.

A preassembly $(X,E)$ for which $E(x) \neq \emptyset$ for all $x \in X$ is called an \defn{assembly}.\footnote{The assemblies form a full subcategory of the effective topos with rich structure, but we do not discuss this here. Our interest now is just in having a rigorous notion of effectivity of functions between sets other than powers of $\N$.}
\end{definition*}

\begin{notation*}
Let $X$ be a set, and let $x \in X$.
As a ``default symbolism'', we may denote by $E_X(x)$ or just by $E(x)$ the code set of $x$ under some encoding function.
Under this practice, we may simply say that $X$ is a preassembly.
\end{notation*}

\begin{definition*}
Let $X,Y$ be preassemblies, and let $f: X \pto Y$ be a partial function.
We say that a partial recursive function $\phi: \N \pto \N$ \defn{tracks} $f$ (or that $\phi$ is a \defn{tracking} of $f$) if for all $x \in \Dom(f)$ and $d \in E(x)$ we have $\phi(d)\cvg$ and $\phi(d) \in E(f(x))$.
We say that $f$ is \defn{computable} (or \defn{effective}) if $f$ has a tracking.
\end{definition*}



\begin{examples*}[of preassemblies]\mbox{}
\begin{enumerate}
\item We view the set $\N$ as an assembly with $E(n) = \{n\}$.
\item We view the set $\N^*$ as an assembly with $E(x_1,\ldots,x_n) = \{\la n, \la x_1,\ldots,x_n \ra \ra\}$.
\item Let $X,Y$ be preassemblies. We view the set $X \times Y$ as a preassembly by $E(x,y) = E(x) \wedge E(y)$.
\item Let $X$ be a preassembly, and let $(Y_x \mid x \in X)$ be a family of preassemblies. We view the set $\sum_{x \in X} Y_x$ as a preassembly by $E(x,y) = E(x) \wedge E(y)$.
\item Let $X,Y$ be preassemblies. We view the set $\Ptl(X,Y)$ as a preassembly by $E(f) = \forall_{x \in \Dom(f)} E(x) \RI E(f(x))$.
\item Let $X$ be a preassembly, and let $U \subseteq X$. We view the set $U$ as a preassembly by $E_U(u) = E_X(u)$.
\end{enumerate}
\end{examples*}

\begin{notation*}
Let $X$ be a preassembly, and let $x \in X$ with $E(x) \neq \emptyset$.
We denote by $\enc{x}$ an arbitrary index of $x$.
\end{notation*}



\subsection*{Arithmetic in toposes}
For this discussion, let $\E$ be a topos with a natural numbers object $(\NNO,0,\suc)$.

\begin{definition*}
The \defn{language of arithmetic} is the untyped language consisting of a constant symbol $0$ and a unary function symbol $\suc$.
\end{definition*}

\begin{notation*}
Let $n \in \N$.
We denote the arithmetic term $\suc^n(0)$ by $\ol{n}$ or just by $n$.
\end{notation*}

\begin{definition*}
We define a notion of `interpretation' of the language of arithmetic in $\E$ w.r.t. $(\NNO,0,\suc)$.
The \defn{interpretation} of the symbols $0$ and $\suc$ are respectively the maps $0: 1 \to \NNO$ and $\suc: \NNO \to \NNO$.
The \defn{interpretation} of a (first- or higher-order) arithmetic term/formula is the term/formula in the language of $\E$ obtained in the obvious way from the notion of interpretation of the symbols $0$ and $\suc$.
\end{definition*}

\begin{rule*}
Under the notion of interpretation just defined, we may notationally as well as narationally regard symbols/terms/formulas
in the language of arithmetic as symbols/terms/formulas in the language of $\E$.
\end{rule*}

Now we can state as follows that the natural numbers object in every topos is a `model' of the Heyting arithmetic.

\begin{proposition*}
The (second-order) Peano axioms (not including the Law of Excluded Middle) are true in $\E$.
\end{proposition*}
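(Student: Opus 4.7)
The second-order Peano axioms, in the absence of LEM, consist of: (P1) the induction principle (for every subset $S \subseteq \NNO$, if $0 \in S$ and $\forall n.\, n \in S \RI \suc(n) \in S$, then $\forall n.\, n \in S$); (P2) $\suc$ is injective; and (P3) $0$ is not a successor, i.e.\ $\forall n.\, \suc(n) \neq 0$. The plan is to interpret each of these in $\E$ and to verify them by appealing to the universal property of the natural numbers object $(\NNO, 0, \suc)$, together with standard facts about the subobject classifier / disjointness of coproducts in a topos. None of the three requires classical logic, so everything stays intuitionistically valid.

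For (P1), the interpretation of a universally-quantified second-order statement ``$\forall S. \ldots$'' in $\E$ reduces, by the Kripke--Joyal semantics, to checking the statement for an arbitrary subobject $S \imto \NNO$. So suppose we are given a monomorphism $m: S \imto \NNO$ together with factorizations $0: 1 \to \NNO$ through $m$ (via some $s_0: 1 \to S$) and $\suc \circ m: S \to \NNO$ through $m$ (via some $f: S \to S$). The universal property of $\NNO$ applied to the diagram $1 \xrightarrow{s_0} S \xrightarrow{f} S$ yields a unique $h: \NNO \to S$ with $h \circ 0 = s_0$ and $h \circ \suc = f \circ h$. The composite $m \circ h: \NNO \to \NNO$ then satisfies the same recursion as $\id_\NNO$, so by uniqueness $m \circ h = \id_\NNO$. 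This makes $m$ a split epi as well as a mono, hence an iso, which is exactly the internal statement $\forall n.\, n \in S$.

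For (P2), I would build a predecessor map $p: \NNO \to \NNO$ satisfying $p \circ 0 = 0$ and $p \circ \suc = \id_\NNO$. The standard trick is to apply the universal property of $\NNO$ to $A = \NNO \times \NNO$, with $(0,0): 1 \to \NNO \times \NNO$ and $g(a,b) = (\suc(a), a)$, obtaining $h: \NNO \to \NNO \times \NNO$. An easy induction (using (P1), which we have just proved) shows $\pi_1 \circ h = \id_\NNO$, so setting $p = \pi_2 \circ h$ gives $p \circ \suc = \id_\NNO$ and $p \circ 0 = 0$. Injectivity of $\suc$ then follows: if $\suc(m) = \suc(n)$ internally, applying $p$ gives $m = n$.

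For (P3), I would construct a map $c: \NNO \to 1 + 1$ distinguishing $0$ from successors by applying the universal property of $\NNO$ with $A = 1 + 1$, $x = \mathrm{inl}: 1 \to 1 + 1$, and $f = \mathrm{inr} \circ{!_{1+1}}: 1 + 1 \to 1 + 1$. This produces $c$ with $c \circ 0 = \mathrm{inl}$ and $c \circ \suc = \mathrm{inr} \circ{!_\NNO}$. Since coproducts in any topos are disjoint (so the pullback of $\mathrm{inl}$ and $\mathrm{inr}$ is the initial object $0_\E$), this forces the pullback of $0$ and $\suc$ along any generalized element to be initial, which is the internal statement $\forall n.\, \suc(n) \neq 0$. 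The only step requiring any care is the passage in (P1) from ``for an arbitrary subobject'' to the genuine second-order universal quantifier, and this is handled uniformly by the internal language of a topos; the rest is essentially pure recursion via the universal property of $\NNO$, and should be the mildest part of the argument.
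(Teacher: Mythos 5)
Your argument is correct and is the standard categorical proof. The paper, however, does not give a proof at all: it simply cites \cite[Proposition 8.1.10]{fb94}, so there is no ``paper approach'' to compare against beyond a pointer to the literature. Your decomposition into induction, injectivity of $\suc$, and $\suc(n) \neq 0$ is the usual one, and the three verifications are right: the split-epi-plus-mono trick for induction, the primitive-recursive predecessor $p = \pi_2 \circ h$ obtained from the pairing $(a,b) \mapsto (\suc a, a)$ to force $\suc$ mono, and the map $\NNO \to 1+1$ together with disjointness of coproducts for $\neg(\suc n = 0)$. The one place you are (admittedly, by your own flag) gliding over is the upgrade from ``for every subobject $S \imto \NNO$'' to the genuine internal second-order quantifier $\forall S \in \P(\NNO)$: under Kripke--Joyal this requires running the same recursion argument in every slice $\E/X$, which works because $X \times \NNO$ (with $(\id_X,0)$ and $\id_X \times \suc$) is an NNO in $\E/X$, or more abstractly because logical functors and inverse image functors preserve NNOs. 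Making that one remark explicit would close the gap. In short: where the paper buys brevity by outsourcing the proof, your version buys self-containment at the cost of a slightly longer argument; both are legitimate, and your mathematics is sound.
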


\begin{proof}
Cf. \cite[Proposition 8.1.10]{fb94}.
\end{proof}

\begin{remark*}
For convenience, the \defn{language of arithmetic} is often definitionally extended\footnote{in the sense of model theory} with a function symbol for each primitive recursive function.
The way we interpret primitive recursive functions in $\E$ is as follows.
If $f: \N^k \to \N$ is a primitive recursive function, choose a formula $\phi(\vec{x},y)$ in the unextended language of arithmetic that provably defines $f$.\footnote{I.e., for all $x_1,\ldots,x_k \in \N$ we have $\PA \proves \phi(\ol{x_1},\ldots,\ol{x_k},\ol{f(x_1,\ldots,x_k)})$ and $\PA \proves \forall \vec{x} \exists! y \phi(\vec{x},y)$, where $\PA$ denotes as usual the Peano arithmetic.}
As (the interpretation in $\N$ of) the formula $\phi$ is primitive recursive, we have by \cite[p.128, (1)]{td88} that
$$\text{Heyting arithmetic} \proves \forall \vec{x} \exists! y \phi(\vec{x},y).$$
In particular, this last sentence is true in $\E$.
So by topos theory, there is a unique map $\ol{f}: \NNO^k \to \NNO$ in $\E$ such that
\begin{equation}\label{de1}
\E \models \forall \vec{x} \in \NNO^k \phi(x,\ol{f}(\vec{x})).
\end{equation}
We define the \defn{interpretation} of $f$ in $\E$ to be the map $\bar{f}$.
This makes (the natural numbers object of) $\E$ again a `model' of the extended theory.\footnote{By which we just mean that \eqref{de1} holds.}
\end{remark*}

\chapter{Triposes and their toposes}
We will not only deal with the effective topos, but also with its subtoposes.
While every subtopos can be presented as a subcollection of the ambient topos, it is useful to present a subtopos of the effective topos as given by the tripos-to-topos construction on a suitable tripos, for dealing with its logic.
For this reason, we review here some theory of triposes.

\section{Definition of tripos and first example}
We will present triposes basically in the style of \cite{jvo08}.
In l.c., the notion of tripos is relative to the choice of a category: a tripos is over some `base category' $\C$.
Although the examples we will deal with are all triposes over the category of sets, the author believes that the generality of base category, if restricted to cartesian closed ones, adds little cognitive or notational complication.
Therefore, we will also allow a (cartesian closed) base category.

Following \cite{jvo08}, we will use the language of preorder-enriched categories:

\begin{definition}
A \defn{preorder-enriched category} is a category $\C$ together with, for each pair $A,B$ of objects of $\C$, a preordering on the collection of arrows $\C(A,B)$, such that for every triple $A,B,C$ of objects of $C$ the composition mapping
$$\C(B,C) \times \C(A,B) \to \C(A,C)$$
is order-preserving.
Let $\C,\D$ be preorder-enriched categories.
A \defn{pseudofunctor} $F: \C \to \D$ is the data
\begin{itemize}\setlength{\itemsep}{0mm}
 \item for each object $X$ of $\C$, an object $F(X)$ of $\D$,
 \item for each arrow $f: X \to Y$ in $\C$, an arrow $F(f): F(X) \to F(Y)$,
\end{itemize}
such that
\begin{itemize}\setlength{\itemsep}{0mm}
 \item for $X,Y \in \Ob(C)$, the assignment $\C(X,Y) \to \D(F(X),F(Y)): f \mapsto F(f)$ is monotone,
 \item for each object $X$ of $\C$, we have $F(\id_X) \cong \id_{F(X)}$,
 \item for arrows $X \xto{f} Y$ and $Y \xto{g} Z$, we have $F(g \circ f) \cong F(g) \circ F(g)$.
\end{itemize}
Let $F,G$ be pseudofunctors $\C \to \D$.
A \defn{pseudo-natural transformation} $\Phi: F \to G$ is the data
\begin{itemize}
 \item for each object $X$ of $\C$ an arrow $\Phi_X: F(X) \to G(X)$
\end{itemize}
such that for every arrow $a: X \to Y$ in $\C$ the diagram in $\D$
\begin{diagram}
F(X)       & \rTo{\Phi_X} & G(X)       \\
\dTo{F(a)} &              & \dTo_{G(a)} \\
F(Y)       & \rTo{\Phi_Y} & G(Y)
\end{diagram}
\term{pseudo-commutes}, i.e. commutes up to isomorphism.
\end{definition}

\begin{example}
A prototypical example of a preorder-enriched category is that of preorder sets and monotone functions with the pointwise preorder on hom-sets.
We denote this preorder-enriched category by $\Preord$.
\end{example}

The notion of \emph{Heyting prealgebras} will base our definition of tripos:

\begin{definition}
A \defn{Heyting prealgebra} is a preorder, that is (considered as a category) cartesian closed and has finite coproducts.
A \defn{morphism} of Heyting prealgebras is a function that preserves all the structures up to isomorphism.
We denote by $\Heytpre$ the preorder-enriched category of Heyting prealgebras and their morphisms, with pointwise preordering on the hom collections.

A \defn{Heyting prealgebra with specific structure} is a tuple $(H,\leq,\wedge,\vee,\RI,0,1)$ where $(H,\leq)$ is a Heyting prealgebra and $\wedge,\vee,\RI,0,1$ are operations on $H$ for meet, join, Heyting implication, bottom and top respectively.
\end{definition}

Now we define tripos.

\begin{definition}
Let $\C$ be a cartesian closed category.
A $\C$-\defn{tripos} (or a \defn{tripos over} $\C$) is a pseudofunctor $P: \C^\op \to \Heytpre$ satisfying the following conditions:
\begin{itemize}
 \item[(i)] For every $\C$-morphism $f: X \to Y$, the map $P(f): P(Y) \to P(X)$ has (as a map of preorders) both a left adjoint $\exists_f$ and a right adjoint $\forall_f$ that satisfy the so-called \defn{Beck-Chevalley condition}: if
 \begin{diagram}
  X       & \rTo{f} & Y \\
  \dTo{g} &         & \dTo{h}  \\
  Z       & \rTo{k} & W
 \end{diagram}
 is a pullback square in $\C$, then the composite maps of preorders $\forall_f \circ P(g)$ and $P(h) \circ \forall_k$ are isomorphic.
 \item[(ii)] There is a $\C$-object $\Sigma$ and an element $\sigma$ of $P(\Sigma)$ such that for every object $X$ of $\C$ and every $\phi \in P(X)$ there is a morphism $[\phi]: X \to \Sigma$ such that $\phi$ and $P([\phi])(\sigma)$ are isomorphic elements of $P(X)$. (Such a pair $(\Sigma,\sigma)$ is called a \defn{generic element}.)
\end{itemize}
We may call elements of $P(X)$ \defn{predicates}.

A \defn{tripos with specific structure} is a tripos $P$ equipped with
\begin{itemize}\setlength{\itemsep}{0mm}
 \item for each object $X$ of $\C$, particular top, bottom, meet, join and implication operations on $P(X)$, (so each $P(X)$ is a Heyting prealgebra with specific structure)
 \item for each arrow $a: X \to Y$ in $\C$, particular left and right adjoints to the map $P(a): P(Y) \to P(X)$, and
 \item a particular generic element.
\end{itemize}
\end{definition}

We have defined triposes in two ways, one without and one with specific structure.
These are clearly not the same notion, but it is obvious that ``essentially'' they should be.
In order that all the discussions about triposes below can be applied to both notions of tripos, we introduce the following notation.

\begin{notation}
Let $(H,\leq)$ be a Heyting prealgebra, and let $x,y \in H$.
\begin{itemize}
 \item We denote by $0$ an arbitrary bottom element.
 \item We denote by $1$ an arbitrary top element.
 \item We denote by $x \wedge y$ an arbitrary meet of $x$ and $y$.
 \item We denote by $x \vee y$ an arbitrary join of $x$ and $y$.
 \item We denote by $x \RI y$ an arbitrary relative pseudo-complement of $x$ w.r.t. $y$.
\end{itemize}
In a situation where $(H,\leq)$ underlies a Heyting prealgebra with specific structure whose top, bottom, meet, join or implication operation is named by the symbol `$0$', `$1$', `$\wedge$', `$\vee$' or `$\RI$' respectively, we face a collision of notation.
In such a case, we give priority to the namings for specific structure.

Let $P$ be a tripos without specific structure.
\begin{itemize}
 \item For each arrow $a: X \to Y$ in $\C$, we let $\exists_a$ be any left adjoint of the map $P(a)$ and $\forall_a$ any right adjoint.
 \item We let $(\Sigma,\sigma)$ be any generic element of $P$.
\end{itemize}
Similarly to the case of Heyting prealgebras, if $P$ underlies a tripos with specific structure and the notations $\exists_\ph$, $\forall_\ph$ and $(\Sigma,\sigma)$ name the specific structure, these namings have precedence over the notation just introduced.
\end{notation}

From now on, the word `tripos' can be understood as referring to either the notion of tripos without specific structure or the one with specific structure (unless explicitly designated).\footnote{But for psychological ease, one should not have two notions in mind simultaneously - just make a choice.}
However, the notion of tripos we need in this thesis is always the one with specific structure.

Let us proceed.
The notion of tripos is designed to interpret a certain kind of logic.
In the following, we survey the notion of formal language linked to triposes and how we interpret these languages in triposes.

\begin{definition}
The notion of language we will consider depends on a category, which serves as a means of specifying the types and the function symbols in the language.
Having this in mind, let $\C$ be a category with finite products.

A \defn{$\C$-typed (first-order) language (without equality)} consists merely of a collection of typed \defn{relation symbols}, with as \defn{types} the objects of $\C$.
The \defn{function symbols} are the arrows in $\C$.

\defn{Terms} and \defn{formulas} are defined as usual in (finitary) logic, respecting the modifiers `$\C$-typed', `first-order' and `without equality' appearing in the name for our notion of language.
The \defn{interpretation} of a term is defined in the obvious way as an arrow of $\C$.
Given a term $t: (X_1,\ldots,X_n) \to Y$, we will denote its interpretation $X_1 \times \ldots \times X_n \to Y$ just by $t$.

From now on, suppose that $\C$ is cartesian closed.
Let $P$ be a $\C$-tripos.

Given a $\C$-typed language $\L$, an \defn{interpretation} of $\L$ in $P$ associates to each relation symbol $r$ in $\L$ on a tuple $(X_1,\ldots,X_n)$ of types an element $\itpr{r} \in P(X_1 \times \ldots \times X_n)$.
Depending on an interpretation of $\L$ in $P$, the \defn{interpretation} of a formula $\phi(x_1^{X_1},\ldots,x_n^{X_n})$ is defined as an element $\dbl \phi \dbr \in P$ by induction on $\phi$ in the obvious way, with the following as the clause for the atomic case: given a relation symbol $r$ on some $(Y_1,\ldots,Y_m)$ and $m$ terms $t_1,\ldots,t_m: (X_1,\ldots,X_n) \to Y_1,\ldots,Y_m$, we put $\dbl r(t_1,\ldots,t_m) \dbr = P(t_1,\ldots,t_m)(\dbl r \dbr)$.\footnote{In the last expression, `$(t_1,\ldots,t_m)$' denotes the induced $\C$-morphism $X_1 \times \ldots \times X_n \to Y_1 \times \ldots \times Y_m$.}
The notation $\dbl \cdot \dbr$ may also be written as $\dbl^P \cdot \dbr$ for indicating in which tripos the language is interpreted.

Depending on an interpretation of $\L$ in $P$, given a formula $\phi$ in $\L$, we say that $\phi$ is \defn{true} in $P$, and write $P \models \phi$, if $\itpr{\phi} \in P(1)$ is a top element.
We have soundness: for $\L$ a $\C$-typed language and an interpretation of $\L$ in $P$, given a formula $\phi$ in $\L$, if $\phi$ can be proved in the intuitionistic predicate calculus, then $P \models \phi$.\footnote{Cf. \cite[Theorem 2.1.6]{jvo08}.}

There is a ``universal'' language associated to a $\C$-tripos $P$: its collection of relations symbols are all the elements of the preorders $P(X)$.
This language is called \defn{the language of $P$}, and obviously its relation symbols can be interpreted as themselves.
\end{definition}

The following is our first notion of morphism for triposes.
Later, we shall also consider the notion of `geometric morphisms' between triposes.

\begin{definition}
A \defn{transformation} $P \to Q$ of $\C$-triposes is just a pseudo-natural transformation, with $P,Q$ considered as pseudofunctors $\C \to \Preord$ (instead of $\C \to \Heytpre$).
\end{definition}

\begin{notation}
Let $\Phi: P \to Q$ be a map between triposes.
Given $X \in \Ob(\C)$, we may denote the map $\Phi_X: P(X) \to Q(X)$ just by $\Phi$.
\end{notation}

Finally, we introduce an (important) example: the \emph{Effective Tripos}.

\begin{example}
Let us equip the family $\PN^\ph = (\PN^X \mid \text{$X$ is a set})$ of sets with the structure of tripos.

Let $X$ be a set.
We make the set $\PN^X$ a Heyting prealgebra with specific structure, as follows.
First, define a preordering $\leq$ on $\PN^X$ by
$$r \leq s \iff \bigcap_{x \in X} (r(x) \RI s(x)) \neq \emptyset.$$
Next, define $\wedge,\vee,\RI: \PN^X \times \PN^X \to \PN^X$ to be the functions whose pointwise operations are $\wedge,\vee,\RI: \PN \times \PN \to \PN$ respectively.\footnote{See Preliminary.}
\claim{Then the tuple $(\PN^X,\vee,\wedge,\RI,\PN,\emptyset)$ is a Heyting prealgebra with specific structure.}

Let $f: X \to Y$ be a function.
Clearly, we have the `precomposition with $f$' operation $\PN^f: \PN^Y \to \PN^X$.
Given $r \in \PN^X$, define a function $\exists_f(r): Y \to \PN$ by
$$\exists_f(r)(y) = \exists_{x \in X, f(x) = y} r(x)$$
and a function $\forall_f(r): Y \to \PN$ by
$$\forall_f(r)(y) = \forall_{x \in X, f(x) = y} r(x).$$
\claim{The function $\PN^f: \PN^Y \to \PN^X$ is a morphism of Heyting prealgebras. The functions $\exists_f,\forall_f: \PN^X \to \PN^Y$ are order-preserving, and are respectively left,right adjoint to $\PN^f$. Moreover, the Beck-Chevalley condition holds. Finally, the pair $(\PN,\id_\PN)$ is a generic element.}

Therefore we have a tripos. We define the \defn{Effective Tripos} to be this tripos with the described specific structure. We denote it as $\EffTrip$.
\end{example}

\begin{notation}
Let $\phi$ be an $\ET$-sentence.
Given $n \in \N$, we write $n \realizes \phi$ and say that $n$ \defn{realizes} $\phi$ if $n \in \dbl \phi \dbr$.
We write $\realizes \phi$ if some natural number realizes $\phi$.
\end{notation}

\section{The tripos-to-topos construction}
Throughout this section, let $\C$ be a cartesian closed category.

The \emph{tripos-to-topos construction} associates to each $\C$-tripos $P$ a topos $\C[P]$.
In this section, we will describe this construction and review basics about the topos $\C[P]$.

\subsection{The construction}
\begin{construction}\label{con:TriposToTopos} 
Let $P$ be a $\C$-tripos.
We proceed to define the associated category $\mathcal{C}[P]$.

A \defn{partial equivalence predicate} on an $X \in \Ob(\C)$ is a predicate $\sim \in P(X \times X)$ such that
\begin{itemize}\setlength{\itemsep}{0mm}
 \item[(symmetry)] $P \models \forall x,x' . x \sim x' \RI x' \sim x$, and
 \item[(transitivity)] $P \models \forall x,x',x'' . x \sim x' \wedge x' \sim x'' \RI x \sim x''$.
\end{itemize}
An \defn{object} of $\C[P]$ is a pair $(X,\sim)$, where $X \in \Ob(\C)$ and $\sim \in P(X \times X)$ is a partial equivalence predicate on $X$.

Given an object $(X,\sim)$ of $\C[P]$ and a term $t\text{:}X$ in the language of $P$, we denote by $\ex(t)$ be the formula $t \sim t$.

Let $(X,\sim)$ and $(Y,\sim)$ be objects of $\C[P]$.
Given $F \in P(X \times Y)$, we say that $F$ is a \defn{functional predicate} from $(X,\sim)$ to $(Y,\sim)$, and write $F: (X,\sim) \to (Y,\sim)$, if\footnote{One might think that the following formulas are appearing somewhat out of thin air. But they are not - we will see soon in Remark \ref{rmk:NoThinAir} where they come from.}
\begin{itemize}\setlength{\itemsep}{0mm}
 \item[(extensionality)] $P \models \forall x,x',y,y' . x \sim x' \wedge y \sim y' \wedge F(x,y) \RI F(x',y')$ 
 \item[(single-valuedness)] $P \models \forall x,y,y' . x \sim x \wedge y \sim y \wedge y' \sim y' \wedge F(x,y) \wedge F(x,y') \RI y \sim y'$ 
 \item[(totality)] $P \models \forall x . x \sim x \RI \exists y . y \sim y \wedge F(x,y)$. 
\end{itemize}
We write $\fr(F)$ for the $P$-sentence obtained as the meet of the three sentences above.
Given functional predicates $F,G: (X,\sim) \to (Y,\sim)$, we say that $F$ is \defn{equivalent} to $G$ if
\begin{indmath}
P \models \forall x^X,y^Y . x \sim x \wedge y \sim y \RI [F(x,y) \BI G(x,y)].
\end{indmath}
Clearly, this is an equivalence relation.
Given objects $(X,\sim)$ and $(Y,\sim)$ of $\mathcal{C}[P]$, a \defn{map} $(X,\sim) \to (Y,\sim)$ is an equivalence class of functional predicates $(X,\sim) \to (Y,\sim)$.
If $F$ is a functional predicate, we denote as usual by $[F]$ its equivalence class.

Given functional predicates $F: (X,\sim) \to (Y,\sim)$ and $G: (Y,\sim) \to (Z,\sim)$, we define their \defn{composite} $G \circ F$ to be the predicate
$$\dbl x,z . \exists y^Y . \ex(y) \wedge F(x,y) \wedge G(y,z) \dbr  \in P(X \times Z).$$
One easily verifies that $G \circ F$ is a functional predicate $(X,\sim) \to (Z,\sim)$.
We define the \defn{composite} $[G] \circ [F]$ of maps to be $[G \circ F]$; one can check that this is well-defined and associative.

Let $f: X \to Y$ be a morphism in $\C$.
We denote by $\ol{f}$ the predicate
$$\dbl x,y . f(x) \sim y \dbr  \in P(X \times Y).$$
We say that $f$ \defn{represents} a map $[F]: (X,\sim) \to (Y,\sim)$ if the associated predicate $\ol{f} \in P(X \times Y)$ is a functional predicate equivalent to $F$.
One can verify that $f$ represents a map $(X,\sim) \to (Y,\sim)$ if and only if
\begin{equation}\label{tt1}
P \models \forall x,x'^X . x \sim x' \RI f(x) \sim f(x').
\end{equation}
We say that $f$ is \defn{extensional} (w.r.t. the objects $(X,\sim)$ and $(Y,\sim)$) if the condition \eqref{tt1} is satisfied.
Given a term $f: X \to Y$ in the language of $P$, we write $\ext(f)$ for the sentence \eqref{tt1}.
If a morphism $f$ represents a map $(X,\sim) \to (Y,\sim)$ and a morphism $g$ represents a map $(Y,\sim) \to (Z,\sim)$, then the composite morphism $g \circ f$ represents the composite map, as one can verify.

The last definition suggests that some, but not all, maps in $\mathcal{C}[P]$ are represented by a morphism in $\C$;
it is usually nicer\footnote{This will become clear to the reader as we proceed.}, whenever possible, to represent a map by a morphism (rather than by a binary predicate).
Next we meet the first example of a map represented by a morphism.

Given an object $(X,\sim)$, one sees that the morphism $\id: X \to X$ is extensional.
By the \defn{identity} map on $(X,\sim)$ we mean the map represented by the morphism $\id: X \to X$;
this map clearly behaves as identity w.r.t. the composition defined above.

Finally, we define $\mathcal{C}[P]$ to be the category of objects, maps, composition and identity as just defined.
\end{construction}

\begin{remark}
There is a minor difference between the definition of the category $\C[P]$ in \cite{jvo08} and ours.
An objects of $\C[P]$ in l.c. are the same as ours, but a map there is an equivalence class of \emph{strict} functional predicates instead of just functional predicates:
these are functional predicates $F$ that additionally satisfy $P \models F(x,y) \RI \ex(x,y)$.
                                                                                                                                            
However, if $F$ is a functional predicate then $\dbl x,y . \ex(x,y) \wedge F(x,y) \dbr$ is clearly a strict functional predicate equivalent to $F$, and conversely every strict functional predicate is trivially a functional predicate.
This gives an isomorphism between the category $\C[P]$ in our sense and the one in the sense of \cite{jvo08}.

Also later, we will not require a predicate representing a subobject to be strict, unlike the traditional fashion.
The reason I deviate from the traditional presentation at this is that our way leads to simpler and more coherent formulations of results about triposes.
Corollary \ref{LogicPCP} below is a main showcase of this; one would appreciate the formulation there (of reducing\footnote{Those not familiar with this saying, will see soon.} formulas in $\C[P]$ to formulas in $P$) which is only possible because we do not require strictness.
\end{remark}

\begin{notation}
Let $(X,\sim),(Y,\sim)$ be objects of $\C[P]$, and let $f: X \to Y$ be a $\C$-morphism.
If $f$ represents a map $(X,\sim) \to (Y,\sim)$, we may denote that map just by $f$.
\end{notation}
%

\begin{convention}
Given that some symbol `$f$' stands for some map $(X,\sim) \to (Y,\sim)$, we will often name some $\C$-morphism $X \to Y$ representing the map $f$ with the same symbol `$f$'.\footnote{This is a deliberate habit serving as a means to prevent overflow of new names.}
\end{convention}

\subsection{Topos structure}\label{ToposStructure}
\begin{lemma}\label{MonoEpi}
Let $f: X \to Y$ be a $\C$-morphism representing a map $(X,\sim) \to (Y,\sim)$.
Then the map $f: (X,\sim) \to (Y,\sim)$ is an epi if and only if\footnote{The reader would recognize that the formulas appearing in this Lemma somewhat resemble the familiar characterizations of surjectivity and injectivity of functions. The precise connection will be unveiled soon in Remark \ref{rmk:NoThinAir}.}
$$P \models \forall y^Y . \ex(y) \RI \exists x^X . \ex(x) \wedge f(x) \sim y.$$
The map $f: (X,\sim) \to (Y,\sim)$ is a mono if and only if
$$P \models \forall x,x'^X . \ex(x,x') \wedge f(x) \sim f(x') \RI x \sim x'.$$
\end{lemma}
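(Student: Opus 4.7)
My plan is to treat the two equivalences separately. In each case, the easy direction (the predicate implies mono/epi) proceeds by manipulating the functional predicates representing arbitrary parallel maps, using the hypothesis to match them; the converse (mono/epi implies the predicate) exhibits a specific test diagram whose universal-property collapse unfolds to precisely the predicate in question.

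For the mono characterization, direction $(\Leftarrow)$: let $g, h: (W, \sim_W) \rightrightarrows (X, \sim)$ be represented by functional predicates $G, H$ with $fg = fh$; given $\ex_W(w), \ex(x), G(w, x)$, the predicate $fg(w, f(x))$ holds (witnessed by $x$), so by $fg \equiv fh$ we obtain $x'$ with $\ex(x') \wedge H(w, x') \wedge f(x') \sim f(x)$; the hypothesis gives $x' \sim x$, and extensionality of $H$ yields $H(w, x)$. A symmetric argument gives the reverse implication, so $G \equiv H$. Direction $(\Rightarrow)$: on $W := X \times X$ define $(x_1, x_2) \sim_W (x_1', x_2') := x_1 \sim x_1' \wedge x_2 \sim x_2' \wedge f(x_1) \sim f(x_2) \wedge f(x_1') \sim f(x_2')$; the two projections then represent maps $(W, \sim_W) \to (X, \sim)$, and $f \pi_1 = f \pi_2$ (because $f(x_1) \sim f(x_2)$ is built into $\ex_W$), so monicity of $f$ forces $\pi_1 = \pi_2$, which unfolds to precisely the claimed formula.

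For the epi characterization, direction $(\Leftarrow)$ is dual: for $g, h: (Y, \sim) \rightrightarrows (Z, \sim_Z)$ represented by $G, H$ with $gf = hf$, extensionality of $G$ and $H$ in the first argument reduces $gf \equiv hf$ to $G(f(x), z) \BI H(f(x), z)$ on $\ex(x)$, and the hypothesis furnishes, for each $y$ with $\ex(y)$, an $x$ with $\ex(x) \wedge f(x) \sim y$, through which $G(y, z) \BI G(f(x), z) \BI H(f(x), z) \BI H(y, z)$ follows by chasing extensionality.

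Direction $(\Rightarrow)$ for epi is the main obstacle. My strategy is image factorization. Write $\psi(y)$ for the target predicate $\exists x . \ex(x) \wedge f(x) \sim y$. Define $y_1 \sim' y_2 := \exists x . \ex(x) \wedge f(x) \sim y_1 \wedge f(x) \sim y_2$ on $Y \times Y$; this is a PER (transitivity by reusing the same $x$), and $\ex_{\sim'}(y)$ coincides with $\psi(y)$. Then $f: X \to Y$ represents a map $f': (X, \sim) \to (Y, \sim')$, and $\id: Y \to Y$ an inclusion $\incl: (Y, \sim') \to (Y, \sim)$, with $\incl \circ f' = f$. Since $f$ is epi, so is $\incl$; and the mono direction just proved, applied to $\incl$, gives $\incl$ mono as well (the defining formula becomes $\psi(y) \wedge \psi(y') \wedge y \sim y' \RI y \sim' y'$, which is immediate by reusing the witness for $\psi(y)$). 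By balancedness of $\C[P]$ (part of its topos structure, established independently in the ambient text), $\incl$ is then an isomorphism; evaluating the totality of its inverse together with the equation $\incl \circ \incl^{-1} = \id_{(Y, \sim)}$ at an arbitrary $y$ with $\ex(y)$ produces a $y'$ with $\psi(y') \wedge y' \sim y$, whence $\psi(y)$ follows since $\psi$ respects $\sim$.
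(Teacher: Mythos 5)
Your argument is correct in both directions and for both characterizations. The converse direction of each characterization and the forward direction of the mono case are exactly the direct diagram-chases one wants: in particular, the test object $W = X\times X$ with the $\sim_W$ you build is the right kernel-pair construction, and $\pi_1 = \pi_2$ unwinds precisely to the displayed formula. The one thing worth flagging is the forward direction of the epi case, where you reach for image factorization plus balancedness of $\C[P]$. This is sound --- the theorem that $\C[P]$ is a topos does not actually depend on this lemma, so there is no circularity --- but it is heavier machinery than needed and it creates a forward reference (that theorem is only stated at the end of \S\ref{ToposStructure}, and Construction~\ref{con:simply} there in fact cites the present lemma). A more self-contained argument is available and cheaper: write $\psi(y)$ for $\exists x\,.\,\ex(x) \wedge f(x) \sim y$, check that $\psi$ is extensional on $(Y,\sim)$, and use the generic element to obtain a $\C$-morphism $[\psi]\colon Y \to \Sigma$ representing a map $(Y,\sim) \to (\Sigma,\BI)$; do the same for the constant-true predicate $\top$ on $Y$. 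These two maps agree after precomposition with $f$, since on $\ex(x)$ the formula $\psi(f(x))$ is witnessed by $x$ itself; so epiness of $f$ forces them to agree as maps out of $(Y,\sim)$, and unwinding equality of maps into $(\Sigma,\BI)$ gives precisely $\forall y\,.\,\ex(y) \RI \psi(y)$. This route uses only the generic-element axiom and definitions already in place before the lemma, and also spares you from verifying that your $\sim'$ is a partial equivalence predicate.
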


\begin{proof}
Left to the reader.
\end{proof}

\begin{construction}[Terminal object]
Consider a terminal object $1$ of $\C$, and the predicate $\top \in P(1 \times 1)$.
\claim{The pair $(1,\top)$ is a terminal object of $\C[P]$.
In fact, for any object $(X,\sim)$, the unique $\C$-morphism $!: X \to 1$ represents the unique map $(X,\sim) \to (1,\top)$.}
We will denote the object $(1,\top)$ by $1$.
\end{construction}

\begin{proof}
Clearly, $\top \in P(1 \times 1)$ is a partial equivalence predicate on $1 \in \Ob(\C)$.
So the pair $(1,\top)$ is an object of $\C[P]$.
Let $(X,\sim)$ be an object of $\C[P]$.
The unique morphism $!: X \to 1$ in $\C$ represents a map $(X,\sim) \to (1,\top)$, as we clearly have $P \models \forall x,x' . x \sim x' \RI !(x) \top !(x')$.
We now show that this is the only map $(X,\sim) \to (1,\top)$.
Let $F$ be a functional predicate $(X,\sim) \to (1,\top)$.
Notice that $P \models \forall y,y'^1 . \ex(y,y') \RI y \top y'$.
Meanwhile, the `totalness' of $F$ says $P \models \forall x^X . \ex(x) \RI [\exists y^1 . \ex(y) \wedge F(x,y)]$.
So the `single-valuedness' of $F$ yields $P \models \forall x^X \forall y^1 . \ex(x,y) \RI F(x,y)$.
In turn, this gives $P \models \forall x^X \forall y^1 . \ex(x,y) \RI [F(x,y) \BI !(x) \top *]$,
i.e., the predicate $F$ and the morphism $!$ represent the same map, as desired.
\end{proof}

\begin{construction}[Products]
Let $(X,\sim),(Y,\sim)$ be objects of $\C[P]$.
Consider the object $(X \times Y,\sim)$, where $(x,y) \sim (x',y') = \dbl x \sim x' \wedge y \sim y' \dbr$.
\begin{itemize}
\item[\claim{(a)}] \claim{This object, together with the maps $(X \times Y,\sim) \to (X,\sim)$ and $(X \times Y,\sim) \to (Y,\sim)$ represented by the projection morphisms, is a product of $(X,\sim)$ and $(Y,\sim)$.}
\end{itemize}
In fact:
Given functional relations $F: (W,\sim) \to (X,\sim)$ and $G: (W,\sim) \to (Y,\sim)$,
let $\la F,G \ra$ be the predicate on $W \times (X \times Y)$ defined by
$$\la F,G \ra(w,(x,y)) = \dbl F(w,x) \wedge G(w,y) \dbr.$$
Then
\begin{itemize}
\item[\claim{(b)}] \claim{$\la F,G \ra$ is a functional predicate $(W,\sim) \to (X \times Y,\sim)$, and represents the induced map $\la [F],[G] \ra: (W,\sim) \to (X \times Y,\sim)$.}
\end{itemize}
Moreover,
\begin{itemize}
\item[\claim{(c)}] \claim{if $f: (W,\sim) \to (X,\sim)$ and $g: (W,\sim) \to (Y,\sim)$ are representative $\C$-morphisms, then the induced morphism $\la f,g \ra: W \to X \times Y$ represents the induced map $\la [f],[g] \ra: (W,\sim) \to (X \times Y,\sim)$.}
\end{itemize}
We will denote the object $(X \times Y, \sim)$ by $(X,\sim) \times (Y,\sim)$.
\end{construction}

\begin{proof}
Straightforward verifications.
\end{proof}

\begin{construction}[Exponentials]
Let $(X,\sim)$ and $(Y,\sim)$ be objects of $\C[P]$.
Consider the object $(\Sigma^{X \times Y},\sim)$, where
\begin{quote}
$F \sim G = \dbl \fr(F) \wedge \forall x,y . x \sim x \wedge y \sim y \RI (F(x,y) \BI G(x,y)) \dbr$.
\end{quote}
Consider the functional predicate $\Ev: (\Sigma^{X \times Y},\sim) \times (X,\sim) \to (Y,\sim)$ defined by
\begin{quote}
$\Ev((F,x),y) = F(x,y)$.
\end{quote}
Then
\begin{itemize}
\item[\claim{(a)}] \claim{the object $(\Sigma^{X \times Y},\sim)$ together with the map $[\Ev]$ is an exponential $(Y,\sim)^{(X,\sim)}$.}
\end{itemize}
In fact,
\begin{itemize}
\item[\claim{(b)}] \claim{if a morphism $F: (W \times X) \times Y \to \Sigma$ is a fuctional predicate $(W,\sim) \times (X,\sim) \to (Y,\sim)$,
then the corresponding morphism $W \to \Sigma^{X \times Y}$ represents the exponential transpose $(W,\sim) \to (\Sigma^{X \times Y},\sim)$.}
\end{itemize}
Moreover,
\begin{itemize}
\item[\claim{(c)}] \claim{if a morphism $f: W \times X \to Y$ represents a map $(W,\sim) \times (X,\sim) \to (Y,\sim)$,
then the functional predicate $(W,\sim) \to (\Sigma^{X \times Y},\sim)$ given by
\begin{quote}
$w,G . \forall x . \ex(x) \RI G(x,f(w,x))$
\end{quote}
represents the exponential transpose $(W,\sim) \to (\Sigma^{X \times Y},\sim)$.}
\end{itemize}
We may denote the object $(\Sigma^{X \times Y},\sim)$ by $(Y,\sim)^{(X,\sim)}$.
\end{construction}

\begin{proof}
Straightforward verifications.
\end{proof}

\begin{construction}[Simply representable exponentials]\label{con:simply}
In the spirit of the phenomenon that some maps in $\C[P]$ can be represented not only by a functional predicate but also by a $\C$-morphism, some exponentials in $\C[P]$ have a simpler presentation (compared to the presentation given in the previous lemma).

For $(X,\sim)$ and $(Y,\sim)$ objects of $\Set[P]$, consider the object $(Y^X,\sim)$ where
\begin{quote}
$f \sim g = \dbl \ex(f) \wedge (\forall x . x \sim x \RI f(x) \sim g(x)) \dbr.$
\end{quote}
Also consider the evaluation function $\ev: Y^X \times X \to Y$, which
\begin{itemize}
 \item[\claim{(a)}] \claim{represents a map $(Y^X,\sim) \times (X,\sim) \to (Y,\sim)$.}
\end{itemize}
Soon we will encouter several cases where this simpler data, i.e. the object $(Y^X,\sim)$ together with the map $\ev: (Y^X,\sim) \times (X,\sim) \to (Y,\sim)$, is the exponential $(Y,\sim)^{(X,\sim)}$.
This is the case precisely if
\begin{equation}\label{BarRepSur}
P \models \forall F \in \Sigma^{X \times Y} . \fr(F) \RI \exists f \in Y^X . \ext(f) \wedge F \sim \ol{f}
\end{equation}
holds; we explain this as follows.

Consider the function $\ol{\ph}: Y^X \to \Sigma^{X \times Y}: f \mapsto \ol{f}$, which
\begin{itemize}
 \item[\claim{(b)}] \claim{represents a mono $(Y^X,\sim) \to (\Sigma^{X \times Y},\sim)$.}
\end{itemize}
Then
\begin{itemize}
 \item[\claim{(c)}] \claim{we have a commuting triangle}
\end{itemize}
\begin{diagram}
(Y^X,\sim) \times (X,\sim)                 &              & \\
\dEmto{\ol{\ph} \times \id}                 & \rdTo{\ev}   & \\
(\Sigma^{X \times Y},\sim) \times (X,\sim) & \rTo{[\Ev]} & (Y,\sim).
\end{diagram}
Therefore $(Y^X,\sim)$ and $\ev: (Y^X,\sim) \times (X,\sim) \to (Y,\sim)$ are the exponential $(Y,\sim)^{(X,\sim)}$ if and only if the map $\ol{\ph}$ is epi, which is\footnote{see Lemma \ref{MonoEpi}} the case precisely if \eqref{BarRepSur} holds.
\begin{itemize}
 \item[\claim{(d)}] \claim{If $F: (W,\sim) \times (X,\sim) \to (Y,\sim)$ is a functional predicate, then the functional predicate $(W,\sim) \to (Y^X,\sim)$
given by
\begin{quote}
$w,g . \forall x . \ex(x) \RI F((w,x),g(x))$ 
\end{quote}
represents the transpose map.}
 \item[\claim{(e)}] \claim{If $f: (W,\sim) \times (X,\sim) \to (Y,\sim)$ is an extensional morphism,
then the transpose morphism $f^\sharp: W \to Y^X$ represents the transpose map $(W,\sim) \to (Y^X,\sim)$.}
\end{itemize}
If the object $(Y^X,\sim)$ underlies the exponential, we may denote it by $(Y,\sim)^{(X,\sim)}$.
\end{construction}

\begin{proof}
Routine verifications.
\end{proof}

\begin{construction}[Subobject classifier and power object]\label{SubClC[P]}
Consider the object $(\Sigma,\BI)$, where $\BI \in P(\Sigma \times \Sigma)$ is the predicate $\dbl p,q . p \BI q \dbr$.
Consider the morphism $\true: 1 \to \Sigma$, representing $\top \in P(\Sigma)$.
\begin{itemize}
 \item[\claim{(a)}] \claim{The object $(\Sigma,\BI)$ together with the map $\true: 1 \to (\Sigma,\BI)$ is a subobject classifier of $\C[P]$.}
 \item[\claim{(b)}] \claim{Every map $(X,\sim) \to (\Sigma,\BI)$ can be represented by a morphism, and this holds even in the following internal sense.\footnote{Cf. \eqref{BarRepSur}.}
\begin{quote}
$P \models \forall F \in P(X \times \Sigma) . \fr(F) \ri \exists f \in \Sigma^X (\ext(f) \wedge F \sim \bar{f})$.
\end{quote}}
 \item[\claim{(c)}] \claim{If $M: (W,\sim) \to (X,\sim)$ is a functional predicate representing a mono,
then the morphism $X \to \Sigma$ given by
\begin{quote}
$x . \exists w \in W . w \sim w \wedge M(w,x)$
\end{quote}
represents the characteristic arrow of the mono $[M]: (W,\sim) \emto (X,\sim)$.}
 \item[\claim{(d)}] \claim{If $r: (X,\sim) \to (\Sigma,\BI)$ is a extensional morphism, consider the object $(X,\sr{r}{\sim})$ where
\begin{quote}
$x \sr{r}{\sim} x' \eqq r(x) \wedge x \sim x'$.
\end{quote}
Then the function $\id: X \to X$ represents a mono $(X,\sr{r}{\sim}) \to (X,\sim)$, and the diagram
\begin{diagram}
(X,\sr{r}{\sim}) & \rTo{!} & 1 \\
\dTo{\id}        &         & \dTo{\true} \\
(X,\sim)         & \rTo{r} & (\Sigma,\BI)
\end{diagram}
is a pullback.}
\end{itemize}
\end{construction}

\begin{proof}
Straightforward verifications.
\end{proof}

\begin{remark}\label{rmk:OmegaPowerSimple}
The statement \ref{SubClC[P]}(b) tells us that given any object $(X,\sim)$ the exponential $\Omega^{(X,\sim)}$ is simply representable in the sense of Construction \ref{con:simply}.
\end{remark}

\begin{theorem}
The category $\C[P]$ is a topos. \qed
\end{theorem}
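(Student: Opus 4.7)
The plan is to verify the defining properties of an elementary topos --- finite limits, a subobject classifier, and exponentials --- using the pieces already in hand. All but one of these ingredients have been supplied in Subsection \ref{ToposStructure}: the terminal object $(1, \top)$, binary products $(X,\sim) \times (Y,\sim)$, the subobject classifier $(\Sigma,\BI)$ together with $\true$, and exponentials $(Y,\sim)^{(X,\sim)}$. So only equalizers remain; together with the terminal object and binary products these then give all finite limits.

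For a parallel pair $[F], [G] \colon (X,\sim) \to (Y,\sim)$, I would define a new partial equivalence predicate on $X$ by
$$x \sim_E x' \;\equiv\; (x \sim x') \wedge \exists y^Y. \ex(y) \wedge F(x,y) \wedge G(x,y),$$
and take the equalizer to be $(X, \sim_E)$ together with the mono $i \colon (X,\sim_E) \emto (X,\sim)$ represented by $\id_X$ (cf.\ Lemma \ref{MonoEpi}). Symmetry and transitivity of $\sim_E$ reduce to those of $\sim$ together with the extensionality and single-valuedness of $F$ and $G$. The equation $[F] \circ i = [G] \circ i$ follows from the existential clause of $\sim_E$ combined with single-valuedness. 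For the universal property, given $[H] \colon (W,\sim) \to (X,\sim)$ with $[F] \circ [H] = [G] \circ [H]$, I would check that the very same predicate $H$ already represents a map $(W,\sim) \to (X,\sim_E)$: extensionality and single-valuedness for the finer target follow immediately from those for the coarser one, and totality, the only nontrivial point, is obtained by combining totality of $H$ with the hypothesis $[F] \circ [H] = [G] \circ [H]$, which supplies for each $x$ in the ``image'' of $H$ a common witness $y$ certifying $x \sim_E x$. Uniqueness then comes from $i$ being mono.

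The main obstacle is bookkeeping rather than depth: each step unwinds the relevant definition into a formula in the language of $P$, which is then derivable intuitionistically and hence true in $P$ by soundness. Once the equalizer step above is dispatched, citing the constructions of terminal object, products, subobject classifier, and exponentials from Subsection \ref{ToposStructure} assembles the full data of an elementary topos.
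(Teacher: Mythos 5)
Your construction is correct, and you do arrive at the theorem; but the route is genuinely different from the one the paper takes. The paper's terse \qed is not an oversight: Constructions in \S\ref{ToposStructure} give finite products, a subobject classifier in the strong form of \ref{SubClC[P]}(b,d) (every map $(X,\sim) \to (\Sigma,\BI)$ is represented by an extensional morphism, and the pullback of $\true$ along any such map exists), and exponentials. It is a standard and easy piece of category theory that this package already forces all finite limits to exist: given parallel arrows $f,g$, the diagonal $\Delta_Y$ is a mono purely from products, hence has a classifying map $\delta_Y$, and the pullback of $\true$ along $\delta_Y \circ \la f,g\ra$ — guaranteed by the strong form of the subobject classifier — is precisely the equalizer of $f,g$. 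So the paper needs no separate construction of equalizers at this point; it defers them to the ``Derived structures'' subsection, where they are read off the already-known topos structure.

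You instead build equalizers by hand, with the predicate $\sim_E$ — and you should notice this coincides, up to notation, with the paper's own later equalizer construction via $(X,\sr{r}{\sim})$ where $r(x) = \exists y^Y . \ex(y) \wedge F(x,y) \wedge G(x,y)$. The verifications you sketch (symmetry/transitivity from extensionality and single-valuedness of $F,G$; the equation $[F]\circ i = [G]\circ i$; totality in the universal property via $[F]\circ[H] = [G]\circ[H]$; uniqueness from $i$ mono by Lemma \ref{MonoEpi}) are all sound. The trade-off: your route is more elementary and self-contained but duplicates work that the paper's general argument makes unnecessary, while the paper's route is slicker but requires the reader to supply (or recall) the products-plus-SOC-plus-exponentials-implies-topos argument themselves. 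Both prove the theorem.
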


\subsection{The internal logic of $\C[P]$}\label{ToposLogic}
We now review how to represent formulas in the language of the topos $\C[P]$ as formulas in the language of the tripos $P$, so that eventually satisfactions of the form $\C[P] \models \cdots$ reduce to satisfactions of the form $P \models \cdots$.

\begin{proposition}\label{EqRep}
Let $(X,\sim)$ be an object of $\C[P]$.
The predicate $\sim \in P(X \times X)$ represents the equality relation $=$ on $(X,\sim) \times (X,\sim)$.
\end{proposition}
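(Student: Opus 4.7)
The plan is to apply Construction~\ref{SubClC[P]}(c). Via the generic element $(\Sigma,\sigma)$, the predicate $\sim \in P(X\times X)$ corresponds to a $\C$-morphism $[\sim]: X \times X \to \Sigma$ with $P([\sim])(\sigma) \cong\, \sim$; the claim is that this morphism represents the equality map $(X,\sim)\times(X,\sim) \to (\Sigma,\BI)$, which by definition is the characteristic arrow of the diagonal mono $\Delta: (X,\sim) \emto (X,\sim)\times(X,\sim)$.

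First I set up the bookkeeping. The product $(X,\sim)\times(X,\sim)$ carries the partial equivalence $(y,z)\sim(y',z') \eqq y\sim y' \wedge z\sim z'$, and the diagonal is represented by the $\C$-morphism $\Delta: X \to X\times X$ (both of whose projections are $\id_X$); the corresponding functional predicate is $\ol{\Delta}(x,(y,z)) \eqq x\sim y \wedge x \sim z$. Extensionality of $[\sim]$ with respect to $(X\times X, \sim)$ and $(\Sigma,\BI)$ amounts to $P \models \forall y,y',z,z' . y\sim y'\wedge z\sim z'\RI ((y\sim z)\BI (y'\sim z'))$, which is immediate from symmetry and transitivity of $\sim$.

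By Construction~\ref{SubClC[P]}(c) applied to $\ol{\Delta}$, the characteristic arrow of $\Delta$ is represented by the $\C$-morphism $(y,z) \,.\, \exists x\in X . x\sim x \wedge x\sim y \wedge x\sim z$. So it remains to check that this morphism and $[\sim]$ represent the same map into $(\Sigma,\BI)$, i.e.
$$P \models \forall y,z \in X . \ex(y) \wedge \ex(z) \RI \bigl[(y\sim z) \BI (\exists x . x\sim x \wedge x\sim y \wedge x\sim z)\bigr].$$
The direction $\RI$ is witnessed by $x := y$ (using $y\sim y$ from $\ex(y)$); the direction $\LI$ is by symmetry and transitivity of $\sim$. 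The only subtlety is to keep straight the dual role of $\sim$ as both an element of $P(X\times X)$ and (via $[\sim]$) a morphism into $\Sigma$; beyond that the verification is routine and poses no conceptual obstacle.
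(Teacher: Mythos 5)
Your proof is correct, but it takes a different route through the paper's toolkit than the paper's own argument. The paper never computes a characteristic arrow: it notes that $\sim$ is extensional over the product object, so by Construction \ref{SubClC[P]}(d) it represents the subobject $\id\colon (X\times X,\dsim)\to (X\times X,\sim)$ with $s\dsim t = (s\sim s)\wedge(s\sim t)$, and then exhibits $\la\id_X,\id_X\ra$ as an isomorphism of monos between this strictized object and the diagonal $(X,\sim)\to(X,\sim)\times(X,\sim)$, which by topos theory is the equality relation. You instead go through Construction \ref{SubClC[P]}(c): compute a predicate representing the characteristic arrow of the diagonal, namely $(y,z)\,.\,\exists x\,.\,\ex(x)\wedge x\sim y\wedge x\sim z$, and check it is equivalent to $\sim$ over the domain of existence. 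The two arguments bottom out in the same appeals to symmetry and transitivity — your two directions of the biconditional correspond exactly to the paper's two bullet points witnessing that $\la\id_X,\id_X\ra$ is mono and epi onto the strictized object — but yours stays entirely at the level of predicate equivalence (relying implicitly on the bijection between subobjects and their classifying maps), whereas the paper makes the isomorphism of subobjects explicit. Both are legitimate; neither has a gap.
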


\begin{proof}
Since we tautologically have $P \models \forall s,t^{X \times X} . s \sim t \RI s \sim t$, the predicate $\sim \in P(X \times X)$ represents a subobject of $(X \times X,\sim)$.
This subobject contains\footnote{See Construction \ref{SubClC[P]}.} the mono $\id: (X \times X,\dsim) \to (X \times X,\sim)$, where $s \dsim t = s \sim s \wedge s \sim t$.
But this mono is isomorphic to the mono $\la \id_X,\id_X \ra: (X,\sim) \to (X \times X,\sim)$, as the morphism $\la \id_X,\id_X \ra: X \to X \times X$ gives the isomorphism by the following observations.
\begin{itemize}
\item $P \models \forall x,y \in X . x \sim x \wedge y \sim y \RI [\la \id_X,\id_X \ra(x) \dsim \la \id_X,\id_X \ra(y) \RI x \sim y]$.
\item $P \models \forall s \in X \times X . s \dsim s \RI \exists x \in X . \la \id_X,\id_X \ra(x) \dsim s$.
\item $(X \xto{\la \id_X,\id_X \ra} X \times X) = (X \xto{\la \id_X,\id_X \ra} X \times X \xto{\id} X \times X)$.
\end{itemize}
But the latter mono, which is the mono $\la \id_{(X,\sim)},\id_{(X,\sim)} \ra: (X,\sim) \to (X,\sim) \times (X,\sim)$, belongs by topos theory to the relation $=$ on $(X,\sim) \times (X,\sim)$.
The conclusion follows.
\end{proof}

\begin{lemma}
Let $A,B$ be subobjects of an object $(X,\sim)$, and let $a,b \in P(X)$ represent $A,B$ respectively.
We have $A \leq B$ if and only if\\ $P \models \forall x . x \sim x \RI (a(x) \RI b(x))$.\footnote{You will soon see why the latter is a natural, not an ad hoc, formula.}
\end{lemma}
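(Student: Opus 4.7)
The plan is to unpack the phrase ``$a$ represents $A$'' via Construction \ref{SubClC[P]}, reduce the question $A \leq B$ to an extensionality condition on an identity morphism, and then simplify that condition using tripos-internal manipulations.

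First, I would use Construction \ref{SubClC[P]}(d) to realize $A$ and $B$ concretely: the predicate $a \in P(X)$ (viewed via the generic element as an extensional morphism $X \to \Sigma$) gives rise to the subobject $\id: (X, \sr{a}{\sim}) \to (X,\sim)$ where $x \sr{a}{\sim} x' = a(x) \wedge x \sim x'$, and similarly $B$ is (represented by) $\id: (X, \sr{b}{\sim}) \to (X,\sim)$. The inequality $A \leq B$ holds iff the first mono factors through the second, i.e.\ iff there is a map $h: (X,\sr{a}{\sim}) \to (X,\sr{b}{\sim})$ with $\id \circ h = \id$ as maps into $(X,\sim)$. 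Since the second $\id$ is mono, the factorization, if it exists, is unique, and the natural candidate to produce it is the $\C$-morphism $\id: X \to X$. I would check that its representing the desired map is both necessary and sufficient for $A \leq B$.

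Next, applying the criterion \eqref{tt1} from Construction \ref{con:TriposToTopos}, the morphism $\id: X \to X$ represents a map $(X,\sr{a}{\sim}) \to (X,\sr{b}{\sim})$ if and only if
$$P \models \forall x,x' . \, a(x) \wedge x \sim x' \;\RI\; b(x) \wedge x \sim x'.$$
The final step is to show, at the level of the tripos $P$, that this is equivalent to
$$P \models \forall x . \, x \sim x \;\RI\; (a(x) \RI b(x)).$$
The forward direction is obtained by instantiating $x' := x$; the backward direction uses that $x \sim x'$ entails $x \sim x$ (by symmetry and transitivity of the partial equivalence $\sim$, which is assumed in the object $(X,\sim)$), together with the observation that the conjunct $x \sim x'$ in the conclusion is already present as a hypothesis.

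The main obstacle is not analytic but conceptual: one needs to be sure that taking the identity morphism as the representative of the factoring map is legitimate, and that the equivalence between the two tripos-internal formulas is carried out using only the symmetry and transitivity axioms of $\sim$ (which we have) rather than any implicit use of reflexivity (which we do not). Once those points are clear, every verification reduces to a routine application of the internal intuitionistic logic of $P$.
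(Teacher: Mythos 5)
Your proposal is correct and follows essentially the same route as the paper's own proof: reduce $A \leq B$ to the statement that $\id: X \to X$ represents a map $(X,\sr{a}{\sim}) \to (X,\sr{b}{\sim})$, apply the extensionality criterion \eqref{tt1}, and then massage the resulting formula into $\forall x . x \sim x \RI (a(x) \RI b(x))$ using only symmetry and transitivity of $\sim$. The paper leaves the first equivalence as "convince yourself!", and your write-up supplies exactly the missing details.
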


\begin{proof}
We have
\begin{quote}
$A \leq B$
\end{quote}
if and only if (convince yourself!)
\begin{quote}
$\id: X \to X$ represents a map $(X,\sr{a}{\sim}) \to (X,\sr{b}{\sim})$
\end{quote}
if and only if
\begin{quote}
$P \models \forall x,x' . a(x) \wedge x \sim x' \RI b(x) \wedge x \sim x'$
\end{quote}
if and only if
\begin{quote}
$P \models \forall x . x \sim x \RI (a(x) \RI b(x))$ 
\end{quote}
as desired.
\end{proof}

\begin{proposition}
Let $(X,\sim),(Y,\sim)$ be objects of $\C[P]$.
\it\begin{itemize}
 \item[(a)] The Heyting operations on $\Sub(X,\sim)$ are given by the Heyting operations on $P(X)$, in the sense: for exmaple, if predicates $a,b \in P(X)$ represent subobjects of $(X,\sim)$ then $a \wedge b$ represents $\sub(a) \wedge \sub(b)$.
 \item[(b)] The `pullback along a morphism' operation $\Sub(Y,\sim) \to \Sub(X,\sim)$ is given as follows: if a morphism $f: X \to Y$ represents a map $(X,\sim) \to (Y,\sim)$ and some $b \in P(Y)$ represents some $B \in \Sub(X,\sim)$, then $P(f)(b) \in P(X)$ represents $f^*(B) \in \Sub(X,\sim)$.
\end{itemize}\rm
Let us denote by $\pi$ the projection morphism $X \times Y \to Y$.
\it\begin{itemize}
 \item[(c)] If some $a \in P(X \times Y)$ represents some $A \in \Sub((X,\sim) \times (Y,\sim))$, then $\dbl y . \forall x \in X . \ex(x) \RI a(x,y) \dbr \in P(Y)$ represents $\forall_\pi(A)$.
 \item[(d)] If some $a \in P(X \times Y)$ represents some $A \in \Sub((X,\sim) \times (Y,\sim))$, then $\dbl y . \exists x \in X . \ex(x) \wedge a(x,y) \dbr \in P(Y)$ represents $\exists_\pi(A)$.
\end{itemize}\rm
\end{proposition}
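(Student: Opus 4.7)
The approach is to handle each part by applying Construction \ref{SubClC[P]}—which tells us that every subobject is representable by an extensional predicate—together with the preceding Lemma characterizing the order on subobjects as a tripos formula, and then invoking the Heyting prealgebra structure of $P(X)$ and the adjointness/Beck-Chevalley properties of $P$. For part (a), each Heyting operation is verified by its universal property. For example, to show $a \wedge b$ represents $\sub(a) \wedge \sub(b)$, for any $c \in P(X)$ representing $C \in \Sub(X,\sim)$, I would unfold $C \leq \sub(a \wedge b)$ via the preceding Lemma as $P \models \forall x . \ex(x) \RI (c(x) \RI (a(x) \wedge b(x)))$, and $C \leq \sub(a), C \leq \sub(b)$ similarly, whereupon equivalence is immediate from the Heyting structure on $P$. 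The cases of $\vee,\RI,\top,\bot$ are strictly analogous.

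For part (b), I would use Construction \ref{SubClC[P]}(d) to see that $B$ is (up to isomorphism) the mono $\id: (Y,\sr{b}{\sim}) \emto (Y,\sim)$, then directly check that
\begin{diagram}
(X,\sr{P(f)(b)}{\sim}) & \rTo{f} & (Y,\sr{b}{\sim}) \\
\dTo{\id}              &         & \dTo{\id}        \\
(X,\sim)               & \rTo{f} & (Y,\sim)
\end{diagram}
is a pullback square in $\C[P]$, noting that $(P(f)(b))(x) = b(f(x))$ so the top arrow is extensional and the square commutes on the nose. Verifying the pullback universal property is then a routine predicate chase using extensionality, single-valuedness and totalness of $f$.

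For parts (c) and (d), I would verify the defining adjointness of $\forall_\pi$ and $\exists_\pi$. For (d), set $a'(y) := \exists x \in X . \ex(x) \wedge a(x,y)$; this is extensional in $y$ (using extensionality of $a$ for the object $(X,\sim) \times (Y,\sim)$), so it represents a subobject of $(Y,\sim)$. For any $c \in P(Y)$ representing $C \in \Sub(Y,\sim)$, one has $\pi^*(C)$ represented by $P(\pi)(c)$ by part (b), so the adjointness
\[ \exists_\pi(A) \leq C \iff A \leq \pi^*(C) \]
reduces via the preceding Lemma to an intuitionistic tautology in $P$ (essentially currying $\exists$ past $\RI$). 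Part (c) is dual, using the adjunction involving $\forall$ in place of $\exists$.

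None of these steps presents a serious obstacle, since they are bookkeeping translations between topos-theoretic universal properties and their tripos counterparts. The one subtlety to watch throughout is the insertion of the strictness factors $\ex(x)$ in the correct places—for instance, omitting $\ex(x)$ in the definition of $a'$ in part (d) would already break extensionality of $a'$ at elements of $X$ that are ``partial''.
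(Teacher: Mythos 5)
Your proof is correct, and for parts (a), (c) and (d) it is essentially identical to the paper's: both unfold the relevant universal property (meet/join/implication in $\Sub(X,\sim)$, respectively the adjunctions $\exists_\pi \dashv \pi^* \dashv \forall_\pi$) through the preceding Lemma characterizing $\leq$ on subobjects, and reduce everything to the Heyting structure of $P(X)$ and an intuitionistic tautology. The only divergence is part (b): the paper invokes the topos-theoretic identity $\cha(f^*(B)) = \cha(B) \circ f$ and observes that the composite morphism $b \circ f$ represents this characteristic map, so that $P(b \circ f)(\sigma) = P(f)(b)$ classifies $f^*(B)$; you instead exhibit the square with top row $f: (X,\sr{P(f)(b)}{\sim}) \to (Y,\sr{b}{\sim})$ and verify the pullback universal property by hand. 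Both are sound --- the paper's route is shorter because it delegates the universal property to the subobject classifier already established in Construction \ref{SubClC[P]}, while yours avoids $\cha$ at the price of a routine but nontrivial predicate chase. Your closing caution about where the strictness factors $\ex(x)$ must be inserted in (c) and (d) is precisely the point the paper's formulas are built around.
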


\begin{proof}
Let $a,b,c \in P(X)$.

(a)
We have $\sub(\bot) \leq \sub(a)$ iff $P \models \forall x . \ex(x) \RI [\bot \RI a(x)]$, but the latter is trivially true. 
Therefore $\sub(\bot)$ is the bottom element of $\Sub(X,\sim)$.
Similarly, $\sub(\top)$ is the top element of $\Sub(X,\sim)$.

We have: $\sub(a) \leq \sub(b \wedge c)$,
if and only if $P \models \forall x . \ex(x) \RI [a(x) \RI b(x) \wedge c(x)]$,
if and only if $P \models \forall x . \ex(x) \RI [a(x) \RI b(x)]$ and $\forall x . \ex(x) \RI [a(x) \RI c(x)]$,
if and only if $\sub(a) \leq \sub(b)$ and $\sub(a) \leq \sub(c)$.
Therefore $\sub(b \wedge c)$ is the meet of $\sub(b)$ and $\sub(c)$.
Similarly, $\sub(b \vee c)$ is the join of $\sub(b)$ and $\sub(c)$.

We have: $\sub(a) \wedge \sub(b) \leq \sub(c)$,
if and only if $P \models \forall x . \ex(x) \RI [a(x) \wedge b(x) \RI c(x)]$,
if and only if $P \models \forall x . \ex(x) \RI [a(x) \RI (b(x) \RI c(x))]$,
if and only if $\sub(a) \leq \sub(b \RI c)$.
Therefore $\sub(b) \RI \sub(c) = \sub(b \RI c)$.

(b) We know (topos theory) that $\cha(f^*(B)) = \cha(B) \circ f$,
so this map $(X,\sim) \to \Omega$ is represented by the composite morphism $b \circ f: X \to \Sigma$.
So $P(b \circ f)(\sigma) = P(f)(P(b)(\sigma)) = P(f)(b)$ represents $f^*(B)$.

(c)
Let $b \in P(Y)$ represent some $B \in \Sub(Y,\sim)$.
We want: $\pi^*(B) \leq A$ iff $B \leq \dbl y . \forall x \in X . \ex(x) \RI a(x,y) \dbr$.
The LHS holds,
if and only if $P \models \forall (x,y) \in X \times Y . \ex(x,y) \RI [b(\pi(x,y)) \RI a(x,y)]$,
if and only if $P \models \forall y \in Y . \ex(y) \RI [b(y) \RI \forall x^X . \ex(x) \RI a(x,y)]$,
if and only if the RHS holds, as desired.

(d)
Let $b \in P(Y)$ represent some $B \in \Sub(Y,\sim)$.
We want: $\dbl y . \exists x \in X . \ex(x) \wedge a(x,y) \dbr \leq B$ iff $A \leq \pi^*(B)$.
The LHS holds,
if and only if $\forall y^Y . \ex(y) \RI [\exists x^X (\ex(x) \wedge a(x,y)) \RI b(y)]$,
if and only if $\forall (x,y)^{X \times Y} . \ex(x,y) \RI [a(x,y) \RI b(\pi(x,y))]$,
if and only if the RHS holds, as desired.
\end{proof}

\begin{corollary}\label{LogicPCP}
Let $\phi(\vec{x}),\psi(\vec{x})$ be $\C[P]$-formulas (their free variables are $\vec{x}$), and let $\phi'(\vec{x}),\psi'(\vec{x})$ be $P$-formulas such that (the interpretation of) $\phi',\psi'$ represent (the interpretration of) $\phi,\psi$ respectively.
\begin{itemize}
 \item The $\C[P]$-formula $\phi(\vec{x}) \wedge \psi(\vec{x})$ is represented by the $P$-formula $\phi'(\vec{x}) \wedge \psi'(\vec{x})$.
 \item The $\C[P]$-formula $\phi(\vec{x}) \vee \psi(\vec{x})$ is represented by the $P$-formula $\phi'(\vec{x}) \vee \psi'(\vec{x})$.
 \item The $\C[P]$-formula $\phi(\vec{x}) \RI \psi(\vec{x})$ is represented by the $P$-formula $\phi'(\vec{x}) \RI \psi'(\vec{x})$.
 \item The $\C[P]$-formula $\neg \phi(\vec{x})$ is represented by the $P$-formula $\neg \phi'(\vec{x})$.
\end{itemize}
Let $\phi(\vec{x},y)$ be a $\C[P]$-formula, and let $\phi'(\vec{x},y)$ be a $P$-formula that represents $\phi(\vec{x},y)$.
\begin{itemize}
 \item The $\C[P]$-formula $\vec{x} . \forall y^{(Y,\sim)} \phi(\vec{x},y)$ is represented by the $P$-formula $\vec{x} . \forall y^Y . y \sim y \RI \phi'(\vec{x},y)$.
 \item The $\C[P]$-formula $\vec{x} . \exists y^{(Y,\sim)} \phi(\vec{x},y)$ is represented by the $P$-formula $\vec{x} . \exists y^Y . y \sim y \wedge \phi'(\vec{x},y)$.\qed
\end{itemize}
\end{corollary}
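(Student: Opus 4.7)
The plan is to derive Corollary \ref{LogicPCP} as a direct, essentially syntactic consequence of the preceding proposition, by induction on the term structure that builds the compound formulas out of the given $\phi,\psi$.

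For the propositional connectives I would argue as follows. The interpretation of a $\C[P]$-formula $\phi(\vec x)$ in free variables $\vec x$ of type $(X,\sim)$ is, by topos-theoretic convention, a subobject of $(X,\sim)$, and the connectives $\wedge,\vee,\RI$ on $\C[P]$-formulas are defined to be the Heyting operations on the subobject poset. Thus parts (a) of the preceding proposition, which says these Heyting operations on $\Sub(X,\sim)$ are computed by the Heyting operations on $P(X)$, directly give the three bullets for $\wedge,\vee,\RI$. For $\neg$, I would use the defining equivalence $\neg\phi \eqq (\phi \RI \bot)$: applying the $\RI$-case to the bottom element of $\Sub(X,\sim)$, which by the proof of (a) is represented by the bottom of $P(X)$, yields that $\neg \phi$ is represented by $\neg \phi'$.

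For the quantifiers I would unpack the standard topos-theoretic definition: if $\vec x$ has type $(X,\sim)$ and $y$ has type $(Y,\sim)$, the interpretation of $\forall y^{(Y,\sim)}\phi(\vec x,y)$ is $\forall_\pi[\phi]$, where $\pi: (X,\sim)\times(Y,\sim) \to (X,\sim)$ is the product projection. This projection is represented by the $\C$-morphism $\pi:X\times Y\to X$. Applying part (c) of the proposition (with the roles of the coordinates interchanged relative to the way it is stated, so that we quantify out the $y$-variable rather than the $x$-variable) shows that if $\phi'(\vec x,y) \in P(X\times Y)$ represents $\phi(\vec x,y)$, then $\forall_\pi[\phi]$ is represented by $\vec x . \forall y^Y . \ex(y) \RI \phi'(\vec x,y)$. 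Since $\ex(y) \eqq (y\sim y)$ by definition, this is exactly the stated formula. The $\exists$-case is handled identically, invoking part (d) in place of (c).

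I do not expect a real obstacle here: the work has essentially been done in the preceding proposition, and the present corollary is a bookkeeping exercise. The only point that deserves a moment of care is the variable convention in parts (c) and (d) of the proposition — they eliminate the first coordinate, whereas in the corollary I want to eliminate the second — but this is resolved by a trivial relabelling of coordinates, since $P$ behaves symmetrically on products. Likewise, the fact that $\ex$ in the tripos language unfolds precisely to $y\sim y$ is built into the notation introduced in Construction \ref{con:TriposToTopos}, so the final rewriting is transparent.
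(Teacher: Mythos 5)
Your proposal is correct and matches exactly what the paper intends: the corollary is marked with \qed with no written proof precisely because it is an immediate bookkeeping consequence of the preceding proposition's parts (a), (c), (d), with negation reduced to implication into bottom (itself handled by the proof of (a)) and a trivial relabelling of coordinates for the quantifier cases.
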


\begin{construction}\label{con:CPtoP}
Let $\phi$ be a $\C[P]$-formula.
Given for each relation symbol appearing in $\phi$ a predicate from $P$ representing it, we may build from the $\C[P]$-formula $\phi$ an associated $P$-formula inductively as suggested by Proposition \ref{EqRep} and Corollary \ref{LogicPCP}.
We call the resulting $\C[P]$-formula the \defn{standard translation} of $\phi$, and denote it by $\el^{P/\C[P]} \phi \er$.
Clearly, \claim{the $P$-formula $\el^{P/\C[P]} \phi \er$ represents the $\C[P]$-formula $\phi$}.\qed
\end{construction}

\begin{proposition}
Let $\phi$ be a \emph{sentence} in $\C[P]$, and let $\phi'$ be a sentence in $P$ that represents $\phi$.
Then $\C[P] \models \phi$ iff $P \models \phi'$.
\end{proposition}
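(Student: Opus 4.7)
The plan is to reduce $\C[P] \models \phi$ to a statement about subobjects of the terminal object and then translate that statement via the preceding lemma about $\leq$ in $\Sub$.

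First I would recall the topos-theoretic definition: $\C[P] \models \phi$ means that the subobject $\sub(\phi) \in \Sub(1)$ corresponding to the classifying map $\phi: 1 \to \Omega$ is the top subobject of $1$. By hypothesis (and the discussion in Construction \ref{con:CPtoP}), the predicate $\phi' \in P(1)$ represents this subobject $\sub(\phi)$. Meanwhile, by the earlier proposition about Heyting operations on $\Sub$, the top subobject of $(1,\top)$ is represented by $\top \in P(1)$. So $\C[P] \models \phi$ is equivalent to the equality $\sub(\phi') = \sub(\top)$ in $\Sub(1,\top)$.

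Next I would unpack this equality using the lemma characterizing $\sub(a) \leq \sub(b)$ as $P \models \forall x . x \sim x \RI (a(x) \RI b(x))$. One direction, $\sub(\phi') \leq \sub(\top)$, is automatic. The other direction, $\sub(\top) \leq \sub(\phi')$, becomes $P \models \forall x^1 . x \sim x \RI (\top \RI \phi')$. Now the crucial simplification is that the equivalence predicate on the terminal object is $\top \in P(1\times 1)$, so $\ex(x) \eqq \top$ on $1$; the quantifier collapses and the formula reduces to $P \models \phi'$.

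The only thing to watch is that $\phi'$, being built from its atomic components by the standard translation, genuinely represents $\phi$ as a subobject of $1$; but this is exactly the content of Construction \ref{con:CPtoP} applied in the sentence case. I expect no serious obstacle: the argument is purely a bookkeeping chain through the characterizations already established in Section~\ref{ToposLogic}.
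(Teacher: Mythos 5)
Your argument is correct and is essentially the paper's own proof: the paper phrases $\C[P]\models\phi$ as equality of the characteristic map $1\to(\Sigma,\BI)$ with $\true$ and unwinds that to $P \models \forall *\in 1 .\ *\top* \RI (\phi'\BI\top)$, whereas you phrase it as equality of subobjects of $1$ and unwind via the lemma on $\leq$ in $\Sub$ — but these are the same reduction through the subobject-classifier correspondence, and both collapse to $P\models\phi'$ because the equivalence predicate on $1$ is $\top$. No gap.
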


\begin{proof}
We have
\begin{quote}
$\C[P] \models \phi$
\end{quote}
iff
\begin{quote}
the characteristic map $1 \to (\Sigma,\BI)$ of $\phi$ equals the map $\true$
\end{quote}
iff (since $\phi'$, regarded as a morphism $1 \to \Sigma$, represents the characteristic map)
\begin{quote}
$P \models \forall * \in 1 . * \top * \RI (\phi' \BI \top)$
\end{quote}
iff
\begin{quote}
$P \models \phi'$
\end{quote}
as desired.
\end{proof}

\begin{remark}\label{rmk:NoThinAir}
Construction \ref{con:TriposToTopos} and Lemma \ref{MonoEpi} involved a number of formulas in the language of $P$ that resembled familiar statements from set theory, but were not exactly the same.
The previous Construction and Proposition explain them now completely: we see that these $P$-formulas are results of applying the construction $\el^{P/\C[P]} \cdot \er$ to the $\C[P]$-formulas that are exactly the relevant statements from set theory.
\end{remark}

\subsection{Derived structures in the topos $\C[P]$}
In \S\ref{ToposStructure} we have given a presentation of the core structures (product, exponential, subobject classifier) that make the category $\C[P]$ a topos.
We know from topos theory that these structures connote many other categorical structures (e.g. limits), and it is our interest now to present such `derived' structures of $\C[P]$.
Usually we define categorical structures in terms of universal properties, but in toposes they can as well be characterized using the internal logic.
The advantage of the logical characterization, in our case of the topos $\C[P]$, is that we immediately obtain a presentation of the structure using the `logic machinary' from \S\ref{ToposLogic}.
So our work below will mainly be some topos theory of giving a logical description of the structure we are interested in.

\subsubsection{Power objects}
From topos theory we know that the power object $\P{X}$ is the exponential $\Omega^X$, in the way that the evaluation map $X \times \Omega^X \to \Omega$ is the map $\in: X \times P{X} \to \Omega$.
So given any object $(X,\sim)$ in $\C[P]$, any presentation of the exponential $\Omega^{(X,\sim)}$ gives a presentation of the power object $\P(X,\sim)$.

\begin{notation}
We shall denote by $\P(X,\sim)$ the object $\Omega^{(X,\sim)}$.
\end{notation}

\subsubsection{Partial map classifiers}
\begin{definition}
In a category, a \defn{partial map} $X \pto Y$ is a pair $(U,f)$, where $U$ is a subobject of an object $X$ and $f$ is a map $U \to Y$.

Let $Y$ be an object of a category.
A \defn{partial map classifier} of $Y$ is an object $\tilde{Y}$ and a monomorphism $\eta: Y \emto \tilde{Y}$ with the following property: for every partial map $X \xemfrom{m} U \xto{f} Y$ there is unique map $\tilde{f}: X \to Y$ such that the square
\begin{diagram}
U           & \rTo{f}           & Y \\  
\dEmto{m}   &                   & \dTo{\eta} \\
X           & \rDato{\tilde{f}} & \tilde{Y}
\end{diagram}
is a pullback.
\end{definition}

In a topos, every object has a partial map classifier.
The following proposition proves this, providing a construction described using the internal logic.

\begin{proposition}
Let $\E$ be a topos, and let $Y$ be an object in $\E$.
Let
$$\tilde{Y} = \{A \in \P(Y) \mid \forall y,y' \in Y . y \in A \wedge y' \in A \RI y = y'\},$$
and let $\eta$ be the map $Y \to \tilde{Y}$ defined internally by
$$\eta(y) = \{y' \in Y \mid y' = y\}.$$
\claim{Then $\eta: Y \to \tilde{Y}$ is a partial map classifier of $Y$.}
In fact, if $X \xemfrom{m} U \xto{f} Y$ is a partial map, then the map $\tilde{f}: X \to \tilde{Y}$ defined by
$$\tilde{f}(x) = \{y \in Y \mid \exists u \in U . m(u) = x \wedge f(u) = y\}.$$
is its characteristic map.
\end{proposition}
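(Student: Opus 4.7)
The plan is to argue entirely in the internal language of $\E$, where $\tilde{Y}$ is read as the object of ``subsingletons'' of $Y$ and $\eta(y)$ as the ``singleton'' $\{y\}$. First, I would check that $\eta$ is a monomorphism: internally, if $\eta(y)=\eta(y')$, then from $y\in\eta(y)=\eta(y')$ one obtains $y=y'$ by the very subsingleton property that defines $\tilde{Y}$.

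Next, I would verify that the square formed by $\tilde{f}$ commutes. For $u\in U$, the formula for $\tilde{f}$ unfolds as $\tilde{f}(m(u))=\{y\mid\exists u'\in U.\,m(u')=m(u)\wedge f(u')=y\}$; since $m$ is monic, the existential variable $u'$ is forced to equal $u$, collapsing this set to $\{y\mid f(u)=y\}=\eta(f(u))$, as required.

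For the pullback property, given $g\colon V\to X$ and $h\colon V\to Y$ with $\tilde{f}\circ g=\eta\circ h$, I would note that at each $v$ the equation says $h(v)\in\tilde{f}(g(v))$, so by the definition of $\tilde{f}$ there exists $u\in U$ with $m(u)=g(v)$ and $f(u)=h(v)$. Because $m$ is monic, such $u$ is unique, and unique choice in the topos (an internal $\exists!$ yielding an actual function) produces the required filler $k\colon V\to U$; its uniqueness among fillers is inherited from that of $u$.

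Finally, the uniqueness of $\tilde{f}$ itself: suppose $\tilde{f}'\colon X\to\tilde{Y}$ also makes the square a pullback. The crucial observation is that since $\tilde{f}'(x)$ is a subsingleton, the relation $y\in\tilde{f}'(x)$ is equivalent to the identity $\tilde{f}'(x)=\eta(y)$. Applying the pullback condition to the generalized element $(x,y)$, the latter holds iff there exists $u\in U$ with $m(u)=x$ and $f(u)=y$. Hence internally $\tilde{f}'(x)=\{y\mid\exists u\in U.\,m(u)=x\wedge f(u)=y\}=\tilde{f}(x)$, and extensionality gives $\tilde{f}'=\tilde{f}$. I expect this last step to be the main obstacle, since one must resist the temptation to think $\tilde{f}$ is only pinned down on $U$ by the pullback condition: in fact, the subsingleton structure of $\tilde{Y}$ together with the pullback condition rigidly forces $\tilde{f}'(x)$ to match $\tilde{f}(x)$ at every $x$, with the ``off-$U$'' points both forced to be the empty subsingleton.
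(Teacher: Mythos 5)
Your proposal is correct, and it is exactly the standard internal-language verification that the paper is waving at when it writes ``Elementary topos theory'' in place of a proof: monicity of $\eta$ from the singleton/subsingleton interplay, commutativity and the pullback filler from monicity of $m$ plus internal unique choice, and uniqueness of $\tilde{f}$ from the equivalence $y \in \tilde{f}'(x) \BI \tilde{f}'(x) = \eta(y)$ combined with the internal description of the pullback of $\eta$ along $\tilde{f}'$. The only step you leave implicit is the routine check that $\eta(y)$ and $\tilde{f}(x)$ actually land in $\tilde{Y}$, i.e.\ are subsingletons (the latter again by monicity of $m$); with that noted, the argument is complete.
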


\begin{proof}
Elementary topos theory.
\end{proof}

\subsubsection{Equalizers}
In any topos, if $f,g: X \to Y$ are parallel arrows, then the subobject
$$\{x \in X \mid f(x) = g(x)\} \imto X$$
is their equalizer.
So, if $f,g: (X,\sim) \to (Y,\sim)$ are extensional morphisms, then
the extensional morphism $\id: (X,\sr{r}{\sim}) \to (X,\sim)$,
where the notation $\sr{r}{\sim}$ is from Construction \ref{SubClC[P]} and
$$r(x) \eqq f(x) = g(x),$$
represents the equalizer of the maps given by $f,g$.
Similarly, if $F,G: (X,\sim) \to (Y,\sim)$ are functional predicates, then
the extensional morphism $\id: (X,\sr{r}{\sim}) \to (X,\sim)$, where
$$r(x) \eqq \exists y^Y . \ex(y) \wedge F(x,y) \wedge G(x,y),$$
represents the equalizer of the maps given by $F,G$.

\subsubsection{Pullbacks}
In any topos, if $f: X \to Z$ and $g: Y \to Z$ are arrows, then the subobject
$$\{(x,y) \in X \times Y \mid f(x) = g(x)\} \imto X \times Y$$
is their pullback.
So, if $f: (X,\sim) \to (Z,\sim)$ and $g: (Y,\sim) \to (Z,\sim)$ are extensional $\C$-morphisms,
then
\begin{itemize}
 \item the object $(X \times Y,\sr{r}{\sim})$ with $r(x,y) = \dbl f(x) = g(y) \dbr$, together with
 \item the $\C$-morphisms $\pi_X: (X \times Y,\sr{r}{\sim}) \to (X,\sim)$ and $\pi_Y: (X \times Y,\sr{r}{\sim}) \to (Y,\sim)$
\end{itemize}
represent the pullback of $f$ and $g$.
%

\subsubsection{Colimits}
\begin{construction}
Let $(X,\sim)$ be an object, and let $\phi(x)$ be a $P$-formula on $X$.
The formula
$$\phi^\st(x) := \ex(x) \wedge \phi(x)$$
is called the \defn{strictization} of $\phi$ [w.r.t. $(X,\sim)$].
We claim of this construction the following properties.
\claim{\begin{itemize}
\item[(a)] The formula $\phi^\st$ is \emph{strict w.r.t. $(X,\sim)$}, i.e.,\\ $P \models \forall x^X . \phi^\st(x) \RI \ex(x)$.
\item[(b)] The formulas $\phi$ and $\phi^\st$ represent the same subobject of $(X,\sim)$, i.e.,\\ $P \models \forall x^X . \ex(x) \RI [\phi(x) \BI \phi^\st(x)]$.
\end{itemize}}
\end{construction}

\begin{proof}
Immediate.
\end{proof}

Let us now learn how to present quotients in $\C[P]$.

\begin{proposition}[Presentation of quotients]
Let $(X,\sim)$ be an object of $\C[P]$.
Let $\dsim \in \P(X \times X)$ \emph{strictly} represent an equivalence relation on $(X,\sim)$.
\it\begin{itemize}
\item[(a)] The predicate $\dsim$ is a partial equivalence predicate on $X$.
\item[(b)] The $\C$-morphism $\id_X: X \to X$ represents a map $(X,\sim) \to (X,\dsim)$, and this map is the quotient map $(X,\sim) \to (X,\sim)/\dsim$.
\end{itemize}\rm
\end{proposition}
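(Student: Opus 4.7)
The plan is to reduce both claims to $P$-statements via the standard translation (Corollary \ref{LogicPCP}) and then invoke the strictness hypothesis on $\dsim$ to eliminate the $\ex$-guards introduced by the translation. Strictness reads $P \models \forall x,x' . x \dsim x' \RI \ex(x) \wedge \ex(x')$, and this is the lever that makes each step work.

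For part (a), I would translate the $\C[P]$-internal symmetry $\forall x,x' . x \dsim x' \RI x' \dsim x$ and transitivity $\forall x,x',x'' . x \dsim x' \wedge x' \dsim x'' \RI x \dsim x''$ using Corollary \ref{LogicPCP}. The resulting $P$-formulas carry $\ex$-side conditions on the quantified variables, but strictness lets me absorb these conditions into the $\dsim$-hypothesis, yielding precisely the partial-equivalence axioms for $\dsim \in P(X \times X)$.

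For part (b), the first step is to check that $\id_X$ represents a map $(X,\sim) \to (X,\dsim)$, which by \eqref{tt1} is the condition $P \models \forall x,x' . x \sim x' \RI x \dsim x'$. This follows because $\dsim$, as a subobject of $(X,\sim) \times (X,\sim)$, contains the diagonal (reflexivity); more concretely, reflexivity translates to $P \models \forall x . \ex(x) \RI x \dsim x$, and extensionality of $\dsim$ with respect to $\sim \times \sim$ then transports this along $x \sim x'$ to the required $x \dsim x'$.

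To identify the map with the quotient, I would exploit that $\C[P]$ is a topos and hence exact: it suffices to show $\id_X : (X,\sim) \to (X,\dsim)$ is epi and has $\dsim$ as its kernel pair. Epi-ness, via Lemma \ref{MonoEpi}, reduces to $P \models \forall y . y \dsim y \RI \exists x . \ex(x) \wedge x \dsim y$, which is witnessed by $x = y$ combined with strictness ($y \dsim y \RI \ex(y)$). For the kernel pair I compute the pullback of $\id_X$ with itself via the formula in the Pullbacks subsubsection: the equality-predicate on $(X,\dsim)$ is $\dsim$ itself (Proposition \ref{EqRep}), so the kernel pair, viewed as a subobject of $(X,\sim) \times (X,\sim)$, is exactly the one represented by $\dsim$. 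The main obstacle is not computational but a matter of careful bookkeeping — keeping track of which $\ex$-guards are annihilated by strictness at each stage, so as not to confuse a translated $P$-formula with the tighter un-guarded version one actually needs.
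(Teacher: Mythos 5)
Your argument is correct; the paper itself dismisses this proposition with ``straightforward to verify,'' so there is no official route to diverge from, and your write-up supplies exactly the verification being left to the reader. The translation-plus-strictness argument for (a), the epi check via Lemma \ref{MonoEpi}, and the identification of the kernel pair with $\dsim$ via Proposition \ref{EqRep} together with exactness of the topos $\C[P]$ all go through. One small bookkeeping point: you invoke ``extensionality of $\dsim$ with respect to $\sim\times\sim$'' as if it were part of the hypotheses, but a predicate representing a subobject need not be extensional in this paper's conventions; it is, however, derivable here (from the unguarded reflexivity, symmetry and transitivity you have already extracted via strictness), and in fact for the first step of (b) you can bypass it entirely, since reflexivity of the represented subobject, unpacked via the lemma on containment of subobjects and Proposition \ref{EqRep}, is literally the statement $P \models \forall x,x' .\ x \sim x' \RI x \dsim x'$, which is condition \eqref{tt1} for $\id_X$.
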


\begin{proof}
Straightforward to verify.
\end{proof}

\subsection{Closure transformations and local operators}\label{ssec:closure}
In topos theory we have the notion of geometric morphism between toposes, and it is clearly important to us since we are interested in the `inclusions' into the topos $\Eff$ in the sense of geometric morphisms.
There is an analogous (and related) theory of `geometric morphisms' for triposes, and we review here a portion of this theory in so far as relevant to us.\footnote{One can find a more extensive account in \cite[Section 2.5]{jvo08}. It includes essentially everything we discuss here, but be aware that some presentation of ours (notably Construction \ref{SubtriposConstruction}) deviates from l.c. to suit our purpose.}

Briefly the key points of this subsection are as follows.
We will (i) discuss that the `subtriposes' of a tripos $P$ correspond to the subtoposes of the topos $\C[P]$, (ii) particularly describe a construction of obtaining from a representation $j$ of a local operator on $\C[P]$ a subtripos $P_j$ of $P$ such that $\C[P_j]$ is the corresponding subtopos, and (iii) see how the logic of $P_j$ (hence that of $\C[P_j]$) reduces to the logic of $P$.

Let us first recall what geometric morphisms and subtoposes of toposes were, and define to the counterpart notions for triposes.
A \emph{geometric morphism} $\E \to \F$ of toposes is a pair $(f_*,f^*)$, where $f_*: \E \to \F$ and $f^*: \F \to \E$ are functors such that $f^* \leftadjto f_*$ and moreover $f^*$ preserves finite limits; the functor $f_*$ is called the \emph{direct image} part and $f^*$ the \emph{inverse image} part of the morphism.
A \emph{geometric inclusion} is a geometric morphism whose direct image functor is fully faithful.
A \emph{subtopos} of a topos $\E$ is an isomorphism class of geometric inclusions into $\E$.

\begin{definition}
Let $\C$ be a cartesian closed category.
A \defn{geometric morphism} $P \to Q$ of $\C$-triposes is a pair $(\Phi_+,\Phi^+)$, where $\Phi_+: P \to Q$ and $\Phi^+: Q \to P$ are transformations of triposes such that for each $X \in \Ob(C)$ we have $(\Phi^+)_X \leftadjto (\Phi_+)_X$ and the preorder map $(\Phi^+)_X$ preserves finite meets; we call $\Phi_+$ the \defn{direct image} part and $\Phi^+$ the \defn{inverse image} part.
A \defn{geometric inclusion} of triposes is a geometric morphism whose direct image transformation is componentwise fully faithful (i.e. not only preserves but also reflects the preorder).
A \defn{subtripos} of a $\C$-tripos $P$ is an isomorphism class of geometric inclusions into $P$.
\end{definition}

Then we proceed to the notion of `closure transformation' on a tripos, which is an important ingredient in the matter of presenting subtriposes.
It is in a sense -presentationally as well as mathematically- analogous to local operator on a topos, as we will see.

First we consider a definition, which is an abstraction saving us from repeating the `same' axioms in the coming definitions of local operator on a topos and of closure transformation on a tripos.

\begin{definition}
Let $(P,\leq)$ be a Heyting prealgebra.
A function $j: P \to P$ is a \defn{closure operator} on $P$ if one of the following two (constructively) \claim{equivalent} conditions
$$\begin{array}{rlccrl}
(i)   & \top \leq j(\top)                    &&&   (i') & p \leq j(p) \\
(ii)  & j(p \wedge q) \cong j(p) \wedge j(q) &&&  (ii') & j(p \RI q) \leq (j(p) \RI j(q)) \\
(iii) & j(j(p)) \cong j(p)                   &&& (iii') & j(j(p)) \leq j(p).
\end{array}$$
are satisfied.\footnote{The use of stating two characterizations is obvious: when proving that something is a closure operator we can choose one of the two, and when proving something given a closure operator we can use the properties from both characterizations.}
\end{definition}

\begin{proof}[Proof (of the equivalence)]
First, we consider the `monotonicity' condition
\begin{equation}\label{co1}
(p \RI q) \leq j(p) \RI j(q).
\end{equation}
Clearly, this follows from the `meet-preservation' (ii) as well as from (i') and (ii').
So \eqref{co1} is a consequence of the LHS axioms as well as of the RHS ones.

Let us prove `left to right'.
Using (i), \eqref{co1} and the Heyting implication property, we have $p \cong (\top \RI p) \wedge j(\top) \leq j(p)$; this shows (i').
To show (ii') is to show $j(p \RI q) \wedge j(p) \leq j(q)$.
But indeed, by (ii), \eqref{co1} and the fact $(p \RI q) \wedge p \leq q$, we have $j(p \RI q) \wedge j(p) \cong j((p \RI q) \wedge p) \leq j(q)$ as desired.
Finally, (iii') is immediate from (iii).

Let us prove `right to left'.
First, (i) is a special case of (i').
Let us show (ii).
Since $p \wedge q \leq p$, we have by (i') that $j(p \wedge q) \leq j(p)$. Symmetrically, $j(p \wedge q) \leq j(q)$.
This yields `$\leq$'.
To show `$\geq$' is to show $j(p) \leq j(q) \RI j(p \wedge q)$.
Since $p \leq q \RI (p \wedge q)$, we have by \eqref{co1} and by (ii') that $j(p) \leq j(q \RI (p \wedge q)) \leq j(q) \RI j(p \wedge q)$, as desired.
This shows (ii).
Finally, (iii) is immediate from (i') and (iii').
We are done.
\end{proof}

It is very useful to be aware of the following properties.

\begin{lemma}
Let $(P,\leq)$ be a Heyting prealgebra, and let $j: P \to P$ be a closure operator.
\begin{itemize}
\item[(a)] $p \RI q \leq j(p) \RI j(q)$.
\item[(b)] $j(p \RI j(q)) \leq p \RI j(q)$.
\end{itemize}
(a)
We have already observed this in the previous proof.
(b)
By (ii'), we have $j(p \RI j(q)) \leq j(p) \RI jj(q)$.
By (i') and (iii'), we have $j(p) \RI jj(q) \leq p \RI j(q)$.
The conclusion follows.
\end{lemma}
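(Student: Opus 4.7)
The plan is to derive each part purely from the axioms (i$'$)--(iii$'$) that appear in the equivalent characterization of a closure operator, using only monotonicity of the Heyting operations. Nothing deeper than the Heyting adjunction is needed.

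For part (a), I would first note that (i$'$) applied to the element $p \RI q$ gives $p \RI q \leq j(p \RI q)$, and then (ii$'$) gives $j(p \RI q) \leq j(p) \RI j(q)$. Chaining yields $p \RI q \leq j(p) \RI j(q)$. Strictly, one could also simply quote the preceding ``equivalence'' proof, where the inequality labelled \eqref{co1} (which is literally (a)) was shown to follow from either set of three axioms; so (a) is already in hand.

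For part (b), the plan is a three-step chain. First apply (ii$'$) to the term $j(p \RI j(q))$, obtaining
\[
j(p \RI j(q)) \leq j(p) \RI j(j(q)).
\]
Next use (iii$'$), which gives $j(j(q)) \leq j(q)$, together with covariance of $\RI$ in its right argument, to replace $j(j(q))$ by $j(q)$:
\[
j(p) \RI j(j(q)) \leq j(p) \RI j(q).
\]
Finally use (i$'$), $p \leq j(p)$, together with \emph{contra}variance of $\RI$ in its left argument, to replace $j(p)$ by the smaller $p$ on the left of the implication:
\[
j(p) \RI j(q) \leq p \RI j(q).
\]
Composing the three inequalities gives (b).

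I do not expect a genuine obstacle; the only point to keep in mind is the contravariance of Heyting implication in its first argument, so that moving from $j(p)$ down to $p$ in that position actually weakens the implication (makes it larger), which is what the inequality requires. Everything else is a direct unwinding of the defining axioms of a closure operator.
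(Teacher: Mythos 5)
Your proposal is correct and follows essentially the same route as the paper: for (a) you invoke the inequality already established in the equivalence proof (or rederive it from (i$'$) and (ii$'$)), and for (b) you chain (ii$'$), (iii$'$), and (i$'$) exactly as the paper does, merely spelling out the monotonicity and antitonicity of Heyting implication that the paper leaves implicit.
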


Now, we can characterize a \emph{local operator}, or \emph{(Lawvere-Tierney) topology}, on a topos $\E$ as a map $\Omega \to \Omega$ that is internally a closure operator on the Heyting algebra $(\Omega,\RI)$.
Recall that given a topology $j$ on a topos $\E$, the full subcategory $\Sh_j(\E)$ of $\E$ underlies a geometric inclusion into $\E$, and that every subtopos of $\E$ arises in this way from a unique topology on $\E$.
Let us see an analogy of this on the side of triposes, in the following Definition and Construction.

\begin{definition}
Let $P$ be a tripos over a cartesian closed category $\C$.
A \defn{closure transformation} on $P$ is a transformation $\Phi: P \to P$ such that for each $X \in \Ob(\C)$ the function $\Phi_X: P(X) \to P(X)$ is a closure operator.
\end{definition}

\begin{construction}\label{SubtriposConstruction}
Let $\C$ be a c.c. category, and let $P$ be a structure-specified $\C$-tripos.

Given a closure transformation $\Phi: P \to P$, we are about to construct a geometric inclusion of a structure-specified tripos into $P$.
For each object $X$ of $\C$, consider a relation $\leq_\Phi$ on $P(X)$ given by $p \leq_\Phi q \iff p \leq \Phi_X(q)$.
We let $P_\Phi(X)$ be the tuple $(P(X),\leq_\Phi,\top_\Phi,\ldots)$ where
\begin{eqnarray*}
\top_\Phi & = & \top \\
\bot_\Phi & = & \bot \\
p \wedge_\Phi q & = & p \wedge q \\
p \vee_\Phi q & = & p \vee q \\
p \RI_\Phi q & = & p \RI \Phi_X(q).
\end{eqnarray*}
For each $\C$-morphism $f: X \to Y$, define functions $P_\Phi(f): P(Y) \to P(X)$ and $\exists^\Phi_f,\forall^\Phi_f: P(X) \to P(Y)$ by
\begin{eqnarray*}
P_\Phi(f)      & = & P(f) \\
\exists^\Phi_f & = & \exists_f \\
\forall^\Phi_f & = & \forall_f \circ \Phi_X.
\end{eqnarray*}
Finally, let $(\Sigma_\Phi,\sigma_\Phi) = (\Sigma,\sigma)$.
\claim{Then these data $(P_\Phi,\exists^\Phi,\forall^\Phi,\Sigma_\Phi,\sigma_\Phi)$ form a structure-specified tripos, and the pair $(\Phi,\Id)$ is a geometric inclusion $P_\Phi \imto P$.}
\end{construction}

\begin{proof}
Routine verifications. (This construction is essentially \cite[Remark 5.2(ii)]{pitts81}. A different construction, that results in the equivalent inclusion, is described in \cite[p.96-97]{jvo08}.)
\end{proof}

\begin{construction}\label{con:PjToP}
Let $j: \Sigma \to \Sigma$ be a topology-representing $\C$-morphism.
We associate to a $P_j$-formula $\phi$ a $P$-formula $\el^{P/P_j} \phi \er$, as in the following clauses.\footnote{Compare these to the clauses in the previous Construction.}
\begin{itemize}
\item Given $r \in P(\_)$, we define $\el^{P/P_j} r \er = r$.
\item $\el^{P/P_j} \phi \wedge \psi \er = \el^{P/P_j} \phi \er \wedge \el^{P/P_j} \psi \er$
\item $\el^{P/P_j} \phi \vee \psi \er = \el^{P/P_j} \phi \er \vee \el^{P/P_j} \psi \er$
\item $\el^{P/P_j} \phi \RI \psi \er = \el^{P/P_j} \phi \er \RI j(\el^{P/P_j} \psi \er)$
\item $\el^{P/P_j} \forall x^X . \phi(x,y) \er = \forall x^X . j(\el^{P/P_j} \phi \er)$
\item $\el^{P/P_j} \exists x^X . \phi(x,y) \er = \exists x^X . \el^{P/P_j} \phi \er$
\end{itemize}
Note that if $\phi$ is a sentence, then so is $\el^{P/P_j} \phi \er$.
\claim{We have $P_\Phi \models \phi$ iff $P \models j(\el^{P/P_j} \phi \er)$.}
\end{construction}

\begin{proof}
For clarity, let us use the notation $\dbl^{P_j} \cdots \dbr$ for the interpretations of $P_j$-formulas and $\dbl^P \cdots \dbr$ for the interpretations of $P$-formulas.

Clearly we have $\dbl^{P_j} \phi \dbr = \dbl^P \el^{P/P_j} \phi \er \dbr$ by the previous Construction.
Therefore: $P_j \models \phi$ iff $\top_P = \top_j \leq_j \dbl^{P_j} \phi \dbr$ iff $\top_P \leq_P j \dbl^{P_j} \phi \dbr = j \dbl^P \el^{P/P_j} \phi \er \dbr = \dbl^P j(\el^{P/P_j} \phi \er) \dbr$ iff $P \models j(\el^{P/P_j} \phi \er)$, as desired.
\end{proof}

\begin{proposition}
Consider the mutual mappings
\begin{eqnarray*}
\text{closure transformations on $P$} & \tofrom   & \text{geometric inclusions into $P$} \\
\Phi                                  & \mapsto   & P_\Phi \xto{(\Phi,\Id)} P \\
\Phi_+ \circ \Phi^+                   & \mapsfrom & (\Phi_+,\Phi^+).
\end{eqnarray*}
\emph{These comprise an equivalence of preorders.}
Hence a subtripos of $P$ is precisely an isomorphism class of closure transformations on $P$.
\end{proposition}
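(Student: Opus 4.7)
The plan is to check three things: that both assignments produce objects of the intended type, that the two round-trips recover the input up to isomorphism, and that both assignments are monotone for the natural preorders (pointwise on closure transformations, factorization on geometric inclusions into $P$).

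First, well-definedness. The forward assignment is furnished by Construction \ref{SubtriposConstruction}. For the backward assignment, given a geometric inclusion $(\Phi_+,\Phi^+): Q \emto P$, set $j := \Phi_+ \circ \Phi^+$ and verify componentwise that $j_X$ is a closure operator on $P(X)$. The unit of $(\Phi^+)_X \dashv (\Phi_+)_X$ supplies $p \leq j_X(p)$; preservation of finite meets by $(\Phi^+)_X$ (an inverse-image axiom) together with preservation of meets by the right adjoint $(\Phi_+)_X$ supplies meet-preservation; and idempotency follows from $(\Phi^+)_X (\Phi_+)_X \cong \Id$, which is available because $\Phi_+$ is componentwise fully faithful. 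Naturality in $X$ is inherited from both $\Phi_+$ and $\Phi^+$.

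For the round-trips: starting from a closure transformation $\Phi$, the inclusion $(\Phi,\Id): P_\Phi \to P$ has direct-image composed with inverse-image equal to $\Phi \circ \Id = \Phi$, so this side is the identity on the nose. Starting from $(\Phi_+,\Phi^+): Q \emto P$ with $j := \Phi_+\Phi^+$, I would exhibit an isomorphism of triposes $\alpha: P_j \to Q$ commuting with the inclusions into $P$, by setting $\alpha_X := (\Phi^+)_X$ on underlying sets, with inverse $(\Phi_+)_X$. That $\alpha$ is an order equivalence reduces to the chain $p \leq_j q \iff p \leq_P \Phi_+ \Phi^+ q \iff \Phi^+ p \leq_Q \Phi^+ q$, the second step being the adjunction. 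The composite $\alpha^{-1} \alpha = \Phi_+ \Phi^+ = j$ is iso to the identity in $P_j$ since $p \cong_j j(p)$ by inflationarity and idempotency, while $\alpha \alpha^{-1} \cong \Id_Q$ is exactly the fully-faithful hypothesis on $\Phi_+$.

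Monotonicity in both directions is a short check: $\Phi \leq \Psi$ pointwise gives an identity-on-underlying-sets geometric morphism $P_\Psi \to P_\Phi$ factoring $(\Psi,\Id)$ through $(\Phi,\Id)$, and conversely any factorization of inclusions yields a pointwise inequality between the associated closures. The main obstacle I anticipate is showing that the candidate $\alpha$ above genuinely preserves the full tripos structure --- intertwining meet, join, implication, universal and existential quantifiers along every $\C$-morphism, and sending generic element to generic element. Each intertwining is a short consequence of the adjunction $\Phi^+ \dashv \Phi_+$ together with the defining equations of $P_\Phi$ from Construction \ref{SubtriposConstruction}; the labour lies in assembling all of them rather than in any single nontrivial step.
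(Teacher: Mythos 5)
The paper itself defers this to ``Straightforward verifications'' with a citation, so your proposal is an expansion of an argument the paper leaves implicit rather than a departure from it. Your structure is correct: the closure-operator check for $\Phi_+ \circ \Phi^+$ (unit of the adjunction gives inflationarity; left-exactness of $\Phi^+$ plus right-adjointness of $\Phi_+$ give meet-preservation; fully-faithfulness, equivalently the counit being iso, gives idempotency) goes through, and the isomorphism $\alpha := \Phi^+ : P_j \to Q$ with pseudo-inverse $\Phi_+$ does commute with the two inclusions on the nose on the direct-image side, since $\Phi_+ \circ \Phi^+ = j$.

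The one thing worth correcting is your anticipated ``main obstacle.'' There is no labor to do there. The paper defines a transformation of triposes to be a pseudo-natural transformation of pseudofunctors valued in $\Preord$ -- \emph{not} in $\Heytpre$ -- and an isomorphism of geometric inclusions into $P$ is therefore just an equivalence of $\Preord$-valued pseudofunctors commuting (up to iso) with the inclusions. Meets, joins, implication, $\exists_f$, $\forall_f$ and the generic element are all determined up to isomorphism by the preorders, so once you have the chain $p \leq_j q \iff p \leq_P \Phi_+\Phi^+ q \iff \Phi^+ p \leq_Q \Phi^+ q$ together with pseudo-naturality of $\Phi^+$ and $\Phi_+$, preservation of the rest of the tripos structure is automatic and needs no further argument. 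In the same spirit, your monotonicity check is right in content but should be stated carefully against the (implicit) orientation of the preorder on geometric inclusions, since factorisation of inclusions runs opposite to the pointwise order on closure transformations under the naive conventions.
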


\begin{proof}
Straightforward verifications. (Cf. \cite[Theorem 2.5.11(ii) on p.97]{jvo08}.)
\end{proof}

\begin{construction}
Let $\C$ be a c.c. category, and let $P$ be a $\C$-tripos.

Given a $\C$-morphism $j: \Sigma \to \Sigma$ representing a local operator on $\C[P]$, we are about to construct a closure transformation on $P$.
For each $X \in \Ob(C)$, consider the association
\begin{eqnarray*}
P(X) & \to     & P(X) \\
r    & \mapsto & j \circ r.
\end{eqnarray*}
\claim{This constitutes a closure transformation on $P$.}
We will denote this closure transformation just with $j$.
\end{construction}

\begin{proof}
Straightforward verifications. (Cf. \cite[Remark 5.2(i)]{pitts81} or \cite[p.96]{jvo08}.)
\end{proof}

\begin{proposition}\label{LoreClorEqv}
\claim{We have an equivalence of preorders
\begin{eqnarray*}
\text{\begin{tabular}{c} $\C$-morphisms $\Sigma \to \Sigma$ \\ representing a topology on $\C[P]$ \end{tabular}} & \tofrom & \text{closure transformations on $\P$} \\
j                & \mapsto   & j \\
\Phi_\Sigma(\id) & \mapsfrom & \Phi.
\end{eqnarray*}
}Hence, under these associations, a local operator on $\C[P]$ is precisely an isomorphism class of closure transformations on $P$.
\end{proposition}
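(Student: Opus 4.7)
The plan is to verify, in order, (i) that the map $j \mapsto (r \mapsto j \circ r)$ and the map $\Phi \mapsto \Phi_\Sigma(\id)$ are mutually inverse up to the appropriate isomorphism of preorders, and (ii) that both maps are order-preserving. The well-definedness of each map on the respective objects is already supplied by the two preceding Constructions, so I am only left to verify the round trips and monotonicity.

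First I would dispatch the easy direction: starting from a topology-representing morphism $j: \Sigma \to \Sigma$, the induced closure transformation $\Phi^j$ is defined componentwise by $\Phi^j_X(r) = j \circ r$, so $\Phi^j_\Sigma(\id_\Sigma) = j \circ \id_\Sigma = j$, on the nose (hence certainly up to isomorphism). This establishes one composite as the identity.

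The other round trip is the main content. Start from a closure transformation $\Phi$ on $P$, and let $j := \Phi_\Sigma(\id_\Sigma) \in P(\Sigma)$, identified with the representing morphism $[\Phi_\Sigma(\id_\Sigma)]: \Sigma \to \Sigma$ via the generic element $(\Sigma, \sigma)$ (so that under this identification $\id_\Sigma$ corresponds to $\sigma$). I need $\Phi_X(r) \cong j \circ r$ for every $r \in P(X)$. Write $r \cong P([r])(\sigma)$ where $[r]: X \to \Sigma$ represents $r$. Then by pseudo-naturality of $\Phi$ applied to the morphism $[r]$, I have a pseudo-commuting square giving
\[
\Phi_X(r) \cong \Phi_X(P([r])(\sigma)) \cong P([r])(\Phi_\Sigma(\sigma)) \cong P([r])(j).
\]
Since $P(\phantom{x})$ is a pseudofunctor, $P([r])(j) = P([r])(P(j)(\sigma)) \cong P(j \circ [r])(\sigma)$, and the latter is (up to isomorphism) the predicate represented by $j \circ [r]$, i.e., $j \circ r$ under the identification given by the generic element. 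Thus $\Phi^j_X(r) \cong \Phi_X(r)$ componentwise, as required. The main technical care here is keeping track of the identification ``morphism $X \to \Sigma$'' $\longleftrightarrow$ ``element of $P(X)$'' furnished by the generic element, together with the pseudo-functoriality of $P$; this is the step most prone to slipping.

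Finally, monotonicity of both assignments is routine. If $j \leq j'$ as morphisms $\Sigma \to \Sigma$ (that is, $j \leq j'$ pointwise, which at the level of $P(\Sigma)$ means $j \leq_{P(\Sigma)} j'$), then $j \circ r \leq j' \circ r$ in $P(X)$ by functoriality of $P$ and monotonicity of composition in the preorder-enriched structure, so $\Phi^j \leq \Phi^{j'}$ componentwise. Conversely, if $\Phi \leq \Phi'$ componentwise, then in particular $\Phi_\Sigma(\id_\Sigma) \leq \Phi'_\Sigma(\id_\Sigma)$ in $P(\Sigma)$, giving the corresponding inequality between the representing morphisms. Combining this with the two round-trip computations and the already-established fact (Construction \ref{SubtriposConstruction} and the preceding Construction) that the two sides are preorders with the stated assignments as well-defined maps, yields the claimed equivalence of preorders, and the final sentence about local operators being isomorphism classes of closure transformations follows by passing to isomorphism classes.
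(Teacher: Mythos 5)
The paper's proof is just ``Straightforward verifications,'' so there is no official argument to compare against; your write-up is a correct filling-in of that gap, and the approach is the obvious one. Two small remarks on presentation. First, your parenthetical ``$j \leq j'$ pointwise'' does not make sense for a general cartesian closed $\C$ (arrows of $\C$ carry no pointwise order); the correct reading, which you then give, is that $j \leq j'$ means $P(j)(\sigma) \leq P(j')(\sigma)$ in $P(\Sigma)$. Second, you claim the well-definedness of both assignments is supplied by the two preceding Constructions, but the paper only gives the Construction $j \mapsto (r \mapsto j\circ r)$; it does not separately verify that $\Phi_\Sigma(\sigma)$ represents a topology. Fortunately your own round-trip computation $\Phi_X(r) \cong j\circ r$ (using pseudo-naturality of $\Phi$ and pseudo-functoriality of $P$) supplies this for free: it shows the closure-transformation axioms on $j\circ(-)$, and since $(\Sigma,\sigma)$ is generic, the componentwise statements are equivalent to the internal statement that $j$ is a local operator on $\C[P]$, so $j$ lands in the stated source preorder. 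It would be cleaner to say this explicitly rather than attribute it to the preceding Constructions. The core of your argument---evaluating pseudo-naturality of $\Phi$ at $[r]: X \to \Sigma$ and $\sigma \in P(\Sigma)$, then collapsing via $P(j\circ[r])(\sigma)$---is exactly the right mechanism, and the easy composite $j \mapsto \Phi^j \mapsto \Phi^j_\Sigma(\sigma)$ recovering $j$ up to isomorphism is correct as stated.
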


\begin{proof}
Straightforward verifications.
\end{proof}

\begin{construction}
Let $(\Phi_+,\Phi^+): P \to Q$ be a geometric morphism of $\C$-triposes.
\it\begin{itemize}
\item[(a)] We have a functor
\begin{eqnarray*}
\Phi^*: \C[Q] & \to     & \C[P] \\
     (X,\sim) & \mapsto & (X,\Phi^+(\sim)) \\
      F        & \mapsto & \Phi^+(F).
\end{eqnarray*}

\item[(b)] If a $\C$-morphism $f: X \to Y$ represents a map $[F]: (X,\sim) \to (Y,\sim)$ in $\C[Q]$, then $f$ represents the map $[\Phi^+(F)]: (X,\Phi^+(\sim)) \to (Y,\Phi^+(\sim))$ in $\C[P]$.

\item[(c)] The functor $\Phi^*$ preserves finite limits.

\item[(d)] The functor $\Phi^*$ has a right adjoint.
\end{itemize}\rm
We shall denote an arbitrary\footnote{In \cite{jvo08}, a specific construction of the right adjoint functor is given. In this thesis we do not need it.} right adjoint of $\Phi^*$.
So the pair $(\Phi_*,\Phi^*)$ is a geometric morphism $\C[P] \to \C[Q]$.
\end{construction}

\begin{proof}
(a) See \cite[the Remark below Definition 2.5.5]{jvo08}.
(b) Left to the reader.
(c) This is \cite[Proposition 2.5.6]{jvo08}.
(d) This is \cite[Theorem 2.5.8(i)]{jvo08}.
\end{proof}

\begin{proposition}
Let $\C$ be a c.c. category, let $P$ be a tripos over $\C$ and let $j: \Sigma \to \Sigma$ represent a local operator on $\C[P]$.

\claim{We have an equivalence of categories
\begin{eqnarray*}
\C[P_j]  & \eqv      & \Sh_j(\C[P]) \\
(X,\sim) & \mapsto   & (X,j \circ \sim) \\
F        &           & j \circ F \\
(X,\sim) & \mapsfrom & (X,\sim) \\
F        &           & F.
\end{eqnarray*}
This equivalence underlies that the square
\begin{diagram}[LaTeXeqno]\label{FourPartyCorr}
\text{\begin{tabular}{c} $\C$-morphisms $\Sigma \to \Sigma$ \\ representing a topology on $\C[P]$ \end{tabular}}     & \rTo{j \mapsto \Sh_j(\E)} & \text{geometric inclusions into $\C[P]$} \\
\dTo{j \mapsto j}                     &      & \uTo{(\Phi_+,\Phi^+) \mapsto (\Phi_*,\Phi^*)} \\
\text{closure transformations on $P$} & \rTo{\Phi \mapsto P_\Phi} & \text{geometric inclusions into $P$}
\end{diagram}
commutes up to equivalence.}
\end{proposition}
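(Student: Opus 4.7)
The plan is to establish the equivalence $\C[P_j]\simeq \Sh_j(\C[P])$ by directly inspecting the two explicit functors -- call them $L$ (left to right) and $R$ (right to left) -- and then to deduce the commutativity of \eqref{FourPartyCorr} by identifying $L$ with the direct-image functor of the tripos-level inclusion.

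First, well-definedness. For $R$: the $P_j$-preorder is coarser than the $P$-preorder (since $p\leq q$ implies $p\leq j(q)$, i.e.\ $p\leq_j q$), so every $\C[P]$-p.e.r.\ is a $\C[P_j]$-p.e.r.\ and every $\C[P]$-functional predicate is a $\C[P_j]$-functional predicate; $R$ is the evident forgetful functor. For $L$: one translates the $P_j$-p.e.r.\ axioms via Construction \ref{con:PjToP}, using $\el\phi\RI\psi\er=\el\phi\er\RI j\el\psi\er$ and $\el\forall x.\phi\er=\forall x.j\el\phi\er$, together with idempotency $j\circ j\cong j$ and the meet-preservation $j(a\wedge b)\cong ja\wedge jb$. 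After simplification one recovers precisely the $\C[P]$-p.e.r.\ axioms for $j\circ\sim$; similarly for functional predicates, so $L$ indeed lands in $\C[P]$.

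Next I would check the composites. $R\circ L$ sends $(X,\sim)\in\C[P_j]$ to $(X,j\circ\sim)$ again in $\C[P_j]$, and $\id_X$ witnesses the isomorphism because $\sim\leq_j j\circ\sim$ reduces to $\sim\leq j\circ\sim$ (trivial) and $j\circ\sim\leq_j\sim$ reduces to $j\circ\sim\leq j\circ\sim$. For $L\circ R$, sending $(X,\sim)\in\Sh_j(\C[P])$ to $(X,j\circ\sim)$, one needs that every $j$-sheaf of $\C[P]$ has a $j$-closed partial equivalence predicate, i.e.\ $\sim\cong j\circ\sim$ in $P$. This is the main technical obstacle I anticipate. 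I would settle it by identifying $L$ with the direct-image functor $\tilde{\Phi}_*:\C[P_j]\to\C[P]$ arising from the tripos inclusion $(\Phi,\Id):P_j\hookrightarrow P$ of Construction \ref{SubtriposConstruction}. By the functor construction recalled in the excerpt, the inverse image $\tilde{\Phi}^*:\C[P]\to\C[P_j]$ is $(X,\sim)\mapsto(X,\Id(\sim))=(X,\sim)$, which is exactly $R$ composed with the subtopos inclusion. Hence $L$ is naturally isomorphic to $\tilde{\Phi}_*$ followed by the inclusion $\Sh_j(\C[P])\hookrightarrow\C[P]$, and since the essential image of the direct image of a geometric inclusion of toposes is exactly its subtopos of sheaves, every $j$-sheaf is forced to have the required form $(X,j\circ\sim)$.

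The commutativity of \eqref{FourPartyCorr} is then essentially tautological. The roundabout path sends $j$ to the topos-level inclusion $\tilde{\Phi}_*:\C[P_j]\to\C[P]$; the direct path sends $j$ to the inclusion $\Sh_j(\C[P])\hookrightarrow\C[P]$. Since $\tilde{\Phi}_*$ factors as $L$ followed by the subtopos inclusion and $L$ is an equivalence, these two geometric inclusions into $\C[P]$ coincide up to isomorphism, which is the pseudo-commutativity claimed.
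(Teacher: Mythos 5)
Your plan correctly isolates $L\circ R\cong\Id$ as the technical crux, but the argument you propose for it is circular. You observe that $\tilde\Phi^*$ restricted along $\iota:\Sh_j(\C[P])\hookrightarrow\C[P]$ coincides with $R$, and write ``hence'' $\iota\circ L\cong\tilde\Phi_*$. That inference is not available: the paper takes $\tilde\Phi_*$ to be an \emph{arbitrary} right adjoint of $\tilde\Phi^*$, so to identify it with $\iota\circ L$ you must first exhibit the adjunction $\tilde\Phi^*\dashv\iota\circ L$, i.e.\ a natural bijection $\Hom_{\C[P]}\bigl((X,\sim),(Y,j\circ\approx)\bigr)\cong\Hom_{\C[P_j]}\bigl((X,\sim),(Y,\approx)\bigr)$, and this does not follow from $\tilde\Phi^*\circ\iota=R$ together with $R\circ L\cong\Id$. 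Even granting the identification, the fact you invoke yields $\Sh_{j'}(\C[P])$ for the local operator $j'$ \emph{belonging to that geometric inclusion}; that $j'=j$ is precisely what the commutativity of \eqref{FourPartyCorr} asserts, and the closing step ``every $j$-sheaf is forced to have the required form'' is exactly the essential surjectivity of $L$ that is being proved, so the argument assumes what it sets out to show. There is also an omission in the well-definedness step: you argue that $(X,j\circ\sim)$ is an object of $\C[P]$ but not that it is a $j$-sheaf, which is needed for $L$ to land in $\Sh_j(\C[P])$ at all.

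These gaps are fillable, but by direct verification against the sheaf condition rather than by the proposed shortcut through $\tilde\Phi_*$. One needs (i) that $(X,j\circ\sim)$ is a $j$-sheaf of $\C[P]$ whenever $\sim$ is a $P_j$-partial-equivalence predicate, and (ii) that for a $j$-sheaf $(Y,\approx)$ the morphism $\id_Y$ yields an isomorphism $(Y,\approx)\cong(Y,j\circ\approx)$. For (ii), note that $(Y,j\circ\approx)$ is $j$-separated because $jj\cong j$, that the sheaf $(Y,\approx)$ is $j$-separated so that $\id_Y$ represents a mono $(Y,\approx)\emto(Y,j\circ\approx)$, that this mono is $j$-dense, and that the sheaf condition then supplies a retraction which, by separatedness of the target, is a two-sided inverse. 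With $L,R$ established as mutually inverse, commutativity of \eqref{FourPartyCorr} follows since two geometric inclusions into $\C[P]$ whose direct images have equivalent essential images are isomorphic. (The paper records only ``Straightforward verifications'' for this Proposition, so there is no official argument to compare against; the point is that the plan as written does not close.)
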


\begin{proof}
Straightforward verifications.
\end{proof}

\begin{cor}
The notions
\begin{itemize}
 \item subtripos of $P$ (i.e. isomorphism class of geometric inclusions into $P$)
 \item isomorphism class of closure transformations on $P$
 \item local operator on $\C[P]$
 \item subtopos of $\C[P]$ (i.e. isomorphism class of geometric inclusions into $\C[P]$)
\end{itemize}
are the same \emph{under the associations} specified at \eqref{FourPartyCorr}.\footnote{Of course, we knew already since Proposition \ref{LoreClorEqv} that these notions are the same. The point of this corollary is that it is so under the associations \eqref{FourPartyCorr}.}
\end{cor}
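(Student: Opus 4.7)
The plan is to observe that this corollary is essentially a bookkeeping consequence of assembling the equivalences established in the four sides of the square \eqref{FourPartyCorr}, together with the fact -- just established in the preceding proposition -- that the square commutes up to equivalence. Nothing fundamentally new is being proved; the content is that the four a priori separate identifications fit together coherently.

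First I would note that three of the four sides have already been shown to be equivalences of (pre)orders or of categories:
\begin{itemize}
\item The bottom horizontal $\Phi \mapsto P_\Phi$ is an equivalence by the proposition preceding Proposition~\ref{LoreClorEqv}, so subtriposes of $P$ correspond to isomorphism classes of closure transformations on $P$.
\item The left vertical $j \mapsto j$ is an equivalence by Proposition~\ref{LoreClorEqv}, so isomorphism classes of $\C$-morphisms $\Sigma \to \Sigma$ representing a topology on $\C[P]$ correspond to isomorphism classes of closure transformations on $P$.
\item The top horizontal $j \mapsto \Sh_j(\C[P])$ is an equivalence by classical topos theory (the fundamental correspondence between Lawvere-Tierney topologies and subtoposes).
\end{itemize}
Next I would invoke the commutativity (up to equivalence) of the square \eqref{FourPartyCorr} proved in the preceding proposition. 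Since three sides of the square are equivalences and the square commutes up to equivalence, the fourth side -- the right vertical $(\Phi_+,\Phi^+) \mapsto (\Phi_*,\Phi^*)$ -- is forced to be an equivalence as well, and all four corners become canonically identified via any path through the square.

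Concretely, to see that the four listed notions coincide, one goes around the square: starting from a subtopos of $\C[P]$, one picks a representing local operator $j: \Sigma \to \Sigma$ (top horizontal), transports it to the closure transformation $j \circ (-)$ on $P$ (left vertical), and thence to the subtripos $P_j \imto P$ (bottom horizontal); the right vertical then confirms that the associated geometric inclusion $\C[P_j] \imto \C[P]$ recovers the chosen subtopos, because the square commutes up to equivalence. The routine verification amounts to checking that these passages are mutually inverse on isomorphism classes, which is immediate from the three previously established equivalences.

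The only possible obstacle is purely notational: making sure that ``the same under the associations'' is given an unambiguous meaning, since each of the four equivalences was set up with its own definition of ``isomorphism class'' (of closure transformations, of geometric inclusions into $P$, of topologies, of geometric inclusions into $\C[P]$). This is handled by reading the corollary as the statement that the four classes are put into bijection by traversing any side of the (now-commuting) square, which is the evident consequence of the three equivalences above together with the commutativity. No new calculation is needed beyond what was already verified in the preceding propositions.
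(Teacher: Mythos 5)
Your proposal is correct and matches the paper's intent: the paper's own proof is simply ``Clear,'' relying on exactly the assembly you describe of the previously established equivalences (Proposition \ref{LoreClorEqv}, the tripos-side correspondence, the classical topology--subtopos correspondence) together with the commutativity up to equivalence of the square \eqref{FourPartyCorr}. You have merely spelled out the bookkeeping that the paper leaves implicit.
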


\begin{proof}
Clear.
\end{proof}

\section{The effective topos}
\begin{definition}
The \defn{effective topos} is the topos $\Eff := \Set[\EffTrip]$.
\end{definition}

\subsection{The functor $\nabla: \Set \to \Eff$}
\begin{notation}\label{NablaNotation}
Let $U \subseteq X$ be an inclusion of sets.
We denote by $\nabla_U$ the function $X \to \PN$ defined by
$$\nabla_U(x) =
\begin{cases}
\N        & \textif x \in U \\
\emptyset & \otherwise.
\end{cases}$$
We also write $\sr{\nabla}{U}$ instead of $\nabla_U$.
\end{notation}

\begin{definition}
\claim{The associations
\begin{eqnarray*}
\Set        & \to     & \Eff \\
X           & \mapsto & (X,\nabeq) \\
X \xto{f} Y & \mapsto & (X,\nabeq) \xto{f} (Y,\nabeq).  
\end{eqnarray*}
comprise a functor.}
We will denote this functor by $\nabla$.
\end{definition}

\begin{proof}
Given a set $X$, the function $\nabeq \in \PN^{X \times X}$ is clearly a partial equivalence predicate on $X$.
Any function $f: X \to Y$ clearly represents a map $(X,\nabeq) \to (Y,\nabeq)$.
Given a set $X$, we know that the identity function $\id_X$ on $X$ represents the identity map on $(X,\sim)$.
Finally, given functions $f: X \to Y$ and $g: Y \to Z$, we know that the function composite $X \xto{f} Y \xto{g} Z$ represents the map composite $(X,\nabeq) \xto{f} (Y,\nabeq) \xto{g} (Z,\nabeq)$.
This verifies that we have a functor.
\end{proof}

\begin{proposition}
We consider the functor $\nabla: \Set \to \Eff$, and the global sections functor $\Gamma: \Eff \to \Set$.
The pair $(\nabla,\Gamma)$ is a geometric inclusion $\Set \emto \Eff$, and this subtopos corresponds to the topology $\negneg$.
\end{proposition}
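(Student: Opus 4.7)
The plan is to verify the three defining conditions of a geometric inclusion for $(\nabla, \Gamma)$ -- the adjunction $\Gamma \dashv \nabla$, preservation of finite limits by $\Gamma$, and full faithfulness of $\nabla$ -- and then to identify the associated local operator on $\Eff$ as $\negneg$ via Proposition \ref{LoreClorEqv}.

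For the adjunction, I would exhibit a natural bijection $\Hom_\Eff(A, \nabla X) \cong \Hom_\Set(\Gamma A, X)$ by direct inspection. Writing $A = (Y, \sim)$, a morphism $A \to \nabla X$ is an equivalence class of functional predicates $F \in \PN^{Y \times X}$; since $x \nabeq x'$ takes only the values $\emptyset$ and $\N$, the single-valuedness axiom forces, for each $y$ with $[y \sim y] \neq \emptyset$, at most one $x \in X$ to satisfy $F(y, x) \neq \emptyset$, while totality (with its uniform realizer) selects exactly one such $x$ per global section of $A$. This is naturally the same data as a function $\Gamma A \to X$. Applying the same calculation with $A = \nabla X'$ yields fullness of $\nabla$; faithfulness is the observation that two parallel set functions realizing the same $\Eff$-map must satisfy $f(x') \nabeq g(x')$ for all $x'$, forcing $f = g$. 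Preservation of finite limits by $\Gamma = \Hom_\Eff(1, -)$ is automatic, since representable functors preserve all limits.

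To identify the topology, Proposition \ref{LoreClorEqv} reduces the task to computing the morphism $j: \Sigma \to \Sigma$ in $\Eff$ associated with the inclusion; by Construction \ref{SubClC[P]}(b), $j$ in turn lifts to an extensional $\Set$-function $\PN \to \PN$. I would obtain this function by tracing the composite closure transformation $\Phi_+ \circ \Phi^+$ that corresponds to $(\nabla, \Gamma)$ at the tripos level: $\Phi^+: \EffTrip \to P_\Set$ sends $r \in \PN^X$ to the subset $\{x \in X \mid r(x) \neq \emptyset\}$, while $\Phi_+: P_\Set \to \EffTrip$ sends $U \subseteq X$ to $\nab{U}$, so their composite sends $r$ pointwise to $\N$ when $r(x) \neq \emptyset$ and to $\emptyset$ otherwise. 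A direct computation in $\PN$ gives $\negneg p = \N$ when $p \neq \emptyset$ and $\emptyset$ when $p = \emptyset$, so $j = \negneg$. The main obstacle is this last step: threading the abstract equivalence of Proposition \ref{LoreClorEqv} through the specific pair $(\nabla, \Gamma)$ requires some care with the tripos-level lifting, but once the representing function $\PN \to \PN$ is identified, matching it with $\negneg$ is just the pointwise calculation above.
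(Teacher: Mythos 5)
The paper gives no proof here (``Left to the reader.\ (Cf.\ \cite{jvo08}.)''), so I can only assess your argument on its own terms.

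Your treatment of the first claim is in good shape in outline. Establishing $\Gamma \dashv \nabla$ by direct inspection of functional predicates $A \to \nabla X$ is the right move, and the single-valuedness/totality analysis is sound; you should just be aware that $\Gamma A$ is the set of global sections, which for $A = (Y,\sim)$ is (after a small computation) the quotient of $\{y : \dbl y\sim y\dbr \neq \emptyset\}$ by $y \approx y'$ iff $\dbl y\sim y'\dbr \neq \emptyset$ --- so the ``at most one $x$ per $y$'' observation needs to be checked to descend to the quotient, which extensionality gives you. Fullness, faithfulness, and the representability argument for $\Gamma$ preserving finite limits are all fine.

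The topology identification is where there is a real gap, and it is larger than the ``requires some care'' you flag. You write down a tripos geometric morphism $(\Phi_+,\Phi^+)$ between the classical powerset tripos $P_\Set$ and $\EffTrip$, verify (implicitly) that $\Phi^+ \dashv \Phi_+$ pointwise, and compute $\Phi_+\Phi^+ = \negneg$. That is all correct as a calculation about \emph{that} tripos inclusion. But nothing in your argument identifies $(\Phi_+,\Phi^+)$ as \emph{the} tripos inclusion whose topos-level realization is isomorphic to $(\nabla,\Gamma)$; you assert this by saying ``the composite closure transformation that corresponds to $(\nabla,\Gamma)$'' without supplying the correspondence. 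Note also that the subtripos the paper's Construction~\ref{SubtriposConstruction} would produce for the topology $\negneg$ is $\EffTrip_\negneg$, not $P_\Set$, so even granting the computation you would still owe an identification $P_\Set \simeq \EffTrip_\negneg$ inducing the same inclusion. To close the gap you would either have to show that $(\Phi_*,\Phi^*)$ built from your $(\Phi_+,\Phi^+)$ is naturally isomorphic to $(\nabla,\Gamma)$ --- essentially redoing your adjunction computation one level up --- or, more economically, bypass the tripos detour entirely: for any geometric inclusion with inverse image $\Gamma$, the associated topology $j$ is characterized by ``$m$ is $j$-dense iff $\Gamma(m)$ is iso,'' and a direct check on a mono represented by $a \in \PN^Y$ into $(Y,\sim)$ shows $\Gamma(m)$ is iso iff $a(y)\neq\emptyset$ for all $y$ with $\dbl y\sim y\dbr\neq\emptyset$, which is exactly $\negneg$-density. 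That finishes the identification without having to thread Proposition~\ref{LoreClorEqv} at all.
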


\begin{proof}
Left to the reader. (Cf. \cite{jvo08}.)
\end{proof}

\begin{notation}
By the previous Proposition, we have canonical bijections
\begin{equation}\label{PointNota}
X \cong \Set(1,X) \cong \Set(\Gamma(1),X) \cong \Eff(1,\nabla(X)).
\end{equation}
Let $X$ be a set, and let $x \in X$ an element.
We will denote the point of the object $\nabla(X)$ corresponding $x$ under the bijection \eqref{PointNota} just by $x$.
\end{notation}
%
%
%

\subsection{The natural numbers object}
\begin{proposition-notation}
Let $(X,E)$ be a preassembly.
\claim{Then the function $\sim: X \to \PN$ defined by
$$x \sim x' = \begin{cases}
E(x)      & \textif x = x' \\
\emptyset & \otherwise
\end{cases}$$
is a partial equivalence predicate.}
We may view the preassembly $(X,E)$ as the object $(X,\sim)$ in $\Eff$. \qed
\end{proposition-notation}

\begin{proposition-notation}
\claim{The assembly $\N$, the element $0 \in \N$ and the successor function $\N \to \N$ give a natural numbers object in $\Eff$.}
We denote by $\NNO$ the object in $\Eff$ associated to the assembly $\N$. \qed
\end{proposition-notation}

\subsection{The toposes $\Eff_j$}
\begin{notation}
Let $j: \PN \to \PN$ be a function representing a topology on $\Eff$.
We denote by $\Eff_j$ the topos $\Set[\EffTrip_j]$.
\end{notation}

\begin{proposition}
Let $j: \PN \to \PN$ be a function representing a \emph{non-degenerate} topology on $\Eff$.

\it\begin{itemize}
\item[(i)]
Let $\phi$ be an $\EffTrip$-\emph{sentence}.
Then $\EffTrip \models j(\phi)$ if and only if $\EffTrip \models \phi$.

\item[(ii)]
Let $\phi$ be an $\Eff$-\emph{sentence}.
Then $\Eff \models j(\phi)$ if and only if $\Eff \models \phi$.
\end{itemize}\rm
\end{proposition}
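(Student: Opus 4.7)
The plan is to reduce both statements to one simple observation: non-degeneracy of $j$ is the concrete condition $j(\emptyset) = \emptyset$. Granted this, (i) falls out from the closure operator axioms, and (ii) reduces to (i) via the tripos representation of $\Eff$-sentences.

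First I would record the coarseness of the preorder on $\PN$ at the level of sentences: for any $p \in \PN$ one has $\top \leq p$ iff $p \neq \emptyset$, since a constant function into $p$ witnesses $\N \RI p \neq \emptyset$ when $p$ is nonempty, and conversely any witness of $\N \RI p \neq \emptyset$ evaluated at $0$ produces an element of $p$. Hence $\EffTrip \models \phi$ iff $\dbl \phi \dbr \neq \emptyset$ for an $\EffTrip$-sentence $\phi$. Applying this to the sentence $j(\emptyset)$, the subtripos $\EffTrip_j$ is degenerate iff $\top_j \leq_j \bot_j$, i.e.\ iff $\top \leq j(\emptyset)$, i.e.\ iff $j(\emptyset) \neq \emptyset$. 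So non-degeneracy of $j$ is the concrete condition $j(\emptyset) = \emptyset$.

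For (i), the direction $\EffTrip \models \phi \Rightarrow \EffTrip \models j(\phi)$ is the closure axiom $p \leq j(p)$ applied to $p = \dbl \phi \dbr$: a witness $e$ of $\dbl \phi \dbr \leq j(\dbl \phi \dbr)$ sends any element of $\dbl \phi \dbr$ into $j(\dbl \phi \dbr)$, so the latter is nonempty. The converse is by contraposition: if $\dbl \phi \dbr = \emptyset$, then $j(\dbl \phi \dbr) = j(\emptyset) = \emptyset$ by non-degeneracy, so $\EffTrip \not\models j(\phi)$.

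For (ii), I reduce to (i) via the tripos representation. An $\Eff$-sentence $\phi$ has interpretation $\dbl \phi \dbr : 1 \to \Omega$, which by Construction \ref{SubClC[P]}(b) is represented by a $\Set$-morphism $1 \to \Sigma = \PN$, hence by an element $p_\phi \in \PN$; by the proposition preceding this one, $\Eff \models \phi$ iff $\EffTrip \models p_\phi$, i.e.\ iff $p_\phi \neq \emptyset$. Since the $\Eff$-morphism $j : \Omega \to \Omega$ is represented by the $\Set$-morphism $j : \PN \to \PN$, the $\Eff$-sentence $j(\phi) = j \circ \dbl \phi \dbr$ is represented by $j(p_\phi)$, so $\Eff \models j(\phi)$ iff $j(p_\phi) \neq \emptyset$. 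Applying (i) to the $\EffTrip$-sentence represented by $p_\phi$ yields (ii). The only real content is the reading of non-degeneracy as $j(\emptyset) = \emptyset$; the remaining steps are bookkeeping around the closure axioms and the tripos-to-topos correspondence.
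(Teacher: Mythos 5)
Your proof is correct and is essentially the paper's own argument, just with the implicit facts spelled out: the paper compresses "non-degenerate" into the one-line assertion that $j(p) = \emptyset$ iff $p = \emptyset$, whereas you unpack this into $j(\emptyset) = \emptyset$ (from $\top_j \not\leq_j \bot_j$) for one direction and $p \leq j(p)$ for the other, and likewise make explicit the observation that $\EffTrip \models \phi$ iff $\dbl\phi\dbr \neq \emptyset$. Part (ii) is the same reduction to (i) via representation of the $\Eff$-sentence by an element of $\PN$; your $p_\phi$ is the paper's $\dbl_\Eff \phi \dbr$.
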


\begin{proof}
(i)
Since $j$ represents a non-degenerate topology, we have for $p \in \PN$ that
\begin{center}
$j(p) = \emptyset$ if and only if $p = \emptyset$.
\end{center}
Therefore: $\EffTrip \models j(\phi)$ iff $\dbl j(\phi) \dbr \neq \emptyset$ iff $\dbl \phi \dbr \neq \emptyset$ iff $\EffTrip \models \phi$, as desired.

(ii)
Indeed: $\Eff \models j(\phi)$ iff $\EffTrip \models \dbl_\Eff j(\phi) \dbr$ iff $\EffTrip \models j \dbl_\Eff \phi \dbr$ iff\footnote{by (i)} $\EffTrip \models \dbl_\Eff \phi \dbr$ iff $\Eff \models \phi$, as desired.
\end{proof}

From this we obtain a simplication of Corollary \ref{con:PjToP} in the case of \emph{sentences} and \emph{non-degenerate} subtoposes of $\Eff$, as follows.
Recall the notation `$\el^{P/P_j} \phi \er$' from that Corollary.

\begin{corollary}\label{cor:NonDegSen}
Let $j: \Sigma \to \Sigma$ be a function representing a \emph{non-degenerate} topology on $\Eff$.
Let $\phi$ be a $\EffTrip_j$-\emph{sentence}.
Then we have $\EffTrip_j \models \phi$ iff $\EffTrip \models \el^{P/P_j} \phi \er$.
\end{corollary}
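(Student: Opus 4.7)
The plan is to chain two results that have just been established: Construction \ref{con:PjToP} and part (i) of the preceding Proposition. From Construction \ref{con:PjToP} applied with $P = \EffTrip$ we already have the general equivalence
\[
\EffTrip_j \models \phi \quad\iff\quad \EffTrip \models j(\el^{P/P_j}\phi\er),
\]
valid for any $j$-representing morphism, without any non-degeneracy hypothesis. So the only thing left to justify is that the outer $j(\cdot)$ on the right-hand side can be dropped when $j$ is non-degenerate.

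For this I would first note the observation (explicit in the Construction) that when $\phi$ is a $P_j$-sentence, the translation $\el^{P/P_j}\phi\er$ is again a sentence (the clauses do not introduce or bind extra free variables beyond what $\phi$ already carried). Consequently $\el^{P/P_j}\phi\er$ is an honest $\EffTrip$-sentence, so part (i) of the preceding Proposition applies to it: $\EffTrip \models j(\el^{P/P_j}\phi\er)$ iff $\EffTrip \models \el^{P/P_j}\phi\er$. Concatenating with the equivalence above yields the corollary.

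The proof is therefore essentially a two-line quotation, and there is no real obstacle; the only point requiring a moment's thought is the sentence-preservation of the translation $\el^{P/P_j}\cdot\er$, which is immediate from inspecting the clauses in Construction \ref{con:PjToP}. I would present it as a short computation:
\[
\EffTrip_j \models \phi
\;\stackrel{\ref{con:PjToP}}{\iff}\;
\EffTrip \models j(\el^{P/P_j}\phi\er)
\;\stackrel{(i)}{\iff}\;
\EffTrip \models \el^{P/P_j}\phi\er,
\]
with the middle step justified by noting that $\el^{P/P_j}\phi\er$ is a sentence (since $\phi$ is) and $j$ is non-degenerate.
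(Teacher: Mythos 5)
Your proof is correct and follows exactly the same two-step chain as the paper: apply Construction \ref{con:PjToP} to reduce to $\EffTrip \models j(\el^{P/P_j}\phi\er)$, then invoke part (i) of the preceding Proposition (using non-degeneracy and the fact that the translation preserves sentencehood) to drop the outer $j$.
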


\begin{proof}
By Corollary \ref{con:PjToP}, we have $\EffTrip_j \models \phi$ if and only if $\EffTrip \models j(\el^{P/P_j} \phi \er)$.
By (i) of the previous Proposition, we have $\EffTrip \models j(\el^{P/P_j} \phi \er)$ if and only if $\EffTrip \models \el^{P/P_j} \phi \er$.
The conclusion follows.
\end{proof}

Let us derive a simple presentation of the natural numbers object in the topos $\Eff_j$.

\begin{proposition-notation}
Let $j: \PN \to \PN$ be a function representing a local operator on $\Eff$.
\claim{The pair $(\N,\{\cdot\})$, the element $0 \in \N$ and the successor function $\N \to \N$ give a natural numbers object in $\Eff_j$.}
We denote by $\NNO$ the object $(\N,\{\cdot\})$ in $\Eff_j$.
\end{proposition-notation}

\begin{proof}
We learned from Subsection \ref{ssec:closure} that the functor
\begin{eqnarray*}
\Id^*: \Eff & \to     & \Eff_j   \\
   (X,\sim) & \mapsto & (X,\sim) \\
          F & \mapsto & F        \\
		  f & \mapsto & f
\end{eqnarray*}
is the inverse image part of a geometric inclusion $\Eff_j \emto \Eff$ corresponding to the topology $j$.
We know that the inverse image part of a geometric inclusion between toposes preserve natural numbers objects.
Applying the functor $\Id^*$ to our natural numbers object $N$ in $\Eff$, the result follows.
\end{proof}

\chapter{General constructions of local operators}\label{chap:lop}
There are several constructions of local operator, which one can apply for any topos.
In this section we shall recall them and instantiate in the case of the effective topos.

Given a topos $\E$, We write $\Lop(\E)$ for the class of local operators on $\E$.
Internally in a topos, if $A \subseteq B$ is an inclusion of sets, we may denote by $\chi_{A \subseteq B}$ or just by $\chi_A$ the characteristic function $B \to \Omega: x \mapsto (x \in A)$.

%
%

\section{The open and closed topologies}
We have the following two constructions of topology arising from a subobject of $1$.

\begin{construction}
Let $p_0$ be a subobject of $1$ in a topos $\mathcal{E}$.
The \defn{open} topology of $q$ is the map
$$\Omega \to \Omega: p \mapsto p_0 \RI p,$$
and the \defn{closed} topology of $U$ is the map
$$\Omega \to \Omega: p \mapsto p_0 \vee p.$$
One easily proves by logic that these maps are indeed topologies.
\end{construction}

\begin{example}
Clearly,
\begin{itemize}
 \item the open topology of $\true$ is the map $\id: p \mapsto p$,
 \item the open topology of $\false$ is the map $\true\circ!: p \mapsto \top$,
 \item the closed topology of $\true$ is the map $\true\circ!: p \mapsto \top$,
 \item the closed topology of $\false$ is the map $\id: p \mapsto p$.
\end{itemize}
So when $\mathcal{E}$ is two-valued, just like in the case of the effective topos,
these constructions do not give a interesting topology; the topology $\id$ is the
least topology giving $\mathcal{E}$ itself as the corresponding subtopos,
and $\true\circ!$ is the largest topology giving the degenerate topos.
\end{example}

\section{Joyal's construction}\label{sec:Joyal}
We now discuss a construction of the least local operator that makes some subobject dense.
First, let us introduce some affable `internal' termonologies.

\begin{definition}
Internally in a topos $\E$, we may call elements of $\Omega$ \defn{truth values}.
Given a local operator $j$, we call a truth value $p$
\begin{itemize}
 \item \defn{$j$-closed} if $j(p)$ implies $p$, and
 \item \defn{$j$-dense} if $j(p)$ holds.
\end{itemize}
\end{definition}

\begin{notation}
Let us recall a number of standard notations in topos theory.
Let $\E$ be a topos, and let $j$ be a local operator on $\E$.
We write $J$ for the subobject of $\Omega$ characterized by $j: \Omega \to \Omega$,
and we write $\Omega_j$ for the subobject of $\Omega$ given as the equalizer of $j,\id: \Omega \to \Omega$.
So saying internally, $J = \{p \in \Omega \mid j(p)\}$ is the set of $j$-dense truth values,
and $\Omega_j = \{p \in \Omega \mid j(p) = p\}$ is the set of $j$-closed truth values.
\end{notation}

\begin{proposition}
Internally in a topos, if $D$ is a subset of $X$ and $\chi_D$ is its characteristic function $X \to \Omega$, then
\begin{itemize}
 \item[(a)] $D$ is $j$-dense if and only if $\chi_D$ factors via $J$, and
 \item[(b)] $D$ is $j$-closed if and only if $\chi_D$ factors via $\Omega_j$.
\end{itemize}
We also have the following.
\begin{itemize}
 \item[(c)] The object $\P_j(X) = \{A \in \P(X) \mid \forall x \in X . j(x \in A) \RI x \in A\}$ of (saying internally) $j$-closed subsets of $X$ is the object $\Omega_j^X$, in the natural sense that the canonical isomorphism $\P(X) \cong \Omega^X$ restricts to an isomorphism $\P_j(X) \cong \Omega_j^X$ (clearly, the inclusion $\Omega_j \imto \Omega$ induces an inclusion $\Omega_j^X \imto \Omega^X$; in this way we view $\Omega_j^X$ as a subobject of $\Omega^X$).
 \item[(d)] The object $\Pdense{j}(X) = \{A \in \P(X) \mid \forall x \in X . j(x \in A)\}$ of the $j$-dense subsets of $X$ is the object $J^X$, in the natural sense that the canonical isomorphism $\P(X) \cong \Omega^X$ restricts to an isomorphism $\Pdense{j}(X) \cong J^X$.
\end{itemize}
\end{proposition}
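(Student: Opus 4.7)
The plan is to reduce all four statements to a single underlying bookkeeping fact: the canonical bijection $\P(X) \cong \Omega^X$ sending a subset to its characteristic function translates ``pointwise belongs to $J$ (resp.\ $\Omega_j$)'' into ``factors via $J \emto \Omega$ (resp.\ $\Omega_j \emto \Omega$)''. I would do all of this internally, so that the argument becomes essentially definition-chasing.

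First I would prove \textbf{(a)}. By definition, $D$ is $j$-dense means $\forall x \in X . j(x \in D)$, i.e.\ $\forall x . j(\chi_D(x))$. But internally $J = \{p \in \Omega \mid j(p)\}$, so this is precisely the assertion that $\chi_D(x) \in J$ for every $x$, equivalently that $\chi_D: X \to \Omega$ factors through the mono $J \emto \Omega$. Then I would prove \textbf{(b)} in the same spirit: $D$ is $j$-closed means $\forall x . j(\chi_D(x)) \RI \chi_D(x)$, and combining with the general inequality $p \leq j(p)$ (which holds since $j$ is a closure operator) this is equivalent to $\forall x . j(\chi_D(x)) = \chi_D(x)$, i.e.\ $\chi_D(x) \in \Omega_j$ for every $x$, which is the same as $\chi_D$ factoring through $\Omega_j \emto \Omega$.

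For \textbf{(c)} and \textbf{(d)} I would transport (a) and (b) along the canonical iso $\P(X) \cong \Omega^X$. Internally, for any mono $M \emto \Omega$, the object $M^X \emto \Omega^X$ is exactly the subobject of those $\chi: X \to \Omega$ that factor pointwise through $M$ (this is the standard description of the exponential of a subobject in any topos, and can be spelled out via the Kripke--Joyal semantics of the exponential). Applying this with $M = \Omega_j$ gives that $\Omega_j^X$ consists of the $\chi$ with $\chi(x) \in \Omega_j$ for all $x$, which by (b) corresponds under $\chi \leftrightarrow A$ precisely to the $j$-closed subsets, i.e.\ to $\P_j(X)$; applying it with $M = J$ and using (a) yields $J^X \cong \Pdense{j}(X)$.

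The only substantive step is the identification $M^X \subseteq \Omega^X$ with ``pointwise-in-$M$''; the rest is pure unfolding. This is the part I would be most careful about, since it is the one place where one leaves the realm of rewriting definitions and invokes a general fact about exponentials of subobjects in a topos. Once that is in hand, (c) and (d) are immediate from (b) and (a) respectively.
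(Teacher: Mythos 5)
The paper leaves this proof to the reader, so there is no canonical argument to compare against; your proposal supplies the natural one and is correct. Parts (a) and (b) are pure definition-unfolding exactly as you describe, and the key fact you isolate for (c) and (d) — that for a mono $M \emto \Omega$ the subobject $M^X \emto \Omega^X$ is internally $\{f \in \Omega^X \mid \forall x \in X . f(x) \in M\}$ — is indeed the only substantive ingredient; it follows since $(-)^X$, being a right adjoint, preserves monos and limits, and the internal description of $M^X$ as the ``pointwise-in-$M$'' maps can be read off from Kripke--Joyal (or from the universal property: a generalized element $Z \to \Omega^X$ factors through $M^X$ iff its transpose $Z \times X \to \Omega$ factors through $M$). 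One small point worth making explicit when you invoke $p \leq j(p)$ in (b): you are using that internally $j(p) \RI p$ and $j(p) \BI p$ are equivalent truth values, so the characterization of $\Omega_j$ as the equalizer of $j$ and $\mathrm{id}$ does match the definition of ``$j$-closed'' given as an implication; you note the inequality but it is worth stating the resulting equivalence, since $\Omega_j$ is defined via the equalizer rather than via the implication.
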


\begin{proof}
Left to the reader.
\end{proof}

\begin{definition}
Internally in a topos, let $D$ be a subset of $\Omega$.
We say that $D$ is \defn{upwards closed} if for all $p \in D$ and $q \in \Omega$ if $p \leq q$ then $q \in D$, i.e. if the characteristic function $\chi_D: \Omega \to \Omega$ of $D$ is monotone.
\end{definition}

\begin{construction}
Internally in a topos, let $D$ be a subset of $\Omega$.
Let
$$D^\mo = \{q \in \Omega \mid \exists p \in \Omega . p \in D \wedge (p \RI q)\},$$
which is clearly (think internally!) the least upwards closed subset of $\Omega$ that includes $D$.
We will employ this notation $\ph^\mo$ also for functions $\Omega \to \Omega$ in the obvious manner, namely via the canonical correspondence $\P(\Omega) \cong \Omega^\Omega$.
\end{construction}

The main ingredient of our goal construction is the following result by A. Joyal.

\begin{proposition}
Internally, let $D$ be a subset of $\Omega$.
Let
\begin{align*}
D^\r
&= \{q^\Omega \mid \forall p^\Omega . p \in D \RI [(p \RI q) \RI q]\} \\
&= \{q^\Omega \mid \forall p^\Omega . p \in D \wedge (p \RI q) \RI q\} \\
&= \{q^\Omega \mid [\exists p^\Omega . p \in D \wedge (p \RI q)] \RI q\} \\
&= \{q^\Omega \mid q \in D^\mo \RI q\} \\
&\in \Sub(\Omega),
\end{align*}
and let
\begin{align*}
D^\l
&= \{p^\Omega \mid \forall q^\Omega . q \in D \RI [(p \RI q) \RI q]\} \\
&= \{p^\Omega \mid \forall q^\Omega . q \in D \wedge (p \RI q) \RI q\} \\
&\in \Sub(\Omega).
\end{align*}
Then
\begin{itemize}
 \item[(i)] The mappings $D \mapsto D^\r$ and $D \mapsto D^\l$ form a Galois connection from $\P(\Omega)$ to itself,
in the sense that they are (1) order-reversing functions $\P(\Omega) \to \P(\Omega)$ and (2) adjoint to each other on the right (i.e. $D \leq D^\rl$ and $D \leq D^\lr$ for all $D \in \P(\Omega)$).
 \item[(ii)] If $j: \Omega \to \Omega$ is a closure operator, then $J^\r = \Omega_j$ and $(\Omega_j)^\l = J$.
 \item[(iii)] A subobject $D \in \P(\Omega)$ satisfies $D^\rl = D$ iff its characteristic map is a closure operator.
\end{itemize}
Notice that the external variant of these statements immediately follow:
\begin{itemize}
 \item[(i')] The mappings $D \mapsto D^\r$ and $D \mapsto D^\l$ form a Galois connection from $\Sub(\Omega)$ to itself,
in the sense that they are (1) order-reversing functions $\Sub(\Omega) \to \Sub(\Omega)$ and (2) adjoint to each other on the right (i.e. $D \leq D^\rl$ and $D \leq D^\lr$ for all $D \in \Sub(\Omega)$).
 \item[(ii')] If $j$ is a local operator on $\E$, then $J^\r = \Omega_j$ and $(\Omega_j)^\l = J$.
 \item[(iii')] A subobject $D \in \Sub(\Omega)$ satisfies $D^\rl = D$ iff its characteristic map is a local operator on $\E$.
\end{itemize}
\end{proposition}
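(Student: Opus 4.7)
The plan is to exploit the symmetry of the defining conditions. Unfolding $D \leq E^\l$ gives $\forall p, q . (p \in D) \wedge (q \in E) \RI ((p \RI q) \RI q)$, which is manifestly symmetric in the pairs $(D,p)$ and $(E,q)$; hence $D \leq E^\l$ if and only if $E \leq D^\r$. This is the Galois connection claim of (i), and order-reversal together with the unit/counit inequalities $D \leq D^\rl$, $D \leq D^\lr$ follow by the usual formal argument (apply the biconditional to $D^\r \leq D^\r$, and symmetrically).

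For (ii), I would verify each equality by double inclusion. For $J^\r \subseteq \Omega_j$, given $q \in J^\r$, instantiate the defining $\forall p$ at $p := q$: since $q \RI q$ is $\top$, this simplifies to $j(q) \RI q$, so $q$ is $j$-closed. For the reverse inclusion, use monotonicity of $j$ (established in the preceding lemma) to convert $p \RI q$ into $j(p) \RI j(q)$, and then combine with $j(p)$ and $j(q) \RI q$ to obtain $q$. For $(\Omega_j)^\l \subseteq J$, the key instantiation is $q := j(p)$, which is $j$-closed by idempotence of $j$; together with $p \RI j(p)$ from inflation, this yields $j(p)$. The reverse direction is again monotonicity.

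For (iii), one direction is immediate from (ii): if the characteristic map $d$ of $D$ is a closure operator, set $j := d$ so $D = J$, and then $D^\rl = (J^\r)^\l = (\Omega_j)^\l = J = D$. For the converse, assuming $D^\rl = D$, I would verify the closure operator axioms for $d$ internally. Upwards closure of $D^\rl$ in $\Omega$ and the inflation $p \leq d(p)$ are essentially built into the definition: in the latter case, assuming $p$, any $r \in D^\r$ with $p \RI r$ yields $r$ by modus ponens, so $p \in D^\rl = D$. For meet-preservation, to verify $p \wedge q \in D^\rl$ from $p, q \in D$, fix $r \in D^\r$ and assume $(p \wedge q) \RI r$; reduce via $p \in D^\rl$ applied to $r$ to the subgoal $p \RI r$, then under the assumption $p$ reduce via $q \in D^\rl$ applied to $r$ to $q \RI r$, which is immediate. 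Idempotence $d(d(p)) \leq d(p)$ is formally analogous: from $(p \in D) \in D$, fix $r \in D^\r$ with $p \RI r$ and reduce to $(p \in D) \RI r$, which under the hypothesis $p \in D = D^\rl$ applied to $r$ reduces further to $p \RI r$.

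The main obstacle is the converse half of (iii): both applications of the hypothesis $D^\rl = D$ must be instantiated at the \emph{same} $r \in D^\r$, and the two assumptions must be peeled off in the correct order so that the final modus ponens lands cleanly. Once (i)--(iii) are established, the external variants (i')--(iii') are immediate, as their internal formulations transport verbatim through the Mitchell--Bénabou calculus.
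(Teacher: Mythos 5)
Your argument is correct. Note that the paper itself offers no proof here --- it leaves the internal statement to the reader and cites Johnstone's Theorem 3.57 for the external variant --- so what you have written is a genuine filling-in rather than an alternative route; it is the standard argument. The symmetry of $\forall p,q\,.\,p \in D \wedge q \in E \RI [(p \RI q) \RI q]$ does give the Galois connection at once, the instantiations $p := q$ (using $q \RI q$) and $q := j(p)$ (using idempotence and inflation) together with monotonicity of $j$ settle (ii), and your treatment of (iii) is sound: the forward direction is indeed just (ii) applied to $j := \chi_D$, and in the converse your nested modus-ponens bookkeeping --- fixing a single $r \in D^\r$ and peeling off $p \RI r$, then $q \RI r$ (resp.\ $(p \in D) \RI r$) --- is exactly the delicate point, and you handle it correctly. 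Two small things to make explicit in a write-up: (1) you verify the mixed list (i'), (ii), (iii') rather than either of the paper's two official axiom lists for a closure operator, which is fine, but say so --- upward closure of $D^\rl$ (which you note in passing) is what supplies the half $d(p\wedge q) \leq d(p) \wedge d(q)$ of meet-preservation that your direct argument omits, and (i') applied to $d(p)$ upgrades (iii') to the isomorphism form of idempotence; (2) in $J^\r \subseteq \Omega_j$ you conclude ``$q$ is $j$-closed'' from $j(q) \RI q$, which matches the paper's equalizer definition of $\Omega_j$ only after invoking inflation for the other inequality $q \leq j(q)$.
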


\begin{proof}
Left to the reader.
The external variant is precisely \cite[Theorem 3.57]{joh77} (or \cite[Proposition 4.5.12]{ele12}).
\end{proof}

\begin{corollary}\label{GeneralLo}
Internally, let $D \subseteq \Omega$.
We write $D^\loar = D^\rl$, and write
$$D^\lomo = \{p \mid \forall q . (q \in D^\mo \RI q) \wedge (p \RI q) \RI q\}.$$
Then:
\begin{itemize}
 \item $\ph^\loar = \ph^\lomo \circ \ph^\mo$.
 \item $D^\loar$ corresponds to the least closure operator $j$ with $D \leq J$.
 \item If $D$ is monotone, then $D^\lomo$ is the least closure operator $\geq D$. \qed
\end{itemize}
\end{corollary}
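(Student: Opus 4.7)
My plan is to work internally throughout and to reduce all three bullets to the Galois connection established in the preceding proposition. The organizing observation is that the definition of $D^\lomo$ is, up to simple rebracketing, just $(D^\r)^\l$ after substituting the description $D^\r = \{q \mid (q \in D^\mo) \RI q\}$ already recorded in that proposition. Once that identification is in hand, each bullet falls out in a few lines.

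For the first bullet I will show $D^\lomo = D^\rl$ for every $D \subseteq \Omega$. Unfolding the definition and uncurrying $((q \in D^\mo) \RI q) \wedge (p \RI q) \RI q$ into $((q \in D^\mo) \RI q) \RI ((p \RI q) \RI q)$ exhibits $D^\lomo$ as $\{p \mid \forall q \in D^\r . (p \RI q) \RI q\}$, which is $(D^\r)^\l = D^\rl = D^\loar$. Next, since $D^\mo$ is already upwards closed, $(D^\mo)^\mo = D^\mo$, hence by direct substitution $(D^\mo)^\lomo = D^\lomo$. Combining, $(\ph^\lomo \circ \ph^\mo)(D) = (D^\mo)^\lomo = D^\lomo = D^\loar$, which is the first bullet.

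For the second bullet I lean on items (ii') and (iii') of the previous proposition: a subobject $E \leq \Omega$ arises as the $J$-set of some closure operator iff $E^\rl = E$, and for any such $E$, the Galois connection gives that $D \leq E$ implies $D^\rl \leq E^\rl = E$. Combined with $D \leq D^\rl$ from (i'), this identifies $D^\loar = D^\rl$ as the least $J$-set containing $D$, i.e.\ the least closure operator $j$ with $D \leq J$.

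For the third bullet, if $D$ is monotone then $D^\mo = D$, so the first bullet gives $D^\lomo = D^\loar$ and the second gives that $D^\lomo$ is the least closure operator $j$ with $D \leq J$; since $J$ is also monotone, the set inclusion $D \leq J$ coincides with the pointwise inequality $\chi_D \leq j$ of maps $\Omega \to \Omega$, which is what the bullet asks. The only subtle step, and the one I expect to be the main technical care, is the first bullet's rebracketing: I must make sure the internal uncurrying $A \wedge B \RI C \;\Leftrightarrow\; A \RI (B \RI C)$ and the substitution $q \in D^\r \Leftrightarrow (q \in D^\mo) \RI q$ are applied in the right order; once that identification $D^\lomo = D^\rl$ is clean, everything else is short Galois-connection bookkeeping.
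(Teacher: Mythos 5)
Your argument is correct and is essentially the intended one: the paper states this corollary with no proof, treating it as an immediate unwinding of the Joyal proposition, and your key step --- currying $\bigl((q \in D^\mo) \RI q\bigr) \wedge (p \RI q) \RI q$ to identify $D^\lomo$ with $(D^\r)^\l = D^{\rl}$, followed by Galois-connection bookkeeping for the other two bullets --- is exactly that unwinding. The only point you leave implicit in the second bullet is that $D^{\rl}$ is itself $\rl$-closed, so that by (iii$'$) it genuinely is the $J$-set of a closure operator; this is the standard Galois-connection fact $(D^{\rl})^{\rl} = D^{\rl}$, obtained by applying the order-reversing $(-)^\r$ to $D \leq D^{\rl}$ and combining with $D^\r \leq (D^\r)^{\lr}$.
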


An immediate consequence of this corollary is that $\Lop(\E)$ has binary joins:

\begin{proposition}
If $j_1,j_2$ are local operators on $\E$, then the least local operator $j$ with $J_1 \cup J_2 \leq J$ is the join $j_1 \vee j_2$.
\end{proposition}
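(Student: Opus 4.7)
The plan is to reduce the problem to the Galois connection established in the previous proposition. The key observation I would exploit is that the assignment $j \mapsto J$ is both order-preserving and order-reflecting as a map from $\Lop(\E)$ to $\Sub(\Omega)$, so that computing the join of $j_1$ and $j_2$ in $\Lop(\E)$ becomes the essentially trivial computation of the join of $J_1$ and $J_2$ in $\Sub(\Omega)$, namely $J_1 \cup J_2$.

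First I would verify that $j \leq j'$ pointwise iff $J \leq J'$ in $\Sub(\Omega)$. Order-preservation is immediate from the internal description $J = \{p \mid j(p)\}$: if $j(p) \leq j'(p)$ for all $p$, then $p \in J$ forces $p \in J'$. For order-reflection, suppose $J \leq J'$. Since the operation $\ph^\r$ is order-reversing by (i'), applying (ii') gives $\Omega_{j'} = (J')^\r \leq J^\r = \Omega_j$. Internally this says every $j'$-closed truth value is also $j$-closed, so for any $p \in \Omega$ the element $j'(p)$ is a $j$-closed truth value lying above $p$; as $j(p)$ is by definition the least $j$-closed truth value above $p$ (using that $\Omega_j$ is the equalizer of $j$ and $\id$), it follows that $j(p) \leq j'(p)$.

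Once this order-isomorphism is in hand, the statement is a short chain of equivalences. Applying Corollary \ref{GeneralLo} to $D = J_1 \cup J_2$, I get a local operator $j$ whose associated subobject is $(J_1 \cup J_2)^{\loar}$, characterized as the least local operator with $J_1 \cup J_2 \leq J$. Then $j_1 \leq j$ and $j_2 \leq j$ iff (by the order-isomorphism) $J_1 \leq J$ and $J_2 \leq J$ iff $J_1 \cup J_2 \leq J$ in $\Sub(\Omega)$, so $j$ is precisely the least upper bound of $j_1$ and $j_2$ in $\Lop(\E)$, i.e.\ the join.

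The main obstacle I anticipate is the order-reflection half of the correspondence $j \leftrightarrow J$; its proof requires a small internal argument reading $\Omega_j$ as the poset of $j$-closed truth values and recognising $j(p)$ as the least such truth value above $p$. Modulo that step, the rest is formal manipulation of the preceding Corollary and of the poset $\Sub(\Omega)$.
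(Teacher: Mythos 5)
Your proof is correct and takes essentially the same route as the paper's: both reduce to the least-dense-local-operator construction of Corollary \ref{GeneralLo} together with the order-correspondence between local operators $j$ and their subobjects $J$. The only difference is that you make that correspondence explicit (including a short internal argument for the order-reflection half, recognizing $j(p)$ as the least $j$-closed truth value above $p$), whereas the paper simply invokes it as known background.
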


\begin{proof}
As $J_1 \leq J_1 \cup J_2 \leq J$, we have $j_1 \leq j_1 \vee j_2$ and symmetrically $j_2 \leq j_1 \vee j_2$.
If $k$ is a local operator with $j_1,j_2 \leq k$, i.e. $J_1 \cup J_2 \leq K$, then the leastness of $j$ yields $j \leq k$.
This proves that $j$ is the join of $j_1$ and $j_2$.
\end{proof}

\begin{lemma}
Let $j$ be a local operator on a topos $\E$, and let $D$ be a subobject of $\Omega$.
The following are equivalent.
\begin{itemize}
 \item $D$ is $j$-dense.
 \item $\Im(\chi_D) \leq J$.
\end{itemize}
\end{lemma}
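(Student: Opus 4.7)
The plan is to unfold both conditions to the same internal statement, namely $\forall p \in \Omega . j(p \in D)$. First I would recall from part (d) of the preceding proposition that a subobject $A$ of an object $X$ is $j$-dense precisely when its characteristic map $\chi_A : X \to \Omega$ factors through the inclusion $J \imto \Omega$; indeed, $J$ is by definition the subobject of $\Omega$ classifying the $j$-dense truth values (i.e.\ those $p$ for which $j(p)$ holds), so factoring $\chi_A$ through $J$ is the same as asserting $\forall x \in X . j(x \in A)$. Specialising $X = \Omega$ and $A = D$, the first bullet ``$D$ is $j$-dense'' is equivalent to $\chi_D : \Omega \to \Omega$ factoring through $J \imto \Omega$.

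Next I would relate this factorisation to the image condition. By elementary topos theory, $\chi_D$ has an epi-mono (image) factorisation $\Omega \onto \Im(\chi_D) \imto \Omega$, and since monos in a topos are precisely the right class of the (epi, mono) orthogonal factorisation system, $\chi_D$ factors through the mono $J \imto \Omega$ if and only if the mono $\Im(\chi_D) \imto \Omega$ does, i.e.\ if and only if $\Im(\chi_D) \leq J$ in $\Sub(\Omega)$. Combining this with the previous paragraph immediately gives the desired equivalence.

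Alternatively, and perhaps more transparently, I would argue purely internally: $\Im(\chi_D) \leq J$ is the same as $\forall q \in \Omega . q \in \Im(\chi_D) \RI j(q)$, and since every element of $\Im(\chi_D)$ is, by definition of image, of the form $\chi_D(p) = (p \in D)$ for some $p \in \Omega$, this reduces to $\forall p \in \Omega . j(p \in D)$, which is exactly what it means for $D$ to be $j$-dense. There is no genuine obstacle here; the only thing one must be careful about is the internal/external translation of ``image'', which is why I would prefer to phrase the argument in the internal logic, using the factorisation system characterisation of the image as the least subobject of $\Omega$ through which $\chi_D$ factors.
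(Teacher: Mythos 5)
Your argument is correct; the paper itself offers no proof beyond ``elementary topos theory'', and your two versions (the epi-mono factorisation argument and the internal unfolding to $\forall p \in \Omega . j(p \in D)$) are exactly the standard way to fill that in. One small slip: the fact you want is part (a) of the preceding proposition ($D$ is $j$-dense iff $\chi_D$ factors via $J$), not part (d) — though (d), which identifies $\Pdense{j}(X)$ with $J^X$, does also yield it by a slightly longer route, so nothing in the argument actually breaks.
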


\begin{proof}
Elementary topos theory.
\end{proof}

\begin{construction}\label{LeastDense}
Internally in a topos, if $A \subseteq B$ is an inclusion of sets, then there is the least local operator $j$ for which the inclusion $A \subseteq B$ is $j$-dense: namely, the least local operator $j$ with $\Im(\chi_{A \subseteq B}) \subseteq J$. (We use Corollary \ref{GeneralLo}.)

For a logical description of this local operator, we write
\begin{align*}
\Im(\chi_{A \subseteq B}) &= \{p \in \Omega \mid \exists x \in B . p = \chi_{A \subseteq B}(x)\} \\
&= \{p \in \Omega \mid \exists x \in B . p \BI x \in A\},
\end{align*}
so that
\begin{align*}
\Im(\chi_{A \subseteq B})^\mo &= \{q \in \Omega \mid \exists p \in \Omega . p \in \Im(\chi_{A \subseteq B}) \wedge (p \RI q)\} \\
&= \{q \in \Omega \mid \exists p \in \Omega . \exists x \in B (p \BI x \in A) \wedge (p \RI q)\} \\
&= \{q \in \Omega \mid \exists x \in B . x \in A \RI q\}.
\end{align*}
\end{construction}
%

%
%
%

\begin{proposition}\label{EveryTVset}
Internally in a topos, every subset of $\Omega$ is the image of a characteristic function.
\end{proposition}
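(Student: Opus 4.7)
The plan is to construct the required characteristic function using $D$ itself as the indexing object. Specifically, given $D \subseteq \Omega$ internally, I would define the subset
$$A := \{p \in D \mid p\} \subseteq D,$$
where ``$p$'' appearing in the comprehension is the standard identification of a truth value with the proposition it names (so $A = \{p \in D \mid p = \top\}$). The claim is that the characteristic function $\chi_{A \subseteq D}: D \to \Omega$ has image exactly $D$.

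The key step is to verify that for each $p \in D$ one has $\chi_A(p) \BI p$. Unwinding, $\chi_A(p) = (p \in A) = (p \in D \wedge p)$, which under the hypothesis $p \in D$ simplifies to $p$ itself. This is the small but essential point: a truth value $p$ satisfies $p \BI (p = \top)$, so testing membership in $A$ just recovers $p$.

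Given this identity, the image
$$\Im(\chi_A) = \{r \in \Omega \mid \exists p \in D . r \BI \chi_A(p)\} = \{r \in \Omega \mid \exists p \in D . r \BI p\}$$
equals $D$: every $r \in D$ is witnessed by $p := r$, and conversely any $r$ in the image has some $p \in D$ with $r \BI p$, whence $r = p \in D$ (since $\BI$ in $\Omega$ is equality).

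I do not anticipate any serious obstacle; the only subtlety worth highlighting is the internal identification between a truth value and the proposition it names, which is what makes the choice $A = \{p \in D \mid p\}$ succeed. The argument is entirely within the internal language of the topos and requires no ingredients beyond the definition of $\Im$ and the subobject classifier.
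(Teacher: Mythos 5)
Your proposal is correct and is essentially the same argument as the paper's: the paper also takes the characteristic function of $D \cap \{p \in \Omega \mid p\} \subseteq D$ and computes its image via the identity $(d \in D \cap \{p \mid p\}) = d$ for $d \in D$. The only cosmetic difference is that you phrase the key step as $\chi_A(p) \BI p$ while the paper writes it as an equality in $\Omega$, which is the same thing.
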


\begin{proof}
Let $D$ be a subset of $\Omega$.
Let $c: D \to \Omega$ be the characteristic function of the inclusion $D \cap \{p \in \Omega \mid p\} \subseteq D$.
Then
\begin{align*}
\Im(c) &= \{p \in \Omega \mid \exists d \in D . p = c(d)\} \\
&= \{p \in \Omega \mid \exists d \in D . p = (d \in D \cap \{p \in \Omega \mid p\})\} \\
&= \{p \in \Omega \mid \exists d \in D . p = d\} \\
&= \{p \in \Omega \mid p \in D\} = D,
\end{align*}
as desired.
\end{proof}

\begin{corollary}\label{cor:EveryLoc}
Internally in a topos, every local operator is the least one for which some inclusion of sets (in fact an inclusion into $\Omega$) is dense. \qed
\end{corollary}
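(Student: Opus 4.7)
The plan is to assemble this corollary directly from the machinery already set up: Proposition \ref{EveryTVset} represents any subobject of $\Omega$ as the image of a characteristic function, and Construction \ref{LeastDense} tells us that the least local operator making an inclusion $A \subseteq B$ dense depends on the data $A \subseteq B$ only through $\Im(\chi_{A \subseteq B}) \in \Sub(\Omega)$. So the task reduces to finding, for each local operator $j$, some $D \in \Sub(\Omega)$ such that the least local operator $k$ with $D \leq K$ equals $j$, and then realizing $D$ as the image of a characteristic function of an inclusion into $\Omega$.

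The obvious candidate is $D := J$, the subobject of $j$-dense truth values. First I would observe, by the general correspondence between local operators and their dense-subobjects, that for any local operator $k$ one has $j \leq k$ iff $J \leq K$. Combining this with Corollary \ref{GeneralLo} (which produces the least closure operator $k$ with $J \leq K$) yields at once that the least such $k$ is $j$ itself: on the one hand $k \leq j$ by leastness since $J \leq J$, on the other hand $j \leq k$ since by construction $J \leq K$.

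It remains to exhibit $J$ as the image of the characteristic map of an inclusion of sets; this is exactly Proposition \ref{EveryTVset} (with $D := J$), which in fact gives such an inclusion into $\Omega$. Plugging the resulting inclusion into Construction \ref{LeastDense}, the least local operator making it dense is determined by the image of its characteristic function, which is $J$, and hence coincides with $j$ by the previous paragraph.

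There is really no obstacle: all the non-trivial work has been carried out by Joyal's Galois connection and the explicit form of $\Im(\chi_{A \subseteq B})^{\mo}$ in Construction \ref{LeastDense}. The corollary is just the concatenation of these two facts together with Proposition \ref{EveryTVset}, so the proof will amount to a brief paragraph stringing together the references.
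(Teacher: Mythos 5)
Your proposal is correct and follows essentially the route the paper intends (the corollary is stated with no written proof precisely because it is the concatenation of Proposition \ref{EveryTVset}, Construction \ref{LeastDense}, and the Joyal correspondence $J^{\rl}=J$ for a local operator $j$). Taking $D=J$ and checking that the least local operator $k$ with $J\leq K$ is $j$ itself is exactly the intended argument.
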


\chapter{Subtoposes of $\Eff$}
\section{Representation by sequences of collections of number sets}
In our treatment, we will present examples of subtoposes of $\Eff$ as infinite sequences of collections of natural number sets (i.e. as functions $\N \to \PPN$).
This section is devoted to explaining how we can represent subtoposes as such sequences, discussing some properties of the representation and describing relevant (pre)orderings on the representatives.

\subsection{Objects of internal functions $\Omega \to \Omega$}
We present here three objects of internal functions $\Omega \to \Omega$ which we will deal with.
Recall the notation `$(X,\sr{r}{\sim})$' from Construction \ref{SubClC[P]}(d).

\begin{proposition}
The object $(\PN^\PN,\sr{\ext}{\BI})$, together with the obvious evaluation map, is the exponential $\Omega^\Omega$.
\end{proposition}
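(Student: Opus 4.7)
The plan is to reduce this to two pieces of machinery already established: Construction \ref{con:simply}, which gives a criterion for when the "simple" data $(\PN^\PN,\sim)$ together with the evaluation map forms an exponential in $\Eff$; and Construction \ref{SubClC[P]}(b), which guarantees (in an internal sense) that every map $(X,\sim)\to(\Sigma,\BI)$ is representable by a $\C$-morphism. Since $\Omega=(\PN,\BI)$, our target object is $\Omega^{\Omega}$, i.e. the case where both the source and the codomain of the exponential are $(\Sigma,\BI)$.

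First I would verify that the PER $\sr{\ext}{\BI}$ on the set $\PN^\PN$ coincides with the exponential PER prescribed by Construction \ref{con:simply}. The ambient PER $\BI$ on $\PN^\PN$ is pointwise biimplication, $f \BI g \eqq \forall p^\PN . f(p)\BI g(p)$, which is trivially reflexive. Decorating by extensionality gives
$$f \sr{\ext}{\BI} g \;\eqq\; \ext(f) \wedge \forall p^\PN . f(p)\BI g(p),$$
so the self-relation $\ex(f) = f \sr{\ext}{\BI} f$ collapses to $\ext(f)$. Substituting this into the defining formula $f\sim g = \ex(f) \wedge \forall x.(x\sim x \RI f(x)\sim g(x))$ from Construction \ref{con:simply} -- and using that $p\BI p$ is trivially realized -- yields exactly $\sr{\ext}{\BI}$. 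Thus the two PERs agree, and in particular $(\PN^\PN,\sr{\ext}{\BI})$ together with the obvious evaluation map is the data of Construction \ref{con:simply} applied with $(X,\sim)=(Y,\sim)=(\Sigma,\BI)$.

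Next, by Construction \ref{con:simply} this data is an exponential $\Omega^\Omega$ precisely if the surjectivity condition \eqref{BarRepSur} holds, namely
$$\EffTrip \models \forall F \in \Sigma^{\PN\times\PN}.\; \fr(F) \RI \exists f \in \PN^\PN.\; \ext(f) \wedge F \sim \ol{f}.$$
But this is exactly the statement in Construction \ref{SubClC[P]}(b), instantiated at $(X,\sim)=(\Sigma,\BI)$: every map into $\Omega$ is representable by a morphism, even in the strong internal sense. That statement is already part of the topos structure on $\Eff$, so we are done.

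The proof is therefore essentially bookkeeping; the only mildly subtle point is untangling the apparently self-referential occurrence of $\ex(f)$ in the PER of Construction \ref{con:simply}, and verifying that in this reflexive ambient setting ($\sim$ on $\Sigma$ being $\BI$, trivially realized on the diagonal) it collapses to the extensionality predicate $\ext(f)$. Once this identification is made, the universal property is handed to us by \ref{SubClC[P]}(b) without further work.
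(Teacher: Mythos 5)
Your proof is correct and follows essentially the same route as the paper: the paper simply quotes Remark \ref{rmk:OmegaPowerSimple} (which in turn combines Construction \ref{con:simply} with \ref{SubClC[P]}(b)) and then notes that the resulting PER is isomorphic to $\sr{\ext}{\BI}$, which is exactly what you do after unpacking the Remark into its two ingredients. Your observation that $\ex(f)$ in Construction \ref{con:simply} must be read as $\ext(f)$, and that the guard $p \BI p$ is trivially realized so the guarded implication collapses, is precisely the ``clearly isomorphic'' step the paper elides.
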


\begin{proof}
By Remark \ref{rmk:OmegaPowerSimple}, the object $(\PN^\PN,\sim)$ and the obvious evaluation map, where
$$h \sim h' = \ext(h) \wedge \forall p [(p \BI p) \RI h(p) \BI h'(p)],$$
are the exponential $\Omega^\Omega$.
But clearly, the predicate $\sr{\ext}{\BI}: \PN^\PN \times \PN^\PN \to \PN$ is isomorphic to $\sim$.
The claim follows.
\end{proof}

\begin{definition}
Let $\C$ be a cartesian closed category, and let $P$ be a $\C$-tripos.
Given a term $h: \Sigma^\Sigma$ in the language of $P$, we define a $P$-formula
$$\mon(h) := \forall p,q^\Sigma . (p \RI q) \RI (h(p) \RI h(q)).$$
Let $h: \Sigma \to \Sigma$ be a morphism in $\C$.
We say that $h$ is \defn{monotone} if $P \models \mon(h)$.
Note that $h$ is monotone if and only if it represents a monotone map $\Omega \to \Omega$.
\end{definition}

\begin{notation-proposition}
Write $\Mo = (\PN^\PN,\sr{\mon}{\BI})$.
The function $\id: \PN^\PN \to \PN^\PN$ represents a mono $\Mo \to \Omega^\Omega$, and this mono gives the subobject of internal monotone functions $\Omega \to \Omega$.
\end{notation-proposition}

\begin{proof}
By Construction \ref{SubClC[P]}(d), the mono $\id: (\PN^\PN,\sr{\ext,\mon}{\BI}) \to (\PN^\PN,\sr{\ext}{\BI})$ gives the desired subobject.\footnote{Here, the notation `$\sr{\ext,\mon}{\BI}$' should be read as putting `$\mon$' above `$\sr{\ext}{\BI}$'.}
But clearly, the predicate $\sr{\mon}{\BI}$ is isomorphic to $\sr{\ext,\mon}{\BI}$.
The claim follows.
\end{proof}

\begin{notation-proposition}
Write $\Lo = (\PN^\PN,\sr{\lop}{\BI})$.
The function $\id: \PN^\PN \to \PN^\PN$ represents a mono $\Lo \to \Mo$, and this mono gives the subobject of internal local operators $\Omega \to \Omega$.
\end{notation-proposition}

\begin{proof}
By Construction \ref{SubClC[P]}(d), the mono $\id: (\PN^\PN,\sr{\mon,\lop}{\BI}) \to (\PN^\PN,\sr{\mon}{\BI})$ gives the desired subobject.\footnote{Here, $\lop: \PN^\PN \to \PN$ is a predicate representing the subobject $\Lo \subseteq \Omega^\Omega$.}
But clearly, the predicate $\sr{\lop}{\BI}$ is isomorphic to $\sr{\mon,\lop}{\BI}$.
The claim follows.
\end{proof}

\begin{remark}
Recall from Section \ref{sec:Joyal} that we have a map $\ph^\mo:\Omega^\Omega \onto \Mo$, which is left adjoint to the inclusion $\Mo \imto \Omega^\Omega$.
Likewise we have a map $\ph^\lomo: \Mo \onto \Lo$, and it is left adjoint to the inclusion $\Lo \imto \Mo$.
Finally we have a map $\ph^\loar: \Omega^\Omega \onto \Lo$, which is the composite $\ph^\lomo \circ \ph^\mo: \Omega^\Omega \onto \Lo$.
\end{remark}

\subsection{NNO-indexed joins in $\Omega^\Omega$ and $\Mo$}
We will define a map $\gr_\ph: \nabla(\PPN) \to \Omega^\Omega$, and show that internally every function $\Omega \to \Omega$ is an $\NNO$-indexed join of functions in the image of this map.

\begin{proposition}
Given $\A \in \PPN$, define a function $\gr_\A: \PN \to \PN$ by
$$\gr_\A(p) = \exists_{A \in \A} (A \BI p).$$
\claim{The function $\gr_\ph: \PPN \to \PN^\PN$ represents a map $\nabla(\PPN) \to \Omega^\Omega$.}
\end{proposition}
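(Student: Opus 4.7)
The plan is to apply the criterion \eqref{tt1} from Construction \ref{con:TriposToTopos}: I must show
$$\EffTrip \models \forall \A, \A'^{\PPN} . \A \nabeq \A' \RI \gr_\A \sr{\ext}{\BI} \gr_{\A'}.$$
Because $\nabeq$ has realizers only on the diagonal (and there is realized by any code of $\N$), this reduces to exhibiting a \emph{uniform} realizer for the schema
$$\forall \A \in \PPN . \ext(\gr_\A) \wedge (\gr_\A \BI \gr_\A).$$
The second conjunct is trivial (take the identity on both sides). So the real content is uniform extensionality: a single number $z_0$ such that, for every $\A \in \PPN$ and every $p, q \in \PN$, if $\la r_1, r_2 \ra \in (p \BI q)$ then $z_0 \cdot \la r_1, r_2 \ra \in (\gr_\A(p) \BI \gr_\A(q))$.

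For this I unfold $\gr_\A(p) = \bigcup_{A \in \A}\bigl((A \RI p) \wedge (p \RI A)\bigr)$: any realizer of $\gr_\A(p)$ is a pair $\la e, e' \ra$ such that, for some (unnamed) $A \in \A$, $e$ tracks $A \to p$ and $e'$ tracks $p \to A$. Given also $\la r_1, r_2 \ra \in (p \BI q)$, the pair $\la r_1 \cdot e, \ e' \cdot r_2 \ra$ (using composition of partial recursive functions) is a realizer of $\gr_\A(q)$, witnessed by the \emph{same} $A$: indeed $r_1 \cdot e$ tracks $A \to q$ and $e' \cdot r_2$ tracks $q \to A$. The direction $\gr_\A(q) \to \gr_\A(p)$ is symmetric, using $r_2, r_1$ in place of $r_1, r_2$.

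The key observation, which makes uniformity in $\A$ go through, is that the witness $A \in \A$ need never be named: the realizer just transports the implication data by pre- and post-composition, and the same $A$ is reused on the other side. Since all the plumbing is a fixed piece of partial-recursive code, the $s$-$m$-$n$ theorem packages it into a single $z_0$. I do not anticipate any genuine obstacle; the only point to watch is that extensionality must be established uniformly in $\A$ (not just for each fixed $\A$), and the above realizer plainly is.
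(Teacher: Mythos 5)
Your proof follows exactly the paper's route: reduce via \eqref{tt1} (using that $\nabeq$ is realized only on the diagonal) to showing a uniform realizer of $\forall \A \in \PPN . \ext(\gr_\A)$, then unfold $\gr_\A$; where the paper stops at ``the last one is evident,'' you helpfully make the realizer explicit by pre- and post-composing with the components of $p \BI q$ while keeping the unnamed witness $A$ fixed. Correct, and essentially the same approach.
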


\begin{proof}
We have to verify
$$\realizes \forall \A,\B \in \PPN . \A \nabeq \B \RI \gr_\A \sr{\ext}{\BI} \gr_\B,$$
equivalently that
$$\realizes \forall \A \in \PPN . \ext(\gr_\A),$$
i.e. that
$$\realizes \forall \A \in \PPN \forall p,q \in \PN . [p \BI q] \RI [\gr_\A(p) \BI \gr_\A(q)],$$
i.e. that
$$\realizes \forall \A \in \PPN \forall p,q \in \PN . [p \BI q] \RI [\exists_{A \in \A} (A \BI p) \BI \exists_{A \in \A} (A \BI q)].$$
But the last one is evident.
\end{proof}

\begin{proposition}\label{prop:AllAreJoin}
Internally in $\Eff$, every function $\Omega \to \Omega$ is an $\NNO$-indexed join of functions
in the image of the map $\gr_\ph: \nabla(\PPN) \to \Omega^\Omega$.
\end{proposition}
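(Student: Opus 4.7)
The plan is to unpack the assertion via Corollary \ref{LogicPCP} and then exhibit the witnessing sequence together with uniform realizers at the tripos level. Internally the claim reads
$$\Eff \models \forall h \in \Omega^\Omega \, \exists s \in \nabla(\PPN)^\NNO \, \forall p \in \Omega \, [h(p) \BI \exists n \in \NNO \, \gr_{s(n)}(p)],$$
so for a fixed extensional $h : \PN \to \PN$ (representing the internal function $\Omega \to \Omega$) I need to produce a witnessing sequence $s$ and to verify the pointwise biconditional.

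Given such an $h$, I would set $\A_h : \N \to \PPN$ by $\A_h(n) = \{p \in \PN \mid n \in h(p)\}$. Because $\nabla(\PPN)$ carries the nabla equality and $\NNO$ the discrete equality $\{\cdot\}$, any set-theoretic function $\N \to \PPN$ automatically represents a morphism $\NNO \to \nabla(\PPN)$ in $\Eff$, so $\A_h$ supplies the required $s$. It then remains to realize both directions of $h(p) \BI \exists n \in \NNO \, \gr_{\A_h(n)}(p)$ in $\EffTrip$, uniformly in $p$ and in a realizer of $\ext(h)$.

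The forward direction should be essentially free: from $e \in h(p)$ one has $p \in \A_h(e)$, so instantiating the existential in $\gr_{\A_h(e)}(p) = \exists_{q \in \A_h(e)}(q \BI p)$ at $q := p$ and pairing a standard realizer of $p \BI p$ with $e$ produces the required realizer of the outer existential; no appeal to $\ext(h)$ is needed here. The backward direction is where extensionality enters: given a realizer providing $n \in \N$, a witness $q \in \A_h(n)$ with $n \in h(q)$, and a realizer of $q \BI p$, I would feed the last into a fixed code of $\ext(h)$ to obtain a realizer of $h(q) \BI h(p)$ and then transport $n$ to an element of $h(p)$. The only point requiring care, and the main bookkeeping obstacle, is to keep the whole construction uniform in $h$ via its $\ext$-realizer so that the external argument genuinely certifies the internal $\forall h \, \exists s$ assertion rather than an external schema.
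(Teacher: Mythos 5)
Your proposal is correct and follows essentially the same strategy as the paper: reduce the internal statement to a realizability claim, manufacture the witnessing sequence externally from $h$, and give uniform realizers for both directions of the biconditional (the uniformity in the $\ext(h)$-realizer, which you rightly flag, does go through). The only difference is the choice of witness — you take $\A_h(n) = \{p \in \PN \mid n \in h(p)\}$ where the paper takes $h^*(n) = \{A \in \PN \mid n \in \dbl \forall q . (A \BI q) \RI h(q) \dbr\}$ — which merely shifts the appeal to $\ext(h)$ from the forward implication (as in the paper) to the backward one (as in yours); both choices work.
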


\begin{proof}
That is, we prove
\begin{eqnarray*}
\Eff & \models & \forall h \in \Omega^\Omega \exists \theta \in \nabla(\PPN)^N . h = \bigvee_{n \in N} \gr_{\theta(n)}   \\
     & \eqq    & \forall h \exists \theta \forall p \in \Omega . h(p) = (\bigvee_{n \in N} \gr_{\theta(n)})(p)           \\
     & \eqq    & \forall h \exists \theta \forall p . h(p) \BI \exists n \in N [\gr_{\theta(n)}(p)],
\end{eqnarray*}
i.e. that
\begin{equation}\label{aaj1}
\realizes \forall h \in \PN^\PN . \ext(h) \RI \exists \theta \forall p . h(p) \BI \exists n \in \N [\{n\} \wedge \exists_{A \in \theta(n)} (A \BI p)].
\end{equation}
Given $h \in \PN^\PN$, define a function $h^*: \N \to \PPN$ by
$$h^*(n) = \{A \in \PN \mid n \in \dbl \forall q . (A \BI q) \RI h(q) \dbr\}.$$
Then we have the realization
\begin{align*}
& \lambda e . \lambda a . \la \lambda x . e(x)_1(a) , \la \id,\id \ra \ra \\
& \realizes \forall h \in \PN^\PN . \ext(h) \RI \forall p . h(p) \RI \exists n \in \N [\{n\} \wedge \exists_{A \in \PN, n \in \dbl \forall q . (A \BI q) \RI h(q) \dbr} (A \BI p)]
\\
& = \forall h \in \PN^\PN . \ext(h) \RI \forall p . h(p) \RI \exists n \in \N [\{n\} \wedge \exists_{A \in h^*(n)} (A \BI p)]
\end{align*}
and `conversely' the realization
\begin{align*}
& \lambda e . \lambda \la n,b \ra . n(b) \\
& \realizes \forall h \in \PN^\PN . \ext(h) \RI \forall p . h(p) \LI \exists n \in \N [\{n\} \wedge \exists_{A \in \PN, n \in \dbl \forall q . (A \BI q) \RI h(q) \dbr} (A \BI p)]
\\
& = \forall h \in \PN^\PN . \ext(h) \RI \forall p . h(p) \LI \exists n \in \N [\{n\} \wedge \exists_{A \in h^*(n)} (A \BI p)].
\end{align*}
Clearly \eqref{aaj1} follows from these realizations.
\end{proof}

In passing, we show a remarkable order-theoretic property regarding $\NNO$-indexed joins that the image of (a slight restriction of) the map $\gr_\ph$ enjoys in the internal poset $(\Omega^\Omega,\leq)$.

\begin{definition}
Let $I$ be a set.
Let $P$ be a poset with $I$-indexed joins.
We say that an element $p \in P$ is \defn{inaccessible by $I$-indexed joins} if
$$\forall p_\ph \in P^I . p \leq \bigvee_{i \in I} p_i \RI \exists i \in I . p \leq p_i.$$
\end{definition}

\begin{proposition}\label{prop:Irred}
Internally in $\Eff$, every element in the image of the map $\gr_\ph: \nabla(\PsPN) \to \Omega^\Omega$ (notice the modified domain!\footnote{The author does not know whether the result still holds for $\nabla(\PPN)$. The reader should note that the image of the map $\gr_\ph: \nabla(\PPN) \to \Omega^\Omega$ differs from that of $\gr_\ph: \nabla(\PsPN) \to \Omega^\Omega$, since $\Eff \not\models \forall \A \in \nabla(\PPN) \exists \B \in \nabla(\PsPN) . \gr_\A = \gr_\B$ (easy to verify).}) is inaccessible by $I$-indexed joins.
\end{proposition}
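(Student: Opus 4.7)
The plan is to exhibit an explicit uniform realizer. Unfolding the statement via Corollary~\ref{LogicPCP}, it suffices to realize
$$\forall \A \in \nabla(\PsPN) . \forall h_\ph \in (\Omega^\Omega)^\NNO . \bigl[\gr_\A \leq \textstyle\bigvee_n h_n\bigr] \RI \bigl[\exists n . \gr_\A \leq h_n\bigr].$$
Externally this means: given an $\A \in \PsPN$ (trivially realized, as $\A$ comes from $\nabla$); a function $h: \N \to \PN^\PN$ with a tracking $\alpha$ such that $\alpha(n)$ realizes $\ext(h_n)$; and a uniform-in-$p$ partial recursive $\psi$ with $\psi(e) = \la n,g \ra$ and $g \in h_n(p)$ whenever $e \in \gr_\A(p)$ --- we must produce a pair $\la n_*,\chi \ra$ with $\chi$ a uniform-in-$p$ partial recursive function mapping $\gr_\A(p)$ into $h_{n_*}(p)$.

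The key trick is a canonical input to $\psi$: let $\ol{i}$ be an index of the identity function and set $i_0 := \la \ol{i},\ol{i}\ra$. For every $A \in \PN$ one has $i_0 \in A \BI A$, while conversely $i_0 \in A \BI p$ forces $A = p$. Hence $i_0 \in \gr_\A(p) = \bigcup_{A \in \A}(A \BI p)$ \emph{precisely} when $p \in \A$. Since $\A$ is non-empty, $\psi(i_0)$ is defined; set $\psi(i_0) =: \la n_*,g_* \ra$. Because $\psi$'s output depends only on its argument, the \emph{same} pair $\la n_*,g_*\ra$ realizes $\bigvee_n h_n(p)$ for every $p \in \A$, forcing $g_* \in h_{n_*}(A)$ for all $A \in \A$ simultaneously.

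It remains to define $\chi$. Given $p$ and $f \in \gr_\A(p)$, pick the (unknown) $A \in \A$ for which $f \in A \BI p$. Apply $\alpha(n_*)$ --- a realizer of $\ext(h_{n_*})$ --- to $f$ to obtain an element of $h_{n_*}(A) \BI h_{n_*}(p)$; its first component maps $g_* \in h_{n_*}(A)$ into $h_{n_*}(p)$. The result depends computably only on $f$ (with $n_*, g_*$ fixed), so this defines $\chi$ uniformly in $f$ and $p$, completing the witness $\la n_*, \chi \ra$.

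The main obstacle is that one never gets to see the witnessing $A \in \A$: the $\nabla$ construction discards all information about $\A$ beyond its non-emptiness, and a realizer $f \in \gr_\A(p)$ does not declare which $A$ it comes from. The resolution is precisely extensionality of $h_{n_*}$, which lets us transport $g_*$ from $h_{n_*}(A)$ to $h_{n_*}(p)$ computably from $f$ alone, never needing to identify $A$. The restriction from $\PPN$ to $\PsPN$ matters at exactly one point: non-emptiness of $\A$ is what guarantees $\psi(i_0)$ is defined, and thus delivers the canonical witness $(n_*, g_*)$.
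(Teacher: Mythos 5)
Your proof is correct and is essentially identical to the paper's: the paper's realizer is $\lambda e . \lambda f . \LET \la n,x \ra = f(\la\id,\id\ra) \IN \la n , \lambda b . e(n)(b)_1(x) \ra$, which is exactly your recipe of feeding the canonical pair of identity indices to the given realizer to extract a uniform witness $\la n_*,g_*\ra$ with $g_* \in h_{n_*}(A)$ for all $A \in \A$, and then transporting via the extensionality realizer of $h_{n_*}$. Your accompanying explanation of why non-emptiness of $\A$ is the one place the restriction to $\PsPN$ is used matches the paper's footnote as well.
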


\begin{proof}
That is, we show the satisfaction
\begin{eqnarray*}
\Eff & \models & \forall \A \in \nabla(\PsPN) \forall h_\ph \in (\Omega^\Omega)^\NNO . \gr_\A \leq \bigvee_{n \in N} h_n \RI \exists n \in \NNO . \gr_\A \leq h_n \\
     & \eqq    & \forall \A \forall h_\ph . [\forall p^\Omega . \gr_\A(p) \RI \exists n^\NNO h_n(p)] \RI [\exists n \forall p . \gr_\A(p) \RI h_n(p)],
\end{eqnarray*}
i.e. that
\begin{equation*}
\begin{array}{l}
\realizes \forall \A \in \PsPN \forall h_\ph \in (\PN^\PN)^\N . \ext(h_\ph) \RI \\
([\forall p . \exists_{A \in \A} (A \BI p) \RI \exists n . \{n\} \wedge h_n(p)] \RI [\exists n . \{n\} \wedge \forall p . \exists_{A \in \A} (A \BI p) \RI h_n(p)]).
\end{array}
\end{equation*}
But $\lambda e . \lambda f . \LET \la n,x \ra = f(\la\id,\id\ra) \IN \la n , \lambda b . e(n)(b)_1(x) \ra$ realizes this.\footnote{Here, `$\LET \ldots \IN \ldots$' is some functional programming notation for introducing aliases. In the usual lambda notation, we mean $\lambda e . \lambda f . \la f(\la\id,\id\ra)_1 , \lambda b . e(f(\la\id,\id\ra)_1)(b)_1(f(\la\id,\id\ra)_2) \ra$.}
We are done.
\end{proof}

The author learnt the previous two propositions from Jaap van Oosten - but for a different setting, to which we now turn our attention.
Given $\A \in \PPN$, define a function $G_\A: \PN \to \PN$ by
$$G_\A(p) = \exists_{A \in \A} (A \RI p).$$
Then the function $G_\ph: \PPN \to \PN^\PN$ represents the composite map
$$\nabla(\PPN) \xto{\gr_\ph} \Omega^\Omega \xto{\ph^\mo} \Mo,$$
and the statements and proofs of Propositions \ref{prop:AllAreJoin} and \ref{prop:Irred} carry easily over to the setting with the function $G_\ph$ and the internal poset $(\Mo,\leq)$ replacing $\gr_\ph$ and $(\Omega^\Omega,\leq)$.
However it is perhaps more neat to deduce these variants from the original statements, as follows.

First, the following is the variant on Proposition \ref{prop:AllAreJoin}.

\begin{proposition}
Internally in $\Eff$, every monotone function $\Omega \to \Omega$ is an $\NNO$-indexed join of functions
in the image of the map $\nabla(\PPN) \xto{\gr_\ph} \Omega^\Omega \xto{\ph^\mo} \Mo$.
\end{proposition}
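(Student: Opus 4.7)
The plan is to deduce this variant directly from Proposition \ref{prop:AllAreJoin} by applying the reflector $\ph^\mo:\Omega^\Omega\onto\Mo$. The key formal fact I will use is that $\ph^\mo$ is left adjoint to the inclusion $\Mo\imto\Omega^\Omega$, so it preserves $\NNO$-indexed joins (left adjoints preserve colimits), and its composite with the inclusion is (up to isomorphism) the identity on $\Mo$. The second key input, already recorded just before the statement, is the identity $\ph^\mo\circ\gr_\ph = G_\ph$, i.e.\ the map $G_\ph:\nabla(\PPN)\to\Mo$ represents the composite $\nabla(\PPN)\xto{\gr_\ph}\Omega^\Omega\xto{\ph^\mo}\Mo$ whose image we want to hit.

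Concretely, I would argue internally as follows. Fix a monotone $h\in\Mo$, and view it as an element of $\Omega^\Omega$ via the inclusion. By Proposition \ref{prop:AllAreJoin}, there exists $\theta\in\nabla(\PPN)^\NNO$ such that $h=\bigvee_{n\in\NNO}\gr_{\theta(n)}$ holds in $\Omega^\Omega$. Applying $\ph^\mo$ and using that it preserves $\NNO$-indexed joins and fixes the monotone element $h$, I obtain
\[
h \;\cong\; (h)^\mo \;=\; \Bigl(\bigvee_{n\in\NNO}\gr_{\theta(n)}\Bigr)^{\!\mo} \;=\; \bigvee_{n\in\NNO}(\gr_{\theta(n)})^\mo \;=\; \bigvee_{n\in\NNO} G_{\theta(n)}
\]
in $\Mo$, which is the desired representation.

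The only subtle point, and what I expect to be the main (minor) obstacle, is justifying rigorously, inside the internal logic of $\Eff$, that $\ph^\mo$ preserves $\NNO$-indexed joins. Externally this is automatic since it is a left adjoint, but internally one has to either (i) appeal to the fact that adjunctions between internal posets are interpreted soundly and thus the left adjoint internally preserves joins, or (ii) unfold definitions and verify by hand that for any $\NNO$-indexed family $(f_n)$ in $\Omega^\Omega$ the truth value $(\bigvee_n f_n)^\mo(p)$ agrees with $\bigvee_n (f_n^\mo)(p)$ for every $p\in\Omega$. Either route is routine, so I would state the lemma separately and just invoke it. With that in hand, the proof is a one-line reduction to Proposition \ref{prop:AllAreJoin}.
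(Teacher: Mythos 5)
Your proposal is correct and matches the paper's own argument, which likewise deduces the statement from Proposition \ref{prop:AllAreJoin} by observing that the reflection $\ph^\mo : \Omega^\Omega \to \Mo$ is surjective (fixes monotone elements) and preserves joins. The paper leaves the preservation of joins as an unstated routine fact, exactly as you flag.
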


\begin{proof}
Immediate from Proposition \ref{prop:AllAreJoin}, as the reflection $\ph^\mo: \Omega^\Omega \to \Mo$ is surjective and and preserves joins.
\end{proof}

For the variant on Proposition \ref{prop:Irred}, we use the following general argument.

\begin{proposition}
Let $I$ be a set, and let $(P,\leq)$ be a poset with $I$-indexed joins.
Let $Q \subseteq P$ be a reflective subposet, \emph{closed under $I$-indexed joins in $P$}.
Denote by $l: P \onto Q$ the reflection.
Let $B \subseteq P$.
Suppose that every element of $B$ is inaccessible by $I$-indexed joins in $P$.
Then every element of $l(B)$ is inaccessible by $I$-indexed joins in $Q$.
\end{proposition}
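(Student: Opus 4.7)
The plan is straightforward: unpack the hypotheses using the adjunction $l \dashv (\text{inclusion})$ and the fact that $I$-indexed joins in $Q$ agree with those in $P$.

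First I would fix an element $p \in B$ and a family $(q_i)_{i \in I}$ in $Q$ with $l(p) \leq_Q \bigvee^Q_{i \in I} q_i$, and aim to produce some $i \in I$ with $l(p) \leq_Q q_i$. The key observation is that because $Q$ is assumed closed under $I$-indexed joins in $P$, the join $\bigvee^Q_{i \in I} q_i$ coincides with $\bigvee^P_{i \in I} q_i$, so we may regard the inequality as taking place in $P$.

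Next I would use the reflection. Since $l$ is left adjoint to the inclusion $Q \hookrightarrow P$, for any $q \in Q$ we have $l(p) \leq_Q q \iff p \leq_P q$ (this is the defining unit/counit property of a reflective subposet). Applying this to $q = \bigvee^P_{i \in I} q_i \in Q$ gives $p \leq_P \bigvee^P_{i \in I} q_i$.

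Now the hypothesis that $p \in B$ is inaccessible by $I$-indexed joins in $P$ yields an $i \in I$ with $p \leq_P q_i$. Invoking the adjunction in the other direction converts this back to $l(p) \leq_Q q_i$, which is what we needed. I do not foresee any real obstacle here; the whole argument is bookkeeping about the adjunction and the preservation-of-joins hypothesis. The only point to be careful about is to use both that $Q \subseteq P$ is closed under $I$-indexed joins (so that joins computed in $Q$ can be treated as joins in $P$) \emph{and} that $l$ is a reflector (so that inequalities into objects of $Q$ transport freely between $P$ and $Q$); if either of these were dropped, the conclusion would fail.
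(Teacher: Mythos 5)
Your proof is correct and follows essentially the same route as the paper's: use closure under $I$-indexed joins to identify $\bigvee^Q$ with $\bigvee^P$, lift the inequality along the reflection adjunction to $P$, apply inaccessibility of $p \in B$ there, and transport the witness back to $Q$. You actually spell out the adjunction bookkeeping (including taking an arbitrary family $(q_i)$ in $Q$) more carefully than the paper does, which elides the final step and tests only families drawn from $l(B)$.
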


\begin{proof}
To distinguish the join in $P$ and the join in $Q$, we will use the notations $\bigvee^P$ and $\bigvee^Q$.

Let $(l(b_i) \mid i \in I)$ be a family in $l(B)$ and let $l(b) \in l(B)$ such that $l(b) \leq \bigvee^Q_{i \in I} l(b_i)$.
The subset $Q$ being closed under $I$-indexed joins in $P$ is equivalent to that $I$-indexed joins in $Q$ are given by $I$-indexed joins in $P$.
So $\bigvee^Q_{i \in I} l(b_i) = \bigvee^P_{i \in I} l(b_i)$.
It follows that there is $i$ such that $l(b) \leq l(b_i)$.
We are done.
\end{proof}

\begin{proposition}
Let $(P,\leq)$ be a poset, and let $I$ be a set such that $(P,\leq)$ has $I$-indexed joins.
A pointwise join of an $I$-indexed family of monotone functions $P \to P$ is again monotone.
\end{proposition}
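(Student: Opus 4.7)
The plan is to take an arbitrary $I$-indexed family $(f_i : P \to P \mid i \in I)$ of monotone maps, define $f : P \to P$ by $f(x) = \bigvee_{i \in I} f_i(x)$, and show directly that $x \leq y$ implies $f(x) \leq f(y)$.

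First I would fix $x \leq y$ in $P$. For each $i \in I$, monotonicity of $f_i$ gives $f_i(x) \leq f_i(y)$, and by the definition of join one has $f_i(y) \leq \bigvee_{j \in I} f_j(y) = f(y)$. Composing these two inequalities yields $f_i(x) \leq f(y)$ for every $i$, so $f(y)$ is an upper bound of the family $(f_i(x) \mid i \in I)$.

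From the universal property of the join, I conclude $f(x) = \bigvee_{i \in I} f_i(x) \leq f(y)$, which is the required monotonicity. There is no real obstacle here; the only subtlety is simply being careful that the hypothesis ``$(P,\leq)$ has $I$-indexed joins'' is precisely what licenses both the definition of $f$ and the two appeals to the join's universal property used above.
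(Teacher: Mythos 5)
Your proof is correct and follows essentially the same route as the paper's: fix $x \leq y$, use monotonicity of each $f_i$ together with the fact that each $f_i(y)$ is below the join $f(y)$ to conclude $f_i(x) \leq f(y)$ for all $i$, then apply the universal property of the join. No discrepancy.
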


\begin{proof}
Let $(h_i \mid i \in I)$ be a family of monotone functions $P \to P$.
We show that the pointwise join $\bigvee_{i \in I} h_i$ is monotonic.
Let $p,q \in P$ with $p \leq q$.
We want $(\bigvee_{i \in I} h_i)(p) \leq (\bigvee_{i \in I} h_i)(q)$,
i.e. $\bigvee_{i \in I} h_i(p) \leq \bigvee_{i \in I} h_i(q)$,
i.e. that for all $i \in I$ one has $h_i(p) \leq \bigvee_{i \in I} h_i(q)$.
But indeed, we have $h_i(p) \leq h_i(q) \leq \bigvee_{i \in I} h_i(q)$.
We conclude that the pointwise join $\bigvee_{i \in I} h_i$ is monotonic.
\end{proof}

\begin{corollary}
Internally in $\Eff$, every element in the image of the map
$$\nabla(\PsPN) \xto{\gr_\ph} \Omega^\Omega \xto{\ph^\mo} \Mo$$
is inaccessible by $I$-indexed joins. \qed
\end{corollary}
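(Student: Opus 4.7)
The plan is to simply assemble the three ingredients immediately preceding the corollary: Proposition \ref{prop:Irred}, the general reflective-subposet proposition, and the monotonicity of pointwise joins of monotone functions. All three are constructive and so transfer from the external to the internal setting.

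First I would instantiate the general reflective-subposet proposition internally in $\Eff$, taking $P = \Omega^\Omega$, $Q = \Mo$, the set $I = \NNO$, the reflection $l = \ph^\mo : \Omega^\Omega \onto \Mo$, and $B = \Im(\gr_\ph : \nabla(\PsPN) \to \Omega^\Omega)$. The image of $\ph^\mo \circ \gr_\ph$ is by construction $l(B)$, which is exactly the set of elements the corollary is about. So once the hypotheses of the general proposition are verified internally, the corollary is immediate.

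The two hypotheses to check are: (a) $\Mo$ is closed under $\NNO$-indexed joins in $\Omega^\Omega$, and (b) every element of $B$ is inaccessible by $\NNO$-indexed joins in $\Omega^\Omega$. For (a), the preceding proposition says that a pointwise join of an $I$-indexed family of monotone functions is monotone; since joins in $\Omega^\Omega$ are computed pointwise, this tells us that the inclusion $\Mo \imto \Omega^\Omega$ preserves $\NNO$-indexed joins, which is precisely the closure condition needed. For (b), this is exactly Proposition \ref{prop:Irred}.

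The only real subtlety is that the two general propositions (the reflective-subposet one and the ``pointwise join of monotones is monotone'' one) were stated and proved in ordinary (external) mathematics, while we need their internal incarnations in $\Eff$. I would therefore briefly note that both proofs are entirely intuitionistic and quantify only over the internal data in question, hence carry over verbatim to the internal language of $\Eff$ applied to the objects $(\Omega^\Omega,\leq)$ and $(\Mo,\leq)$. No further calculation is needed; the corollary then follows by combining (a), (b), and the internal version of the general proposition. The only place one could get stuck is the bookkeeping of internalization, but since everything in sight is constructive this is routine rather than a genuine obstacle.
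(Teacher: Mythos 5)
Your proposal is correct and follows exactly the route the paper intends: the corollary is stated with no separate proof precisely because it is the instantiation of the reflective-subposet proposition with $P = \Omega^\Omega$, $Q = \Mo$, $l = \ph^\mo$, $B = \Im(\gr_\ph)$, with closure under $\NNO$-indexed joins supplied by the pointwise-join-of-monotones proposition and inaccessibility of $B$ supplied by Proposition \ref{prop:Irred}. Your remark that the auxiliary propositions internalize because their proofs are constructive is the same (tacit) justification the paper relies on.
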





\subsection{The representation}
We finally define our representation of local operators by functions $\N \to \PPN$.
We do this internally, in the sense that we single out a map $\nabla(\PPN^\N) \to \Lo$.
Of course this also give the external representation, as a function $\N \to \PPN$ is precisely a point of $\nabla(\PPN^\N)$ and a point of $\Lo$ is precisely a local operator.

\begin{notation-proposition}
Given $\A \in \PPN$, we denote by $\Delta_\A: \N \to \PPN$ the function with action
$$\Delta_\A(n) = \A.$$
\emph{The function $\Delta_\ph: \PPN \to \PPN^\N$ represents a mono $\nabla(\PPN) \to \nabla(\PPN^\N)$.} \qed
\end{notation-proposition}

\begin{notation}\label{OmitDelta}
Internally in $\Eff$, given $\A \in \nabla(\PPN)$, we may denote $\Delta_\A \in \nabla(\PPN^\N)$ just by $\A$.
\end{notation}

\begin{proposition-notation}
\emph{The function $\id: \PPN^\N \to \PPN^\N$ represents both directions of an isomorphism $\nabla(\PPN^\N) \cong \nabla(\PPN)^N$.}
Under this isomorphism, we shall not distinguish notationally between an (internal) element of $\nabla(\PPN^\N)$ and that of $\nabla(\PPN)^N$. \qed
\end{proposition-notation}

\begin{definition}
Consider the following diagram of maps in $\Eff$, where the unnamed maps are obvious canonical maps.
Clearly everything commutes.
\begin{diagram}
                &                 & \nabla(\PPN)   & \rTo{\gr_\ph} & \Omega^\Omega \\
  & \ldImto{\Delta_\ph}           & \dImto         &                  & \dImto & \rdEq    \\
\nabla(\PPN^\N) & \sr{\id}{\cong} & \nabla(\PPN)^N & \rDoto & (\Omega^\Omega)^N & \rTo{\vee} & \Omega^\Omega & \rOnto{\ph^\mo} & \Mo & \rOnto{\ph^\lomo} & \Lo
\end{diagram}
Note that by Proposition \ref{prop:AllAreJoin}, the composite map $\nabla(\PPN)^N \to \Omega^\Omega$ is an epi.
We shall refer to the composite maps
$$\nabla(\PPN^\N) \onto \Omega^\Omega,\Mo,\Lo$$
as $\ar_\ph,\mo_\ph,\lo_\ph$ respectively.
Internally in $\Eff$, given $\theta \in \nabla(\PPN^\N)$, we call $\lo_\theta$ the local operator \defn{associated} to $\theta$.
\end{definition}

\begin{remark}
Combining the previous Notation with Notation \ref{OmitDelta}, we can denote the maps
$$\nabla(\PPN) \to \Omega^\Omega,\Mo,\Lo$$
in the diagram above also as $\ar_\ph,\mo_\ph,\lo_\ph$.

Note (from the commutativity of the diagram) that as maps $\nabla(\PPN) \to \Omega^\Omega$ we have $\gr_\ph = \ar_\ph$.
\end{remark}

\begin{definition}\label{def:basic}
Internally in $\Eff$, let $j \in \Lo$.
We say that $j$ is \defn{basic} if $j = \lo_\A$ for some $A \in \nabla(\PPN)$.
\end{definition}

Let us describe a representation of the map $\ar_\ph: \nabla(\PPN^\N) \to \Omega^\Omega$.

\begin{proposition}
Given $\theta \in \PPN^\N$, define a function $\ar_\theta: \PN \to \PN$ by
$$\ar_\theta(p) = \exists n^\N . \{n\} \wedge \gr_{\theta(n)}(p) = \exists n^\N . \{n\} \wedge \exists_{A \in \theta(n)} (A \BI p).$$
\claim{The function $\ar_\ph: \PPN^\N \to \PN^\PN$ represents the map $\ar_\ph: \nabla(\PPN)^\NNO \onto \Omega^\Omega$.}
\end{proposition}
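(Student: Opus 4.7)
The plan is to verify two things: first that the function $\ar_\ph : \PPN^\N \to \PN^\PN$ is extensional enough to represent \emph{some} map $\nabla(\PPN^\N) \to \Omega^\Omega$, and second that this map coincides with the composite that originally defined $\ar_\ph$ in the diagram above.

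For the first step, extensionality in the $\theta$-variable is automatic because $\nabla(\PPN^\N)$ carries the predicate $\nabeq$. So it suffices to show the $\EffTrip$-sentence $\forall \theta \forall p,q. (p \BI q) \RI (\ar_\theta(p) \BI \ar_\theta(q))$ is realized. This is straightforward from the shape of $\ar_\theta$, which is built only out of $\exists$, $\wedge$ and $\BI$; a concrete realizer simply threads the given realizer of $p \BI q$ through the innermost equivalence $A \BI p \leftrightarrow A \BI q$, the remaining connectives being transparent.

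For the second step, I would unwind the composite via the internal logic of $\Eff$ together with Corollary \ref{LogicPCP}. Internally, the composite $\nabla(\PPN)^\NNO \to (\Omega^\Omega)^\NNO \xto{\vee} \Omega^\Omega$ sends $\theta$ to $\lambda p . \bigvee_{n \in \NNO} \gr_{\theta(n)}(p)$: the join in $\Omega^\Omega$ is computed pointwise from the $\NNO$-indexed join in $\Omega$, and the latter is existential quantification over $\NNO$. Hence internally the composite is $\theta \mapsto \lambda p . \exists n^{\NNO}. \gr_{\theta(n)}(p)$.

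It then remains to apply the standard translation $\el^{\EffTrip/\Eff}\cdot\er$ from Construction \ref{con:CPtoP}. The bounded quantifier $\exists n^{\NNO}(\cdot)$ translates to $\exists n^\N . \ex(n) \wedge (\cdot)$, which here is $\exists n^\N . \{n\} \wedge (\cdot)$ since the natural numbers assembly has $\ex(n) = \{n\}$; by the preceding proposition, $\gr_{\theta(n)}(p)$ is represented by $\exists_{A \in \theta(n)}(A \BI p)$. Substituting yields precisely $\exists n^\N . \{n\} \wedge \exists_{A \in \theta(n)}(A \BI p) = \ar_\theta(p)$, as required. The main (mild) obstacle is only careful bookkeeping between the internal language of $\Eff$ and the formulas of $\EffTrip$; no nontrivial calculation is involved once the translation is invoked.
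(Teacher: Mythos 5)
Your proposal is correct and follows essentially the same route as the paper: unwind the composite $\nabla(\PPN)^\NNO \xto{(\gr_\ph)^\NNO} (\Omega^\Omega)^\NNO \xto{\vee} \Omega^\Omega$ internally to get $\theta \mapsto \lambda p.\exists n^\NNO.\gr_{\theta(n)}(p)$, then pass to the tripos via the standard translation, which turns $\exists n^\NNO$ into $\exists n^\N.\{n\}\wedge(\cdot)$ and $\gr_{\theta(n)}(p)$ into $\exists_{A \in \theta(n)}(A \BI p)$. Your explicit extensionality check in the $p$-variable is a detail the paper leaves implicit, but the argument is the same.
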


\begin{proof}
The map $\ar_\ph: \nabla(\PPN)^N \to \Omega^\Omega$ is by definition the composite
$$\nabla(\PPN)^N \xto{(\gr_\ph)^N} (\Omega^\Omega)^N \xto{\vee} \Omega^\Omega.$$
So internally in $\Eff$, for $\theta \in \nabla(\PPN)^N$, we have
$$\ar_\theta(p) = \vee[(\gr_\ph)^\NNO(\theta)](p) = \bigvee_{n \in N} \gr_{\theta(n)}(p) = \exists n \in N . \gr_{\theta(n)}(p).$$
Therefore the function $\ar_\ph: \PPN^\N \to \PN^\PN$ clearly represents the map $\ar_\ph: \nabla(\PPN)^\NNO \onto \Omega^\Omega$.
\end{proof}

\begin{remark}
Now we can cleanly explain the connection between the construction $(\theta \in \PPN^\N) \mapsto (\lo_\theta \in \Lop(\Eff))$ and the construction of the least local operator making a given subobject dense.

Let us see that the latter construction is an instance of the former, as follows.
Let $(X,\sim)$ be an object of $\Eff$, and let $R: X \to \PN$ be a predicate on $(X,\sim)$.
Define a function $\delta[(X,\sim),R]: \N \to \PPN$ by
$$\delta[(X,\sim),R](n) = \{R(x) \mid x \in X, n \in \ex(x)\}.$$
Then it is immediate that the function $\ar_{\delta[(X,\sim),R]}: \PN \to \PN$ represents the subobject
$$\Im(\chi_{R \subseteq (X,\sim)}: \Omega \to \Omega) \in \Sub(\Omega),$$
hence $\lo_{\delta[(X,\sim),R]} = (\ar_{\delta[(X,\sim),R]})^\loar$ is\footnote{see Construction \ref{LeastDense}} the least local operator making the subobject $R \subseteq (X,\sim)$ dense.

The converse, that every local operator of the form $\lo_\theta$ is the least one making a subobject dense, follows of course from the general fact\footnote{see Corollary \ref{cor:EveryLoc}} that any local operator is such a one.
But let us track this concretely.
Given $\theta \in \PPN^\N$, define a function $\theta^*: \PN \to \PN$ by
$$\theta^*(p) = \{m \in \N \mid p \in \theta(m)\}.$$
Then we have a preassembly $(\PN,\theta^*)$ and a predicate $\id_\PN: \PN \to \PN$ on it.
Note that
\begin{align*}
\theta(n) &= \{p \mid p \in \PN \en p \in \theta(n)\} \\
          &= \{p \mid p \in \PN \en n \in \{m \in \N \mid p \in \theta(m)\}\} \\
          &= \{\id_\PN(p) \mid p \in \PN, n \in \theta^*(p)\} \\
          &= \delta[(\PN,\theta^*),\id_\PN](n),
\end{align*}
so that $\lo_\theta$ is the least local operator making the subobject $\id_\PN \subseteq (\PN,\theta^*)$ dense.
\end{remark}

\begin{remark}
Building upon the previous remark, we can now see that a local operator $j$ is basic if and only if it is the least one making a subobject of a $\negneg$-sheaf dense.

`if': Let $X$ be a set and $R: X \to \PN$ be a function such that $j$ is the least local operator making $R \subseteq \nabla(X)$ dense.
So $$\delta[\nabla(X),R](n) = \{R(x) \mid x \in X \en n \in \ex(x)\} = \{R(x) \mid x \in X\}.$$
Hence $j = \lo_{\{R(x) \mid x \in X\}}$ is basic.

`only if': If $j$ is basic, then $j = \lo_\A$ for some $\A \in \PPN$, so $j$ is the least local operator making the inclusion $\id_\PN \subseteq (\PN,(\Delta_\A)^*)$ dense. But
$$(\Delta_\A)^*(p) = \{m \in \N \mid p \in \Delta_\A(m) = \A\} = \begin{cases} \N & \textif p \in \A \\ \emptyset & \otherwise. \end{cases}$$ So $(\PN,(\Delta_\A)^*)$ is a $\negneg$-sheaf, as desired.
\end{remark}

Next we describe a representation of the map $\mo_\ph: \nabla(\PPN^\N) \to \Mo$.

\begin{proposition}\label{MoDesc}
Given $\theta \in \PPN^\N$, define $\mo_\theta \in \PN^\PN$ to be the predicate given by
$$\mo_\theta(p) = \dbl \exists n^\N \exists_{A \in \theta(n)} . \{n\} \wedge (A \RI p) \dbr.$$
\claim{The function $\mo_\ph: \PPN^\N \to \PN^\PN$ represents the map $\mo_\ph: \nabla(\PPN)^N \onto \Mo$.}
\end{proposition}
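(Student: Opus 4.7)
The plan is to exploit the factorization built into the very definition of $\mo_\ph$: from the diagram defining the composites $\ar_\ph,\mo_\ph,\lo_\ph$, one has $\mo_\ph = \ph^\mo \circ \ar_\ph$. Since the preceding proposition already supplies $\ar_\theta$ as a representative of $\ar_\ph$, the task reduces to tracing how the reflection $\ph^\mo : \Omega^\Omega \onto \Mo$ acts on a representing function in $\PN^\PN$.

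First I would recall from Section \ref{sec:Joyal} that, under the canonical $\P(\Omega) \cong \Omega^\Omega$, the monotonification $\ph^\mo$ carries a function $h : \Omega \to \Omega$ to $h^\mo(q) = \exists p . h(p) \wedge (p \RI q)$ — this being the image under the isomorphism of the subset $\{q \mid \exists p . p \in D_h \wedge (p \RI q)\}$, where $D_h = \{p \mid h(p)\}$. The same formula applies literally to representing functions: if $h : \PN \to \PN$ represents an element of $\Omega^\Omega$, then a representative of its image under $\ph^\mo$ is $q \mapsto \exists p \in \PN . h(p) \wedge (p \RI q)$. Substituting $h := \ar_\theta$, a representative of $\ph^\mo \circ \ar_\ph$ is therefore
$$(\theta, q) \;\mapsto\; \exists p \in \PN . \ar_\theta(p) \wedge (p \RI q).$$

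Next I would verify that this predicate is realizably equivalent, uniformly in $\theta$ and $q$, to $\mo_\theta(q) = \exists n \exists_{A \in \theta(n)} . \{n\} \wedge (A \RI q)$. The forward direction is a short realizer that, given data $\langle n, A, \text{realizers of } A\BI p \text{ and } p \RI q \rangle$ pulled out of $\ar_\theta(p) \wedge (p \RI q)$, composes the $A \RI p$ component with the $p \RI q$ component to output a realizer of $A \RI q$. The backward direction specializes $p := A$ and supplies $\langle \id, \id \rangle$ for the trivial $A \BI A$. Both realizers are clearly uniform in $\theta$ and $q$.

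The only points requiring any care are the bookkeeping ones: that $\mo_\theta$ is internally monotone, so it indeed lands in $\Mo \imto \Omega^\Omega$, and that the assignment $\theta \mapsto \mo_\theta$ is extensional with respect to $\nabeq$ on $\PPN^\N$ and $\sr{\mon}{\BI}$ on $\PN^\PN$. Both are automatic once the factorization through $\ph^\mo$ is in place, and are in any case immediate from the shape of $\mo_\theta$ (monotonicity in $q$ comes from composing a realizer of $A \RI q$ with one of $q \RI q'$). I expect no genuine obstacle beyond this bookkeeping; the content of the proposition really lies in having already identified the internal formula for $\ph^\mo$ in Section \ref{sec:Joyal}.
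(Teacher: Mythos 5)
Your proposal matches the paper's proof essentially verbatim: you factor $\mo_\ph$ as $\ph^\mo \circ \ar_\ph$, apply the internal description of $\ph^\mo$ from Section \ref{sec:Joyal} to the representative $\ar_\theta$, and then simplify the resulting formula $\exists p . \ar_\theta(p) \wedge (p \RI q)$ to $\exists n \exists_{A \in \theta(n)} . \{n\} \wedge (A \RI q)$ by the same two-way realizer argument (compose $A \RI p$ with $p \RI q$ one way, specialize $p := A$ the other). Correct, and no difference in method worth noting.
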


\begin{proof}
The map $\mo_\ph: \nabla(\PPN)^N \to \Omega^\Omega$ is the composite
$$\nabla(\PPN)^N \xto{\ar_\ph} \Omega^\Omega \xto{\ph^\mo} \Mo.$$
So internally in $\Eff$, for $\theta \in \nabla(\PPN)^N$ and $p \in \Omega$, we have
$$\mo_\theta(p) = (\ar_\theta)^\mo(p) = \exists q^\Omega . \ar_\theta(q) \wedge (q \RI p).$$
Therefore the map $\mo_\ph\ph: \nabla(\PPN)^\NNO \times \Omega \to \Omega$ is represented by the formula
\begin{eqnarray*}
&      & \theta \in \PPN^\N, p \in \PN . \exists q^\PN . \exists n^\N [\{n\} \wedge \exists_{A \in \theta(n)} (A \BI q)] \wedge (q \RI p) \\
& \eqq & \theta \in \PPN^\N, p \in \PN . \exists q^\PN \exists n^\N \exists_{A \in \theta(n)} . \{n\} \wedge (A \BI q) \wedge (q \RI p) \\
& \eqq & \theta \in \PPN^\N, p \in \PN . \exists n^\N \exists_{A \in \theta(n)} . \{n\} \wedge (A \RI p).
\end{eqnarray*}
It follows that the function $\mo_\ph$ represents the map $\mo_\ph$.
\end{proof}

Having described representations of the maps $\ar_\ph$ and $\mo_\ph$, the map $\lo_\ph$ remains.
Describing a good representation of $\lo_\ph$ is a substantial issue, and will be considered in the next section (\S\ref{sec:sights}).
Before going into it, we first discuss about relevant (pre)orders on the set $\PPN^\N$.

\subsection{The preorderings $\leq_\mo$ and $\leq_\lo$}
Internally speaking, the `representation maps' $\nabla(\PPN^\N) \onto \Mo,\Lo$ facillitate studying monotonics and topologies in terms of elements of $\nabla(\PPN^\N)$.
In particular, we may study their inequalities so.
In this spirit, we pull back the orderings of monotonics and of topologies to $\nabla(\PPN^\N)$, as in the following definition.

\begin{definition}
We define $(\nabla(\PPN^\N),\leq_\mo)$ and $(\nabla(\PPN^\N),\leq_\lo)$ to be the preorders induced by the maps
$$\mo_\ph,\lo_\ph: \nabla(\PPN^\N) \to \Mo,\Lo$$
respectively.
\end{definition}

\begin{remark}
With Notation \ref{OmitDelta}, this also gives us preorders $(\nabla(\PPN),\leq_\mo)$ and $(\nabla(\PPN),\leq_\lo)$.
\end{remark}

\begin{lemma}\label{SmallestMoFn}
We have
\begin{quote}
$\realizes \forall \A \in \PPN \forall h \in \PN^\PN . \mon(h) \RI [(\mo_\A \RI h) \BI \bigcap_{A \in \A} h(A)]$.
\end{quote}
In particular, given $\A \in \PPN$, the function $\mo_\A$ is the smallest monotone function $h: \PN \to \PN$ with $\bigcap_{A \in \A} h(A) \neq \emptyset$.
\end{lemma}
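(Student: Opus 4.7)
First I would unpack the description from Proposition \ref{MoDesc}, namely
\[
\mo_\A(p) \;=\; \{\la n, r \ra \mid n \in \N,\ r \in (A \RI p) \text{ for some } A \in \A\},
\]
so as to verify the two directions of the biconditional uniformly in $\A$, $h$, and $m \realizes \mon(h)$. For the direction $(\mo_\A \RI h) \RI \bigcap_{A \in \A} h(A)$ I would use a single \emph{uniform canonical realizer}: the number $r_0 := \la 0, \enc{\id_\N} \ra$ lies in $\mo_\A(A)$ for every $A \in \A$ simultaneously, because $\enc{\id_\N} \in (A \RI A)$ for each $A \in \PN$. Consequently, given any $f^* \realizes \mo_\A \RI h$, the single value $f^*(r_0)$ lies in $h(A)$ for every $A \in \A$, that is, in $\bigcap_{A \in \A} h(A)$. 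This direction does \emph{not} use $\mon(h)$.

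For the reverse direction $\bigcap_{A \in \A} h(A) \RI (\mo_\A \RI h)$, I would read off the realizer directly: given $m \realizes \mon(h)$ and $b \in \bigcap_{A \in \A} h(A)$, take $\lambda \la n,r \ra . m(r)(b)$. Indeed, if $\la n,r \ra \in \mo_\A(p)$ then $r \in (A \RI p)$ for some $A \in \A$; monotonicity gives $m(r) \in h(A) \RI h(p)$, and $b \in h(A)$ then yields $m(r)(b) \in h(p)$. Each step is a fixed $\lambda$-term in the parameters, so the two components assemble into one realizer of the whole closed statement.

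The ``in particular'' clause drops out at once: $\mo_\A$ is itself monotone (composing $A \RI p$ with $p \RI q$ gives $A \RI q$, uniformly in the realizers) and the same $r_0$ as above witnesses $\bigcap_{A \in \A} \mo_\A(A) \neq \emptyset$, so the biconditional, applied to any other monotone $h$ with $\bigcap_{A \in \A} h(A) \neq \emptyset$, delivers $\mo_\A \leq h$. I expect no real obstacle; the only slightly delicate point is bookkeeping with the pair-encoded realizer form of $\mo_\A(p)$, after which both directions are essentially one-line observations.
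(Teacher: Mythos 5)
Your proof is correct and is essentially the paper's own proof with the details filled in: the paper merely unfolds $\mon(h)$, $\mo_\A \RI h$ and $\forall_{A \in \A} h(A)$ and says ``observe that the claim holds'', whereas you exhibit the intended realizers explicitly --- $r_0 = \la 0,\enc{\id_\N}\ra$ as a uniform element of $\bigcap_{A \in \A}\mo_\A(A)$ for the forward direction (indeed not using $\mon(h)$), and $\lambda \la n,r\ra .\, m(r)(b)$ via monotonicity for the converse, with the ``in particular'' clause following as you say. The one point worth flagging (an imprecision in the statement itself, which the paper's terse proof also glosses over) is the case $\A = \emptyset$: there $\dbl \mo_\A \RI h \dbr$ and $\bigcap_{A \in \A} h(A)$ both equal $\N$, so the forward realizer would have to be total, and $\lambda e.\, e(r_0)$ is not --- in fact no uniform total realizer can exist (diagonalize with the recursion theorem against constant $h$), so the lemma should really be read with $\A$ ranging over $\PsPN$.
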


\begin{proof}
Recall that
\begin{indentation}
$\mon(h) = \forall p,q^\PN . (p \RI q) \RI (h(p) \RI h(q))$,
\end{indentation}
that
\begin{align*}
\mo_\A \RI h
&\eqq \forall p^\PN . [\exists n \in \N \exists_{A' \in \Delta_\A(n)} . \{n\} \wedge (A' \RI p)] \RI h(p) \\
&\eqq \forall p^\PN . [\exists A'^\PN . A' \nabin \A \wedge (A' \RI p)] \RI h(p),
\end{align*}
and that
$$\forall_{A \in \A} h(A) \eqq \forall A^\PN . A \nabin \A \RI h(A).$$
Now observe that the claim holds.
\end{proof}

\begin{proposition}\label{LeqMoDesc}
Define a predicate $\leq_\mo: \PPN^\N \times \PPN^\N \to \PN$ by
$$\eta \leq_\mo \theta = \stil \forall n^\N . \{n\} \RI \forall_{A \in \eta(n)} \exists m^\N . \{m\} \wedge \exists_{B \in \theta(m)} (B \RI A) \stir.$$
\claim{The predicate $\leq_\mo$ represents the relation $(\nabla(\PPN^\N),\leq_\mo)$.}
\end{proposition}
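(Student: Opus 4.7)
The plan is to verify that the displayed predicate $\leq_\mo$ on $\PPN^\N \times \PPN^\N$ is realizably equivalent, uniformly in $\eta$ and $\theta$, to the standard translation of ``$\mo_\eta \leq_\Mo \mo_\theta$''. Since the ordering on $\Mo$ is inherited pointwise from that on $\Omega^\Omega$, Proposition~\ref{MoDesc} tells us that the latter unfolds (modulo the trivially realizable strictness clause $p \BI p$) to
$$\forall p^\PN . [\exists n^\N \exists_{A \in \eta(n)} . \{n\} \wedge (A \RI p)] \RI [\exists m^\N \exists_{B \in \theta(m)} . \{m\} \wedge (B \RI p)].$$
The task thus reduces to exhibiting realizers, independent of $\eta$ and $\theta$, converting between this formula and the displayed $\eta \leq_\mo \theta$.

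For the direction ``displayed $\RI$ pointwise'', I would take $e$ realizing $\eta \leq_\mo \theta$ and a realizer $\la n, r \ra$ of $\mo_\eta(p)$ (so $r \in A \RI p$ for some $A \in \eta(n)$), and compute $e(n) = \la m, t \ra$: by hypothesis this fixed pair has the uniform property that for every $A' \in \eta(n)$ some $B \in \theta(m)$ satisfies $t \in B \RI A'$. Specialising to our $A$, the composite $\lambda x . r(t(x))$ lies in $B \RI p$, so $\la m, \lambda x . r(t(x)) \ra$ realizes $\mo_\theta(p)$. The assembly $\lambda e . \lambda \la n, r \ra . \LET \la m, t \ra = e(n) \IN \la m, \lambda x . r(t(x)) \ra$ is visibly uniform in $\eta$, $\theta$, $p$.

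The converse is the interesting direction, and the key trick is to feed a realizer $f$ of the pointwise inequality the identity trigger at each stage $n$. Writing $\iota$ for an index of the identity function, the single number $\la n, \iota \ra$ realizes $\mo_\eta(A)$ \emph{simultaneously} for every $A \in \eta(n)$: the inner existential is witnessed by $A' := A$ itself, and $\iota \in A \RI A$ trivially. Hence $f(\la n, \iota \ra) = \la m, t \ra$ is a single realizer of $\mo_\theta(A)$ valid for every $A \in \eta(n)$, which unpacks to: for every $A \in \eta(n)$ there is $B \in \theta(m)$ with $t \in B \RI A$---exactly what $\forall_{A \in \eta(n)} \exists m^\N . \{m\} \wedge \exists_{B \in \theta(m)} (B \RI A)$ requires. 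So $\lambda f . \lambda n . f(\la n, \iota \ra)$ gives the uniform converse realizer.

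The one conceptual point to keep in mind, and what I would flag as the main obstacle, is the asymmetry in the displayed predicate: the typed existential $\exists m^\N$ is decorated by $\{n\}, \{m\}$ and forces $m$ to be chosen independently of $A \in \eta(n)$, whereas the bare union $\exists_{B \in \theta(m)}$ permits $B$ to depend on $A$. This is exactly the shape delivered by $f$ when applied to the universal trigger $\la n, \iota \ra$, which is why the backward argument needs nothing beyond the existence of an identity realizer.
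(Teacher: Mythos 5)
Your proof is correct, and the heart of it — the identity-trigger trick in the backward direction — is exactly the idea that powers the paper's argument. The difference is one of packaging: the paper first rewrites $\mo_\eta \leq \mo_\theta$ as $\forall n^\N . \{n\} \RI \mo_{\eta(n)} \leq \mo_\theta$ (via $\mo_\eta = \bigvee_n \mo_{\eta(n)}$) and then cites Lemma~\ref{SmallestMoFn}, which is precisely your identity-trigger argument stated for a single $\A \in \PPN$, whereas you inline that content directly against the pointwise unfolding from Proposition~\ref{MoDesc} and exhibit the mutual realizers explicitly. Both routes land on the same computations; the paper's is slightly more modular (reusing Lemma~\ref{SmallestMoFn}), yours is more self-contained and makes the uniformity of the realizers visible.
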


\begin{proof}
For now, to avoid ambiguity, let us denote the \emph{predicate} $\leq_\mo$ by $\leq_\mo'$.
The predicate $\leq_\mo'$ automatically represents a binary relation on $\nabla(\PPN^\N)$.
We have to prove the satisfaction
\begin{eqnarray*}
\Eff & \models & \forall \eta,\theta \in \nabla(\PPN^\N) . \eta \leq_\mo' \theta \BI \eta \leq_\mo \theta \\
     & \eqq    & \forall \eta,\theta . \eta \leq_\mo' \theta \BI \mo_\eta \leq \mo_\theta \\
     & \eqq    & \forall \eta,\theta . \eta \leq_\mo' \theta \BI \bigvee_{n \in N} \mo_{\eta(n)} \leq \mo_\theta \\
     & \eqq    & \forall \eta,\theta . \eta \leq_\mo' \theta \BI \forall n^N . \mo_{\eta(n)} \leq \mo_\theta,
\end{eqnarray*}
i.e. that
\begin{equation}\label{lmd1}
\realizes \forall \eta,\theta \in \PPN^\N . \eta \leq_\mo' \theta \BI [\forall n^\N . \{n\} \RI \mo_{\eta(n)} \leq \mo_\theta].
\end{equation}
But since\footnote{by Proposition \ref{MoDesc}} $\realizes \forall \A . \mon(\mo_\A)$,
we have by Lemma \ref{SmallestMoFn} that
$$\realizes \forall \eta,\theta \forall n . \mo_{\eta(n)} \leq \mo_\theta \BI \forall_{A \in \eta(n)} \mo_\theta(A).$$
So \eqref{lmd1} is equivalent to
\begin{align*}
& \realizes \forall \eta,\theta \in \PPN^\N . \eta \leq_\mo' \theta \BI
[\forall n^\N . \{n\} \RI \forall_{A \in \eta(n)} \mo_\theta(A)] \\
& \eqq      \forall \eta,\theta \in \PPN^\N . \eta \leq_\mo' \theta \BI
[\forall n^\N . \{n\} \RI \forall_{A \in \eta(n)} \exists m^\N . \{m\} \wedge \exists_{B \in \theta(m)} (B \RI A)],
\end{align*}
but the last formula holds trivially by definition of $\leq_\mo'$.
Done.
\end{proof}

\begin{corollary}\label{BasicLeqMoDesc}
Define a predicate $\leq_\mo: \PPN \times \PPN \to \PN$ by
$$\A \leq_\mo \B = \stil \forall_{A \in \A} \exists_{B \in \B} . B \RI A \stir.$$
\claim{This predicate $\leq_\mo$ represents the relation $(\nabla(\PPN),\leq_\mo)$.}
\end{corollary}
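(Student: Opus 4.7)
The plan is to derive this from Proposition \ref{LeqMoDesc} by pulling back along the mono $\Delta_\ph : \nabla(\PPN) \emto \nabla(\PPN^\N)$. Recall that the preorder $\leq_\mo$ on $\nabla(\PPN)$ was introduced (via Notation \ref{OmitDelta} and the Remark right after the Definition) as the restriction of the preorder on $\nabla(\PPN^\N)$ along this mono. Since the mono is represented at the level of realizability by the concrete function $\Delta_\ph : \PPN \to \PPN^\N$, a predicate representing $\leq_\mo$ on $\nabla(\PPN) \times \nabla(\PPN)$ is obtained by precomposing the predicate from Proposition \ref{LeqMoDesc} with $\Delta_\ph \times \Delta_\ph$.

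So I would substitute $\eta := \Delta_\A$ and $\theta := \Delta_\B$ into the formula from Proposition \ref{LeqMoDesc}. Since $\Delta_\A(n) = \A$ and $\Delta_\B(m) = \B$ for every $n,m \in \N$, the pulled-back predicate is
\[
\A,\B . \forall n^\N . \{n\} \RI \forall_{A \in \A} \exists m^\N . \{m\} \wedge \exists_{B \in \B} . B \RI A.
\]
The key observation is that the bound variables $n$ and $m$ are pure bookkeeping: neither occurs in the body. Hence this formula should be realizably equivalent to the formula $\forall_{A \in \A} \exists_{B \in \B} . B \RI A$ of the corollary statement, and the equivalence would constitute an isomorphism of predicates on $\PPN \times \PPN$, yielding the conclusion.

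The main (and only) task is to exhibit realizers for the two directions of the equivalence of $\forall n^\N . \{n\} \RI \psi$ with $\psi$ (and dually of $\exists n^\N . \{n\} \wedge \psi$ with $\psi$) when $n$ is not free in $\psi$. One direction is trivial: given a realizer for the shorter formula, one obtains a realizer for the longer one by ignoring the $n$-argument and pairing the result with the constant $0$. Conversely, from a realizer of the longer formula one recovers a realizer of the shorter one by applying it to $0$ and then projecting off the $m$-component. These manipulations are entirely analogous to the $n,m$-manipulations already appearing in the proof of Proposition \ref{LeqMoDesc}, so I expect no real obstacle.
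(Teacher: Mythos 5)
Your proposal is correct and follows essentially the same route as the paper: reduce to Proposition \ref{LeqMoDesc} by substituting $\Delta_\A,\Delta_\B$, then discharge the vacuous quantifiers $\forall n^\N.\{n\}\RI(-)$ and $\exists m^\N.\{m\}\wedge(-)$ by an easy realizable equivalence (the paper phrases this as ``by logic'' from $\realizes\exists n^\N.\{n\}$ and $\realizes\forall\A.\A\nabeq\Delta\A$, where you instead write out the explicit realizers). No gap.
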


\begin{proof}
For now, again to avoid ambiguity, let us denote the \emph{predicate} $\leq_\mo$ by $\leq_\mo'$.
We want the satisfaction
\begin{eqnarray*}
\Eff & \models & \forall \A,\B^{\nabla(\PPN)} . \A \leq_\mo' \B \BI \A \leq_\mo \B \\
     & \eqq    & \forall \A,\B^{\nabla(\PPN)} . \A \leq_\mo' \B \BI \Delta_\A \leq_\mo \Delta_\B,
\end{eqnarray*}
i.e. (by the last Proposition) that
\begin{equation}\label{blmd1}
\realizes \forall \A,\B^\PPN . \A \leq_\mo' \B \BI \forall n^\N . \{n\} \RI \forall_{A \in \Delta\A(n)} \exists m^\N . \{m\} \wedge \exists_{B \in \Delta\B(m)} (B \RI A).
\end{equation}
But since $\realizes \forall \A^\PPN . \A \nabeq \Delta\A$ and $\realizes \exists n^\N . \{n\}$ and $\realizes \forall x^X . x \nabin X$, we have by logic
\begin{eqnarray*}
&& \forall n^\N . \{n\} \RI \forall_{A \in \Delta\A(n)} \exists m^\N . \{m\} \wedge \exists_{B \in \Delta\B(m)} (B \RI A) \\
& \eqq & \forall n^\N . \{n\} \RI \forall A^\PN . A \nabin \Delta\A(n) \RI \exists m^\N . \{m\} \wedge \exists B^\PN . B \nabin \Delta\B(m) \wedge (B \RI A) \\
& \eqq & \forall n^\N . \{n\} \RI \forall A^\PN . A \nabin \A \RI \exists m^\N . \{m\} \wedge \exists B^\PN . B \nabin \B \wedge (B \RI A) \\
& \eqq & \forall A^\PN . A \nabin \A \RI \exists B^\PN . B \nabin \B \wedge (B \RI A) \\
& \eqq & \forall_{A \in \A} \exists_{B \in \B} (B \RI A).
\end{eqnarray*}
So \eqref{blmd1} is equivalent to
$$\realizes \forall \A,\B^\PPN . \A \leq_\mo' \B \BI \forall_{A \in \A} \exists_{B \in \B} (B \RI A),$$
which holds trivially by definition of $\leq_\mo'$.
Done.
\end{proof}

In the following we construct a meet operation for the internal preorder $(\nabla(\PPN^\N),\leq_\mo)$.
Jaap van Oosten provided the core part (the operation on $\PPN$) of this construction.

\begin{proposition}
Given $\A,\B \in \PPN$, let
$$\A \ovee \B = \{A \vee B \mid A \in \A \en B \in \B\} \in \PPN.$$
Given $\eta,\theta \in \PPN^\N$, define a function $\eta \ovee \theta: \N \to \PPN$ by
$$(\eta \ovee \theta)(\la n,m \ra) = \eta(n) \ovee \theta(m).$$
\claim{The function $\ovee: \PPN^\N \times \PPN^\N \to \PPN^\N$ represents a \emph{meet} operation for the preorder $(\nabla(\PPN^\N),\leq_\mo)$.}
\end{proposition}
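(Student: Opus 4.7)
The plan is to exploit that $(\nabla(\PPN^\N), \leq_\mo)$ is, by construction, the preorder pulled back along the epi $\mo_\ph: \nabla(\PPN^\N) \onto \Mo$, and that $\Mo$ inherits pointwise meets from $\Omega^\Omega$ (the pointwise meet of two monotone maps being again monotone). Hence it suffices to verify that $\mo_\ph$ carries $\ovee$ to the pointwise meet in $\Mo$; more precisely, that there is an internal isomorphism
$$\mo_{\eta \ovee \theta}(p) \cong \mo_\eta(p) \wedge \mo_\theta(p)$$
uniform in $\eta, \theta, p$. Granted this, any $\xi$ with $\xi \leq_\mo \eta$ and $\xi \leq_\mo \theta$ satisfies $\mo_\xi \leq \mo_\eta \wedge \mo_\theta \cong \mo_{\eta \ovee \theta}$, hence $\xi \leq_\mo \eta \ovee \theta$; specializing $\xi = \eta \ovee \theta$ recovers the two lower-bound inequalities, and so $\ovee$ represents the meet.

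To establish the isomorphism, I first apply the description of $\mo_\ph$ from Proposition \ref{MoDesc} and unfold $\ovee$:
\begin{align*}
\mo_{\eta \ovee \theta}(p)
 & = \exists k^\N \exists_{C \in (\eta \ovee \theta)(k)} . \{k\} \wedge (C \RI p) \\
 & \cong \exists n^\N \exists m^\N \exists_{A \in \eta(n)} \exists_{B \in \theta(m)} . \{\la n,m \ra\} \wedge ((A \vee B) \RI p).
\end{align*}
The recursion-theoretic core is the bijection $(A \vee B) \RI p \cong (A \RI p) \wedge (B \RI p)$, uniform in $A, B, p$: one direction sends a tracker $f$ of $(A \vee B) \RI p$ to the pair $\langle \lambda a.\, f \la 0,a \ra,\ \lambda b.\, f \la 1,b \ra \rangle$; the converse combines a pair $\la g,h \ra$ into the case-splitting function sending $\la 0,a \ra \mapsto g(a)$ and $\la 1,b \ra \mapsto h(b)$. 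Substituting this into the display above and then distributing the existentials across the resulting conjunction (using the standard pairing/unpairing trackers and the isomorphisms $\{\la n,m \ra\} \cong \{n\} \wedge \{m\}$) yields
$$\bigl(\exists n^\N \exists_{A \in \eta(n)} . \{n\} \wedge (A \RI p)\bigr) \wedge \bigl(\exists m^\N \exists_{B \in \theta(m)} . \{m\} \wedge (B \RI p)\bigr) = \mo_\eta(p) \wedge \mo_\theta(p),$$
uniformly in $\eta, \theta, p$, as required.

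The only real work is the bookkeeping of these uniform realizers; no combinatorial obstacle arises beyond the disjunction/conjunction bijection above. The main obstacle, if any, is merely choosing the correct distribution step to keep the realizer uniform in $\eta, \theta$; once the bijection for $(A \vee B) \RI p$ is in hand, the rest is standard intuitionistic calculation inside the tripos.
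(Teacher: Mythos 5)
Your argument is correct and takes a genuinely different route from the paper's. The paper proves the three clauses of the meet universal property directly at the level of $\leq_\mo$, by hand-crafting realizers: the first projection $p_1(\la n,m\ra)=n$ yields the lower-bound inequalities, and the computability of the pairing $(n,m)\mapsto\la n,m\ra$ yields the universal property. You instead prove the stronger and more structural fact that $\mo_\ph$ carries $\ovee$ to the pointwise meet in $\Mo$ --- that is, the uniform isomorphism $\mo_{\eta\ovee\theta}(p)\cong\mo_\eta(p)\wedge\mo_\theta(p)$ realized by the standard bijection $(A\vee B)\RI p\cong(A\RI p)\wedge(B\RI p)$ --- and then invoke the general fact that a pulled-back preorder along a surjection $\mo_\ph\colon\nabla(\PPN^\N)\onto\Mo$ inherits meets from $\Mo$, which itself has pointwise meets since a pointwise meet of monotone functions is monotone. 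Your approach has the advantage of exposing the underlying conceptual reason ($\ovee$ implements conjunction in $\Mo$), and the isomorphism you establish is a slightly stronger, reusable statement. One minor slip in exposition: ``specializing $\xi=\eta\ovee\theta$'' does not by itself recover the lower-bound inequalities (it would presuppose them); but in fact they follow immediately from your isomorphism together with the Heyting inequality $\mo_\eta\wedge\mo_\theta\leq\mo_\eta$, so this is only a phrasing issue, not a gap.
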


\begin{proof}
Clearly, the function $\ovee$ represents a map $\nabla(\PPN^\N) \times \nabla(\PPN^\N) \to \nabla(\PPN^\N)$.
First we verify that
$$\Eff \models \forall \zeta,\xi \in \nabla(\PPN^\N) . \zeta \ovee \xi \leq_\mo \zeta,$$
i.e.\footnote{See Proposition \ref{LeqMoDesc}.} that
\begin{equation}\label{eq:ov1}
\realizes \forall n \in \N . \{n\} \ri \forall_{A \vee B \in (\zeta \ovee \xi)(n)} \exists m \in \N [\{m\} \wedge \exists_{A' \in \zeta(m)} (A' \ri A \vee B)].
\end{equation}
Let $p_1: \N \to \N$ be the function determined by $p_1(\la m,n \ra) = m$.
As this function is computable, we can clearly realize
$$\realizes \forall n \in \N . \{n\} \ri \forall_{A \vee B \in (\zeta \ovee \xi)(n)} [\{p_1(n)\} \wedge A \nabin \zeta(p_1(n)) \wedge (A \ri A \vee B)].$$
This yields \eqref{eq:ov1} by logic, as desired.
Symmetrically, $\Eff \models \forall \zeta,\xi . \zeta \ovee \xi \leq_\mo \xi$.

Next, we need to verify that
$$\Eff \models \forall \theta,\zeta,\xi \in \nabla(\PPN^\N) . \theta \leq_\mo \zeta \wedge \theta \leq_\mo \xi \RI \theta \leq_\mo \zeta \ovee \xi,$$
i.e. that
\begin{align*}
& \realizes \forall \theta,\zeta,\xi \in \PPN^\N . \\
& \qquad [\forall n \in \N . \{n\} \ri \forall_{C \in \theta(n)} \exists m \in \N . \{m\} \wedge \exists_{A \in \zeta(m)} (A \RI C)] \\
& \qquad \wedge \\
& \qquad [\forall n \in \N . \{n\} \ri \forall_{C \in \theta(n)} \exists k \in \N . \{k\} \wedge \exists_{B \in \xi(m)} (B \RI C)] \\
& \qquad \RI \\
& \qquad [\forall n \in \N . \{n\} \ri \forall_{C \in \theta(n)} \exists \la m,k \ra \in \N . \{\la m,k \ra\} \wedge \exists_{A \vee B \in (\zeta \ovee \xi)(\la m,k \ra)} (A \vee B \RI C)].
\end{align*}
But, since the mapping $\N \times \N \to \N: (n,m) \mapsto \la n,m \ra$ is computable, this is clearly realizable.
Done.
\end{proof}

\begin{corollary}
\claim{The function $\ovee: \PPN \times \PPN \to \PPN$ represents a meet operation for the preorder $(\nabla(\PPN),\leq_\mo)$.} \qed
\end{corollary}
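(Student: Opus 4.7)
The plan is to deduce this corollary almost immediately from the preceding Proposition by transporting the meet property along the embedding $\Delta_\ph: \nabla(\PPN) \imto \nabla(\PPN^\N)$. The first observation to make is that, on the nose as functions $\N \to \PPN$,
\begin{equation*}
(\Delta_\A \ovee \Delta_\B)(\la n,m \ra) \;=\; \Delta_\A(n) \ovee \Delta_\B(m) \;=\; \A \ovee \B \;=\; \Delta_{\A \ovee \B}(\la n,m \ra),
\end{equation*}
so $\Delta_\A \ovee \Delta_\B = \Delta_{\A \ovee \B}$ as elements of $\PPN^\N$. Hence the square
\begin{diagram}
\nabla(\PPN)\times\nabla(\PPN)       & \rTo{\ovee}  & \nabla(\PPN)   \\
\dImto^{\Delta_\ph \times \Delta_\ph} &              & \dImto_{\Delta_\ph} \\
\nabla(\PPN^\N)\times\nabla(\PPN^\N) & \rTo{\ovee}  & \nabla(\PPN^\N)
\end{diagram}
commutes, even on representatives.

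Secondly, the map $\Delta_\ph$ preserves and reflects the preorder $\leq_\mo$, in the sense that
\begin{equation*}
\Eff \models \forall \A,\B \in \nabla(\PPN).\ [\A \leq_\mo \B] \;\BI\; [\Delta_\A \leq_\mo \Delta_\B].
\end{equation*}
This is essentially the content of the proof of Corollary \ref{BasicLeqMoDesc}: one compared the predicate from Proposition \ref{LeqMoDesc} instantiated at $\Delta_\A,\Delta_\B$ with the predicate from Corollary \ref{BasicLeqMoDesc}, and the equivalence was realizable uniformly in $\A,\B$ using that $\realizes \forall n. \{n\} \RI \cdots$ is realized by any index for a recursive choice of $n$. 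I would simply cite (the proof of) \ref{BasicLeqMoDesc} for this.

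Combining these two facts, each of the three internal meet axioms for $\ovee$ on $\nabla(\PPN)$ follows by instantiating the corresponding axiom for $\ovee$ on $\nabla(\PPN^\N)$ (proved in the previous Proposition) at constant sequences $\Delta_\A,\Delta_\B,\Delta_\C$ and using the displayed equality $\Delta_\A \ovee \Delta_\B = \Delta_{\A \ovee \B}$ together with reflection of $\leq_\mo$ by $\Delta_\ph$. I anticipate no real obstacle; the only thing to be slightly careful about is that every implication used is realized by an index uniform in the data, but this is automatic since each step either invokes the preceding Proposition/Corollary (already uniform) or composes recursive functions. This is why the Corollary is stated with \qed directly after the preceding Proposition.
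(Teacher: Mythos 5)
Your proof is correct and is exactly the unpacking the paper intends by marking the Corollary \qed: the preorder $\leq_\mo$ on $\nabla(\PPN)$ is by construction (Notation \ref{OmitDelta} and the Definition of $\leq_\mo$) the pullback along $\Delta_\ph$ of the preorder on $\nabla(\PPN^\N)$, and the on-the-nose identity $\Delta_\A \ovee \Delta_\B = \Delta_{\A\ovee\B}$ lets the three meet axioms from the Proposition specialize directly. No genuinely different route is involved.
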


Let us now turn our attention to $(\nabla(\PPN^\N),\leq_\lo)$.
Given a representation of the map $\lo_\ph: \nabla(\PPN^\N) \to \Lo$, we can obtain a representation of the preordering $\leq_\lo$ on $\nabla(\PPN^\N)$, as follows.

\begin{proposition}
Let $l_\ph: \PPN^\N \to \PN^\PN$ be a function representing the map $\lo_\ph: \nabla(\PPN)^N \to \Lo$.
Then the predicate $\leq_l: \PPN^\N \times \PPN^\N \to \PN$ defined by
$$\theta \leq_l \zeta = \dbl \forall n^\N . \{n\} \RI \forall_{A \in \theta(n)} l_\zeta(A) \dbr$$
represents the preordering $\leq_\lo$ on $\nabla(\PPN)^N$.
\end{proposition}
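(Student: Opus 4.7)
The plan is to mimic the proof of Proposition \ref{LeqMoDesc} (and its Corollary \ref{BasicLeqMoDesc}). To avoid ambiguity, let $\leq_l'$ temporarily denote the predicate defined in the statement. By definition of $\leq_\lo$ as the pullback along $\lo_\ph$ of the internal preorder on $\Lo$, what must be established is
$$\Eff \models \forall \theta,\zeta \in \nabla(\PPN^\N) . \theta \leq_l' \zeta \BI \lo_\theta \leq \lo_\zeta,$$
where on the right $\leq$ is the internal pointwise-implication order on $\Lo \subseteq \Omega^\Omega$.

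The first step is to replace the abstract inequality $\lo_\theta \leq \lo_\zeta$ by a concrete one using the universal property of the reflection $\ph^\loar : \Omega^\Omega \onto \Lo$ established in Section \ref{sec:Joyal}: since $\lo_\theta = (\ar_\theta)^\loar$ and $\lo_\zeta$ is (internally) a local operator, one has $\lo_\theta \leq \lo_\zeta$ iff $\ar_\theta \leq \lo_\zeta$ as elements of $\Omega^\Omega$, i.e.\ iff $\forall p^\Omega . \ar_\theta(p) \RI \lo_\zeta(p)$. The second step unfolds $\ar_\theta(p) = \exists n^\N \exists_{A \in \theta(n)}(A \BI p)$ and uses that $\lo_\zeta$ is internally monotone (built into the carrier of $\Lo$) to reduce the statement to
$$\forall n^\N \forall_{A \in \theta(n)} . \lo_\zeta(A).$$
One direction of this reduction is an instance (take $p := A$); the other uses $A \BI p$ together with $\lo_\zeta(A)$ to infer $\lo_\zeta(p)$ by the extensionality of $\lo_\zeta$ in $\Omega$. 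The third step observes that, since $l_\ph$ represents $\lo_\ph$, internally $\lo_\zeta(A)$ is represented by $l_\zeta(A)$; the display then becomes the defining formula of $\theta \leq_l' \zeta$ once the guard $\{n\} \RI$ coming from the interpretation of $\forall n \in \N$ is made explicit (using that $\realizes \forall x^X . x \nabin X$, exactly as in the proof of Corollary \ref{BasicLeqMoDesc}).

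I expect the main obstacle to be the first reduction: justifying the universal property of $\ph^\loar$ \emph{internally with explicit realizers that depend uniformly on $(\theta,\zeta)$}. Corollary \ref{GeneralLo} gives this in the needed generality, so the work amounts to extracting from its proof the primitive recursive combinators that convert a realizer of `$\lo_\theta \leq \lo_\zeta$' into one of `$\ar_\theta \leq \lo_\zeta$' and back, with the direction `$\ar_\theta \leq j \RI (\ar_\theta)^\loar \leq j$' using the realizers of the local operator axioms for $\lo_\zeta$. The remaining two steps are routine $\lambda$-calculus bookkeeping (pairing, projection, and application of the monotonicity/extensionality tracker of $\lo_\zeta$), entirely parallel to the realizer constructions already exhibited in the proofs of Proposition \ref{LeqMoDesc} and Corollary \ref{BasicLeqMoDesc}.
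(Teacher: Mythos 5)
Your argument is correct and is essentially the proof the paper intends (the paper's own proof reads only ``Analogous to the proof of Proposition \ref{LeqMoDesc}''): reduce $\lo_\theta \leq \lo_\zeta$ to `$\lo_\zeta$ holds on each $A \in \theta(n)$' via the reflection adjunction and the shape of $\ar_\theta$/$\mo_\theta$, then note this is the defining formula of the predicate; whether one passes through $\ar_\ph$ and $\ph^{\loar}$ as you do, or through $\mo_\ph$, $\ph^{\lomo}$ and Lemma \ref{SmallestMoFn} with $h = \lo_\zeta$ as in the model proof, is immaterial. The uniformity worry you flag is already discharged: the adjunction $\ph^{\loar} \dashv (\Lo \imto \Omega^\Omega)$ is an \emph{internal} statement of $\Eff$ quantified over $h \in \Omega^\Omega$ and $j \in \Lo$, so its truth supplies a single realizer uniform in $(\theta,\zeta)$, needing only the (given) realizers of $\ext(\ar_\theta)$ and $\lop(l_\zeta)$.
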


\begin{proof}
Analogous to the proof of Proposition \ref{LeqMoDesc}.
\end{proof}

Of course, this proposition will gain its force only after we discuss representations of the map $\lo_\ph$ in the next section.

\section{The method of sights}\label{sec:sights}
In this section, we aim to establish a couple of representations of the map $\lo_\ph: \nabla(\PPN^\N) \to \Lo$ and understand these representations.

\subsection{A representation of $\lo_\ph$, and sights}
The following Proposition establishes the first of the two representations we shall consider of the map $\lo_\ph: \nabla(\PPN^\N) \to \Lo$.
The first characterization of this (first) representation, which we now meet as definition, is (essentially) due to A.M. Pitts; cf. \cite[Proposition 5.6]{pitts81}.

\begin{proposition}\label{PittsLemma}
Given $\theta \in \PPN^\N$ and $p \in \PN$, define
$$\loF_\theta(p) = \bigcap \{q \subseteq \N \mid \{0\} \wedge p \subseteq q \en \{1\} \wedge \mo_\theta(q) \subseteq q\}.$$
The function $\loF_\ph: \PPN^\N \to \PN^\PN$ represents the map $\nabla(\PPN^\N) \to \Lo$.
\end{proposition}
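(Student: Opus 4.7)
The plan is to verify that, uniformly in $\theta$, the function $\loF_\theta \in \PN^\PN$ represents the least local operator internally dominating $\mo_\theta$; since $\mo_\theta$ is a monotone element of $\Omega^\Omega$, Corollary \ref{GeneralLo} identifies that least local operator with $\lo_\theta = \ph^\lomo(\mo_\theta)$. Concretely, I must exhibit uniform realizers for three statements: (A) $\lop(\loF_\theta)$; (B) $\mo_\theta \leq \loF_\theta$; (C) for every $h \in \Lo$ with $\mo_\theta \leq h$, one has $\loF_\theta \leq h$. Extensionality of $\loF_\ph$ in $p$ and in $\theta$ drops out along the way from the inductive clauses.

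For (B), given $n \in \mo_\theta(p)$ and any $q$ in the defining family for $\loF_\theta(p)$, the first closure clause supplies $\lambda k . \la 0,k \ra$ as a realizer of $p \leq q$; composing with the uniform monotonicity realizer $\mu$ for $\mo_\theta$ produces $\nu(n) := \mu(\lambda k . \la 0,k \ra)(n) \in \mo_\theta(q)$, and the second clause then gives $\la 1,\nu(n)\ra \in q$. Since $\nu(n)$ does not depend on $q$, it witnesses membership in $\loF_\theta(p)$, so $\lambda n . \la 1,\nu(n)\ra$ realizes (B). For (C), starting from realizers $e_0$ for $p \leq h(p)$, $e_1$ for $\mo_\theta(r) \leq h(r)$ (uniform in $r$), and $e_2$ for $h(h(r)) \leq h(r)$, I invoke Kleene's recursion theorem to construct a partial recursive $g$ satisfying $g(\la 0,n\ra) = e_0(n)$ and $g(\la 1,m\ra) = e_2(e_1(\phi(m,g)))$, where $\phi$ uses $g$ itself (as a realizer of $q \leq h(p)$ for $q := \{k \mid g(k) \in h(p)\}$) together with $\mu$ to transport $m \in \mo_\theta(q)$ into an element of $\mo_\theta(h(p))$. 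By construction, this $q$ satisfies both defining clauses of $\loF_\theta(p)$, so $\loF_\theta(p) \subseteq q$, which unpacks to $g$ realizing $\loF_\theta(p) \leq h(p)$.

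For (A), inflation $p \leq \loF_\theta(p)$ is realized directly by $n \mapsto \la 0,n \ra$, and the remaining closure-operator conditions, namely meet preservation and idempotence $\loF_\theta(\loF_\theta(p)) \leq \loF_\theta(p)$, are handled in the same spirit as (C) by applications of the recursion theorem that peel off the tag structure of elements and re-tag appropriately. For idempotence, for instance, one builds $g$ so that $q := \{k \mid g(k) \in \loF_\theta(p)\}$ satisfies both defining closure clauses of $\loF_\theta(\loF_\theta(p))$. The main obstacle is precisely the bookkeeping in these recursion-theorem applications: one must track how $\mu$ composes with the self-referential definition of $g$ so that the two closure clauses line up correctly. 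Once that template is established, each individual verification reduces to a finite arithmetic computation.
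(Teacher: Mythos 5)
Your proposal is correct in broad outline, but it takes a genuinely different and strictly heavier route than the paper. The paper proves the single bi-implication $\realizes \forall\theta\,.\,\loF_\theta \BI (\mo_\theta)^\lomo$ and then transfers $\lop(\cdot)$ across it, using that $(\mo_\theta)^\lomo$ is already known to represent a local operator. Crucially, the ``$\LI$'' direction $(\mo_\theta)^\lomo(p) \RI \loF_\theta(p)$ is nearly free: $\loF_\theta(p)$ is itself a member of its own defining family (because $\mo_\theta$ preserves \emph{ordinary} set inclusion, $\loF_\theta(p) \subseteq q$ implies $\mo_\theta(\loF_\theta(p)) \subseteq \mo_\theta(q) \subseteq q$), so $\lambda y.\la 0,y\ra$ and $\lambda m.\la 1,m\ra$ directly realize $p \RI \loF_\theta(p)$ and $\mo_\theta(\loF_\theta(p)) \RI \loF_\theta(p)$, and one simply instantiates the universal quantifier in $(\mo_\theta)^\lomo(p)$ at $q = \loF_\theta(p)$. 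No recursion theorem, no monotonicity realizer $\mu$, no verification that $\loF_\theta$ is a local operator. The ``$\RI$'' direction is a recursion-theorem argument essentially identical to your step (C) specialized to $h = (\mo_\theta)^\lomo$.

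By contrast, your decomposition (A)+(B)+(C) deduces $\lo_\theta \leq \loF_\theta$ only indirectly, via the minimality of $\lo_\theta$ among local operators dominating $\mo_\theta$ --- which means (A) is genuinely load-bearing: without $\lop(\loF_\theta)$ you cannot invoke that minimality. You only sketch (A), and while your idempotence template looks right, meet preservation ($\loF_\theta(p) \wedge \loF_\theta(q) \leq \loF_\theta(p \wedge q)$) is the real work there: it requires weaving two ``proof trees'' together, which is a nontrivial recursion-theorem construction in its own right (it is in fact the analogue of Lemma \ref{JoinAction} on concatenation of sights, which the paper only develops later and for a different purpose). Your (B) is fine but also unnecessary under the paper's reduction. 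So the plan is sound and would yield a correct proof if (A) were carried out, but you should notice that replacing (A) and (B) by the single cheap instantiation at $q=\loF_\theta(p)$ collapses the argument to one recursion-theorem application instead of three.
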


\begin{proof}
It suffices to show that the functions $\loF_\ph,(\mo_\ph)^\lomo: \PPN^\N \to \PN^\PN$ are isomorphic as functions $\nabla(\PPN^\N) \to \Lo$, i.e.
$$\realizes \forall \theta \in \PPN^\N . \loF_\theta \sr{\lo}{\BI} (\mo_\theta)^\lomo,$$
i.e. [using the fact that $\realizes \forall \theta . \loF((\mo_\theta)^\lomo)$]
\begin{equation}\label{eq:pl1}
\realizes \forall \theta \in \PPN^\N . \loF_\theta \BI (\mo_\theta)^\lomo.
\end{equation}
Recall, for $\theta \in \PPN^\N$, that
$$\mo_\theta(p) = \exists n \in \N . \{n\} \wedge \exists A \in \theta(n) (A \RI p)$$
and that
$$(\mo_\theta)^\lomo(p) = \forall q \in \PN . (\mo_\theta(q) \RI q) \wedge (p \RI q) \RI q.$$

First we show `$\LI$' from \eqref{eq:pl1}, i.e. that
\begin{equation}\label{lomo<=star}
\realizes \theta,p . \forall q [(\mo_\theta(q) \RI q) \wedge (p \RI q) \RI q] \RI \loF_\theta(p).
\end{equation}
But we have $\realizes \theta,p . \mo_\theta(\loF_\theta(p)) \RI \loF_\theta(p)$ and $\realizes \theta,p . p \RI \loF_\theta(p)$.
Hence \eqref{lomo<=star} follows, as desired.

Next we show `$\RI$'.
Take $a \realizes \forall \theta \forall p . p \RI (\mo_\theta)^\lomo(p)$.
Also take
\begin{align*}
b &\realizes \forall \theta \forall p . [\exists n \in \N . \{n\} \wedge \exists A \in \theta(n) (A \RI \mo_\theta^\lomo(p))] \RI \mo_\theta^\lomo(p) \\
  &=         \forall \theta \forall p . \mo_\theta(\mo_\theta^\lomo(p)) \RI \mo_\theta^\lomo(p).
\end{align*}
Take an index $c$ by recursion theorem such that
$$
c(x) =
\begin{cases}
a(y)                   & \text{if $x = \la 0,y \ra$ for some $y$}, \\
b(n,\lambda m.c[e(m)]) & \text{if $x = \la 1,\la n,e \ra \ra$}.
\end{cases}
$$
Given $\theta \in \PPN^\N$ and $p \in \PN$, consider the set $S_\theta(p) = \{x \in \N \mid c(x) \in \mo_\theta^\lomo(p)\}$.
We see that
\begin{itemize}
 \item $\{0\} \wedge p \subseteq S_\theta(p)$
 \item $\{1\} \wedge \mo_\theta(S_\theta(p)) \subseteq S_\theta(p)$,
\end{itemize}
so we can deduce that $\loF_\theta(p) \subseteq S_\theta(p)$.
Thus clearly $c \realizes \forall \theta \forall p . \loF_\theta(p) \RI \mo_\theta^\lomo(p)$, and this completes the proof.
\end{proof}

\begin{proposition}
Let $\theta \in \PPN^\N$ and $p \in \PN$.
Define
\begin{quote}
$\mo_\theta^0(p) = \{0\} \wedge p$ \\
$\mo_\theta^{\alpha+1}(p) = \mo_\theta^\alpha(p) \cup \{1\} \wedge \mo_\theta(\mo_\theta^\alpha(p))$ \\
$\mo_\theta^\lambda(p) = \bigcup_{\alpha < \lambda} \mo_\theta^\alpha(p)$.
\end{quote}
Then $\mo_\theta^{\omega_1}(p) = \loF_\theta(p)$.
\end{proposition}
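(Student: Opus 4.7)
The plan is to prove two inclusions: $\mo_\theta^{\omega_1}(p) \subseteq \loF_\theta(p)$ by transfinite induction, and $\loF_\theta(p) \subseteq \mo_\theta^{\omega_1}(p)$ by showing that $\mo_\theta^{\omega_1}(p)$ is itself one of the sets $q$ being intersected in the definition of $\loF_\theta(p)$. Throughout I will use that the sequence $(\mo_\theta^\alpha(p))_\alpha$ is increasing in $\alpha$, which is clear from the recursive definition (the successor step only enlarges the set, and the limit step takes a union).

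For the first inclusion, fix any $q \subseteq \N$ with $\{0\} \wedge p \subseteq q$ and $\{1\} \wedge \mo_\theta(q) \subseteq q$; I would show by transfinite induction on $\alpha$ that $\mo_\theta^\alpha(p) \subseteq q$. The base case $\alpha = 0$ is the first hypothesis on $q$; the successor step uses monotonicity of $\mo_\theta$ together with the second hypothesis; and the limit step is immediate. Taking the intersection over all such $q$ yields $\mo_\theta^{\omega_1}(p) \subseteq \loF_\theta(p)$.

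For the second inclusion the key is to verify that $q := \mo_\theta^{\omega_1}(p)$ itself satisfies the two closure conditions. The condition $\{0\} \wedge p \subseteq q$ holds trivially since $\mo_\theta^0(p) = \{0\} \wedge p \subseteq \mo_\theta^{\omega_1}(p)$. For the condition $\{1\} \wedge \mo_\theta(q) \subseteq q$, take $y \in \mo_\theta(\mo_\theta^{\omega_1}(p))$. Unfolding the explicit formula for $\mo_\theta$ (Proposition \ref{MoDesc}), $y$ is of the form $\la n,e \ra$ where, for some $A \in \theta(n)$, $e$ is a natural number with $e(a) \in \mo_\theta^{\omega_1}(p)$ for every $a \in A$. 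For each such $a$, let $\alpha_a < \omega_1$ be the least ordinal with $e(a) \in \mo_\theta^{\alpha_a}(p)$. Because $A \subseteq \N$ is countable and $\omega_1$ is regular, the ordinal $\alpha := \sup_{a \in A} \alpha_a$ is still $< \omega_1$, and by monotonicity of $\alpha \mapsto \mo_\theta^\alpha(p)$ we get $e(a) \in \mo_\theta^\alpha(p)$ for every $a \in A$. Hence $\la n,e \ra \in \mo_\theta(\mo_\theta^\alpha(p))$, so $\la 1, \la n,e \ra \ra \in \{1\} \wedge \mo_\theta(\mo_\theta^\alpha(p)) \subseteq \mo_\theta^{\alpha+1}(p) \subseteq \mo_\theta^{\omega_1}(p)$, as required.

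The main (and only genuine) obstacle is the cofinality argument in the last step: one needs to see that the iteration necessarily stabilizes by stage $\omega_1$, which uses both the countability of the witness sets $A \in \theta(n)$ (which is automatic, since $A \subseteq \N$) and the regularity of $\omega_1$. Everything else is bookkeeping with the inductive definition and the explicit description of $\mo_\theta$.
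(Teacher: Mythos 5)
Your proof is correct and takes essentially the same approach as the paper: both directions are argued identically, including the key cofinality step where one bounds $\sup_{a\in A}\alpha_a$ below $\omega_1$ (the paper phrases this as ``a countable union of countable ordinals is countable,'' which is the same fact as the regularity of $\omega_1$ that you invoke).
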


\begin{proof}
For brevity, we abbreviate $h = \mo_\theta$.

Let us first see that $h: \PN \to \PN$ preserves inclusions.
If $p,q \in \PN$ with $p \subseteq q$, then clearly
$$
h(p)
=
\bbl \exists n \in \N \exists_{A \in \theta(n)} . \{n\} \wedge (A \RI p) \bbr
\subseteq
\bbl \exists n \in \N \exists_{A \in \theta(n)} . \{n\} \wedge (A \RI q) \bbr
=
h(q),
$$
as desired.

We prove the statement
\begin{quote}
For all $\alpha$ and for all $q \in \PN$ with $\{0\} \wedge p \subseteq q$ and $\{1\} \wedge h(q) \subseteq q$, we have $h^\alpha(p) \subseteq q$.
\end{quote}
by induction on $\alpha$.

The ordinal 0 case. Trivially, $h^\alpha(p) = \{0\} \wedge p \subseteq q$.

The case of a successor ordinal $\alpha+1$.
We want to show that $h^\alpha(p) \cup \{1\} \wedge h(h^\alpha(p)) \subseteq q$.
The IH says $h^\alpha(p) \subseteq q$, so it remains to show $\{1\} \wedge h(h^\alpha(p)) \subseteq q$.
Since $h$ preserves inclusion, we have $h(h^\alpha(p)) \subseteq h(q)$.
It follows that $\{1\} \wedge h(h^\alpha(p)) \subseteq \{1\} \wedge h(q) \subseteq q$, as desired.

The case of a limit ordinal $\lambda$.
The IH tells us that each $\alpha < \lambda$ satisfies $h^\alpha(p) \subseteq q$.
It follows that $h^\lambda(p) = \bigcup_{\alpha < \lambda} h^\alpha(p) \subseteq q$, as desired.

This completes the induction.
It follows that for all $\alpha$ (in particular for $\omega_1$) we have $h^\alpha(p) \subseteq \{q \in \PN \mid \{0\} \wedge p \subseteq q \en \{1\} \wedge h(q) \subseteq q\} = \loF_\theta(p)$.

To show $\loF_\theta(p) \subseteq h^{\omega_1}(p)$, it clearly suffices\footnote{See the definition of $\loF_\theta(p)$.} to show that $\{0\} \wedge p \subseteq h^{\omega_1}(p)$ and $\{1\} \wedge h(h^{\omega_1}(p)) \subseteq h^{\omega_1}(p)$.
The former is clear, so let us show the latter inclusion.
Let $\la n,e \ra \in h(h^{\omega_1}(p)) = h(\bigcup_{\alpha < \omega_1} h^\alpha(p))$.
Then we can choose\footnote{See the definition of $h$.} $A \in \theta(n)$ for which $e \realizes A \RI \bigcup_{\alpha < \omega_1} h^\alpha(p)$.
So for each $x \in A$, we can choose an ordinal $o(x) < \omega_1$ with $e(x) \in h^{o(x)}(p)$.
Let $\pi = \bigcup_{x \in A} o(x)$, which is (being a countable union of countable ordinals) a countable ordinal.
Then for each $x \in A$ we have $e(x) \in h^{o(x)}(p) \subseteq h^\pi(p)$, so $e \realizes A \RI h^\pi(p)$.
Therefore $\la n,e \ra \in h(h^\pi(p))$, and so $\la 1,\la n,e \ra \ra \in \{1\} \wedge h(h^\pi(p)) \subseteq h^{\pi+1}(p) \subseteq h^{\omega_1}(p)$, as desired.
This proves $\loF_\theta(p) \subseteq h^{\omega_1}(p)$.
\end{proof}


\begin{definition}
A \defn{sight} is, inductively,
\begin{itemize}
 \item either: something called $\nil$,
 \item or: a pair $(A,\sigma)$ where $A \in \PN$ and $\sigma(a)$ is a sight for each $a \in A$.
\end{itemize}
Given $\theta \in \PPN^\N$, $z \in \N$ and $p \in \PN$, a sight $S$ is \defn{$(z,\theta,p)$-dedicated} if, by induction on $S$,
\begin{itemize}
 \item in case $S = \nil$: $z = \la 0,y \ra$ with $y \in p$,
 \item in case $S = (A,\sigma)$: (i) $z = \la 1, \la n,e \ra \ra$ for some $n,e \in \N$, (ii) $A \in \theta(n)$, (iii) $e$ is defined on $A$, and (iv) for each $a \in A$ the sight $\sigma(a)$ is $(e(a),\theta,p)$-dedicated.
\end{itemize}
We say that $S$ is \defn{$(z,\theta)$-dedicated} if $S$ is $(z,\theta,\N)$-dedicated.
\end{definition}

\begin{proposition}\label{prop:loF}
Let $\theta \in \PPN^\N$.
For all $z \in \N$ and $p \in \PN$, we have
\begin{center}
$z \in \loF_\theta(p)$ if and only if there is a $(z,\theta,p)$-dedicated sight.
\end{center}
\end{proposition}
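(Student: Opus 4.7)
The plan is to prove both implications directly from the definition of $\loF_\theta(p)$ as an intersection, without needing the transfinite characterization proved in the previous proposition.

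For the $(\Leftarrow)$ direction, I will argue by induction on the structure of a $(z,\theta,p)$-dedicated sight $S$. If $S = \nil$, then $z = \la 0,y \ra$ with $y \in p$, so $z \in \{0\} \wedge p$, which is contained in every set $q$ featuring in the intersection defining $\loF_\theta(p)$; hence $z \in \loF_\theta(p)$. If $S = (A,\sigma)$, then $z = \la 1,\la n,e \ra \ra$ with $A \in \theta(n)$ and each $\sigma(a)$ an $(e(a),\theta,p)$-dedicated sight; the inductive hypothesis gives $e(a) \in \loF_\theta(p)$ for each $a \in A$, so $e$ realizes $A \RI \loF_\theta(p)$. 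Since $A \in \theta(n)$, this yields $\la n,e \ra \in \mo_\theta(\loF_\theta(p))$, whence $z \in \{1\} \wedge \mo_\theta(\loF_\theta(p)) \subseteq \loF_\theta(p)$ by the closure property of $\loF_\theta(p)$.

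For the $(\Rightarrow)$ direction, I will use the minimality of $\loF_\theta(p)$. Define
$$q_0 = \{z \in \N \mid \text{there is a $(z,\theta,p)$-dedicated sight}\}.$$
It suffices to show $\{0\} \wedge p \subseteq q_0$ and $\{1\} \wedge \mo_\theta(q_0) \subseteq q_0$, for then $\loF_\theta(p) \subseteq q_0$ by the definition of $\loF_\theta(p)$ as an intersection. The first inclusion is witnessed by taking $\nil$. For the second, given $\la 1,\la n,e \ra \ra \in \{1\} \wedge \mo_\theta(q_0)$, we have $\la n,e \ra \in \mo_\theta(q_0)$, so there exists $A \in \theta(n)$ with $e$ defined on $A$ and $e(a) \in q_0$ for each $a \in A$. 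For each such $a$, pick a $(e(a),\theta,p)$-dedicated sight $S_a$, and set $\sigma(a) = S_a$; then $(A,\sigma)$ is an $(\la 1,\la n,e \ra \ra,\theta,p)$-dedicated sight, as required.

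The main point requiring care is the choice of the family $(S_a)_{a \in A}$ in the second direction, which invokes a form of the axiom of choice at the metalevel. This is standard and not an obstacle; it is the only non-routine aspect of an otherwise direct argument relying entirely on the universal property defining $\loF_\theta(p)$.
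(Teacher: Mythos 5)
Your proof is correct, but it takes a genuinely different route from the paper's. The paper deduces both directions from the transfinite characterization $\loF_\theta(p) = \mo_\theta^{\omega_1}(p)$ established in the preceding proposition: the `only if' direction is a transfinite induction on $\alpha$ showing every $z \in \mo_\theta^\alpha(p)$ admits a dedicated sight, and the `if' direction is an induction on sights that assigns a countable ordinal to each child and takes the supremum. You instead work directly with the definition of $\loF_\theta(p)$ as the intersection of all pre-fixed points $q$ (those with $\{0\} \wedge p \subseteq q$ and $\{1\} \wedge \mo_\theta(q) \subseteq q$): minimality handles one direction, since the set $q_0$ of numbers admitting a dedicated sight is itself such a $q$, and induction on sights handles the other. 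Your argument is shorter, needs no ordinals, and renders the $\omega_1$-iteration proposition unnecessary for this result; the paper's approach buys the explicit stratification of $\loF_\theta(p)$ by rank. One small point you gloss over: in the $(\Leftarrow)$ direction you invoke ``the closure property of $\loF_\theta(p)$,'' i.e.\ that $\{1\} \wedge \mo_\theta(\loF_\theta(p)) \subseteq \loF_\theta(p)$. An intersection of sets closed under an operator is not automatically closed under it; this step needs the observation that $\mo_\theta$ preserves inclusions (which the paper verifies at the start of the preceding proof), so that $\loF_\theta(p) \subseteq q$ implies $\{1\} \wedge \mo_\theta(\loF_\theta(p)) \subseteq \{1\} \wedge \mo_\theta(q) \subseteq q$ for every $q$ in the intersection. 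With that one-line justification added, your proof is complete. The appeal to choice in selecting the family $(S_a)_{a \in A}$ is unproblematic and is implicitly present in the paper's proof as well.
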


\begin{proof}
To deduce the `only if' part of the proposition, we show
\begin{center}
For all $\alpha$, for all $z \in \N$ and $p \in \PN$,\\
if $z \in \mo_\theta^\alpha(p)$ then there is a $(z,\theta,p)$-dedicated sight.
\end{center}
by transfinite induction on $\alpha$.

The ordinal $0$ case.
That $z \in \mo_\theta^0(p)$ means that $z = \la 0,y \ra$ with $y \in p$.
Therefore the sight $\nil$ is a $(z,\theta,p)$-dedicated sight.

The case of a successor ordinal $\alpha+1$.
Remind that $\mo_\theta^{\alpha+1}(p) = \mo_\theta^\alpha(p) \cup \mo_\theta(\mo_\theta^\alpha(p))$.
If $z \in \mo_\theta^\alpha(p)$, we are clearly done by the IH; so suppose $z \in \mo_\theta(\mo_\theta^\alpha(p))$.
Write $z = \la 1, \la n,e \ra \ra$.
There is $A \in \theta(n)$ such that $e$ is defined on $A$ and $e(a) \in \mo_\theta^\alpha(p)$ for each $a \in A$.
By the IH, choose for each $a \in A$ an $(e(a),\theta,p)$-dedicated sight $\sigma(a)$.
Then $(A,\sigma)$ is clearly a $(z,\theta,p)$-dedicated sight.

The case of a limit ordinal $\lambda$.
As $z \in \mo_\theta^\lambda(p) = \bigcup_{\alpha < \lambda} \mo_\theta^\alpha(p)$, there is $\alpha < \lambda$ such that $x \in \mo_\theta^\alpha(p)$.
Hence by the IH, there is a $(z,\theta,p)$-dedicated sight.
Induction done.

To deduce the `if' part of the proposition, we show
\begin{center}
For every sight $S$, for all $z \in \N$ and $p \in \PN$,\\
if $S$ is $(z,\theta,p)$-dedicated, then $z \in \mo_\theta^{\omega_1}(p)$.
\end{center}
by induction on $S$.

The case $S = \nil$.
Then for some $y \in p$ we have
$$z = \la 0,y \ra \in \{0\} \wedge p = \mo_\theta^0(p) \subseteq \mo_\theta^{\omega_1}(p)$$
as desired.

The case $S = (A,\sigma)$.
Then $z = \la 1, \la n,e \ra \ra$ for some $n,e$, and $A \in \theta(n)$.
For each $a \in A$, the sight $\sigma(a)$ is $(e(a),\theta,p)$-dedicated;
so by the IH, we can choose an ordinal $o(a) < \omega_1$ such that $e(a) \in \mo_\theta^{o(a)}(p)$.
Consider the ordinal $\pi = \bigcup_{a \in A} o(a)$.
Since $o(a) \leq \pi$ for each $a \in A$, we have $e(a) \in \mo_\theta^{o(a)}(p) \subseteq \mo_\theta^\pi(p)$.
Therefore $z \in \mo_\theta(\mo_\theta^{\pi}(p)) \subseteq \mo_\theta^{\pi+1}(p)$.
We are done, since $\pi+1$ is still countable.
\end{proof}
%
%
%
%

\subsection{Basic notions around sight}
\begin{definition}
Let $S$ be a sight.

A finite sequence $(x_1,\ldots,x_n)$ in $\N$ is a \defn{node} of $S$ if, by induction on $n$,
\begin{itemize}
 \item if $n = 0$, always,
 \item if $n > 0$, then $S = (A,\sigma)$, $x_1 \in A$ and $(x_2,\ldots,x_n)$ is a node of $\sigma(x_1)$.
\end{itemize}
We denote by $\Nds(S)$ the set of nodes of $S$.

Given a node $(x_1,\ldots,x_n)$ of $S$, the \defn{subsight} of $S$ \defn{at} $(x_1,\ldots,x_n)$, to be denoted $\Subsight_S(x_1,\ldots,x_n)$, is, by induction on $n$,
\begin{itemize}
 \item if $n = 0$, then it is the sight $S$,
 \item if $n > 0$, then it is the subsight of $\sigma(x_1)$ at $(x_2,\ldots,x_n)$.
\end{itemize}
A node $s$ of $S$ is a \defn{leaf} if $\Subsight_S(s) = \nil$.

Given a node $s$ of $S$, we write $\Out_S(s) = \{x \in \N \mid \text{$s \cons x$ is a node of $S$}\}$.
Note that if $\Subsight_S(s) = (B,\_)$, then $\Out_S(s) = B$.
\end{definition}

\begin{definition}
A node $s$ of a sight is \defn{degenerate} if the subsight at $s$ is $(\emptyset,\emptyset)$.
A sight is \defn{degenerate} if it has a degenerate node, i.e. it has a subsight equal to $(\emptyset,\emptyset)$.
\end{definition}

\begin{definition}
Given $\A \in \PPN$, a sight $S$ is said to be \defn{on $\A$} if each subsight of $S$ of the form $(A',\sigma')$ satisfies $A' \in \A$.
\end{definition}

\begin{proposition}
If a sight $S$ is $(z,\theta)$-dedicated for some $z,\theta$, then $S$ is on $\bigcup_{n \in \N} \theta(n)$. \qed
\end{proposition}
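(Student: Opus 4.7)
The plan is to argue by structural induction on the sight $S$, leveraging the fact that the condition ``on $\A$'' is hereditary with respect to subsights (a sight is on $\A$ iff its root, when non-$\nil$, has first component in $\A$ and every immediate substight is on $\A$), and similarly that the condition of being $(z,\theta)$-dedicated propagates to immediate subsights.

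For the base case $S = \nil$, there is no subsight of the form $(A',\sigma')$ at all, so the condition of being on $\bigcup_{n \in \N} \theta(n)$ holds vacuously. For the inductive case $S = (A,\sigma)$, the hypothesis that $S$ is $(z,\theta,\N)$-dedicated unpacks to give: $z = \la 1,\la n,e \ra \ra$ for some $n,e$, moreover $A \in \theta(n) \subseteq \bigcup_{m \in \N} \theta(m)$, the partial recursive function $e$ is defined on all of $A$, and for each $a \in A$ the subsight $\sigma(a)$ is $(e(a),\theta,\N)$-dedicated, hence $(e(a),\theta)$-dedicated. The induction hypothesis then yields that every $\sigma(a)$ is on $\bigcup_{m \in \N} \theta(m)$.

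It remains to conclude that $S = (A,\sigma)$ itself is on $\bigcup_{m \in \N} \theta(m)$. Any subsight of $S$ of the form $(A',\sigma')$ is either $S$ itself, in which case $A' = A \in \theta(n) \subseteq \bigcup_{m \in \N} \theta(m)$, or else it is a subsight of $\sigma(a)$ of that form for some $a \in A$, in which case the induction hypothesis (applied to $\sigma(a)$) supplies $A' \in \bigcup_{m \in \N} \theta(m)$. Both cases being covered, $S$ is on $\bigcup_{m \in \N} \theta(m)$, completing the induction.

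There is no real obstacle here; the proof is a direct structural induction, and the only thing to verify carefully is that ``subsight of $S$'' decomposes correctly into ``$S$ itself or subsight of an immediate child'', which is immediate from the inductive definition of $\mathrm{Subsight}$ via nodes.
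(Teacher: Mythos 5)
Your structural induction is correct, and the decomposition of ``subsight of $S$'' into ``$S$ itself, or a subsight of some immediate child $\sigma(a)$'' is exactly the right observation to make. The paper states this proposition without proof (the \qed appears immediately, as the fact is regarded as immediate from the definitions), and your argument is precisely the obvious one the author intended the reader to supply.
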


\begin{definition}
Let $z \in \N$, and let $s = (x_1,\ldots,x_k) \in \N^*$.
The \defn{r-value of $s$ under $z$}, denoted $z[s]$, is sometimes defined, by induction on $k$, as follows.
\begin{itemize}
 \item (case $k = 0$) For it to exist, we must have $z = \la 0,y \ra$. If so, it is $y$.
 \item (case $k > 0$) For it to exist, we must have that (i) $z = \la 1,\la\_,e\ra \ra$, (ii) $e$ is defined on $x_1$, and (iii) the r-value of $(x_2,\ldots,x_k)$ under $e(x_1)$ exists. If so, it is the r-value of $(x_2,\ldots,x_k)$ under $e(x_1)$.
\end{itemize}
We say that $z$ is \defn{r-defined} on $s$ if the r-value of $s$ under $z$ exists.
\end{definition}

\begin{definition}
Let $z \in \N$, and let $S$ be a sight.
We say that $z$ is \defn{r-defined} on $S$ if $z$ is defined on every leaf of $S$.
If $z$ is r-defined on $S$, we define the \defn{image} of $S$ under $z$ to be the set
$$z[S] := \{z[s] \mid \text{$s$ is a leaf of $S$}\}.$$
A \defn{$z$-value of $S$} is an element of $z[S]$.
\end{definition}

\begin{proposition}
If a sight $S$ is $(z,\theta)$-dedicated for some $z,\theta$, then $z$ is defined on $S$.
A $(z,\theta)$-dedicated sight $S$ is $(z,\theta,p)$-dedicated if and only if $z[S] \subseteq p$. \qed
\end{proposition}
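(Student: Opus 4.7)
The plan is to prove both claims by straightforward induction on the sight $S$, exploiting the fact that the recursive clause in the definition of $(z,\theta,p)$-dedicatedness for $S = (A,\sigma)$ forces $z$ to have the shape $\la 1,\la n,e \ra\ra$ with $e$ defined on $A$ and each $\sigma(a)$ being $(e(a),\theta,p)$-dedicated. This shape is exactly what the definitions of ``r-defined on'' and of $z[S]$ also recurse along, so the induction will essentially unwind definitions.

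For the first statement, I would induct on $S$. In the base case $S = \nil$, the only leaf is the empty sequence, and r-definedness of $z$ there requires $z = \la 0,y\ra$ for some $y$, which is precisely what $(z,\theta,\N)$-dedicatedness of $\nil$ gives. In the inductive case $S = (A,\sigma)$, every leaf of $S$ has the form $a \cons s$ where $a \in A$ and $s$ is a leaf of $\sigma(a)$. The dedicatedness condition ensures $z = \la 1,\la n,e\ra\ra$ with $e$ defined on all of $A$ and $\sigma(a)$ being $(e(a),\theta)$-dedicated; by the inductive hypothesis $e(a)$ is r-defined on every leaf $s$ of $\sigma(a)$, and so $z$ is r-defined on $a \cons s$ by the second clause of the definition.

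For the second statement, I would again induct on $S$, assuming $S$ is $(z,\theta)$-dedicated (so by the previous part $z[S]$ is well-defined). In the base case $S = \nil$ we have $z = \la 0,y\ra$ and $z[\nil] = \{y\}$; then $(z,\theta,p)$-dedicatedness of $\nil$ reads $y \in p$, which is exactly $z[\nil] \subseteq p$. In the inductive case $S = (A,\sigma)$, since leaves of $S$ are of the form $a\cons s$ with $s$ a leaf of $\sigma(a)$ and $z[a \cons s] = e(a)[s]$, we have
\[
z[S] \;=\; \bigcup_{a \in A} e(a)[\sigma(a)].
\]
Over $(z,\theta)$-dedicatedness, being $(z,\theta,p)$-dedicated amounts (by the inductive clause) to each $\sigma(a)$ being $(e(a),\theta,p)$-dedicated; by the inductive hypothesis applied to each $\sigma(a)$, this is equivalent to $e(a)[\sigma(a)] \subseteq p$ for every $a \in A$, which is in turn equivalent to $z[S] \subseteq p$.

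There is no real obstacle here; the proof is pure unwinding of definitions. The only points requiring care are keeping the two cases of the coding of $z$ (the $\la 0,\_\ra$ and $\la 1,\_\ra$ branches) aligned with the base and inductive cases of the sight, and noting that the parameter $p$ enters only through the leaf clause, whereas $\theta$ controls the shape of $z$ and the membership $A \in \theta(n)$ at internal nodes — so that $z[S] \subseteq p$ is genuinely the only extra constraint imposed by $p$.
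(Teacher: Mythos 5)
Your proof is correct: both halves follow by the straightforward induction on $S$ that you describe, and the key observations (that the clauses of dedicatedness not involving $p$ are exactly what r-definedness needs, and that $z[(A,\sigma)]=\bigcup_{a\in A}e(a)[\sigma(a)]$) are exactly the unwinding the paper has in mind when it marks this proposition as immediate with no written proof. No gaps.
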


Next we prove some properties of the function $\loF_\theta: \PN \to \PN$ around the condition on $\theta \in \PPN^\N$ that $\emptyset \in \bigcup_{n \in \N} \theta(n)$.

\begin{proposition}\label{prop:degenerate}
Let $z \in \N$, $\theta \in \PPN^\N$ and $p \in \PN$.
\it\begin{itemize}
\item[(a)] If a degenerate sight is $(z,\theta,p)$-dedicated, then $\emptyset \in \bigcup_{n \in \N} \theta(n)$.
\item[(b)] If $\emptyset \in \bigcup_{n \in \N} \theta(n)$, then the sight $(\emptyset,\emptyset)$ is $(z,\theta,p)$-dedicated.
\end{itemize}\rm
\end{proposition}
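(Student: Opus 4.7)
The plan is to handle the two parts separately by carefully unwinding the inductive definition of $(z,\theta,p)$-dedication.

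For part (a), the key is an auxiliary propagation fact: whenever $S$ is $(z,\theta,p)$-dedicated and $s = (x_1,\ldots,x_k)$ is a node of $S$, the subsight $\Subsight_S(s)$ is $(z',\theta,p)$-dedicated for some $z' \in \N$. I would prove this by an easy induction on the length $k$. The base case $k=0$ is trivial with $z' = z$. For the inductive step $s = s' \cons x$, the inductive hypothesis yields some $z''$ such that $\Subsight_S(s')$ is $(z'',\theta,p)$-dedicated; since $s' \cons x$ is a node, this subsight has the form $(A,\sigma)$ with $x \in A$, and unpacking the non-$\nil$ clause forces $z'' = \la 1,\la n,e\ra\ra$ with $e$ defined on $A$ and $\sigma(x)$ being $(e(x),\theta,p)$-dedicated, so $z' := e(x)$ works. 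Now the hypothesis of (a) gives a node $s$ with $\Subsight_S(s) = (\emptyset,\emptyset)$; applying the propagation fact, $(\emptyset,\emptyset)$ is $(z',\theta,p)$-dedicated for some $z'$. One more application of the non-$\nil$ clause yields $z' = \la 1,\la n,e\ra\ra$ with $\emptyset \in \theta(n)$, hence $\emptyset \in \bigcup_{n \in \N}\theta(n)$.

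For part (b), I would simply exhibit a witness. Pick $n \in \N$ with $\emptyset \in \theta(n)$ and take $z := \la 1,\la n,0\ra\ra$ (any value in the $e$-slot works, because $e$ is required only to be defined on $\emptyset$, which is vacuous). Then all four clauses in the definition of $(\emptyset,\emptyset)$ being $(z,\theta,p)$-dedicated are satisfied: (i) $z$ has the prescribed pairing form, (ii) $\emptyset \in \theta(n)$ by choice of $n$, and (iii)--(iv) are vacuous over $a \in \emptyset$.

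I do not anticipate any real obstacle. The only mild point is to formulate the propagation lemma in (a) so that the induction on node length lines up cleanly with the recursion in the definition of dedication; everything else is immediate bookkeeping from the definitions. Note that in (b) the witness $z$ depends on the chosen $n$, so the statement is best read as asserting the existence of a suitable $z$ making $(\emptyset,\emptyset)$ dedicated.
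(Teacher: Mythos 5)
Your proof of (a) reaches the conclusion by essentially the same underlying idea as the paper --- drill down to the degenerate subsight $(\emptyset,\emptyset)$ and read off $n$ from its dedication data --- but organizes the traversal differently. The paper does a single structural induction directly on $S$ (distinguishing $S = \nil$, $S = (\emptyset,\emptyset)$, and $S = (A,\sigma)$ with $A \neq \emptyset$, implicitly re-quantifying $z$ in the inductive hypothesis), whereas you extract a separate propagation lemma proved by induction on node length and then apply it at a degenerate node. Both are sound; yours implicitly uses the small auxiliary fact that when $\Subsight_S(s') = (A,\sigma)$ and $x \in A$ one has $\Subsight_S(s' \cons x) = \sigma(x)$, which the paper's direct structural recursion sidesteps. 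Nothing is missing from your argument, it is just one layer more modular.

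Your closing remark about (b) is sharper than the paper, which dismisses (b) as ``Immediate by definition.'' You are right that (b) can only hold for suitably chosen $z$: for $(\emptyset,\emptyset)$ to be $(z,\theta,p)$-dedicated the definition forces $z = \la 1, \la n,e \ra \ra$ with $\emptyset \in \theta(n)$, so the statement fails for any $z$ not of this form (e.g.\ $z = \la 0,0 \ra$), and hence cannot hold for an arbitrary $z$ bound by the Proposition's opening ``Let $z \in \N$''. This is not just a cosmetic quibble: Corollary~\ref{cor:DegLoFp} invokes (b) in exactly this universal reading to conclude $\loF_\theta(p) = \N$, yet the set $q$ of numbers whose first pairing coordinate lies in $\{0,1\}$ is a proper subset of $\N$ with $\{0\} \wedge p \subseteq q$ and $\{1\} \wedge \mo_\theta(q) \subseteq q$, so by definition $\loF_\theta(p) \subseteq q \subsetneq \N$. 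Your existential reformulation, with explicit witness $z = \la 1, \la n, 0 \ra \ra$, is the corrected and provable form of (b).
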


\begin{proof}
(a)
We show the statement
\begin{center}
For every sight $S$, if $S$ is degenerate and is $(z,\theta,p)$-dedicated,\\
then $\emptyset \in \bigcup_{n \in \N} \theta(n)$.
\end{center}
by induction on $S$.

The case $S = \Nil$ is trivially okay.
Let us consider the case $S = (A,\sigma)$.

Suppose that $S = (\emptyset,\emptyset)$.
Then, since $S$ is $(z,\theta,p)$-dedicated, we have that $z = \la 1, \la n,\_ \ra \ra$ for some $n \in \N$ and that $\emptyset \in \theta(n)$.
Therefore $\emptyset \in \bigcup_{n \in \N} \theta(n)$.

Suppose that $S = (A,\sigma)$ with $A \neq \emptyset$.
Since $S$ is degenerate, there must be some $a_0 \in A$ such that the sight $\sigma(a_0)$ is degenerate.
By the IH, we have $\emptyset \in \bigcup_{n \in \N} \theta(n)$.

Induction complete. The conclusion (a) follows.

(b)
Immediate by definition.
\end{proof}

\begin{corollary}\label{cor:DegLoFp}
Let $\theta \in \PPN^\N$ and $p \in \PN$.
If $\emptyset \in \bigcup_{n \in \N} \theta(n)$, then $\loF_\theta(p) = \N$.
\end{corollary}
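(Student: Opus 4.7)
The plan is straightforward: combine the two results the corollary is explicitly placed to follow from. By Proposition \ref{prop:loF}, the claim $\loF_\theta(p) = \N$ reduces to showing that for every $z \in \N$ there exists a $(z, \theta, p)$-dedicated sight. So I would reformulate the goal as: produce, for each $z \in \N$, a sight witnessing $z$'s membership in $\loF_\theta(p)$.

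The natural candidate is the degenerate sight $(\emptyset, \emptyset)$, and this is precisely what Proposition \ref{prop:degenerate}(b) is there to supply: under the hypothesis $\emptyset \in \bigcup_{n \in \N} \theta(n)$, pick $n_0$ with $\emptyset \in \theta(n_0)$, and the sight $(\emptyset, \emptyset)$ qualifies as $(z, \theta, p)$-dedicated, since clauses (iii) and (iv) in the definition of $(z,\theta,p)$-dedication become vacuous over the empty index set $A = \emptyset$. Thus I would chain: hypothesis $\Rightarrow$ (by Prop.\ \ref{prop:degenerate}(b)) every $z$ has a $(z,\theta,p)$-dedicated sight $\Rightarrow$ (by Prop.\ \ref{prop:loF}) every $z$ lies in $\loF_\theta(p)$.

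I do not foresee any real obstacle: the work has already been done in Proposition \ref{prop:degenerate}(b), and the present corollary is just a two-step composition via the sight characterization of $\loF_\theta$. The only thing I would pause on is the bookkeeping around the form of $z$ in the definition of dedication, but this is handled uniformly by the degenerate sight clause once $\emptyset \in \theta(n_0)$ is in hand, so the proof should read as a single sentence.
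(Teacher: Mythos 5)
Your proposal reproduces the paper's proof exactly: invoke Proposition \ref{prop:loF} to reduce $\loF_\theta(p) = \N$ to finding a $(z,\theta,p)$-dedicated sight for each $z$, then invoke Proposition \ref{prop:degenerate}(b) to supply the degenerate sight $(\emptyset,\emptyset)$ as a uniform witness. The step you flag as needing care --- whether $z$ really has the form $\la 1, \la n,e\ra\ra$ required by the dedication clause for a non-$\nil$ sight --- is also left implicit in the paper (Proposition \ref{prop:degenerate}(b) is dispatched as ``Immediate by definition''), so the two proofs share both their structure and their level of detail.
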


\begin{proof}
By Proposition \ref{prop:loF}, we have
$$\loF_\theta(p) = \{z \in \N \mid \text{there is a $(z,\theta,p)$-dedicated sight}\}.$$
So if $\emptyset \in \bigcup_{n \in \N}$,
then for each $z \in \N$ the sight $(\emptyset,\emptyset)$ is $(z,\theta,p)$-dedicated\footnote{This is Proposition \ref{prop:degenerate}(b)},
so $\loF_\theta(p) = \N$, as desired.
\end{proof}

\subsection{Well-founded trees, and another representation of $\lo_\ph$}
It is not entirely easy to write down a concrete construction of sight.
We now discuss an axiomatization of the set of nodes of a sight, which will facillitate defining a sight by specifying what its nodes are - a task rather considerable.

\begin{definition}
As usual, we denote by $\N^*$ the set of finite sequences in $\N$.
Given $s,t \in \N^*$, we write $s \ini t$ if $s$ is an initial segment of $t$.

A \defn{tree} (for us) is a \emph{non-empty} subset $T$ of $\N^*$ that is closed under initial segments, i.e. if $s \in T$ and $s' \in \N^*$ with $s' \ini s$ then $s' \in T$.
A tree is \defn{well-founded} if it admits no infinite chain w.r.t. the ordering $\ini$.
We may abbreviate `well-founded tree' to \defn{wf-tree}.

Let $T$ be a tree.
Given $t \in T$, we write $\Out_T(t) = \{x \in \N \mid t \cons x \in T\}$.

An element $s \in T$ may be called a \defn{node} of $T$.
A node $s$ of $T$ is a \defn{leaf} if $s$ is a maximal in $T$ w.r.t. $\ini$.
We denote by $\Lvs(T)$ the set of leaves in $T$.

Let $t \in T$.
We write $\subtree_T(t) = \{s \in \N^* \mid t \conc s \in T\} \subseteq \N^*$.
It is easy to see that $\subtree_T(t)$ is a tree.
We say that $\subtree_T(t)$ is the \defn{subtree} of $T$ at $t$.
\end{definition}

\begin{proposition}
A well-founded tree has a leaf.
\end{proposition}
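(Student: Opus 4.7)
The plan is to argue by contradiction. Suppose $T$ is well-founded but has no leaf. Since $T$ is non-empty, pick any $s \in T$; by closure under initial segments, every initial segment of $s$ lies in $T$, and in particular the empty sequence $()$ belongs to $T$. The assumption that $T$ has no leaf means precisely that for every $s \in T$ there exists some $s' \in T$ with $s \ini s'$ and $s \neq s'$, i.e.\ every node admits a proper $\ini$-extension inside $T$.

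Using this, I would construct an infinite strictly ascending $\ini$-chain in $T$, directly contradicting well-foundedness. Set $s_0 = ()$. Having defined $s_n \in T$, the node $s_n$ is not a leaf, so it has some proper $\ini$-extension in $T$; let $s_{n+1}$ be the length-lexicographically least such extension. This makes the choice canonical at each stage, so no appeal to dependent choice is needed. The resulting sequence $s_0 \ini s_1 \ini s_2 \ini \cdots$ with each $s_n \neq s_{n+1}$ is an infinite chain in $(T, \ini)$, contradicting the hypothesis that $T$ is well-founded. Hence $T$ must have a leaf.

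There is no real obstacle; the proof is a direct unfolding of the definitions of \emph{tree}, \emph{leaf} and \emph{well-founded}. The only minor subtlety is the need to single out one proper extension per stage, which is handled above by well-ordering $\N^*$ and taking the least witness.
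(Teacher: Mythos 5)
Your proof is correct and takes essentially the same route as the paper: the paper observes that a leafless tree lets every node be properly extended, yielding an infinite $\ini$-chain, and states the proposition as the contrapositive. You simply unfold the same idea into an explicit recursive construction of the chain, with the (welcome but inessential) refinement of picking the length-lexicographically least extension to keep the construction choice-free.
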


\begin{proof}
If a tree $T$ has no leaf, then any node of $T$ can be extended to a longer node, which clearly implies that $T$ is not well-founded.
The Proposition is the contrapositive of this observation.
\end{proof}

\begin{definition}
Let $T$ be a well-founded tree.
The \defn{foundation number} of $T$, to be denoted $\fn(T)$, is the least length of a leaf of $T$.
\end{definition}

\begin{proposition}
The set of nodes of a non-degenerate sight is a well-founded tree.
Moreover, every well-founded tree is the node set of a unique non-degenerate sight.
\end{proposition}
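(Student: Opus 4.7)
The plan is to handle the two assertions separately, starting with the claim that $\Nds(S)$ is a well-founded tree. I would prove this by structural induction on $S$. That $\Nds(S)$ is non-empty (the empty sequence $()$ is always a node) and closed under initial segments is immediate from the definition of `node'. For well-foundedness: the base case $S = \nil$ is trivial since $\Nds(\nil) = \{()\}$; in the inductive step $S = (A,\sigma)$, any infinite $\ini$-chain in $\Nds(S)$ would, past its initial element $()$, lie inside $\{a \cons t \mid t \in \Nds(\sigma(a))\}$ for some fixed $a \in A$, hence induce an infinite $\ini$-chain in $\Nds(\sigma(a))$, contradicting the inductive hypothesis. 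Non-degeneracy plays no role for this direction.

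For the second assertion, my approach is to construct a map $T \mapsto \hr(T)$ from wf-trees to non-degenerate sights by well-founded recursion and to show it inverts $S \mapsto \Nds(S)$ on non-degenerate sights. The recursive definition: if $\Out_T(()) = \emptyset$ (equivalently $T = \{()\}$), set $\hr(T) = \nil$; otherwise $\hr(T) = (\Out_T(()),\, a \mapsto \hr(\subtree_T(a)))$. This is well-defined because the strict `subtree' relation on wf-trees is well-founded: an infinite descending chain $T_0, T_1, T_2, \ldots$ with $T_{i+1} = \subtree_{T_i}(a_i)$ for some $a_i \in \Out_{T_i}(())$ would concatenate into an infinite $\ini$-chain $(a_0), (a_0, a_1), \ldots$ in $T_0$, contradicting well-foundedness. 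The same induction verifies that $\hr(T)$ is non-degenerate (the branching clause activates precisely when $\Out_T(()) \neq \emptyset$, so no subsight produced is $(\emptyset,\emptyset)$) and that $\Nds(\hr(T)) = T$.

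Finally, I would establish uniqueness by induction on the sight. If non-degenerate $S_1, S_2$ satisfy $\Nds(S_1) = \Nds(S_2) = T$, then when $T = \{()\}$ non-degeneracy excludes $(\emptyset,\emptyset)$ and forces both to be $\nil$; otherwise both are pairs $(A,\sigma_i)$ with common first coordinate $A = \Out_T(())$, and the two subsight-assignments agree by the inductive hypothesis since $\Nds(\sigma_1(a)) = \subtree_T(a) = \Nds(\sigma_2(a))$ for each $a \in A$. The step I expect to require the most care is the justification of the well-founded recursion on wf-trees, as this is not a `native' structural induction; but it reduces, as above, to the observation that an infinite chain of strict subtrees concatenates to an infinite $\ini$-chain in the original tree.
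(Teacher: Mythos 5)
Your proof is correct, and its route is genuinely different from — and, as it happens, more careful than — the paper's. The paper establishes the second assertion by inducting on the foundation number $\fn(T)$ (the least leaf length), asserting in the inductive step that $\fn(\subtree_T(a)) = \fn(T)-1$ for \emph{every} $a \in \Out_T()$. That assertion is false: take $T = \{(),(0),(1),(0,0),(1,0),(1,0,0)\}$; then $\fn(T)=2$, while $\subtree_T(1)=\{(),(0),(0,0)\}$ also has foundation number $2$ (its only leaf is $(0,0)$). So the paper's measure need not decrease, and the stated induction does not descend. Your alternative — defining the inverse $\hr$ by well-founded recursion on the immediate-strict-subtree relation, justified by observing that an infinite descending chain $T_0, \subtree_{T_0}(a_0), \ldots$ would concatenate to an infinite $\ini$-chain $(a_0), (a_0,a_1), \ldots$ in $T_0$ — is sound and sidesteps the problem entirely; it amounts to induction on the ordinal rank of $T$, which does decrease strictly on passing to any immediate subtree. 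You also prove explicitly that $\Nds(S)$ is a well-founded tree, a fact the paper's proof uses tacitly as the codomain of the bijection without verification. The uniqueness step is essentially the same in both: structural induction on the sight, with non-degeneracy used to rule out $(\emptyset,\emptyset)$ at the base.
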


\begin{proof}
We show that the association
$$\Nds: \{\text{Non-degenerate sights}\} \to \{\text{Well-founded trees}\}$$
is bijective.

Injectivity.
Suppose, for contradiction, that there are non-degenerate sights $S \neq S'$ with $\Nds(S) = \Nds(S')$.
We show the inconsistency of mathematics by induction on $S$.

The case $S = \Nil$.
Since $S \neq S'$, we have $S' = (A,\sigma)$ with (since $S'$ is nondegenerate) $A \neq \emptyset$.
Choose $a_0 \in A$.
Then the length 1 sequence $(a_0)$ belongs to $\Nds(S') = \Nds(S) = \{()\}$, so mathematics is inconsistent.

The case $S = (A,\sigma)$.
Choose $a_0 \in A$.
Applying the IH to the sight $\sigma(a_0)$, mathematics is inconsistent.
Induction complete.
This proves the injectivity.

Surjectivity.
We prove that for every well-founded tree $T$ there is a non-degenerate sight $S$ with $\Nds(S) = T$, by induction on the foundation number of $T$.

The case $\fn(T) = 0$.
Then $T = \{()\} = \Nds(\Nil)$, so okay.

The case $\fn(T) > 0$.
Then $\Out_T() \neq \emptyset$, and for each $a \in \Out_T()$ the foundation number of $\subtree_T(a)$ is $\fn(T)-1$.
So by the IH we can choose for each $a \in \Out_T()$ a non-degenerate sight $\sigma(a)$ with $\Nds(\sigma(a)) = \subtree_T(a)$.
Now clearly the pair $(\Out_T(),\sigma)$ is a non-degenerate sight and we have $\Nds(\Out_T(),\sigma) = T$, as desired.
Induction complete.
This proves the surjectivity.
Proof complete.
\end{proof}

\begin{remark}
Let $S$ be a \emph{non-degenerate} sight, and let $s \in \N^*$.
Observe that
\begin{itemize}
\item $s$ is a node of $S$ if and only if $s$ is a node of $\Nds(S)$,
\item $s$ is a leaf of $S$ if and only if $s$ is a leaf of $\Nds(S)$,
\item $\Nds(\Subsight_S(s)) = \Subtree_{\Nds(S)}(s)$,
\item $\Out_S(s) = \Out_{\Nds(S)}(s)$.
\end{itemize}
\end{remark}

\begin{notation}
Given a non-degenerate sight $S$, we may denote the node set of $S$ just by $S$.
\end{notation}

The point of view the last Proposition provides, namely to view a sight as a set of sequences, has stimulated the author to reconsider the way a natural number acts on sights, as in the following Definition and Proposition.

\begin{definition}
Let $w: \N^* \pto \N$ be a partial function, let $\theta \in \PPN^\N$ and let $p \in \PN$.
We say that a sight $S$ is \defn{$(w,\theta,p)$-supporting} if
\begin{itemize}
 \item for each leaf $s \in \Nds(S)$, we have $w(s) = \la 0,y \ra$ with $y \in p$, and
 \item for each non-leaf $s \in \Nds(S)$, we have $w(s) = \la 1,n \ra$ with $\Out_S(s) \in \theta(n)$.
\end{itemize}
We say that $S$ is \defn{$(w,\theta)$-supporting} if $S$ is $(w,\theta,\N)$-supporting.
\end{definition}

\begin{proposition}
Given $\theta \in \PPN^\N$, define a function $\loS_\theta: \PN \to \PN$ by
$$\loS_\theta(p) = \{w \in \N \mid \text{there is a $(w,\theta,p)$-supporting sight}\}$$
The function $\loS_\ph: \PPN^\N \to \PN^\PN$ represents the map $\lo_\ph: \nabla(\PPN^\N) \to \Lo$.
\end{proposition}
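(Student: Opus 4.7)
The plan is to leverage Proposition~\ref{PittsLemma} (refined by Proposition~\ref{prop:loF}), which already tells us that $\loF_\ph$ represents $\lo_\ph$, and show that $\loS_\ph$ is isomorphic to $\loF_\ph$ as a map $\nabla(\PPN^\N) \to \Omega^\Omega$. Since $\loF_\ph$ lands in $\Lo$, the same will then follow for $\loS_\ph$, and both will represent $\lo_\ph$. Concretely the task reduces to exhibiting a realizer for
\begin{equation*}
\forall \theta \in \PPN^\N \forall p \in \PN . \loF_\theta(p) \BI \loS_\theta(p),
\end{equation*}
where a number $w \in \loS_\theta(p)$ is read as an index for a partial recursive function $\N^* \pto \N$ witnessing the supporting condition (via the preassembly structure on $\Ptl(\N^*, \N)$ from the Preliminaries).

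For $\loF_\theta(p) \RI \loS_\theta(p)$: given $z \in \loF_\theta(p)$, Proposition~\ref{prop:loF} yields a $(z,\theta,p)$-dedicated sight $S$. Define $w_z : \N^* \pto \N$ by walking $z$ along $s$: compute the r-value $z_s$ (defined on every node of $S$) and put $w_z(s) = z_s$ when $z_s = \la 0, y \ra$, or $w_z(s) = \la 1, n \ra$ when $z_s = \la 1, \la n, e \ra \ra$. A straightforward induction on $S$ (using $z[S] \subseteq p$ for the leaves) shows $S$ is $(w_z, \theta, p)$-supporting. Since an index for $w_z$ is uniformly computable from $z$, this supplies one realizer.

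For $\loS_\theta(p) \RI \loF_\theta(p)$: apply the recursion theorem, as in the proof of Proposition~\ref{PittsLemma}, to obtain an index $c$ satisfying
\begin{equation*}
c(\la w, s \ra) = \begin{cases} w(s) & \text{if } w(s) = \la 0, y \ra, \\ \la 1, \la n, \lambda a . c(\la w, s \cons a \ra) \ra \ra & \text{if } w(s) = \la 1, n \ra, \end{cases}
\end{equation*}
and set $z_w := c(\la w, () \ra)$. To show that $S$ is $(z_w, \theta, p)$-dedicated whenever $S$ is $(w, \theta, p)$-supporting, prove by induction on $S$ the generalised statement: for every $s_0 \in \N^*$, if the supporting clauses for $S$ hold with $w(s_0 \conc s)$ in place of $w(s)$, then $c(\la w, s_0 \ra)$ dedicates $S$; taking $s_0 = ()$ then gives the claim.

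The main obstacle is this recursion-theorem step: one must arrange $c$ to consume both $w$ and a position $s$ so that the self-reference correctly supplies the ``$e$-component'' of a dedication encoding. Once $c$ is set up, the two inductive verifications are routine and mirror those already carried out in Propositions~\ref{PittsLemma} and~\ref{prop:loF}.
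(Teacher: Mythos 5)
Your proposal is correct and follows essentially the same route as the paper: the forward direction is the paper's $\snd_\ph$ construction (walking $z$ along a node and outputting $\la 0,y\ra$ or $\la 1,n\ra$), and the backward direction is the paper's $\fst'_\ph$ construction, with your recursion-theorem index $c(\la w,s\ra)$ playing the role of $\fst'_w(s)$ and your generalisation over the prefix $s_0$ matching the paper's shifting lemma for $w@a$. Both reduce the statement to realizing $\loF_\theta(p) \BI \loS_\theta(p)$ and then invoke Proposition \ref{PittsLemma}.
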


\begin{proof}
We proceed in a number of steps.

\textsc{Step 1.}
We define a partial computable function $\snd_\ph\ph: \N \times \N^* \pto \N$ satisfying
$$\snd_z(c_1,\ldots,c_p) \eqv
\begin{cases}
\la 0,y \ra                   & \text{if $z = \la 0, y \ra$ and $p = 0$} \\
\la 1,n \ra                   & \text{if $z = \la 1, \la n,e \ra \ra$ and $p = 0$} \\
\snd_{e(c_1)}(c_2,\ldots,c_p) & \text{if $z = \la 1, \la n,e \ra \ra$ and $p > 0$}.
\end{cases}$$
We show that
\begin{center}
if a sight $S$ is $(z,\theta,p)$-dedicated, then it is $(\snd_z,\theta,p)$-supporting.
\end{center}
This clearly follows from the statement
\begin{center}
For each $(x_1,\ldots,x_k) \in \N^*$, each $z \in \N$ and each sight $S$, \\
given that $(x_1,\ldots,x_k) \in \Nds(S)$ and that $S$ is $(z,\theta,p)$-dedicated, we have: \\
if $(x_1,\ldots,x_k)$ is a leaf of $S$ then $\snd_z(x_1,\ldots,x_k) = \la 0,y \ra$ with $y \in p$, and \\
if $(x_1,\ldots,x_k)$ is a non-leaf of $S$ then $\snd_z(x_1,\ldots,x_k) = \la 1,n \ra$ with $\Out_S(x_1,\ldots,x_k) \in \theta(n)$.
\end{center}
which prove by induction on $k$.

The case $p = 0$, i.e. $(x_1,\ldots,x_p) = ()$.
Suppose that $()$ is a leaf of $S$.
Then $S = \nil$.
Since $S$ is $(z,\theta,p)$-dedicated, we have $z = \la 0,y \ra$ with $y \in p$.
By definition of $\snd$, we have $\snd_z() = \la 0,y \ra$, as desired.
Suppose that $()$ is a non-leaf of $S$.
Then $S = (A,\sigma)$ for some $A,\sigma$.
Since $S$ is $(z,\theta,p)$-dedicated, we have $z = \la 1,\la n,e \ra \ra$ for some $n,e$, and have $\Out_S() \in \theta(n)$.
By definition of $\snd$, we have $\snd_z() = \la 1,n \ra$.
The case done.

The case $p > 0$.
Then $S = (A,\sigma)$ for some $A,\sigma$.
Since $(A,\sigma)$ is $(z,\theta,p)$-dedicated, we have that $z = \la 1, \la \_,e \ra \ra$ for some $e$, and that the sight $\sigma(x_1)$ is $\la e(x_1),\theta,p \ra$-dedicated.
If $(x_1,\ldots,x_k)$ is a leaf of $S$, then $(x_2,\ldots,x_k)$ is a leaf of $\sigma(x_1)$,
so by the IH we have
$$\snd_z(x_1,\ldots,x_k) = \snd_{e(x_1)}(x_2,\ldots,x_p) = \la 0,y \ra$$
with $y \in p$, as desired.
If $(x_1,\ldots,x_k)$ is a non-leaf of $S$, then $(x_1,\ldots,x_p)$ is a non-leaf of $\sigma(x_1)$,
so by the IH we have
$$\snd_z(x_1,\ldots,x_k) = \snd_{e(x_1)}(x_2,\ldots,x_p) = \la 1,n \ra$$
with $\Out_S(x_1,\ldots,x_k) = \Out_{\sigma(x_1)}(x_2,\ldots,x_k) \in \theta(n)$, as desired.
Induction complete.

\textsc{Step 2.}
Given a partial function $w: \N^* \pto \N$, define a partial function $\fst'_w: \N^* \pto \N$ by
$$\fst'_w(x_1,\ldots,x_k) \eqv
\begin{cases}
\la 0,y \ra                                             & \textif w(x_1,\ldots,x_k) = \la 0,y \ra \\
\la 1, \la n, \lambda x . \fst'_w(x_1,\ldots,x_k,x) \ra & \textif w(x_1,\ldots,x_k) = \la 1,n \ra.
\end{cases}$$
Note that if $w: \N^* \pto \N$ is effective, then so is $\fst'_w: \N^* \pto \N$.

We define a possibly-undefined element $\fst_w \in \N$ by
$$\fst_w \eqv \fst'_w().$$
Clearly, the partial function $\fwd_\ph: \Ptl(\N^*,\N) \pto \N$ is effective.

\textsc{Step 3.}
Given a partial computable function $w: \N^* \pto \N$ and $a \in \N$,
define a partial computable function $w@a: \N^* \pto \N$ by
$$w@a(x_1,\ldots,x_k) \eqv w(a,x_1,\ldots,x_k).$$
We prove the statement
\begin{equation}\label{lr2}
\text{\begin{tabular}{c}
For every sight $S$, for $k \in \N$ and $x_1,\ldots,x_k \in \N$ such \\
that the sight $S$ is $(\fst'_{w@a}(x_1,\ldots,x_k)\cvg,\theta,p)$-dedicated, \\
the sight $S$ is $(\fst'_w(a,x_1,\ldots,x_k)\cvg,\theta,p)$-dedicated.
\end{tabular}}
\end{equation}
by induction on $S$.

The case $S = \nil$.
Then $\fst'_{w@a}(x_1,\ldots,x_k) = \la 0,y \ra$ with $y \in p$.
By definition of $\fst'_{w@a}(x_1,\ldots,x_k)$, we have $w@a(x_1,\ldots,x_k) = \la 0,y \ra$.
Hence $w(x_1,\ldots,x_k) = \la 0,y \ra$, and so $\fst'_w(a,x_1,\ldots,x_k) = \la 0,y \ra$.
Thus $\nil$ is $(\fst'_w(a,x_1,\ldots,x_k),\theta,p)$-dedicated.

The case $S = (B,\tau)$.
Then $\fst'_{w@a}(x_1,\ldots,x_k) = \la 1, \la n, \lambda x. \fst'_{w@a}(x_1,\ldots,x_k,x) \ra \ra$ for some $n$, and we have $B \in \theta(n)$.
By definition of $\fst'_{w@a}(x_1,\ldots,x_k)$, it follows that $w@a(x_1,\ldots,x_k) = \la 1,n \ra$.
So $w(a,x_1,\ldots,x_k) = \la 1,n \ra$.
Hence $\fst'_w(a,x_1,\ldots,x_k) = \la 1, \la n, \lambda x. \fst'_w(a,x_1,\ldots,x_k,x) \ra \ra$.
Let $b \in B$.
We have $\fst'_{w@a}(x_1,\ldots,x_k,b)\cvg$, and $\tau(b)$ is $(\fst'_{w@a}(x_1,\ldots,x_k,b),\theta,p)$-dedicated.
By the IH, we have $\fst'_w(a,x_1,\ldots,x_k,b)\cvg$, and $\tau(b)$ is $(\fst'_w(a,x_1,\ldots,x_k,b),\theta,p)$-dedicated.
Hence $(B,\tau)$ is $(\fst'_w(a,x_1,\ldots,x_k),\theta,p)$-dedicated, as desired.
Induction complete.
This proves \eqref{lr2}.

\textsc{Step 4.}
We show that
\begin{center}
if a sight $S$ is $(w,\theta,p)$-supporting, then it is $(\fst_w,\theta,p)$-dedicated.
\end{center}
by induction on $S$.

The case $S = \nil$.
Then $()$ is the only node of $S$.
Since $S$ is $(w,\theta,p)$-dedicated and $()$ is a leaf of $S$, we have $w() = \la 0,y \ra$ with $y \in p$.
So $\fst_w \eqv \fst'_w() = \la 0,y \ra$.
Therefore $\fst_w\cvg$, and the sight $S$ is $(\fst_w,\theta,p)$-dedicated.

The case $S = (A,\sigma)$.
We want: (i) $\fst_w = \la 1,\la n,e \ra \ra$ for some $n,e$, (ii) $A \in \theta(n)$, and (iii) for each $a \in A$ one has $e(a)\cvg$ and the sight $\sigma(a)$ is $(e(a),\theta,p)$-dedicated.
Since $()$ is a non-leaf of $S$, we have $w() = \la 1,n \ra$ for some $n$ and have $\Out_S() \in \theta(n)$.
As $\fst_w = \fst'_w() = \la 1, \la n, \lambda x . \fst'_w(x) \ra \ra$, we have (i).
As $A = \Out_S()$, we have (ii).
Let us verify (iii).
Let $a \in A$.
Since $(A,\sigma)$ is $(w,\theta,p)$-supporting, the sight $\sigma(a)$ is $(w@a,\theta,p)$-supporting.
By the IH, as $\fst_{w@a} \eqv \fst'_{w@a}()$, we have $\fst'_{w@a}()\cvg$ and the sight $\sigma(a)$ is $(\fst'_{w@a}(),\theta,p)$-dedicated.
By \eqref{lr2}, we have $\fst'_w(a)\cvg$ and the sight $\sigma(a)$ is $(\fst'_w(a),\theta,p)$-dedicated.
As $\fst'_w(a) \eqv (\lambda x . \fst'_w(x))(a)$, this proves (iii).
Induction complete.

\textsc{Step 5.}
We now see
$$\la \ulcorner \snd_\ph \urcorner , \ulcorner \fst_\ph \urcorner \ra \realizes \forall \theta \in \PPN^\N \forall p \in \PN . \loF_\theta(p) \BI \loS_\theta(p).$$
So the functions $\loF_\ph,\loS_\ph: \PPN^\N \to \PN^\PN$ represent the same map $\nabla(\PPN^\N) \to \Lo$, which is the map $\lo_\ph$.
We are done.
\end{proof}

\subsection{Join of the preordering $\leq_\lo$}
As the first application of well-founded trees, we present a construction for joins in the preorder $(\PPN^\N,\leq_\lo)$.

%
%

\begin{construction}[Concatenation of sights]
Given non-degenerate sights $S$ and $T$, define
$$S \conc T = \{s \conc t \mid s \in \Lvs(S) \en t \in T\}.$$
\claim{Then the set $S \conc T$ is a well-founded tree, hence a non-degenerate sight.}
\end{construction}

\begin{proof}
Easy.
\end{proof}

\begin{lemma}\label{JoinAction}
Given partial functions $a,b: \N^* \pto \N$, choose a partial function $a \conc b: \N^* \pto \N$ subject to the following, clearly effective action.

\begin{itemize}
\item[]
Let $(x_1,\ldots,x_l) \in \N^*$.

\item[]
If for all $0 \leq i \leq l$ one has $a(x_1,\ldots,x_i) = \la 1,\_ \ra$, \\
then $(a \conc b)(x_1,\ldots,x_l) = a(x_1,\ldots,x_l)$.

\item[]
If there is $0 \leq k \leq l$ such that
  \begin{itemize}
  \item for all $0 \leq i < k$ one has $a(x_1,\ldots,x_i) = \la 1,\_ \ra$,
  \item $a(x_1,\ldots,x_k) = \la 0,\_ \ra$,
  \item for all $k \leq i \leq l$ one has $b(x_{k+1},\ldots,x_i) = \la 1,\_ \ra$,
  \end{itemize}
then $(a \conc b)(x_1,\ldots,x_l) = b(x_{k+1},\ldots,x_l)$.

\item[]
If there is $0 \leq k \leq l$ such that
  \begin{itemize}
  \item for all $0 \leq i < k$ one has  $a(x_1,\ldots,x_i) = \la 1,\_ \ra$,
  \item $a(x_1,\ldots,x_k) = \la 0,y \ra$,
  \item for all $k \leq i < l$ one has $b(x_{k+1},\ldots,x_i) = \la 1,\_ \ra$,
  \item $b(x_{k+1},\ldots,x_l) = \la 0,z \ra$,
  \end{itemize}
then $(a \conc b)(x_1,\ldots,x_l) = \la 0,\la y,z \ra\ra$.
\end{itemize}

Let $\theta \in \PPN^\N$, $p,q \in \PN$ and $a,b \in \N$.
Let $S,T$ be non-degenerate sights such that $S$ is $(a,\theta,p)$-supporting and $T$ is $(b,\theta,q)$-supporting.
\claim{Then $S \conc T$ is $(a \conc b,\theta,p \wedge q)$-supporting.}
\end{lemma}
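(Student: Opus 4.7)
The plan is to verify directly the two defining properties of the $(a \conc b, \theta, p \wedge q)$-supporting condition by a case analysis on nodes of $S \conc T$, matching each case against the three clauses of the definition of $a \conc b$.

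First I would establish a clean dichotomy on nodes. Since $S \conc T$ is the tree obtained by grafting a copy of $T$ at each leaf of $S$, every node $u = (x_1, \ldots, x_l) \in S \conc T$ falls into exactly one of two cases: \textbf{(A)} $u$ is a non-leaf node of $S$, in which case $\Out_{S \conc T}(u) = \Out_S(u)$ and $u$ is still a non-leaf in $S \conc T$; or \textbf{(B)} there is a (necessarily unique) leaf $s = (x_1, \ldots, x_k) \in \Lvs(S)$ with $s \ini u$, and setting $t := (x_{k+1}, \ldots, x_l)$ gives $t \in \Nds(T)$, with $u$ a leaf of $S \conc T$ iff $t \in \Lvs(T)$, and $\Out_{S \conc T}(u) = \Out_T(t)$ otherwise. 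These alternatives are exclusive and exhaustive because $S$ is closed under initial segments and its leaves are maximal, so walking from $()$ along $u$ one stays within $S$ until (and unless) one crosses a leaf of $S$.

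Then I would match each case against a clause of the definition of $a \conc b$ using the supporting hypotheses on $S$ and $T$. In case (A), every prefix $(x_1, \ldots, x_i)$ with $0 \leq i \leq l$ is a non-leaf of $S$, so $a(x_1, \ldots, x_i) = \la 1, \_ \ra$, triggering the first clause: $(a \conc b)(u) = a(u) = \la 1, n \ra$ with $\Out_S(u) \in \theta(n)$, as required. In case (B), the prefixes strictly below $k$ are non-leaves of $S$, so $a(x_1, \ldots, x_i) = \la 1, \_ \ra$ for $i < k$, and $a(s) = \la 0, y \ra$ with $y \in p$ since $s$ is a leaf of $S$. If $t$ is a non-leaf of $T$, then $b(x_{k+1}, \ldots, x_i) = \la 1, \_ \ra$ for all $k \leq i \leq l$, which triggers the second clause: $(a \conc b)(u) = b(t) = \la 1, n \ra$ with $\Out_T(t) \in \theta(n)$. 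If instead $t$ is a leaf of $T$, then $b$ returns $\la 1, \_ \ra$ on proper prefixes of $t$ and $b(t) = \la 0, z \ra$ with $z \in q$, triggering the third clause: $(a \conc b)(u) = \la 0, \la y, z \ra \ra$ with $\la y, z \ra \in p \wedge q$, which is exactly the required form for a leaf of $S \conc T$.

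The main obstacle is purely organizational: one has to check that the dichotomy (A)/(B), together with the leaf/non-leaf subdivision of case (B), matches bijectively the three clauses of the definition of $a \conc b$, and that corner cases --- $k = 0$ (so $s$ is the root), $k = l$ (so $t = ()$), or $T = \{()\}$ --- all fit this scheme without special treatment. No new construction is needed beyond the case analysis.
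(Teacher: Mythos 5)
Your proof is correct and takes essentially the approach the paper gestures at: the paper's own ``proof'' is a single sentence (``one can best verify this in mind by staring at the definition''), and your case analysis is exactly the direct verification it alludes to. The dichotomy (A)/(B) on nodes of $S \conc T$, subdividing (B) by whether the residual $t$ is a leaf or non-leaf of $T$, lines up bijectively with the three clauses of the definition of $a \conc b$, and you correctly check both supporting conditions and the identities $\Out_{S\conc T}(u)=\Out_S(u)$ (case A) and $\Out_{S\conc T}(u)=\Out_T(t)$ (case B). One small remark: the paper's displayed formula for $S\conc T$, taken literally as $\{s\conc t\mid s\in\Lvs(S),\, t\in T\}$, is not closed under initial segments (it omits the non-leaf nodes of $S$); your reading of $S\conc T$ as the tree obtained by grafting a copy of $T$ at each leaf of $S$, i.e.\ $S\cup\{s\conc t\mid s\in\Lvs(S),\, t\in T\}$, is the correct intended one and is what the paper's claim and your proof both rely on.
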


\begin{proof}
The construction of $a \conc b$ is tailored to satisfy this property - one can best verify this in mind by staring at the definition of $a \conc b$ above.
\end{proof}

\begin{proposition}
Given $\A,\B \in \PsPN$, we define
$$\A \owedge \B = \{A \wedge B \mid A \in \A \en B \in \B\} \in \PsPN.$$
Let $\eta,\theta \in \PsPN^\N$.
We define a function $(\eta \owedge \theta): \N \to \PsPN$ by
$$(\eta \owedge \theta)(n) = \eta(n) \owedge \theta(n).$$
\claim{Then the function $\eta \owedge \theta$ is the join of $\eta$ and $\theta$ in $(\PsPN^\N,\leq_\lo)$.}
\end{proposition}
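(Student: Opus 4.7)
The plan is to verify the two clauses of being a join separately: first $\eta, \theta \leq_\lo \eta \owedge \theta$, then the leastness. The first (easier) half I would handle by strengthening it to $\eta \leq_\mo \eta \owedge \theta$ and then invoking the fact that $\leq_\mo$ refines $\leq_\lo$ (since $\lo_\ph = (\ph^\lomo) \circ \mo_\ph$ and the reflection $\ph^\lomo$ is monotone). By Proposition \ref{LeqMoDesc}, showing $\eta \leq_\mo \eta \owedge \theta$ reduces to realizing $\forall n . \{n\} \RI \forall_{A \in \eta(n)} \exists m . \{m\} \wedge \exists_{C \in (\eta \owedge \theta)(m)} (C \RI A)$. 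I would take $m := n$; since $\theta(n) \neq \emptyset$ (this is precisely where the restriction to $\PsPN$ matters, guaranteeing some $B \in \theta(n)$), the set $(\eta \owedge \theta)(n)$ contains $A \wedge B$, and the first projection $\la a,b \ra \mapsto a$ realizes $A \wedge B \RI A$ uniformly in $A, B$. A symmetric argument with the second projection handles $\theta \leq_\lo \eta \owedge \theta$.

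For the leastness, suppose $\eta \leq_\lo \zeta$ and $\theta \leq_\lo \zeta$, and take $\loS$ as the representation of $\lo_\ph$. Unfolding the $\leq_l$-predicate, this provides $c_1, c_2 \in \N$ such that for every $n$, $c_1(n) \in \loS_\zeta(A)$ for every $A \in \eta(n)$ and $c_2(n) \in \loS_\zeta(B)$ for every $B \in \theta(n)$; equivalently, for each such $A$ there is a $(c_1(n), \zeta, A)$-supporting sight $S_A$, and similarly a $(c_2(n), \zeta, B)$-supporting sight $T_B$. The key step is now Lemma \ref{JoinAction}: the concatenation $S_A \conc T_B$ is $(c_1(n) \conc c_2(n), \zeta, A \wedge B)$-supporting, and the operation $c_1(n) \conc c_2(n)$ is effectively computed from $c_1(n), c_2(n)$. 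Consequently $c_1(n) \conc c_2(n) \in \loS_\zeta(A \wedge B) = \loS_\zeta(C)$ for every $C \in (\eta \owedge \theta)(n)$, and the map $n \mapsto c_1(n) \conc c_2(n)$ realizes $\eta \owedge \theta \leq_\lo \zeta$.

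The main insight --- and the reason Lemma \ref{JoinAction} was set up --- is the precise matching between the sight-concatenation operation on indices and the $\owedge$-operation on collections of sets. Had we used the $\loF$-representation instead, this step would be awkward: $\loF_\theta(p)$ is defined as an intersection of fixed points and does not admit such a direct compositional account. The explicit sight-based $\loS$-presentation carries exactly the combinatorial data needed to splice two realizer-witnesses together, which is what makes the leastness argument short.
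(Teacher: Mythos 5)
Your proof is correct and follows essentially the same route as the paper: establish the upper-bound inequalities via $\leq_\mo$ (taking $m := n$ with the projection realizers, using non-emptiness of $\theta(n)$), and establish leastness by invoking the sight-concatenation Lemma \ref{JoinAction} and concatenating the two realizer families. Your commentary on why the $\loS$-representation (rather than $\loF$) is the right tool for the leastness step is an accurate reading of why the lemma was set up that way.
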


\begin{proof}
First we show that $\eta \leq_\lo \eta \owedge \theta$.
It suffices to have $\eta \leq_\mo \eta \owedge \theta$, i.e. that $\realizes \forall n . \{n\} \RI \forall_{A \in \eta(n)} \exists m \exists_{B \in (\eta \owedge \theta)(n)} . \{m\} \wedge (B \RI A)$, i.e. that $\realizes \forall n . \{n\} \RI \forall_{A \in \eta(n)} \exists m \exists_{A' \in \eta(n)} \exists_{B' \in \theta(n)} \{m\} \wedge (A' \wedge B' \RI A)$.
But clearly $\lambda n . \la n, \pi^2_1 \ra$ realizes the last sentence, as desired.
Analogously we have $\theta \leq_\lo \eta \owedge \theta$.

Then we show that for all $\zeta \in \PPN^\N$ if $\eta \leq_\lo \zeta$ and $\theta \leq_\lo \zeta$ then $\eta \owedge \theta \leq_\lo \zeta$.
The inequality $\eta \leq_\lo \zeta$ means that some $\alpha \realizes \forall n . \{n\} \RI \forall_{A \in \eta(n)} \loS_\zeta(A)$, and the inequality $\theta \leq_\lo \zeta$ means that some $\beta \realizes \forall n . \{n\} \RI \forall_{A \in \theta(n)} \loS_\zeta(A)$.
The inequality $\eta \owedge \theta \leq_\lo \zeta$ means
\begin{equation}\label{ow1}
\realizes \forall n . \{n\} \RI \forall_{A \wedge B \in (\eta \owedge \theta)(n)} \loS_\zeta(A \wedge B).
\end{equation}
By the previous Lemma, we have that given $n \in \N$, $A \in \eta(n)$ and $B \in \theta(n)$ if $a \in \loS_\zeta(A)$ and $b \in \loS_\zeta(B)$ then $a \conc b \in \loS_\zeta(A \wedge B)$.
Therefore $\lambda n . \alpha(n) * \beta(n)$ realizes \eqref{ow1}, as desired.
This proves the proposition.
\end{proof}

%
%
%

\subsection{$\theta$-Realizability}
Using the language of sights, we can provide a realizability-style semantics for the arithmetic in subtoposes of $\Eff$.
Let $\theta \in \PPN^\N$ such that $\lo_\theta$ is non-degenerate.

\begin{definition}
We define inductively a relation `$\theta$-realizes' between the natural numbers and the arithmetic \emph{sentences}.
Let $n \in \N$.
\begin{itemize}
 \item Let $\sigma,\tau$ be closed terms in the language of arithmetic. We say that $n$ \defn{$\theta$-realizes} the sentence $\sigma = \tau$ if $\dbl^\N \sigma \dbr = n = \dbl^\N \tau \dbr$, where the latter denote the interpretation of the terms $\sigma,\tau$ in the standard model $\N$.
\end{itemize}
Let $\phi,\psi$ be arithmetic sentences.
\begin{itemize}
 \item We say that $n$ \defn{$\theta$-realizes} $\phi \wedge \psi$ if, writing $n = \la n_1,n_2 \ra$, $n_1$ $\theta$-realizes $\phi$ and $n_2$ $\theta$-realizes $\psi$.
 \item We say that $n$ \defn{$\theta$-realizes} $\phi \vee \psi$ if, writing $n = \la g,m \ra$, either, $g = 0$ and $m$ $\theta$-realizes $\phi$, or, $g = 1$ and $m$ $\theta$-realizes $\psi$.
 \item We say that $n$ \defn{$\theta$-realizes} $\phi \RI \psi$ if for every $\theta$-realizer $m$ of $\phi$ there is a $(n(m),\theta)$-dedicated sight $S$ such that every $n(m)$-value of $S$ $\theta$-realizes $\psi$.
 \item We say that $n$ \defn{$\theta$-realizes} $\forall x . \phi(x)$ if for every $x \in \N$ there is a $(n,\theta)$-dedicated sight $S$ such that for every $n$-value $m$ of $S$ there is a $(m(x),\theta)$-dedicated sight $T$ whose $m(x)$-values $\theta$-realize $\phi(x)$.
 \item We say that $n$ \defn{$\theta$-realizes} $\exists x . \phi(x)$ if there are $x,m \in \N$ such that $n = \la x,m \ra$ and $m$ $\theta$-realizes $\phi(x)$.
\end{itemize}
\end{definition}

\begin{proposition}
An arithmetic sentence $\phi$ is true in $\Eff_\theta$ if and only if some number $\theta$-realizes $\phi$.
\end{proposition}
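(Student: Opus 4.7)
The plan is to compose the $\Eff_\theta$-to-$\EffTrip_j$ translation with the $\EffTrip_j$-to-$\EffTrip$ translation and then compare the resulting $\EffTrip$-realizability with $\theta$-realizability by induction on $\phi$. Applying Corollary \ref{LogicPCP} to the tripos $\EffTrip_j$ turns the $\Eff_\theta$-sentence $\phi$ into a $\EffTrip_j$-sentence $\psi := \el^{P_j/\Set[P_j]} \phi \er$, and applying Construction \ref{con:PjToP} with $j = \loS_\theta$ turns $\psi$ into a $\EffTrip$-sentence $\phi^\flat := \el^{P/P_j} \psi \er$. Since $\lo_\theta$ is non-degenerate by hypothesis, Corollary \ref{cor:NonDegSen} gives $\Eff_\theta \models \phi$ iff $\EffTrip \models \phi^\flat$, i.e.\ iff $\dbl \phi^\flat \dbr \in \PN$ is non-empty. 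It therefore suffices to show, by induction on $\phi$, that $\dbl \phi^\flat \dbr$ coincides with the set of $\theta$-realizers of $\phi$.

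For atomic $\sigma = \tau$, the $\C[P_j]$-to-$P_j$ step replaces equality on the NNO $(\N,\{\cdot\})$ by its partial equivalence predicate $\sim$, which is $\{n\}$ when $\dbl^\N \sigma \dbr = n = \dbl^\N \tau \dbr$ and $\emptyset$ otherwise, and $\el^{P/P_j}$ leaves atomic predicates unchanged; this matches the base clause. The clauses for $\wedge$ and $\vee$ introduce no $j$, so the pair/tag reading of $\EffTrip$-realizability directly reproduces the corresponding clauses.

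For $\phi \RI \psi$ the translation produces $\dbl \phi^\flat \dbr \RI \loS_\theta(\dbl \psi^\flat \dbr)$; a number $n$ realizes this iff for every $\theta$-realizer $m$ of $\phi$, $n(m)$ is defined and lies in $\loS_\theta(\dbl \psi^\flat \dbr) = \loF_\theta(\dbl \psi^\flat \dbr)$. By Proposition \ref{prop:loF}, combined with the fact that a $(z,\theta)$-dedicated sight $S$ is $(z,\theta,p)$-dedicated iff $z[S] \subseteq p$, this is precisely: there is an $(n(m),\theta)$-dedicated sight all of whose $n(m)$-values realize $\psi$ (by IH). For $\forall x . \phi(x)$ the translation yields $\forall x^\N . j(\{x\} \RI j(\widetilde{\phi}(x)))$ with $\widetilde{\phi}(x) := \el^{P/P_j} \el^{P_j/\Set[P_j]} \phi(x) \er \er$; peeling the outer $j$ as a sight controlling the $n$-values $m$, and the inner $j$ as a sight controlling the resulting $m(x)$-values, reproduces the nested-sights clause of the definition. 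Finally, $\exists x . \phi(x)$ translates to $\exists x^\N . \{x\} \wedge \widetilde{\phi}(x)$, whose realizers are exactly pairs $\la x,m \ra$ with $m$ realizing $\phi(x)$.

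The principal obstacle is the book-keeping of the two-step translation: one must verify that each $j$ inserted by $\el^{P/P_j}$ (around implication consequents and universally quantified bodies) corresponds to exactly one layer of dedicated sight in the definition of $\theta$-realizability, and that equality on $\NNO$ in $\Eff_\theta$ survives the translation as the tight predicate $\{n\}$ rather than some $j$-closure thereof. Once this alignment is secured, each inductive case is a direct unfolding via Proposition \ref{prop:loF}.
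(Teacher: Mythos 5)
Your approach matches the paper's: compose the translations $\el^{\ET_\theta/\Eff_\theta}\cdot\er$ and $\el^{\ET/\ET_\theta}\cdot\er$, reduce to nonemptiness via Corollary \ref{cor:NonDegSen}, and then check by induction on $\phi$ that the resulting $\EffTrip$-interpretation coincides with the set of $\theta$-realizers; the paper merely asserts this coincidence (``the clauses are designed to satisfy\ldots'') whereas you unfold the inductive cases, which is useful. One small slip worth fixing: you set $j = \loS_\theta$ but later write $\loS_\theta(\cdot) = \loF_\theta(\cdot)$ --- these two functions $\PN \to \PN$ are isomorphic as representations of $\lo_\ph$ but are not literally equal as sets, and it is $\loF_\theta$ (via Proposition \ref{prop:loF}) that makes the translated interpretation agree on the nose with $\theta$-realizability, so you should take $j = \loF_\theta$ throughout.
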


\begin{proof}
The inductive clauses in the definition of `$n$ $\theta$-realizes $\phi$' are designed to satisfy that
\begin{center}
$n$ $\theta$-realizes $\phi$ if and only if\footnote{Recall the definition of $\el^{\ET_\theta/\Eff_\theta} \ldots \er$ from Construction \ref{con:CPtoP}, and the definition of $\el^{\ET/\ET_\theta} \ldots \er$ from Corollary \ref{cor:NonDegSen}.} $n \in \dbl \el^{\ET/\ET_\theta} \el^{\ET_\theta/\Eff_\theta} \phi \er \er \dbr$.
\end{center}
On the other hand, we have: $\Eff_\theta \models \phi$ iff $\ET_\theta \models \el^{\ET_\theta/\Eff_\theta} \phi \er$ iff $\ET \models \el^{\ET/\ET_\theta} \el^{\ET_\theta/\Eff_\theta} \phi \er \er$ iff $\dbl \el^{\ET/\ET_\theta} \el^{\ET_\theta/\Eff_\theta} \phi \er \er \dbr \neq \emptyset$.
The conclusion follows.
\end{proof}

\begin{remark}
Note that the definition of $\theta$-realizability does not involve any category-theoretic termininology, and is written purely in elementary terms using the notion of sights.
This means that the last Proposition enables one to study the arithmetic in a subtopos of $\Eff$ without reference to category theory.
\end{remark}

\section{Examples of basic topologies}\label{sec:examples}
\subsection{Extreme examples}
First we characterize which collections $\in \PPN$ give the least local operator $\id$.

\begin{calculation}
Let $\A \in \PPN$.
We have, $\lo_\A = \id$ as maps $\Omega \to \Omega$ if and only if $\bigcap \A \neq \emptyset$.\footnote{Remind that $\bigcap \{\} = \N$.}
\end{calculation}

\begin{proof}
Let us show `if'.
Choose $a_0 \in \bigcap \A$.
Recall that
$$\gr_\A(p) = \exists_{A \in \A} (A \BI p).$$
Thus $\lambda x . \la (\lambda \_ . x), (\lambda \_ . a_0) \ra \realizes \forall p . \id(p) \RI \gr_\A(p)$,
and $\lambda \la e,\_ \ra . e(a_0) \realizes \gr_\A(p) \RI \id(p)$.
Hence $\gr_\A = \id$ as maps $\Omega \to \Omega$.
Therefore $\gr_\A$ is a local operator, and it follows that $\lo_\A = \gr_\A = \id$ as maps $\Omega \to \Omega$.

Now we show `only if'.
Since $\lo_\A = \id$ is the least topology, we have $\A \leq_\lo \{\}$.
In other words, there is $z \in \N$ such that for each $A \in \A$ there is a $(z,\{\},A)$-dedicated sight $S_A$ into $A$.
It follows that each $S_A = \nil$.
But the empty sequence $()$ is the only leaf on the sight $\nil$.
Therefore the $z$-value of $()$ belongs to each $A \in \A$, so $\bigcap \A \neq \emptyset$ as desired.
\end{proof}

Next we characterize which sequences $\in \PPN^\N$ give the largest local operator $\top$.

\begin{calculation}
Let $\theta \in \PPN^\N$.
The following are equivalent.
\begin{itemize}
 \item[(i)] $\theta$ is a top element of $(\PPN^\N,\leq_\mo)$
 \item[(ii)] $\theta$ is a top element of $(\PPN^\N,\leq_\lo)$
 \item[(iii)] $\emptyset \in \bigcup_{n \in \N} \theta(n)$
\end{itemize}
It follows that the local operator $\top$ is basic: for instance, the collection $\{\emptyset\} \in \A$ gives it.
\end{calculation}

\begin{proof}
(i) \implies (ii):
Clear.

(iii) \implies (i): 
Assume (iii).
Let $\zeta \in \PPN^\N$.
To conclude (i), we show that $\zeta \leq_\mo \theta$, i.e. that
\begin{equation}\label{eq:tq1}
\realizes \forall n . \{n\} \RI \forall_{A \in \zeta(n)} \exists m . \{m\} \wedge \exists_{B \in \theta(m)} (B \RI A).
\end{equation}
Choose $m_0 \in \N$ such that $\emptyset \in \theta(m_0)$.
Then $\lambda \_ . \la m_0 , \_ \ra$ clearly realizes \eqref{eq:tq1}, as desired.

(ii) \implies (iii):
By (ii), we have $\{\emptyset\} \leq_\lo \theta$.
It follows that there is a number $z \in \N$ and a $(z,\theta,\emptyset)$-dedicated sight $S$.
Thus $S$ cannot have a leaf, and so\footnote{Every non-degenerate sight has a leaf!} $S$ is a degenerate sight.
By Proposition \ref{prop:degenerate}(a), we have $\emptyset \in \bigcup_{n \in \N} \theta(n)$, as desired.
\end{proof}

Next, let us see that the double negation topology is also basic.
First we recall a lemma from \cite{hyl82}.

\begin{lemma}
Let $j: \PN \to \PN$ be a function representing a topology on $\Eff$.
We have $\negneg \leq j$ if and only if $\bigcap \{j(p) \mid p \in \PsN\}$ is non-empty.
\end{lemma}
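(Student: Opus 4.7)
The plan is to unwind both sides in terms of realizers using the explicit form of the $\negneg$-topology on the effective tripos. First I would compute $\negneg p$ for $p \in \PN$. By definition $\neg p = p \RI \emptyset = \{e \mid \forall n \in p.\ e(n) \in \emptyset\}$, which equals $\N$ if $p = \emptyset$ and $\emptyset$ otherwise; iterating gives $\negneg p = \N$ when $p \neq \emptyset$ and $\negneg p = \emptyset$ when $p = \emptyset$.

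Next I would rewrite the hypothesis $\negneg \leq j$. As an inequality between elements of the Heyting prealgebra $\PN^\PN$, it says that $\bigcap_{p \in \PN}(\negneg p \RI j(p)) \neq \emptyset$, i.e.\ that there exists $e \in \N$ with the property that for every $p \in \PN$ and every $n \in \negneg p$, the application $e(n)$ is defined and lies in $j(p)$. Using the explicit description of $\negneg$, the case $p = \emptyset$ is vacuous, so this reduces to: \emph{there exists $e \in \N$ such that for every $p \in \PsN$ and every $n \in \N$, $e(n) \in j(p)$.}

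From this reformulation both directions are essentially immediate. For the forward direction, given such an $e$, set $a := e(0)$; then for every $p \in \PsN$ we have $a \in j(p)$, so $a \in \bigcap\{j(p) \mid p \in \PsN\}$, which is therefore non-empty. For the backward direction, given $a \in \bigcap\{j(p) \mid p \in \PsN\}$, let $e$ be an index of the (total) constant recursive function $n \mapsto a$; then for every $p \in \PsN$ and every $n \in \N$ we have $e(n) = a \in j(p)$, so $e$ witnesses $\negneg \leq j$.

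There is no real obstacle: the content is entirely in the bookkeeping that translates the external poset inequality $\negneg \leq j$ into the uniform (in $p$) realizer statement and the observation that $\negneg p$ is either $\N$ or $\emptyset$, so that such a uniform realizer behaves like a single natural number witnessing membership in every $j(p)$ with $p$ non-empty.
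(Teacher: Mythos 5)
Your argument is correct. Note that the paper does not actually give a proof here — it simply cites Hyland's Lemma 16.2 — so there is no paper-internal proof to compare against. Your unwinding is the standard one: computing $\negneg p = \N$ for $p \neq \emptyset$ and $\negneg p = \emptyset$ for $p = \emptyset$ (using the recursion-theoretic $\RI$ on $\PN$ from the Preliminaries), then observing that the $p=\emptyset$ conjunct in $\bigcap_{p}(\negneg p \RI j(p))$ is $\N$ and hence vacuous, so $\negneg \leq j$ is exactly the existence of a single $e$ uniformly sending every $n\in\N$ into every $j(p)$ with $p$ non-empty. The extraction of a fixed element $a = e(0)$ (well-defined since $\N\in\PsN$ forces $e$ to be total) and the converse via an index for the constant function $n\mapsto a$ are both sound, and you have correctly attended to the definedness requirement built into the $\RI$ operation. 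Nothing is missing.
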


\begin{proof}
This is \cite[Lemma 16.2]{hyl82}.
\end{proof}

Now we are ready to single out a class of collections $\in \PPN$ that give the topology $\negneg$.

\begin{proposition}
Let $\A \in \PPN$, such that $\A$ contains two, recursively separable\footnote{Sets $A,B \subseteq \N$ are \defn{recursively separable} if there is a recursive set $C$ with $A \subseteq C$ and $B \subseteq \N \bs C$.} sets.
Then $\negneg \leq \lo_\A$ as maps $\Omega \to \Omega$.
\end{proposition}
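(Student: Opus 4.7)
The plan is to invoke the just-cited lemma of Hyland with $j := \loS_{\Delta_\A}$, a function representation of the topology $\lo_\A$ (where $\A$ is viewed as the constant sequence $\Delta_\A \in \PPN^\N$). It thus suffices to produce a single partial recursive $w : \N^* \pto \N$ such that for every $p \in \PsN$ there exists a $(w, \Delta_\A, p)$-supporting sight; any index of such a $w$ is then a member of $\loS_{\Delta_\A}(p)$ uniformly in $p$, and Hyland's lemma delivers $\negneg \leq \lo_\A$.

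The construction of $w$ uses the hypothesised recursive separator. Pick $A_0, A_1 \in \A$ together with a recursive $\chi : \N \to \{0,1\}$ satisfying $\chi|_{A_0} \equiv 0$ and $\chi|_{A_1} \equiv 1$, and fix any $n_0 \in \N$ (so that $\Delta_\A(n_0) = \A$). Define, for $s = (a_1, \ldots, a_k) \in \N^*$,
$$w(s) = \begin{cases} \la 0, k-1 \ra & \text{if $k \geq 1$, $\chi(a_1) = \cdots = \chi(a_{k-1}) = 0$, and $\chi(a_k) = 1$,} \\ \la 1, n_0 \ra & \text{otherwise.} \end{cases}$$
This $w$ is recursive.

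Given an arbitrary $p \in \PsN$, pick some $n \in p$ and build a sight $S_n$ of depth $n + 1$: every node at depth $< n$ branches on $A_0$, every node at depth $n$ branches on $A_1$, and the depth-$(n+1)$ nodes are leaves. At each internal node the $\chi$-image of the path is a string of zeros, so $w$ returns $\la 1, n_0 \ra$, and the prescribed children set (either $A_0$ or $A_1$) lies in $\Delta_\A(n_0) = \A$. At each leaf the $\chi$-image of the path is $(0, \ldots, 0, 1)$ of length $n+1$, so $w$ returns $\la 0, n \ra$, with $n \in p$. Hence $S_n$ is $(w, \Delta_\A, p)$-supporting, completing the construction.

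No serious obstacle is anticipated; the only small matter is the edge case $n = 0$, where the tree has depth $1$, the root itself already branches on $A_1$, and the leaves carry the value $0 \in p$.
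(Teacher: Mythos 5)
Your proof is correct and follows essentially the same route as the paper: both invoke the Hyland lemma, then construct a single computable $w$ that routes through the recursive separator and, for each $n$, a depth-$(n{+}1)$ tree branching on $A_0$ (equivalently the paper's $A$) until the last level, which branches on $A_1$ (the paper's $B$), so the leaves return $\la 0,n\ra$. The only cosmetic differences are that you make the separator $\chi$ explicit in the definition of $w$ and index from $1$ where the paper indexes from $0$.
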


\begin{proof}
By the previous Lemma, we may show that $\bigcap \{\loS_\A(p) \mid p \in \PsN\}$ contains a number.
To this end, it suffices to construct
\begin{itemize}
\item a partial effective function $w: \N^* \to \N$, and
\item for each $y \in \N$ a $(w,\A,\{y\})$-supporting wf-tree,
\end{itemize}
for then (any index of) $w$ belongs to $\bigcap \{\loS_\A(p) \mid p \in \PsN\}$.
Let us construct these.

Choose sets $A,B \in \A$ that are recursively separable.
Choose a partial computable function $w: \N^* \to \N$ subject to following conditions.
\begin{quote}
We have $w() = \la 1,0 \ra$.

Let $(x_0,\ldots,x_k) \in \N^*$ with $k \geq 0$.

If $x_0,\ldots,x_k \in A$, then $w(x_0,\ldots,x_k) = \la 1,0 \ra$.

If $x_0,\ldots,x_{k-1} \in A$ and $x_k \in B$, then $w(x_0,\ldots,x_k) = \la 0,k \ra$.
\end{quote}
For $y \in \N$, let
\begin{align*}
S_y &= \{()\} \\
    &\cup \{(x_0,\ldots,x_k) \mid 0 \leq k < y \en x_0,\ldots,x_k \in A\} \\
	&\cup \{(x_0,\ldots,x_y) \mid x_0,\ldots,x_{y-1} \in A \en x_y \in B\}.
\end{align*}
Now clearly, $S_y$ is a well-founded tree, and is $(w,\A,\{y\})$-supporting.
Done.
\end{proof}

\begin{corollary}\label{cor:RecSepNN}
Let $\A \in \PPsN$, and suppose that $\A$ contains two, recursively separable sets.
Then $\negneg = \lo_\A$ as maps $\Omega \to \Omega$.
\end{corollary}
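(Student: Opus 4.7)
The plan is to obtain the missing inequality $\lo_\A \leq \negneg$ by a short direct argument, since the converse $\negneg \leq \lo_\A$ is precisely the content of the preceding Proposition, which applies under the recursive-separability hypothesis. So I only need to exploit the extra assumption $\A \in \PPsN$ (i.e.\ $\emptyset \notin \A$) to produce a realizer of $\forall p \in \PN .\ \lo_\A(p) \RI \negneg(p)$.

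First I would fix the canonical representative $\negneg : \PN \to \PN$ given by $p \mapsto (p \RI \emptyset) \RI \emptyset$ and unpack that $\negneg(p) = \emptyset$ when $p = \emptyset$ and $\negneg(p) = \N$ otherwise. The crux is then the single claim $\loS_{\Delta_\A}(\emptyset) = \emptyset$, because once this is in place any total computable function (say $\lambda z . 0$) tracks the desired implication uniformly in $p$: on $p = \emptyset$ the antecedent is empty and the implication is vacuous, on $p \neq \emptyset$ the conclusion is all of $\N$.

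To verify the claim, I would argue by the non-existence of a $(w, \Delta_\A, \emptyset)$-supporting sight $S$. Any leaf $s$ of $S$ would force $w(s) = \la 0, y \ra$ with $y \in \emptyset$, which is impossible; so $S$ has no leaf. If $S$ were non-degenerate, then $\Nds(S)$ would be a well-founded tree (by the earlier proposition identifying non-degenerate sights with wf-trees) and hence would have a leaf, a contradiction. If $S$ were degenerate, then the supporting-sight analogue of Proposition \ref{prop:degenerate}(a), proved by exactly the same induction on $S$ as the original, would yield $\emptyset \in \A$, contradicting $\A \in \PPsN$. I do not expect any serious obstacle: the only point deserving a sanity check is that this induction transfers verbatim from dedicated to supporting sights, which it does because the clause ``$z = \la 1,\la n, \_ \ra\ra$ with $\emptyset \in \theta(n)$'' in the dedicated version is replaced by the parallel ``$w(s) = \la 1,n\ra$ with $\emptyset \in \theta(n)$'' in the supporting version, the base case $S = (\emptyset,\emptyset)$ reducing to the observation that $()$ is then a non-leaf with $\Out_S() = \emptyset$.
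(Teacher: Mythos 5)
Your proof is correct, but it takes a genuinely different route from the paper's. The paper's argument is a two-liner: since $\emptyset \notin \A$, the characterization of the top element of $(\PPN^\N,\leq_\lo)$ (namely $\emptyset \in \bigcup_n \theta(n)$) shows $\lo_\A \neq \top$, and then $\negneg \leq \lo_\A$ together with the standard (externally quoted) fact that $\negneg$ is the \emph{largest} non-degenerate topology on $\Eff$ forces $\lo_\A = \negneg$. You instead prove the missing inequality $\lo_\A \leq \negneg$ head-on, by exhibiting the realizer $\lambda z.0$ of $\forall p .\ \lo_\A(p) \RI \negneg(p)$ once you know $\loS_{\Delta_\A}(\emptyset) = \emptyset$; and that claim you establish correctly with the sight machinery (a supporting sight for $\emptyset$ can have no leaf, a non-degenerate sight always has a leaf via the well-founded-tree correspondence, and a degenerate supporting sight forces $\emptyset \in \A$ by the same induction as Proposition \ref{prop:degenerate}(a)). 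The trade-off: the paper's proof is shorter but leans on an unproved external fact about the maximality of $\negneg$; yours is longer but self-contained within the thesis's toolkit --- indeed your argument amounts to an internal proof that \emph{any} topology $j$ with $j(\emptyset) = \emptyset$ satisfies $j \leq \negneg$, which is exactly that maximality. One small point worth making explicit if you write this up: the uniformity of $\lambda z.0$ over $p$ is what makes it a realizer of the $\forall p$ statement (an intersection over all $p \in \PN$), which you do address correctly by splitting into the vacuous case $p = \emptyset$ and the case $\negneg(p) = \N$.
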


\begin{proof}
Since $\emptyset \notin \A$, the local operator $\lo_\A$ is non-degenerate.
By the previous Proposition, it must then be $\negneg$.
\end{proof}

%



The discussion so far has shown that the `extreme' local operators $\id,\top,\negneg$ are basic.
Clearly, we can easily come up with a collection $\A \in \PPN$ that does not give $\id$ or $\top$, and it seems not difficult to avoid $\negneg$.
At this point, one might decide to challenge his imagination and produce such examples of collections $\in \PPN$.
The next subsection follows this line of thoughts, and introduces examples of $\A \in \PPN$ that will give a `non-trivial' topology.

\subsection{Further examples}
We introduce now some subcollections of $\PN$. (But it does not mean that we will use all of them.)

\begin{definition}
Let $\A \in \PPN$.
Define
$$\mathcal{A}^* = \{(\bigcup \mathcal{A}) \bs A \mid A \in \mathcal{A}\}.$$
We say that $\mathcal{A}^*$ is the \defn{dual} collection of $\A$.
Note that $|\mathcal{A}^*| = |\mathcal{A}|$.
\end{definition}

\paragraph{Countable simple graphs.}
\begin{itemize}
 \item For $m \geq 2$, let $\mathcal{L}_m = \{\{i,i+1\} \mid i+1 < m\}$. This is the ``line of $m$ vertices''.
 \item For $m \geq 3$, let $\mathcal{C}_m = (L_m \cup \{\{0,m-1\}\})$. This is the ``circle of $m$ vertices''.
 \item For $m \geq 2$, let $\mathcal{K}_m = \{\{i,j\} \mid 0 \leq i < j < m\}$. This is the ``complete graph of $m$ vertices''.
\end{itemize}
Note that by Corollary \ref{cor:RecSepNN}, these examples all give the topology $\negneg$.
The trick, for obtaining a more interesting example, is to take their duals.

\paragraph{The co-$m$-tons.}
\begin{itemize}
 \item Let $1 < 2m < \alpha \leq \omega$. We write $\mathcal{O}^\alpha_m = \{\text{the co-$m$-tons $\subseteq \alpha$}\}$. We write $\O^\omega = \O_1^\omega$.
\end{itemize}

\paragraph{The finites, the cofinites.}
\begin{itemize}
 \item Let $\F = \{\text{the finite subsets of $\N$}\}$. So $\F^* = \{\text{the cofinite subsets of $\N$}\}$.
 \item Let $\ups \N = \{\ups n \mid n \in \N\}$, where $\ups n := \{m \in \N \mid n \leq m\}$.\footnote{This is the example of Pitts \cite[Example 5.8]{pitts81} mentioned in the Introduction.}
\end{itemize}
%
%

%

\subsection{Elementary $\leq_\mo$ calculations}
\begin{calculation}\label{incl=>leqmo}
$\F^* \cong_\mo \ups \N$.
\end{calculation}

\begin{proof}
Notice that
\begin{itemize}
 \item `$\leq_\mo$' means $\realizes \forall_\text{cofinite} A \exists n (\ups{n} \ri A)$,
 \item `$\geq_\mo$' means $\realizes \forall n \exists_\text{cofinite} B (B \ri \ups{n})$.
\end{itemize}
Now, observe that $\id$ is a realizer for both cases.
\end{proof}

\begin{proposition}
Let $m \geq 2$.
$\K_m <_\mo \K_{m+1}$.
\end{proposition}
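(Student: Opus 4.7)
The plan is to verify the two parts separately: $\K_m \leq_\mo \K_{m+1}$ is immediate from set-theoretic inclusion, while $\K_{m+1} \not\leq_\mo \K_m$ follows from a short counting argument applied to the restriction of any purported realizer to $\{0,\ldots,m-1\}$.

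First, for $\K_m \leq_\mo \K_{m+1}$, by Corollary \ref{BasicLeqMoDesc} the inequality is represented by $\stil \forall_{A \in \K_m} \exists_{B \in \K_{m+1}} (B \RI A) \stir$. Since $\K_m \subseteq \K_{m+1}$, for each $A \in \K_m$ I take $B = A$; a code of the identity function belongs to $A \RI A$ (as $\id(n) = n \in A$ for $n \in A$), and the same code works uniformly in $A$. So the identity realizes this direction.

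For the strict part, I would suppose for contradiction that some $e \in \N$ realizes $\stil \forall_{A \in \K_{m+1}} \exists_{B \in \K_m} (B \RI A) \stir$. Unwinding: for every $\{i,j\} \in \K_{m+1}$ there is some $\{a,b\} \in \K_m$ (i.e.\ $0 \leq a < b < m$) with $e(a),e(b)\cvg$ and $e(a),e(b) \in \{i,j\}$. Setting
$$D := \{a \in \{0,\ldots,m-1\} \mid e(a)\cvg \text{ and } e(a) \in \{0,\ldots,m\}\}, \quad I := \{e(a) \mid a \in D\},$$
every witnessing pair $\{a,b\}$ automatically lies in $D$, so every edge of $\K_{m+1}$ must be witnessed inside $D$. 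If $|I| \leq m-1$ then $\{0,\ldots,m\}\setminus I$ contains two distinct elements $i_1,i_2$, and the edge $\{i_1,i_2\} \in \K_{m+1}$ admits no witness (since $e$-values on $D$ lie in $I$), a contradiction. Hence $|I| = m$, which forces $|D| = m$ (as $|I| \leq |D| \leq m$), so $D = \{0,\ldots,m-1\}$ and $e \upharpoonright D$ is a bijection onto an $m$-element subset $I \subseteq \{0,\ldots,m\}$. Let $i_0$ be the unique missing element; for any $j \in I$, the edge $\{i_0, j\} \in \K_{m+1}$ requires some $\{a,b\} \in \K_m$ with $e(a), e(b) \in \{i_0, j\}$, but since $i_0 \notin I$ this forces $e(a) = e(b) = j$, contradicting injectivity of $e \upharpoonright D$.

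I do not anticipate a true obstacle here; the only point deserving care — more bookkeeping than difficulty — is the translation from ``$e$ realizes the internal $\forall\exists$-statement'' to ``for every $A \in \K_{m+1}$ there is a (non-uniformly chosen) $B \in \K_m$ with $e \upharpoonright B$ landing in $A$''. Because the outer universal quantifier is a set-indexed intersection rather than a realized input, the choice of $B$ per $A$ need not be uniform or computable; set-theoretic choice on a finite set suffices. Once this translation is in place, the rest is elementary finite combinatorics driven by the inequality $\binom{m+1}{2} > \binom{m}{2}$.
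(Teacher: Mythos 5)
Your proof is correct and follows essentially the same route as the paper: the forward inequality from $\K_m \subseteq \K_{m+1}$, and the strict part by analyzing the restriction of a purported realizer to $\{0,\ldots,m-1\}$ and counting its image inside $\{0,\ldots,m\}$, splitting into the non-injective and injective (bijective onto an $m$-set) cases. Your introduction of the set $D$ is a welcome bit of extra care — it handles cleanly the possibility that $e$ is undefined or takes values outside $\{0,\ldots,m\}$ on some of $\{0,\ldots,m-1\}$, points the paper's proof glosses over.
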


\begin{proof}
Since $\K_m \subseteq \K_{m+1}$, we have `$\leq_\mo$'.
Let us show `$\not\geq_\mo$'.
Suppose, for contradiction, that some
$$\gamma \realizes \forall A \in \K_{m+1} \exists B \in \K_m (B \ri A).$$

Case 1: $\gamma$ is not injective on $\{0,\ldots,m-1\}$, i.e. $|\gamma\{0,\ldots,m-1\}| < m$.
Then choose distinct numbers $0 \leq a,b \leq m$, and observe that $\gamma \not\realizes \exists B \in K_m (B \ri \{a,b\})$.
Contradiction.

Case 2: $\gamma$ is injective on $\{0,\ldots,m-1\}$.
Let $x$ be the unique element of $\{0,\ldots,m\} \bs \gamma(\{0,\ldots,m-1\})$.
Choose $a \in \gamma\{0,\ldots,m-1\}$.
There exists $B \in K_m$ such that $\gamma \realizes B \ri \{a,x\}$.
This is impossible, as $\gamma(B) = \{b \neq c\}$ with $b,c \in \gamma\{0,\ldots,m-1\}$.
Done.
\end{proof}

\section{Turing degree topologies}\label{sec:turing}
\subsection{Introduction}
\begin{definition}
For $D \subseteq \N$, we write
$$\rho_D(n) =
\begin{cases}
\{\{0\}\} & \textif n \in D, \\
\{\{1\}\} & \otherwise.
\end{cases}$$
We might say that $\rho_D \in \PPN^\N$ gives the \softdefn{Turing degree topology} of $D$.
\end{definition}

These topologies were first studied in \cite{hyl82} and \cite{phoa89}.
We will establish below (Remark \ref{HylandTuring}) that our definition indeed gives the topologies as defined in \cite{hyl82}.
For now, assuming this correlation, we will remind ourselves of some results known about these topologies.

First of all, the following proposition from \cite{hyl82} is fundamental, justifying the name `Turing degree topology'.

\begin{proposition}
The association $D \mapsto \rho_D$ induces an order-embedding of Turing degrees in the lattice of topologies on $\Eff$.
In other words, given $D,E \subseteq \N$, we have $D \leq_\T E$ if and only if $\rho_D \leq_\lo \rho_E$.
\end{proposition}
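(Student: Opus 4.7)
The plan is to reinterpret supporting sights on $\rho_E$ as oracle computations with oracle $E$; once this dictionary is set up, the equivalence becomes essentially tautological.

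First I would unpack $\rho_D \leq_\lo \rho_E$ externally. By the proposition characterising $\leq_\lo$ via the representation $\loS_\ph$ of $\lo_\ph$, this is equivalent to the existence of a total computable $f : \N \to \N$ with $f(n) \in \loS_{\rho_E}(A)$ for every $A \in \rho_D(n)$. Since $\rho_D(n)$ has a unique member $\{D(n)\}$, writing $D(n) \in \{0,1\}$ for the indicator value of $D$ matching the $0 \leftrightarrow \text{in}$, $1 \leftrightarrow \text{out}$ convention of the definition of $\rho_D$, this reduces to the condition $f(n) \in \loS_{\rho_E}(\{D(n)\})$.

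The key observation, driving both directions, is that each $\rho_E(m)$ is the singleton $\{\{E(m)\}\}$; hence the clause $\Out_S(s) \in \rho_E(m)$ at an internal node of a $(w, \rho_E, p)$-supporting sight $S$ forces $\Out_S(s) = \{E(m)\}$, so every internal node has exactly one child. Such a sight reduces to a single finite path $() \ini (e_0) \ini \cdots \ini (e_0, \ldots, e_{k-1})$ determined by $E$: if the current prefix $s$ satisfies $w(s) = \la 1,m \ra$, then the next step appends $E(m)$; the path terminates at a node $s$ with $w(s) = \la 0, v \ra$ and $v \in p$. So $w \in \loS_{\rho_E}(p)$ iff walking the $E$-driven path of $w$ halts at a leaf whose output lies in $p$; this is precisely the assertion that the index $w$ codes an oracle Turing computation with oracle $E$ halting with a value in $p$.

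Given this dictionary both implications are immediate. For $D \leq_\T E \RI \rho_D \leq_\lo \rho_E$: given an oracle Turing machine $\Phi^?$ computing $D$ from $E$, define uniformly in $n$ a partial computable $w_n : \N^* \pto \N$ that simulates $\Phi^?(n)$ by reading the $i$-th oracle answer off the $i$-th coordinate of its input sequence, emitting $\la 1, m \ra$ when about to query $m$ and $\la 0, D(n) \ra$ upon halting. The $E$-driven path of $w_n$ then mirrors the real $E$-oracle computation on input $n$, so it terminates at a leaf with value $D(n)$; hence $w_n \in \loS_{\rho_E}(\{D(n)\})$, and a Kleene index for $n \mapsto \enc{w_n}$ realizes $\rho_D \leq_\lo \rho_E$. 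Conversely, given a realizer $f$, one constructs an oracle machine that on input $n$ computes $w = f(n)$ and walks the $E$-driven path using oracle queries to $E$, returning $v$ the moment it first reads $\la 0, v \ra$. The existence of a supporting sight guarantees termination, and $v \in \{D(n)\}$ forces $v = D(n)$, giving $D \leq_\T E$.

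The only obstacle is clerical: one must set up the uniform coding $n \mapsto w_n$ and the oracle-walking procedure as honest (oracle-)partial computable functions using Kleene's $s^m_n$-type simulation; no further conceptual work is needed once the sight/oracle correspondence is in hand.
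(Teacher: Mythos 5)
Your proof is correct, and it takes a genuinely different route from the paper: the paper does not supply a proof at all but simply cites Hyland's Theorem~17.2, whereas you give a self-contained argument using the sight apparatus developed in Section~3.2. The driving idea is sound: since each $\rho_E(m)$ is the singleton $\{\{\chi_E(m)\}\}$, the supporting condition forces every internal node of a $(w,\rho_E,p)$-supporting sight to have exactly one child, so such a sight (if any) is a uniquely determined finite path whose branching choices are dictated by $E$ and whose existence is precisely the statement that the ``$E$-driven walk of $w$'' terminates in a leaf with value in $p$ --- i.e.\ that $w$ codes a halting oracle computation relative to $E$. This collapses $\rho_D \leq_\lo \rho_E$ into the existence of a uniformly-indexed family of such halting $E$-oracle computations outputting $\chi_D(n)$, which is exactly $D \leq_\T E$. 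The same ``unique path'' observation appears in the paper in the proof of Proposition~\ref{prop:TuringBasic}, so your argument is very much in the spirit of the thesis's machinery even though the thesis itself doesn't use it to reprove Hyland's theorem; what you gain is an elementary, self-contained proof that makes the connection between Turing reducibility and $\leq_\lo$ transparent at the level of sights, rather than deferring to the quite different realizability computations in the cited source.

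Two small clerical points worth tidying. First, when you write ``emitting $\la 0, D(n) \ra$ upon halting'' in constructing $w_n$, you should emit $\la 0, v\ra$ where $v$ is the output of the simulated run on the fake oracle answers; $w_n$ of course cannot know $\chi_D(n)$ a priori. It is only along the $E$-driven branch that the simulated output agrees with $\chi_D(n)$, which is what makes the resulting path $(w_n,\rho_E,\{\chi_D(n)\})$-supporting. Second, keep in mind that $\loS_\theta(p)$ consists of \emph{codes} for partial computable functions $\N^*\pto\N$ under the preassembly structure on $\Ptl(\N^*,\N)$, so ``$w\in\loS_{\rho_E}(p)$'' should be read through that encoding; your dictionary still goes through unchanged, but it is the code, not the function, that the realizer $f(n)$ must produce.
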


\begin{proof}
This is \cite[Theorem 17.2]{hyl82}.
\end{proof}

%
%

A result from \cite{phoa89} states that $\negneg$ is the least topology greater than all the $\rho_D$, as follows.

\begin{proposition}\label{Phoa3}
Let $\theta \in \PPN$.
If $\rho_D \leq_\lo \theta$ for all $D \subseteq \N$, then $\negneg \leq \lo_\theta$.
\end{proposition}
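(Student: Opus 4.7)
The plan is to reduce the desired inequality to a single nonemptiness statement about $\loS_\theta$, and then to produce the required element by cardinality pigeonhole applied to the supplied family of realizers. Concretely, the aim is to show $\loS_\theta(\{0\}) \cap \loS_\theta(\{1\}) \neq \emptyset$; this will suffice because the two-element collection $\A_0 := \{\{0\},\{1\}\} \in \PPsN$ contains the recursively separable sets $\{0\}$ and $\{1\}$, so by Corollary~\ref{cor:RecSepNN} one has $\lo_{\A_0} = \negneg$, and the pullback preorder gives $\negneg \leq \lo_\theta$ if and only if $\A_0 \leq_\lo \theta$ (reading $\A_0$ as the constant sequence $\Delta_{\A_0}$). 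Unwinding the definition of $\leq_\lo$, the latter asks for a number $r \in \N$ with $r(n)\cvg$ and $r(n) \in \bigcap_{A \in \A_0} \loS_\theta(A) = \loS_\theta(\{0\}) \cap \loS_\theta(\{1\})$ for every $n$. Such an $r$ exists iff the intersection is nonempty: take $r$ to be an index of the constant function at any single witness, and conversely $r(0)$ witnesses nonemptiness.

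Next, I would extract realizers from the hypothesis. For each $D \subseteq \N$, the assumption $\rho_D \leq_\lo \theta$ supplies some $r_D \in \N$ such that, for every $n \in \N$, $r_D(n)\cvg$ and $r_D(n) \in \bigcap_{A \in \rho_D(n)} \loS_\theta(A)$. Since $\rho_D(n)$ is either $\{\{0\}\}$ or $\{\{1\}\}$, this collapses to $r_D(n) \in \loS_\theta(\{0\})$ when $n \in D$ and $r_D(n) \in \loS_\theta(\{1\})$ when $n \notin D$. Using choice, fix one such $r_D$ for every $D \subseteq \N$; this gives a function $\P(\N) \to \N$, $D \mapsto r_D$.

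The conclusion is then pure cardinality. Since $|\P(\N)| > |\N|$, the map $D \mapsto r_D$ cannot be injective; pick distinct $D_1, D_2 \subseteq \N$ with $r_{D_1} = r_{D_2} =: r$, and choose $n_0$ in the symmetric difference, say $n_0 \in D_1 \setminus D_2$. Then $r(n_0)$, read through $r_{D_1}$, lies in $\loS_\theta(\{0\})$, and read through $r_{D_2}$, lies in $\loS_\theta(\{1\})$, exhibiting the required element of the intersection. The main conceptual step is the initial reduction: recognising that $\negneg$ is itself basic and that the induced $\leq_\lo$-inequality for a two-element constant collection degenerates to a plain nonemptiness statement. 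Once that is in hand, the rest uses neither sights nor effectivity beyond the definition of $\loS_\theta$ and is essentially just pigeonhole.
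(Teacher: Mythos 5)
Your proof is correct, but I cannot compare it line-by-line with the paper's: the paper gives no argument of its own for this proposition, the entire proof body being a citation to Phoa's original paper. What you have produced is therefore a self-contained argument rendered in the $\loS_\ph$/$\leq_\lo$ framework developed in the thesis. The key reduction is well chosen: recognising $\negneg$ as the basic topology $\lo_{\{\{0\},\{1\}\}}$ via Corollary~\ref{cor:RecSepNN}, so that $\negneg \leq \lo_\theta$ unwinds (through the realizability description of $\leq_\lo$) to the single non-emptiness condition $\loS_\theta(\{0\}) \cap \loS_\theta(\{1\}) \neq \emptyset$. That is what lets the rest collapse to a bare cardinality pigeonhole: each $D \subseteq \N$ yields a realizer $r_D$ with $r_D(n) \in \loS_\theta(\{\chi_D(n)\})$, there are $2^{\aleph_0}$ subsets and only $\aleph_0$ realizers, so two distinct $D_1, D_2$ share $r$, and for $n_0$ in the symmetric difference $r(n_0)$ lands in both $\loS_\theta(\{0\})$ and $\loS_\theta(\{1\})$. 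This is essentially Phoa's own counting argument faithfully transported into the sight-based representation; the benefit of writing it out this way is that it makes the cited result self-contained in the thesis's notation, which would be worth doing in the text. One inessential remark: the ``iff'' you establish between the existence of $r$ and the non-emptiness of the intersection is more than you need---you only use the direction from a witness $w$ in the intersection to the constant realizer $\lambda n.\,w$.
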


\begin{proof}
This is \cite[Proposition 3]{phoa89}.
\end{proof}

\subsection{Comparison to basics} 
Then we present an observation of our own, that a basic topology below some Turing degree topology is necessarily the bottom topology $\id$.

\begin{proposition}\label{prop:TuringBasic}
Let $\A \in \PPN$ and $D \subseteq \N$.
If $\A \leq_\lo \rho_D$, then $\A \cong_\lo \{\}$.
\end{proposition}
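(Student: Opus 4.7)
The plan is to deduce $\bigcap \A \neq \emptyset$, whence the calculation at the start of the ``extreme examples'' subsection (showing $\lo_\A = \id$ iff $\bigcap \A \neq \emptyset$) delivers $\lo_\A = \id = \lo_{\{\}}$ in $\Lo$, i.e.\ $\A \cong_\lo \{\}$.

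First I would unfold the hypothesis $\A \leq_\lo \rho_D$ using the $\loS$-representation of $\lo_\ph$. Since $\leq_\lo$ on $\nabla(\PPN^\N)$ is represented by the predicate $\eta \leq_l \zeta = \dbl \forall n^\N . \{n\} \RI \forall_{A \in \eta(n)} \loS_\zeta(A) \dbr$, applying this to $\eta = \Delta_\A$ (where $\eta(n) = \A$ for all $n$) and $\zeta = \rho_D$ yields a single number $z \in \N$ lying in $\bigcap_{A \in \A} \loS_{\rho_D}(A)$. Equivalently, for each $A \in \A$ there is some $(z,\rho_D,A)$-supporting sight $S_A$.

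Next I would analyse the shape of these sights. For every $n \in \N$, the collection $\rho_D(n)$ equals either $\{\{0\}\}$ or $\{\{1\}\}$---a singleton collection whose unique element is itself a singleton subset of $\N$. Consequently, in any $(z,\rho_D,A)$-supporting sight $S_A$, each non-leaf node $s$ must satisfy $\phi_z(s) = \la 1, n \ra$ with $\Out_{S_A}(s) \in \rho_D(n)$, so $\Out_{S_A}(s) = \{0\}$ if $n \in D$ and $\{1\}$ otherwise; in particular $s$ has exactly one child. Thus $S_A$ is a linear chain, uniquely determined by the canonical rule: start with $s_0 = ()$ and, while $\phi_z(s_i) = \la 1, n_i \ra$, set $s_{i+1} = s_i \cons 0$ if $n_i \in D$ and $s_{i+1} = s_i \cons 1$ otherwise. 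Well-foundedness of $S_A$ forces this chain to terminate at some leaf $\ell$ with $\phi_z(\ell) = \la 0, y \ra$, and both $\ell$ and $y$ depend only on $z$ and $D$---not on $A$.

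Finally, for $S_A$ to be $(z,\rho_D,A)$-supporting the leaf condition requires $y \in A$. Since this holds for every $A \in \A$, the single number $y$ lies in $\bigcap \A$, so $\bigcap \A \neq \emptyset$, and the conclusion follows. The main obstacle is the middle step: fixing that every $(z,\rho_D,A)$-supporting sight is the same canonical path, which depends squarely on the observation that $\rho_D(n)$ is always a singleton whose element is a singleton, forcing a unique successor at every non-leaf. Once this is in place, the argument is essentially a direct read-off---the oracle-like computation encoded by $z$ with oracle $D$ halts with a single output $y$ that simultaneously witnesses $y \in A$ for every $A \in \A$.
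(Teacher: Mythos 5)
Your proposal is correct and takes essentially the same approach as the paper: both exploit that $\rho_D(n)$ is always a singleton collection of singletons, forcing any dedicated/supporting sight for $\rho_D$ to be uniquely determined by the realizer and $D$ alone, so its single leaf value lies in $\bigcap \A$. The only cosmetic difference is that you phrase this via the $\loS$-representation (supporting sights as a canonical linear chain) whereas the paper phrases it via the $\loF$-representation and proves a separate uniqueness statement by induction---``if $S,T$ are $(z,\rho_D)$-dedicated then $S = T$''---before extracting the common leaf value.
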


\begin{proof}
First we prove the statement
\begin{equation}\label{eq:tb1}
\text{Given sights $S,T$ and $z \in \N$, if $S,T$ are $(z,\rho_D)$-dedicated, then $S = T$.}
\end{equation}
by induction on $S$.

The case $S = \nil$.
Then $z = \la 0,\_ \ra$.
It follows that $T = \nil$ as well, as desired.

The case $S = (U,\sigma)$.
Then (i) $z = \la 1,\la n,e \ra \ra$ for some $n,e \in \N$, (ii) $U = \{\chi_D(n)\}$, (iii) $e$ is defined on $\chi_D(n)$ and (iv) the sight $\sigma(\chi_D(n))$ is $(e(\chi_D(n)),\rho_D)$-dedicated.
As $T$ is also $(z,\rho_D)$-dedicated, it follows that $T = (\{\chi_D(n)\},\tau)$ for some $\tau$ and the sight $\tau(\chi_D(n))$ is $(e(\chi_D(n)),\rho_D)$-dedicated.
By the IH, we have $\sigma(\chi_D(n)) = \tau(\chi_D(n))$.
It follows that $S = T$, as desired.

Let us now establish the proposition.
Take $z \realizes \A \leq_\lo \rho_D$.
Choose, for each $A \in \A$, a $(z,\rho_D,A)$-dedicated sight $S_A$.
By \eqref{eq:tb1}, all the $S_A$ are the same sight, say $S$.
Since $S$ is on $\rho_D$, it is non-degenerate.
So we can choose a leaf $d$ of $S$.
Then $z(d) \in \bigcap \A$.
It follows that $\A \cong_\lo \{\}$, as desired.
\end{proof}


\begin{corollary}
If $D \subseteq \N$ is undecidable, then $\rho_D$ is not basic. \qed
\end{corollary}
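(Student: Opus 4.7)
The plan is to derive the corollary essentially immediately from Proposition \ref{prop:TuringBasic} combined with the order-embedding of Turing degrees into topologies on $\Eff$. Suppose for contradiction that $\rho_D$ is basic. Unfolding this externally, there exists $\A \in \PPN$ with $\lo_{\rho_D} = \lo_\A$ as topologies on $\Eff$; equivalently, identifying $\A$ with $\Delta_\A \in \PPN^\N$, we have $\A \cong_\lo \rho_D$ in the preorder on $\PPN^\N$, and in particular $\A \leq_\lo \rho_D$.

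Applying Proposition \ref{prop:TuringBasic} to this inequality yields $\A \cong_\lo \{\}$. Since $\bigcap \{\} = \N$ is non-empty, the first calculation of the ``Extreme examples'' subsection gives $\lo_{\{\}} = \id$, so $\lo_\A = \id$, and therefore $\lo_{\rho_D} = \id$.

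Finally, invoke the Turing-degree order-embedding (the first proposition of Section \ref{sec:turing}). Since $\emptyset$ is recursive, the same chain of reasoning (or the Introduction's remark that the least Turing degree corresponds to $\Eff$ itself, i.e.\ to the topology $\id$) gives $\lo_{\rho_\emptyset} = \id$. Hence $\rho_D \cong_\lo \rho_\emptyset$, which by the order-embedding forces $D \equiv_\T \emptyset$, i.e.\ $D$ is recursive --- contradicting the hypothesis that $D$ is undecidable.

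No step presents a real obstacle, since Proposition \ref{prop:TuringBasic} has already done the heavy lifting; the corollary only needs the external reading of ``basic'' and the fact that the recursive Turing degree maps to $\id$ under the embedding. The one tiny care is simply the translation $\rho_D$ basic $\Longleftrightarrow$ $\lo_{\rho_D} = \lo_\A$ for some $\A \in \PPN$, which is the external shadow of Definition \ref{def:basic}.
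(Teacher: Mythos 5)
Your proposal is correct and is exactly the intended argument: the paper marks this corollary with \qed as an immediate consequence of Proposition \ref{prop:TuringBasic}, and your unfolding (basic $\Rightarrow$ $\A \leq_\lo \rho_D$ for some $\A$ $\Rightarrow$ $\A \cong_\lo \{\}$ $\Rightarrow$ $\lo_{\rho_D} = \id$ $\Rightarrow$ $D$ recursive via the Turing-degree order-embedding) is precisely the chain the author intends the reader to supply. The only cosmetic remark is that for the last step one could equally invoke the equivalence ``$\rho_D \leq j$ iff $\chi_D$ is effective in $j$'' with $j = \id$, but your route through $\rho_\emptyset$ and the embedding is just as valid.
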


\subsection{Notion of a set $\subseteq \N$ being effective in a subtopos of $\Eff$}


First, let us free ourselves from the presentational distinction between partial maps and partial functional relations.

\begin{remark}
We pass through the coincidence of partial maps and partial functional relations in a topos, and its compatibility with sheafification.

In a topos $\E$, a partial map $X \pto Y$ is precisely a partial functional relation $X \pto Y$, as follows.
If $(U,f)$ is a partial map, then the relation
$$x^X,y^Y . x \in U \wedge f(x) = y$$
is a partial functional relation.
If $G \in \Sub(X \times Y)$ is a partial functional relation,
let $U = \{x:X \mid \exists y \in Y . (x,y) \in G\}$,
and let $g$ be the map $U \to Y$ induced by the fact
$$\E \models \forall u \in U \exists! y \in Y. (\incl_{U \subseteq X}(u),y) \in G,$$
then $(U,g)$ is a partial map $X \pto Y$.
The two directions we have described are mutual inverses.

Let $j$ be a local operator on $\E$, and let $\a: \E \onto \Sh_j(\E)$ be the sheafification.
If $(U,f)$ is a partial map $X \pto Y$, then $(\a(U),\a(f))$ is clearly a partial map $\a(X) \pto \a(Y)$.
If $G \in \Sub_\E(X \times Y)$ is a partial functional relation, i.e. satisfies
$$\forall x \forall y,y' . (x,y) \in G \wedge (x,y') \in G \RI y = y'$$
an entailment of two geometric formulas,
then as $\a$ preserves this we have that $\a(G) \in \Sub_{\Sh_j(E)}(\a(X) \times \a(Y))$ is a partial functional relation $\a(X) \pto \a(Y)$.

The final remark is that the bijection between partial maps and partial functional relations commutes with sheafification, in the now obvious sense.
\end{remark}

\begin{proposition}\label{a_and_j}
In a topos $\E$, let $X$ be an object and $U,V$ its subobjects.
Let $j$ be a local operator on $\E$, and denote by $\a: \E \to \Sh_j(\E)$ the sheafification.
We denote (as usual) by $\a(U)$ the obvious subobject of $\a(X)$, and by $j(U)$ the $j$-closure of the subobject $U$.
\claim{We have
\begin{center}
$\a(U) \leq \a(U)$ in $\Sub_{\Sh_j(\E)}(\a{X})$ if and only if $j(U) \leq j(V)$ in $\Sub_\E(X)$.
\end{center}}
\end{proposition}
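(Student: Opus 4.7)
The plan is to establish the equivalence by reducing both sides to a common intermediate statement via two standard properties that connect sheafification with $j$-closure. First, for any subobject $U \emto X$, the canonical inclusion $U \emto j(U)$ is $j$-dense (immediate from the definition of $j$-closure), and sheafification inverts $j$-dense monomorphisms; this yields property (A): $\a(U) \cong \a(j(U))$ as subobjects of $\a(X)$ in $\Sh_j(\E)$. Second, writing $\eta: X \to \a(X)$ for the unit of the adjunction $\a \dashv i$, we have property (B): $j(U) = \eta^{-1}(\a(U))$ in $\Sub_\E(X)$, where the pullback is taken in $\E$ along $\eta$.

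Granted (A) and (B), the proposition reduces to a single line in each direction. For ``$\LI$'', suppose $j(U) \leq j(V)$ in $\Sub_\E(X)$; since $\a$ preserves finite limits and therefore is order-preserving on subobjects, we obtain $\a(j(U)) \leq \a(j(V))$, and by (A) this is precisely $\a(U) \leq \a(V)$. For ``$\RI$'', suppose $\a(U) \leq \a(V)$ in $\Sub_{\Sh_j(\E)}(\a X)$; pullback along $\eta$ is monotone on subobject posets, so $\eta^{-1}(\a(U)) \leq \eta^{-1}(\a(V))$, and by (B) this is $j(U) \leq j(V)$.

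The only nontrivial bookkeeping lies in verifying (B), which I expect to be the ``main obstacle'' (though it is entirely standard). The key observations are: (i) $\eta^{-1}(\a U)$ is $j$-closed in $X$, because its classifying map $X \to \Omega$ factors as $X \xto{\eta} \a X \to \Omega_j \emto \Omega$, and subobjects classified through $\Omega_j$ are exactly the $j$-closed ones; (ii) $\eta^{-1}(\a U)$ contains $U$, since the outer square $U \emto X \xto{\eta} \a X$ factors through $\a U \emto \a X$ by naturality of $\eta$; and (iii) by the universal property of $j$-closure as the smallest $j$-closed subobject containing $U$, points (i) and (ii) force $j(U) \leq \eta^{-1}(\a U)$, while the reverse inclusion follows by pulling back the inclusion $\a U \leq \a(j(U))$ (from (A), which gives in fact equality) along $\eta$ and using that $\eta^{-1}(\a(j U)) = j(U)$ when $j(U)$ is already $j$-closed (a straightforward adjunction chase). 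All three points are mild exercises in elementary topos theory, so no substantial difficulty is anticipated beyond careful manipulation of the $\a \dashv i$ adjunction and the characterization of $j$-closed subobjects via $\Omega_j$.
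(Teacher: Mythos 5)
Your proposal is correct and follows essentially the same route as the paper: the paper simply cites the two standard facts that $\a$ restricted to $j$-closed subobjects is an order isomorphism onto $\Sub_{\Sh_j(\E)}(\a X)$ and that $\a$ on $\Sub_\E(X)$ factors through taking $j$-closure, which are exactly your (A) and (B) (your $j(U)=\eta^{-1}(\a U)$ being the explicit inverse of that order isomorphism). You merely unpack the details that the paper delegates to ``we know from topos theory,'' and your verifications of (A) and (B) are sound.
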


\begin{proof}
We know from topos theory that \\
(i) the assignment $\ClSub_\E(X) \xto{\a} \Sub_{\Sh_j(\E)}(\a{X})$ is an order isomorphism, and
(ii) the assignment $\Sub_\E(X) \xto{\a} \Sub_{\Sh_j(\E)}(\a{X})$ factors as $\Sub_\E(X) \xto{\text{take $j$-closure}} \ClSub(X) \xto{\a} \Sub_{\Sh_j(\E)}(\a{X})$.
The Proposition follows.
\end{proof}

\begin{definition}
In a topos $\E$, let $(D,g): X \pto Y$ be a partial map, and let $U \in \Sub(X)$.
We say that the partial map $(D,g)$ is \defn{defined on $U$} if $U \leq D$ in $\Sub(X)$.
Let $j$ be a local operator on $\E$.
We say that the partial map $(D,g)$ is \defn{defined on $U$ in $j$} if the partial map $(\a(D),\a(g))$ is defined on $\a(U)$ in $\Sh_j(\E)$.
\end{definition}

\begin{corollary}
Let $j,j'$ be local operators on a topos $\E$, with $j \leq j'$.
Let $(D,g)$ be a partial map $X \pto Y$ in $\E$, and let $U \in \Sub(X)$.
If $(D,g)$ is defined on $U$ in $j$, then $(D,g)$ is defined on $U$ in $j'$.
\end{corollary}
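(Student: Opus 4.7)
The plan is to unfold both the hypothesis and the conclusion using the definition of ``defined on $U$ in $j$'' together with Proposition~\ref{a_and_j}, after which the statement becomes a purely formal inequality about closure operators, and the conclusion follows from monotonicity and idempotence of $j'$ together with $j \leq j'$.

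First I would note that, by definition, ``$(D,g)$ is defined on $U$ in $j$'' says $\a_j(U) \leq \a_j(D)$ in $\Sub_{\Sh_j(\E)}(\a_j X)$, and similarly for $j'$. By Proposition~\ref{a_and_j} applied to each of $j$ and $j'$, these inequalities are respectively equivalent to
$$j(U) \leq j(D) \quad\text{and}\quad j'(U) \leq j'(D)$$
in $\Sub_\E(X)$. Thus the corollary reduces to the following closure-operator statement: if $j,j'$ are closure operators (internally, on $\Omega$) with $j \leq j'$ pointwise, and $j(U) \leq j(D)$, then $j'(U) \leq j'(D)$.

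For the latter I would simply chain
$$U \;\leq\; j(U) \;\leq\; j(D) \;\leq\; j'(D),$$
where the first inequality is extensivity of $j$, the second is the hypothesis, and the third uses $j \leq j'$ at the predicate $D$. Applying the (monotone) operator $j'$ to the outer inequality $U \leq j'(D)$ and invoking idempotence $j'(j'(D)) \cong j'(D)$ yields $j'(U) \leq j'(D)$, as required.

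I do not expect a serious obstacle here; the argument is entirely formal. The only small point of care is to recall that ``$j \leq j'$ as local operators'' means exactly that $j(p) \leq j'(p)$ for every predicate $p$, which is the pointwise preordering on $\Lop(\E)$ used throughout Section~\ref{ssec:closure}. Everything else follows by the closure-operator axioms recalled just before Section~\ref{sec:Joyal}.
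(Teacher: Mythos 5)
Your proof is correct and follows essentially the same route as the paper: translate ``defined on $U$ in $j$'' via Proposition~\ref{a_and_j} into an inequality in $\Sub_\E(X)$ and then use the closure-operator axioms. The only difference is cosmetic: the paper first observes that $j(U)\leq j(D)$ is equivalent (by extensivity and idempotence of $j$) to the simpler condition $U\leq j(D)$, after which the step $j(D)\leq j'(D)$ finishes immediately; you instead keep the form $j(U)\leq j(D)$ and redo that same extensivity/idempotence reasoning inline for $j'$.
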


\begin{proof}
Note that, $(D,g)$ is defined on $U$ in $j$ if and only if $\a_j(U) \leq \a_j(D)$ if and only if (by Proposition \ref{a_and_j}) $U \leq j(D)$.
On the other hand, $(D,g)$ is defined on $j'$ if and only if $U \leq j'(D)$.
But $j(D) \leq j'(D)$, hence the result follows.
\end{proof}

\begin{construction}
Let $k \in \N$, and let $D \subseteq \N^k$.
We describe two constructions of a subobject $D_\Eff \in \Sub(\NNO^k)$ in $\Eff$.

On one hand, we have the function $\nab{D}: \N^k \to \PN$.
This is automatically an extensional predicate on the object $\NNO^k$ in $\Eff$, hence determines a subobject.

On the other hand, we may consider the assembly $(D,\singt{\cdot})$.
The function $\incl: D \emto \N^k$ clearly represents a mono $(D,\singt{\cdot}) \emto \NNO^k$, hence determines a subobject.

The claim is that the two subobjects are the same.
We denote by $D_\Eff$ this subobject of $\NNO^k$ in $\Eff$.
Moreover if $j$ is a local operator on $\Eff$, we denote by $D_j$ the subobject $\a_j(D_\Eff)$ of $\a_j(N^k) \cong \a_j(N)^k$.
\end{construction}

\begin{proof}
The second subobject is clearly represented by the formula in $\EffTrip$
$$\phi(\vec{n}) := \exists \vec{m} \in D . \singt{\vec{m}} \wedge \incl(\vec{m}) \sim \vec{n}.$$
It suffices that $\phi$ and $\nab{D}$ are equivalent as predicates on $N$, i.e. that
$$\realizes \forall \vec{n} \in \N^k . \singt{\vec{n}} \RI [\phi(\vec{n}) \BI \nab{D}(\vec{n})].$$
Now $\lambda n . \la (\lambda x . n),(\lambda x . n) \ra$ realizes this.
\end{proof}

\begin{proposition}
Let $g$ be a partial function $\N \pto \N$.
The subobject $g_\Eff$ of $\NNO \times \NNO$ in $\Eff$ is a partial functional relation.
\end{proposition}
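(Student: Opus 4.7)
The plan is to unfold what it means for $g_\Eff$ to be a partial functional relation via the internal logic, and then to exhibit an explicit realizer in $\ET$. By the definition of partial functional relation together with Corollary \ref{LogicPCP}, it suffices to show
\begin{equation*}
\ET \models \forall n, m, m' \in \N .\ \ex(n) \wedge \ex(m) \wedge \ex(m') \RI \bigl[ \nab{g}(n,m) \wedge \nab{g}(n,m') \RI m \sim m' \bigr],
\end{equation*}
where, since $\NNO$ corresponds to the assembly $(\N,\{\cdot\})$, we have $\ex(k) = \{k\}$ for $k \in \N$ and $m \sim m' = \{m\}$ if $m = m'$ and $\emptyset$ otherwise.

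First I would note that the subobject $g_\Eff \in \Sub(\NNO \times \NNO)$ is, by the preceding Construction, represented by the predicate $\nab{g}\colon \N \times \N \to \PN$, which takes value $\N$ on $(n,m)$ with $g(n) = m$ and $\emptyset$ otherwise. Therefore if any $(n,m,m') \in \N^3$ admits a realizer of the hypothesis of the implication, then $\nab{g}(n,m), \nab{g}(n,m') \neq \emptyset$, which forces $g(n) = m$ and $g(n) = m'$; since $g$ is a partial \emph{function}, this yields $m = m'$, so $m \sim m' = \{m\}$.

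Concretely, consider the partial recursive function that on input $\la \la n,m,m' \ra,\la\_,\_\ra \ra$ simply returns $m$ (extracting $m$ from the code of the triple $(n,m,m')$ via the standard projections). The ``existence'' hypothesis $\ex(n) \wedge \ex(m) \wedge \ex(m')$ supplies, in coded form, the actual values $n,m,m'$, and the hypothesis $\nab{g}(n,m) \wedge \nab{g}(n,m')$ (together with the non-emptiness of the $\nabla$-values) forces the equality $m = m'$ in the meta-theory, whence $m \in \{m\} = \dbl m \sim m' \dbr$. I would therefore produce a lambda-term of the schematic form
\begin{equation*}
\lambda \la k,\_ \ra .\ \pi^3_2(k)
\end{equation*}
where $k$ codes the triple $(n,m,m')$, and verify that it realizes the displayed formula.

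There is no genuine obstacle here; the only mildly delicate point is bookkeeping, namely making sure that the ``existence'' predicate $\ex$ on $\NNO$ really does hand us the numerical values of $n,m,m'$ that we need to emit as the realizer of $m \sim m'$, and that we correctly treat the vacuous cases (where $g(n)$ is undefined or distinct from one of $m,m'$) in which $\nab{g}(n,m) \wedge \nab{g}(n,m') = \emptyset$ and any term trivially realizes the implication.
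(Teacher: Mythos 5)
Your proposal takes essentially the same route as the paper: the paper reduces the claim to the single-valuedness condition in $\ET$, writes down the realizability statement $\realizes \forall x,y,y' \in \N . \{x\} \wedge \{y\} \wedge \{y'\} \RI [\nab{g}(x,y) \wedge \nab{g}(x,y') \RI y \sim y']$, and simply remarks that the realizer is easily found, whereas you spell out the realizer explicitly. The one small slip is the shape of your lambda-term: since the formula is $A \RI (B \RI C)$ rather than $(A \wedge B) \RI C$, the realizer should be curried (something like $\lambda k . \lambda \_ . \pi^3_2(k)$) rather than taking a single pair $\la k,\_ \ra$, but this is the kind of bookkeeping detail you already flag as the only delicate point.
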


\begin{proof}
We have to verify the single-valuedness requirement, that
\begin{itemize}
 \item[] $\realizes \forall x,y,y' \in \N . \{x\} \wedge \{y\} \wedge \{y'\} \RI [\nab{g}(x,y) \wedge \nab{g}(x,y') \RI y \sim y']$.
\end{itemize}
But its realizer is easily found.
\end{proof}

\begin{definition}
Let $g: \N \pto \N$ be a partial function, let $R \subseteq \N$, and let $j$ be a local operator on $\Eff$.
We say that $g$ is \defn{defined on $R$ in $j$} if $g_j$ is defined on $R_j$.
We say that $g$ is \defn{effective in $j$} if $g$ is defined on $\Dom(g)$ in $j$.
\end{definition}

\begin{proposition}
Let $g,j,R$ be as in the previous Definition.
Suppose that $j \neq \top$.
If $R \bs \Dom(g) \neq \emptyset$, then $g$ is not defined on $R$ in $j$.
\end{proposition}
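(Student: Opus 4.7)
The plan is to assume for contradiction that $g$ is defined on $R$ in $j$ and then deduce that the representative $j:\PN\to\PN$ sends $\emptyset$ to a non-empty set, which forces $j=\top$.

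First I would unwind the definition: the partial functional relation $g_j = \a_j(g_\Eff)$ is defined on $R_j = \a_j(R_\Eff)$ precisely when $R_j \leq \Dom(g)_j$ as subobjects of $\a_j(\NNO)$ in $\Sh_j(\Eff)$. By Proposition \ref{a_and_j}, this in turn is equivalent to $j(R_\Eff)\leq j(\Dom(g)_\Eff)$ in $\Sub_\Eff(\NNO)$. Since $R_\Eff\leq j(R_\Eff)$ by the extensiveness of $j$, it follows that $R_\Eff\leq j(\Dom(g)_\Eff)$.

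Next I would translate this inequality back to the tripos. The subobject $R_\Eff\leq\NNO$ is represented by the predicate $\nabla_R\colon\N\to\PN$, and $j(\Dom(g)_\Eff)$ is represented by $j\circ\nabla_{\Dom(g)}$ (postcomposition with the function $j$ representing the local operator corresponds, by construction, to $j$-closure of the subobject). Unwinding the inequality of subobjects of $\NNO=(\N,\singt{\cdot})$ yields a realizer
$$e\realizes \forall n\in\N.\;\{n\}\RI\bigl(\nabla_R(n)\RI j(\nabla_{\Dom(g)}(n))\bigr).$$

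Now I would pick $x_0\in R\setminus\Dom(g)$, which exists by hypothesis. Then $\nabla_R(x_0)=\N$ and $\nabla_{\Dom(g)}(x_0)=\emptyset$. Feeding $x_0$ (its own code) and any element of $\N$ through $e$ produces an element of $j(\emptyset)$, whence $j(\emptyset)\neq\emptyset$. But we have already noted that a non-degenerate representing function satisfies $j(\emptyset)=\emptyset$, so this forces $j$ to be the degenerate (top) topology, contradicting $j\neq\top$. The main subtlety — and the only non-automatic step — is the translation in step two: making sure that the $j$-closure of the subobject $\Dom(g)_\Eff$ is indeed represented by the predicate $j\circ\nabla_{\Dom(g)}$, which follows from how the construction $(\theta\mapsto\lo_\theta)$ and the general correspondence between local operators and closure transformations were set up in Subsection \ref{ssec:closure}.
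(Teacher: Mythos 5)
Your proof is correct, but it takes a different route from the paper's. The paper's argument is a two-line reduction: since $j \neq \top$ and $\negneg$ is the largest non-degenerate topology on $\Eff$, we have $j \leq \negneg$; the corollary immediately preceding the proposition then transports ``defined on $R$ in $j$'' up to ``defined on $R$ in $\negneg$'', and the latter is just the ordinary set-theoretic statement $R \subseteq \Dom(g)$ (because $\Sh_{\negneg}(\Eff) \simeq \Set$), contradicting $R \bs \Dom(g) \neq \emptyset$. You instead unwind everything to the tripos level and extract a realizer witnessing $j(\emptyset) \neq \emptyset$, which forces degeneracy. Your version is more self-contained -- it does not invoke the maximality of $\negneg$ among non-degenerate topologies, nor the identification of the $\negneg$-sheaves with $\Set$ -- at the price of a concrete computation; the paper's version is shorter given those background facts. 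One small imprecision in yours: the domain subobject of the partial functional relation $g_\Eff$ is represented by $n \mapsto \exists m . \{m\} \wedge \nabla_g(n,m)$, which is \emph{contained in} but in general not isomorphic to $\nabla_{\Dom(g)}$ unless $g$ is computable. This is harmless here, since the two predicates agree (both are $\emptyset$) at the point $x_0 \notin \Dom(g)$ from which you derive the contradiction, but the inequality $R_\Eff \leq j(\nabla_{\Dom(g)})$ should strictly be obtained by first passing through the actual domain predicate and then using monotonicity of $j$.
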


\begin{proof}
Assume $R \bs \Dom(g) \neq \emptyset$.
If $g$ would be defined on $R$ in $j$, then, as $j \leq \negneg$, $g$ is defined on $R$ in $\negneg$, i.e., the partial function $g: \N \pto \N$ is defined on $R$ in the ordinary sense, contradicting the assumption.
\end{proof}

\begin{proposition}
Let $D \subseteq \N$, and let $j: \PN \to \PN$ be a function representing a local operator on $\Eff$.
The following are equivalent.
\begin{itemize}
 \item[(i)] The characteristic function $\N \to \N$ of $D$ is effective in $j$.
 \item[(ii)] $D_j$ is decidable.\footnote{In a topos, a subobject $U$ of an object $X$ is \defn{decidable} if $X \leq U \vee \neg U$ in $\Sub(X)$.}
 \item[(iii)] $\rho_D \leq j$ as local operators.
 \item[(iv)] $\ET \models \forall n^\N . \{n\} \RI j(\{\chi_D(n)\})$.
\end{itemize}
We say that $D$ is \defn{effective in} the subtopos $j$ if these conditions are satisfied.
\end{proposition}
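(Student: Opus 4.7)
The plan is to pivot on condition (iv) and prove the three equivalences $(\text{iii}) \iff (\text{iv})$, $(\text{i}) \iff (\text{iv})$ and $(\text{ii}) \iff (\text{iv})$ separately.

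For $(\text{iii}) \iff (\text{iv})$: since $\ph^\lomo: \Mo \onto \Lo$ is left adjoint to the inclusion and $j$ already lies in $\Lo$, the inequality $\lo_{\rho_D} \leq j$ reduces to $\mo_{\rho_D} \leq j$ in $\Mo$. Proposition \ref{MoDesc} applied to $\rho_D(n) = \{\{\chi_D(n)\}\}$ gives $\mo_{\rho_D}(p) = \exists n . \{n\} \wedge (\{\chi_D(n)\} \RI p)$, so after currying the existential the condition $\mo_{\rho_D} \leq j$ reads $\realizes \forall n . \{n\} \RI \forall p . (\{\chi_D(n)\} \RI p) \RI j(p)$. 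Setting $p := \{\chi_D(n)\}$ with the identity as realiser of $\{\chi_D(n)\} \RI \{\chi_D(n)\}$ recovers (iv); conversely, given a realiser of (iv) and a realiser $e$ of $\{\chi_D(n)\} \RI p$, the closure-operator inequality $(p \RI q) \leq j(p) \RI j(q)$ from Section \ref{ssec:closure} converts $e$ into a realiser of $j(\{\chi_D(n)\}) \RI j(p)$, whose application to the realiser of $j(\{\chi_D(n)\})$ yields one of $j(p)$.

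For $(\text{i}) \iff (\text{iv})$: totality of $\chi_D$ gives $\Dom(\chi_D) = \N$, so ``$\chi_D$ effective in $j$'' reads $\NNO_j \leq \Dom(\chi_{D,j})$ in $\Sub_{\Sh_j(\Eff)}(\NNO_j)$; Proposition \ref{a_and_j} transports this to $\NNO_\Eff \leq j(\Dom(\chi_{D,\Eff}))$ in $\Sub_\Eff(\NNO_\Eff)$. The extent $\Dom(\chi_{D,\Eff})$ is the domain projection of the partial functional relation $\chi_{D,\Eff}$, and a direct computation gives the representative $n \mapsto \exists m . \{m\} \wedge \nab{\chi_D}(n,m) = \{\chi_D(n)\} \wedge \N$, which is realizably isomorphic to $\{\chi_D(n)\}$; applying $j$ pointwise, the required inequality is exactly (iv).

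For $(\text{ii}) \iff (\text{iv})$: both conditions hold trivially when $j$ is degenerate, so assume $j$ non-degenerate. By Corollary \ref{cor:NonDegSen} combined with Construction \ref{con:CPtoP}, $D_j$ decidable in $\Eff_j$ is equivalent to $\ET \models \el^{\ET/\ET_j} \el^{\ET_j/\Eff_j} \bigl[ \forall n \in \NNO . n \in D_j \vee n \notin D_j \bigr] \er \er$. The inner translation (Corollary \ref{LogicPCP}) yields $\forall n^\N . \{n\} \RI [\nab D(n) \vee (\nab D(n) \RI \bot)]$; the outer one (Construction \ref{con:PjToP}) wraps each $\forall$ and $\RI$ with a $j$, and non-degeneracy $j(\bot) = \bot$ collapses $\nab D(n) \RI j(\bot)$ to $\nab{\N \bs D}(n)$, producing $\forall n . j(\{n\} \RI j(\nab D(n) \vee \nab{\N \bs D}(n)))$. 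The closure-operator inequality $j(p \RI j(q)) \leq p \RI j(q)$ from Section \ref{ssec:closure} strips the outer $j$, leaving $\realizes \forall n . \{n\} \RI j(\nab D(n) \vee \nab{\N \bs D}(n))$; the pointwise computation $\nab D(n) \vee \nab{\N \bs D}(n) = \{\chi_D(n)\} \wedge \N \cong \{\chi_D(n)\}$ finally produces (iv).

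The main obstacle is the layered translation in $(\text{ii}) \iff (\text{iv})$: the two passes of $\el \cdot \er$ insert a fresh $j$ around every connective and quantifier, and collapsing these nested occurrences back to the single outermost $j$ of (iv) requires careful combination of both items of the closure-operator lemma of Section \ref{ssec:closure} with non-degeneracy.
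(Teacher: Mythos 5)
Your proposal is correct and follows essentially the same strategy as the paper: pivot on (iv) and prove the three equivalences separately, with each one reducing to the pointwise computation $\{\chi_D(n)\}\wedge\N \cong \{\chi_D(n)\}$ (for (i) and (ii)) or to unwinding $\rho_D \leq_\lo j$ (for (iii)). The only differences are bookkeeping choices --- you reduce (iii) to $\mo_{\rho_D} \leq j$ via the adjunction $\ph^{\lomo} \dashv \iota$ where the paper invokes its representation of $\leq_\lo$, and you spell out the nested-$j$ collapse in (ii)$\Leftrightarrow$(iv) that the paper elides --- and these are all sound.
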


\begin{proof}
We will show that each of (i),(ii),(iii) is equivalent to (iv).

`(i)$\BI$(iv)'.
We have: (i) holds, if and only if the partial functional relation $(\chi_D)_j \in \Sub(\NNO \times \NNO)$ in $\Eff_j$ is total, if and only if $\Eff_j \models \forall x^N \exists y^N . (x,y) \in (\chi_D)_j$, if and only if
\begin{equation}\label{sf1}
\ET_j \models \forall x^\N \{x\} \RI \exists y^\N . \{y\} \wedge \Delta_{\chi_D}(x,y).
\end{equation}
Note that
$$\dbl^{\ET_j} \exists y^\N . \{y\} \wedge \Delta_{\chi_D}(x,y) \dbr(x) = \{\chi_D(x)\} \wedge \N,$$
so that
$$[x . \exists y^\N . \{y\} \wedge \Delta_{\chi_D}(x,y)] \eqq_j [x . \{\chi_D(x)\} \wedge \N] \eqq_j [x . \{\chi_D(x)\}].$$
Thus \eqref{sf1} is equivalent to
$$\ET_j \models \forall n^\N \{n\} \RI \{\chi_D(n)\},$$
which in turn is equivalent to (iv).

`(ii)$\BI$(iv)'.
We have: (ii) holds, if and only if
$$\Eff_j \models \forall n \in \NNO . n \in (D_j \vee \neg D_j),$$
if and only if
$$\ET_j \models \forall n^\N . \{n\} \RI (\nabla_D \vee \neg \circ \nabla_D)(n),$$
if and only if
\begin{equation}\label{sf2}
\ET \models \forall n^\N . \{n\} \RI j[(\nabla_D \vee \neg \circ \nabla_D)(n)].
\end{equation}
We have
$$(\nabla_D \vee \neg \circ \nabla_D)(n) = \begin{cases}
\{0\} \wedge \N \cup \{1\} \wedge \emptyset & \textif n \in D \\
\{0\} \wedge \emptyset \cup \{1\} \wedge \N & \textif n \notin D
\end{cases} = \{\chi_D(n)\} \wedge \N.$$
So \eqref{sf2} is equivalent to (iv).

`(iii)$\BI$(iv)'.
Choose $\theta \in \PPN^\N$ such that $j \cong \lo_\theta$.
We have: (iii) holds, if and only if $\rho_D \leq_\lo \theta$, if and only if
\begin{eqnarray*}
\ET & \models & \forall n^\N . \{n\} \RI \forall_{A \in \rho_D(n)} \lo_\theta(A) \\
    & \eqq    & \forall n^\N . \{n\} \RI \forall_{A \in \{\{\chi_D(n)\}\}} \lo_\theta(A) \\
    & \eqq    & \forall n^\N . \{n\} \RI \lo_\theta(\{\chi_D(n)\}) \\
    & \eqq    & \forall n^\N . \{n\} \RI j(\{\chi_D(n)\}),
\end{eqnarray*}
as desired.
\end{proof}

\begin{remark}\label{HylandTuring}
Let $D \subseteq \N$.
The definition in \cite{hyl82} of the topology $k_D$ associated to the Turing degree $D$ comes down to that it is the least topology $j$ for which $D_j$ is decidable.
With our terminology: $k_D$ is (by the previous Proposition) thus the least topology in which $D$ is effective.
Since (also by the previous Proposition) our topology $\lo_{\rho_D}$ has this property, we must have $k_D = \lo_{\rho_D}$ as topologies on $\Eff$.
\end{remark}

\begin{application}
The least topology in which every set $\subseteq \N$ is effective, is $\neg\neg$.
\end{application}

\begin{proof}
By the last Proposition, this is just a paraphrase of Proposition \ref{Phoa3}.
\end{proof}

\section{The examples $\O_m^\alpha$}
\subsection{More on sights: intersection properties}

\begin{lemma}\label{joint-intersection}
Let $n \geq 1$.
Let $\mathcal{A}_1,\ldots,\mathcal{A}_n$ be collections such that the \defn{joint intersection property} is satisfied:
\begin{center}
for any $A_1 \in \mathcal{A}_1, \ldots, A_n \in \mathcal{A}_n$ we have $A_1 \cap \ldots \cap A_n \neq \emptyset$.
\end{center}
Given for each $1 \leq i \leq n$ a sight $S_i$ on $\A_i$, there is a finite sequence $d$ such that
\begin{itemize}
 \item $d$ is a node of each $S_i$, and
 \item $d$ is a leaf of one of the $S_i$.
\end{itemize}
\end{lemma}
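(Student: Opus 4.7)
The plan is to proceed by structural induction on the sight $S_1$, with the statement universally quantified over the natural number $n \geq 1$, the remaining collections $\A_2,\ldots,\A_n$, and the remaining sights $S_2,\ldots,S_n$ (each on the corresponding $\A_i$, with the joint intersection property holding for the whole tuple $\A_1,\ldots,\A_n$). This framing will be essential, since the inductive step must descend from $S_1$ to one of its subsights while keeping the collections $\A_1,\ldots,\A_n$ unchanged.

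In the base case $S_1 = \nil$ I take $d = ()$: the empty sequence is a node of every sight, and is a leaf of $S_1 = \nil$. For the inductive case $S_1 = (A_1,\sigma_1)$, I first observe that if any other $S_j$ equals $\nil$ then once again $d = ()$ suffices. Otherwise write $S_i = (A_i,\sigma_i)$ for every $i$, with $A_i \in \A_i$ by the hypothesis that $S_i$ is on $\A_i$; the joint intersection property applied to the tuple $(A_1,\ldots,A_n)$ supplies some $x \in A_1 \cap \ldots \cap A_n$. For each $i$ the subsight $\sigma_i(x)$ is again a sight on $\A_i$, and the joint intersection property for $\A_1,\ldots,\A_n$ is unaffected. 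Applying the induction hypothesis to $\sigma_1(x)$, together with the data $\sigma_2(x),\ldots,\sigma_n(x)$, I obtain a sequence $d'$ which is a node of each $\sigma_i(x)$ and a leaf of at least one of them. Setting $d := x \cons d'$ then gives a sequence that is a node of each $S_i$ (since $x \in A_i = \Out_{S_i}()$ and $d'$ is a node of $\sigma_i(x) = \Subsight_{S_i}(x)$), and is a leaf of $S_j$ whenever $d'$ is a leaf of $\sigma_j(x)$.

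I anticipate no real obstacle beyond bookkeeping: the joint intersection property is preserved under passing to subsights because the collections themselves are never modified, and the inductive definition of sights directly supports this kind of structural recursion. The only point that deserves care is quantifying the induction hypothesis correctly over $n$ and over the auxiliary data $(\A_j, S_j)_{j \geq 2}$, so that the recursive call—which leaves the collections alone but replaces every $S_i$ by one of its proper subsights—is admissible.
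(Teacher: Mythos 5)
Your proof is correct and matches the paper's argument essentially line for line: induction on the structure of $S_1$, the observation that $()$ works if any $S_i$ is $\nil$, selecting $x$ in the joint intersection, invoking the induction hypothesis on the subsights $\sigma_i(x)$, and prepending $x$ to the resulting $d'$. The extra care you take in spelling out what exactly is quantified in the induction hypothesis is a welcome clarification, but the route is the same.
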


\begin{proof}
We prove this by induction on $S_1$.

The case $S_1 = \nil$. Then $()$ is a leaf of $S_1$, and at the same time it is a node of $S_2,\ldots,S_n$. So fine.

The case $S_1 = (A_1,\sigma_1)$.
If $S_{i_0} = \nil$ for some $2 \leq i_0 \leq n$, then $()$ is a leaf of $S_{i_0}$ and a node of each $S_i$, so we can forget this case and assume that each $s_i$ is some $(A_i,\sigma_i)$.
By the joint intersection property, we can choose $a \in A_1 \cap \ldots \cap A_n$.
By the IH, there is a finite sequence $d'$ such that
\begin{itemize}
 \item $d'$ is a node of each $\alpha_i(a)$, and
 \item $d'$ is a leaf of $\alpha_{i_0}(a)$ for some $1 \leq i_0 \leq n$.
\end{itemize}
Clearly, the sequence $a \cons d'$ is a node of each $S_i = (A_i,\sigma_i)$, and is a leaf of $S_{i_0} = (A_{i_0},\sigma_{i_0})$.
Done.
\end{proof}

\begin{corollary}\label{n-intersection}
Let $n \geq 1$.
Let $\A$ be a collection that has the \defn{$n$-intersection property}:
\begin{center}
for any $A_1,\ldots,A_n \in \A$, the intersection $A_1 \cap \ldots \cap A_n$ is nonempty.
\end{center}
Then, given sights $S_1,\ldots,S_n$ on $\A$, there is a finite sequence $d$ such that
\begin{itemize}
 \item $d$ is a node of each $S_i$, and
 \item $d$ is a leaf of some $S_i$. \qed
\end{itemize}
\end{corollary}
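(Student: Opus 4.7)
The plan is to deduce Corollary \ref{n-intersection} as an immediate specialization of Lemma \ref{joint-intersection}. Concretely, I would apply the lemma to the constant family $\A_1 = \A_2 = \cdots = \A_n := \A$.

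To do this, I first need to check that the two hypotheses of Lemma \ref{joint-intersection} are satisfied for this family. The hypothesis that each $S_i$ is a sight on $\A_i$ is immediate, since by assumption each $S_i$ is a sight on $\A$ and $\A_i = \A$. The joint intersection property for $(\A_1,\ldots,\A_n)$ says that for every choice of $A_1 \in \A_1, \ldots, A_n \in \A_n$ the intersection $A_1 \cap \cdots \cap A_n$ is nonempty; but since $\A_i = \A$ for every $i$, this is literally the $n$-intersection property of $\A$, which we are assuming.

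Having verified the hypotheses, Lemma \ref{joint-intersection} yields a finite sequence $d$ that is a node of each $S_i$ and a leaf of at least one of them, which is exactly the conclusion required.

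There is no real obstacle here; the only thing to be careful about is to note that the ``$n$-intersection property'' is simply the diagonal case of the ``joint intersection property'' and not to confuse it with any weaker ``pairwise'' or ``finite'' variant. Since the statement is an immediate specialization, the corresponding proof in the paper should consist of a single line invoking Lemma \ref{joint-intersection}.
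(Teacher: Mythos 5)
Your proof is correct and matches the paper's intent exactly: the corollary is the diagonal case $\A_1=\cdots=\A_n=\A$ of Lemma \ref{joint-intersection}, and the paper accordingly gives no separate proof (the \qed in the statement signals it is immediate). Nothing to add.
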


\begin{lemma}\label{all-complete}
Let $S$ be a sight, and let $d = (x_1,\ldots,x_n)$ be a sequence that is a node of both $S$ and $T$.
Suppose that some $z$ is r-defined on both $S$ and $T$.
If $d$ is a leaf of, say, $S$, then it is a leaf of $T$.
\end{lemma}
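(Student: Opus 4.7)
The plan is to derive a contradiction from the supposition that $d$ is a leaf of $S$ but not a leaf of $T$. The lever is that existence of $z[d]$ forces the value obtained from $z$ after peeling off $x_1,\ldots,x_n$ to have the terminal form $\la 0,y\ra$, whereas extending $d$ any further under $z$ would demand that same value to have the form $\la 1,\la\cdot,e\ra\ra$.

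First I would unwind the definition of r-value on $d$. Because $d=(x_1,\ldots,x_n)$ is a leaf of $S$ and $z$ is r-defined on $S$, the value $z[d]\cvg$; by induction on $n$ this produces a sequence $z_0 = z, z_1,\ldots,z_n$ such that for each $1\leq i\leq n$ one has $z_{i-1} = \la 1,\la n_i,e_i\ra\ra$ with $e_i$ defined on $x_i$ and $z_i = e_i(x_i)$, and moreover $z_n = \la 0,y\ra$ for some $y$ (this is forced by the $k=0$ base clause of the recursion).

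Next, assume for contradiction that $d$ is not a leaf of $T$. Then $\Subsight_T(d)\neq \nil$, so it has the form $(A,\sigma)$; under the standing assumption that $T$ is non-degenerate at and beneath $d$, we have $A\neq\emptyset$. Pick any $a\in A$, so that $d\cons a$ is a node of $T$, and use well-foundedness of the inductively defined subsight to extend $d\cons a$ to a leaf of $T$ of the shape $d' = d \conc (a_1,\ldots,a_k)$ with $k\geq 1$.

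Finally, since $z$ is r-defined on $T$ and $d'$ is a leaf of $T$, we have $z[d']\cvg$. Unwinding its recursion replays the first $n$ peel-off steps to recover the very same $z_0,\ldots,z_n$, and then requires a further peel-off at $a_1$; this step insists that $z_n$ be of the form $\la 1,\la\cdot,e\ra\ra$, contradicting $z_n=\la 0,y\ra$. The one real obstacle, and the reason I flag non-degeneracy explicitly, is the degenerate case: if $\Subsight_T(d)$ (or any subsight beneath it) equals $(\emptyset,\emptyset)$, then $T$ may have no leaves extending $d$, so r-definedness of $z$ on $T$ imposes no constraint there and the conclusion can fail (take $S=\nil$, $T=(\emptyset,\emptyset)$, $z=\la 0,y\ra$, $d=()$); this case is excluded by the non-degeneracy convention implicit in the present discussion.
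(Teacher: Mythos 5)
Your argument is correct, and you have spotted a genuine defect in the statement. The lemma is literally false as written: with $S=\nil$, $T=(\emptyset,\emptyset)$, $z=\la 0,y\ra$ and $d=()$, the number $z$ is r-defined on both sights (on $T$ vacuously, since $(\emptyset,\emptyset)$ has no leaves), and $()$ is a leaf of $S$ but not of $T$. The paper's own proof trips over exactly this at its base case $n=0$: from $S=\nil$ it deduces $z=\la 0,\_\ra$ and then asserts ``so $T=\nil$ as well,'' which does not follow when $T=(\emptyset,\emptyset)$. The repair you propose is the right one: one should add the hypothesis that $\Subsight_T(d)$ is non-degenerate (equivalently, that every subsight of $T$ at or below $d$ has a leaf), which holds in particular whenever $T$ is non-degenerate, since subsights of non-degenerate sights are non-degenerate. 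In the only place the lemma is invoked (Calculation~\ref{fNotLeqO}) the sights in play are dedicated sights on collections of non-empty sets, hence non-degenerate, so the application is unaffected; but the lemma as printed does need the extra hypothesis.

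On the route itself: the paper proceeds by structural induction on the length $n$ of $d$, descending $S$ and $T$ in lockstep, and reducing the leaf claim at $d=(x_1,\ldots,x_n)$ to the same claim for $(x_2,\ldots,x_n)$ inside $\sigma(x_1)$, $\tau(x_1)$ with $e(x_1)$ in place of $z$ (where $z=\la 1,\la\_,e\ra\ra$). You instead extract the entire peel-off chain $z_0,\ldots,z_n$ from $z[d]\cvg$ in one pass, then argue by contradiction: if $d$ were not a leaf of $T$, extend $d$ to a longer leaf $d'$ of $T$, and note that $z[d']\cvg$ would force $z_n$ to have the shape $\la 1,\cdot\ra$, clashing with $z_n=\la 0,y\ra$. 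The two arguments carry the same mathematical content---both turn on the first coordinate of the value obtained after peeling off $x_1,\ldots,x_n$---but your contrapositive packaging makes it visible exactly where non-degeneracy is consumed, namely in the step that extends $d$ to an honest leaf $d'$ of $T$; in the paper's version the corresponding dependence is hidden in the unjustified step of the base case.
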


\begin{proof}
By induction on $n$.
The case $n = 0$.
Then $S = \nil$, so $z = \la0,\_\ra$, so $T = \nil$ as well, so $d$ is a leaf of $b$.
The case $n > 0$.
Then $S$ is some $(A,\sigma)$ and $T$ is some $(B,\tau)$.
The length $n-1$ sequence $(x_2,\ldots,x_n)$ is a leaf of $\sigma(x_1)$ and a node of $\tau(x_1)$,
so by the IH it is a leaf of $\tau(x_1)$.
It follows that $(x_1,\ldots,x_n)$ is a leaf of $(B,\tau)$.
Done.
\end{proof}


\subsection{Comparisons between the examples $\O_m^\alpha$.}\label{subs:between}
We wish to establish strict inequalities between the examples $\O_m^\alpha$.
First we observe the following.

\begin{calculation}
Let $1 < 2m < \alpha \leq \omega$.
We have $\O_m^\alpha <_\mo \O_{m+1}^\alpha$.
\end{calculation}

\begin{proof}
In the following, a `co-$n$-ton' means a co-$n$-ton in $\alpha$.

To have $\leq_\mo$, we need $\realizes \forall_\text{co-$m$-ton} A \exists_\text{co-$m+1$-ton} B (B \ri A)$.
In fact, $\id$ realizes this; if $\alpha\bs{A'}$ is a co-$m$-ton, choose any $m+1$-ton $B' \subseteq \alpha$ with $A' \subseteq B'$, then $\alpha\bs{B'}$ is a co-$m+1$-ton and $\id \realizes \alpha\bs{B'} \ri \alpha\bs{A'}$.

We now show $\not\geq_\mo$.
Suppose, for contradiction, that some
$$\gamma \realizes \forall_\text{co-$m+1$-ton} A \exists_\text{co-$m$-ton} B (B \ri A).$$

Then $\Im(\gamma) \cap \alpha$ must contains at least $m+1$ values; for if not, say $\Im(\gamma) \cap \alpha = \{v_1,\ldots,v_k\}$ with $k \leq m$, choose any $m+1$-ton $A' \subseteq \alpha$ with $\{v_1,\ldots,v_k\} \subseteq A'$, then $\gamma \realizes B \ri \alpha\bs{A'}$ for some co-$m$-ton $B$, so $\gamma: B \ri \alpha\bs{A'} \subseteq \alpha\bs\{v_1,\ldots,v_k\}$ has in $\Im(\gamma) \cap \alpha$ more than $\{v_1,\ldots,v_k\}$, a constradiction.

Choose $m+1$ distinct elements $v_1,\ldots,v_{m+1}$ in $\Im(\gamma) \cap \alpha$.
Then there is an $m$-ton $B' \subseteq \alpha$ such that $\gamma: \alpha\bs{B'} \to \alpha\bs\{v_1,\ldots,v_{m+1}\}$.
But then $\gamma(B')$ must contain $v_1,\ldots,v_{m+1}$, which is impossible as $|\gamma(B')| \leq |B'| = m < m+1$.
Done.
\end{proof}

Naturally, we wonder whether we can refine this and have $\O_m^\alpha <_\lo \O_{m+1}^\alpha$.
First we should notice that for $\alpha = \omega$ the examples $\O_m^\alpha$ collapse w.r.t. $\leq_\lo$, as follows.

\begin{calculation}
Let $1 \leq m < \omega$.
We have $\O_1^\omega \geq_\lo \O_m^\omega$.
(Hence $\O_1^\omega \cong_\lo \O_m^\omega$.)
\end{calculation}

\begin{proof}
To establish this, we construct a partial computable function $\zeta: \N^* \pto \N$ and for each $A \in \O_m^\omega$ a $(\zeta,\O_1^\omega,A)$-supporting sight.
Our construction will be a kind of diagonal arugment, as the reader will notice.

For $1 \leq i \leq m$, we shall denote by $\pi^m_i$ the `projection' function $\N \to \N: \la x_1,\ldots,x_m \ra \mapsto x_i$.
Given $a_1,\ldots,a_m \in \N$, define a set
$$S_{a_1,\ldots,a_m} = \{(c_1,\ldots,c_p) \in \N^* \mid p \leq m \en \forall 1 \leq i \leq p (c_i \neq \pi_i^m(a_i)\},$$
which is evidently a well-founded tree.
On the other hand, there is clearly a partial computable function $\zeta: \N^* \pto \N$ satisfying for each $(c_1,\ldots,c_p) \in \N^*$ the condition
$$\zeta(c_1,\ldots,c_p) =
\begin{cases}
\la 1,0 \ra                      & \textif p < m \\
\la 0,\la c_1,\ldots,c_m \ra \ra & \textif p = m.
\end{cases}$$

We now show that for $a_1,\ldots,a_m \in \N$, the sight $S_{a_1,\ldots,a_m}$ is $(\zeta,\O_1^\omega,\N\bs\{a_1,\ldots,a_m\})$-supporting.
Let $(c_1,\ldots,c_p) \in S_{a_1,\ldots,a_m}$.
If $(c_1,\ldots,c_p)$ is a non-leaf, then $p < m$, so $\zeta(c_1,\ldots,c_p) = \la 1,0 \ra$, as desired.
If $(c_1,\ldots,c_p)$ is a leaf, then $p = m$, so $\zeta(c_1,\ldots,c_p) = \la 0,\la c_1,\ldots,c_m \ra \ra$.
We need that $\la c_1,\ldots,c_m \ra \notin \{a_1,\ldots,a_m\}$.
Suppose, for contradiction, that for some $1 \leq i \leq m$ we have $a_i = \la c_1,\ldots,c_m \ra$.
As $(c_1,\ldots,c_m) \in S_{a_1,\ldots,a_m}$, we have
$$c_i \neq \pi^m_i(a_i) = \pi^m_i(\la c_1,\ldots,c_m \ra) = c_i,$$
an absurdity.
This proves that $S_{a_1,\ldots,a_m}$ is $(\zeta,\O_1^\omega,\N\bs\{a_1,\ldots,a_m\})$-supporting.

Let us spell out how this applies to our situation.
Let $A \in \O_m^\omega$.
Then we may write $A = \N \bs \{a_1,\ldots,a_m\}$.
Therefore we have a $(\zeta,\O_1^\omega,A)$-supporting sight, namely $S_{a_1,\ldots,a_m}$.
We are done.
\end{proof}

The situation becomes interesting for $\alpha < \omega$.
The following proposition, which belongs to the realm of the intersection property machinary we have built above, allows us to establish $\O_m^\alpha \not\geq_\lo \O_{m+1}^\alpha$ for \emph{almost} all $m$.

\begin{proposition}\label{IscnPropNleq}
Let $\A,\B \in \PPN$, and let $n \geq 1$. Suppose that
\begin{itemize}
 \item $\A$ does not have the $n$-intersection property, while
 \item $\B$ has the $n$-intersection property.
\end{itemize}
Then $\mathcal{A} \not\leq_\lo \mathcal{B}$.
\end{proposition}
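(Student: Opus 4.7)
The plan is to argue by contradiction, using the sight-based representation of $\lo_\ph$ together with the intersection-property lemmas just proved. Suppose that $\A \leq_\lo \B$. Using the representation of $\leq_\lo$ via supporting (equivalently, dedicated) sights, this means we can extract a number $z \in \N$ such that for every $A \in \A$ there is a $(z,\B,A)$-dedicated sight $S_A$. In particular each $S_A$ is on $\B$, and $z$ is r-defined on each $S_A$ with $z[S_A] \subseteq A$.

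Since $\A$ lacks the $n$-intersection property, choose $A_1,\ldots,A_n \in \A$ with $A_1 \cap \cdots \cap A_n = \emptyset$. Now apply Corollary \ref{n-intersection} to $\B$ (which has the $n$-intersection property) and to the sights $S_{A_1},\ldots,S_{A_n}$ on $\B$: this yields a finite sequence $d$ that is a node of each $S_{A_i}$ and a leaf of some $S_{A_{i_0}}$. Since $z$ is r-defined on each $S_{A_i}$ (as each is $(z,\B,A_i)$-dedicated), Lemma \ref{all-complete} upgrades this to say that $d$ is in fact a leaf of \emph{every} $S_{A_i}$.

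Consequently the r-value $z[d]$ exists and, for each $1 \leq i \leq n$, lies in $z[S_{A_i}] \subseteq A_i$. Therefore $z[d] \in A_1 \cap \cdots \cap A_n = \emptyset$, which is the desired contradiction. The entire argument is a clean application of the two intersection-lemmas; no step looks like it should present any real obstacle, the only mildly delicate point being the verification that we are legitimately in a position to apply Lemma \ref{all-complete} (i.e.\ that being $(z,\B)$-dedicated really does supply r-definedness on each $S_{A_i}$), which is immediate from the definition of `dedicated'.
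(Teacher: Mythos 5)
Your proof is correct and follows essentially the same route as the paper's: contradiction, pick $A_1,\dots,A_n\in\A$ with empty intersection, extract $z$ and $(z,\B,A_i)$-dedicated sights $S_{A_i}$, apply the $n$-intersection corollary to get a common node that is a leaf of one of them, then use Lemma~\ref{all-complete} to promote it to a leaf of all, and land in $\bigcap A_i=\emptyset$. If anything you are slightly more careful than the paper, which compresses the Corollary~\ref{n-intersection} plus Lemma~\ref{all-complete} step into the single phrase ``there is a sequence $d$ that is a leaf on each $S_i$.''
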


\begin{proof}
Suppose, for contradiction, that some $z \in \forall_{A \in \A} \loF_\B(A)$.
Since $\A$ does not have the $n$-intersection property, we can choose $A_1,\ldots,A_n \in \A$ with $A_1 \cap \ldots \cap A_n = \emptyset$.
Choose, for each $1 \leq i \leq n$, a $(z,\B,A_i)$-dedicated sight $S_i$.
Since $\B$ has $n$-intersection property, there is a sequence $d$ that is a leaf on each $S_i$.
So $z[d] \in \bigcap_{i} A_i = \emptyset$, an absurdity.
\end{proof}

\begin{proposition}
Let $1 < 2m < \alpha < \omega$.
The ceiled number $\lceil \alpha / m \rceil$ is the least number $d$ for which there are $d$ sets in $\O_m^\alpha$ with empty intersection.
\end{proposition}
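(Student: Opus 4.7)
The plan is to translate the statement into an elementary combinatorial question about covering $\alpha$ by $m$-element sets, and then establish matching lower and upper bounds.

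First, I would observe that a set $A \in \O_m^\alpha$ has the form $A = \alpha \setminus F$ with $F \subseteq \alpha$ of cardinality $m$. Hence, given $A_1, \ldots, A_d \in \O_m^\alpha$ written as $A_i = \alpha \setminus F_i$, we have
\[
\bigcap_{i=1}^d A_i = \alpha \setminus \bigcup_{i=1}^d F_i,
\]
which is empty if and only if $\bigcup_i F_i = \alpha$. So the proposition amounts to showing that $\lceil \alpha/m \rceil$ is the least number of $m$-element subsets of $\alpha$ whose union is all of $\alpha$.

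For the lower bound, I would argue that if $F_1, \ldots, F_d$ each have size $m$ and cover $\alpha$, then
\[
\alpha = \Bigl|\bigcup_{i=1}^d F_i\Bigr| \leq \sum_{i=1}^d |F_i| = dm,
\]
so $d \geq \alpha/m$, and since $d$ is an integer we get $d \geq \lceil \alpha/m \rceil$.

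For the upper bound, setting $d = \lceil \alpha/m \rceil$, I would exhibit an explicit covering: for $1 \leq i < d$, take $F_i = \{(i-1)m, (i-1)m + 1, \ldots, im - 1\}$, and let $F_d = \{\alpha - m, \alpha - m + 1, \ldots, \alpha - 1\}$. Each $F_i$ has size $m$, and since $dm \geq \alpha$ we have $\alpha - m \leq (d-1)m$, so $F_d$ fills in the gap from $(d-1)m$ up to $\alpha - 1$; together the $F_i$ cover $\alpha$. (The hypotheses $1 < 2m < \alpha$ guarantee $d \geq 3$, so this construction makes sense.) This yields the matching upper bound, finishing the proof. There is no real obstacle here; the content is purely combinatorial counting.
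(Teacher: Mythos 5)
Your proof is correct and is essentially the elementary counting argument the paper has in mind (the paper's own proof is just the remark ``Elementary reasoning''). You correctly reduce via De Morgan to covering $\alpha = \{0,\ldots,\alpha-1\}$ by $m$-element sets, and both bounds are sound: the cardinality estimate gives $d \ge \lceil \alpha/m \rceil$, and the explicit blocks $F_1,\ldots,F_{d-1}$ together with the shifted final block $F_d = \{\alpha-m,\ldots,\alpha-1\}$ (which meets $F_{d-1}$ because $(d-1)m \ge \alpha - m$) give the matching construction.
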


\begin{proof}
Elementary reasoning.
\end{proof}

\begin{calculation}
Let $1 < 2m < \alpha < \omega$.
Suppose $\lceil \alpha / (m+1) \rceil < \lceil \alpha / m \rceil$.
Then $\O_\alpha^{m+1} \not\leq_\lo \O_\alpha^{m}$.
\end{calculation}

\begin{proof}
Let $d = \lceil \alpha / (m+1) \rceil$, i.e., the least number $d$ for which there are $d$ sets in $\O_{m+1}^\alpha$ with empty intersection.
Let $A_1,\ldots,A_d \in \O_{m+1}^\alpha$ with $\bigcap_{1 \leq i \leq d} A_i = \emptyset$.\footnote{For example, one can take $A_i = \N \bs \{n \in \N \mid (i-1)(m+1) \leq n < i*(m+1)\}$.}
But $\O_m^\alpha$ has $n$-intersection property.
So the Proposition \ref{IscnPropNleq} applies, and we have $\O_\alpha^{m+1} \not\leq_\lo \O_\alpha^{m}$.
\end{proof}

\begin{openquiz}
Refine this: do we have $\O_{m+1}^\alpha \not\leq_\lo \O_m^\alpha$ for \emph{all} $1 < 2m < \alpha < \omega$?
\end{openquiz}

Next, instead of varying the subscript $m$ in the exmaples $\O_m^\alpha$, we can try to vary the superscript $\alpha$.
First we notice that increasing $\alpha$ means going lower in the order ($\leq_\mo$ and) $\leq_\lo$, as follows.

\begin{calculation}
Let $1 < 2m < \beta \leq \alpha \leq \omega$.
We have $\O_m^\alpha \leq_\mo \O_m^\beta$.
\end{calculation}

\begin{proof}
It is easy to see that $\id \realizes \forall A \in \O_m^\alpha \exists B \in \O_m^\beta (B \ri A)$.
\end{proof}

\begin{calculation}
Let $1 < 2m < \alpha < \omega$. We have $\O^\omega \not\geq_\lo \O_m^\alpha$. (Hence $\O^\omega <_\lo \O_m^\alpha$.)
\end{calculation}

\begin{proof}
Let $n = |\O_m^\alpha|$, which is finite.
Clearly $\O^\omega$ has $n$-intersection property, and $\bigcap \O^\omega = \emptyset$.
So by Proposition \ref{IscnPropNleq} we have `$\not\geq_\lo$', as desired.
\end{proof}

\begin{calculation}
Let $3 \leq \alpha < \omega$, and let $2m < \alpha$.
We have $\O_m^{\alpha+m} \not\geq_\lo \O_m^\alpha$.
(Hence $\O_m^{\alpha+m} <_\lo \O_m^\alpha$.)
\end{calculation}

\begin{proof}
Put $d = \lceil \alpha/m \rceil$.
Then $\O_m^\alpha$ contains $n$ sets whose intersection is empty, while $\O^m_{\alpha+m}$ has the $d$-intersection property.
Therefore by Proposition \ref{IscnPropNleq} we have `$\not\geq_\lo$'.
\end{proof}

\begin{openquiz}
Refine this: do we have $\O_m^{\alpha+1} \not\geq_\lo \O_m^\alpha$?
\end{openquiz}

\begin{openquiz}
Even if the last two questions are answered ``as desired'', so that all the `horizontal' and `vertical' $\leq_\lo$-inequalities in the triangular matrix $(\O_m^\alpha \mid 1 < 2m < \alpha < \omega$) are strict, it remains unknown how two non-horizontal\&non-vertical entries compare to each other.
For instance, for $1 \leq m < n$, how do $\O_m^{2m+1}$ and $\O_n^{2n+1}$ compare?
\end{openquiz}

\begin{openquiz}
Take two entries from the matrix $(\O_m^\alpha)$, and consider their join and their meet. What do we get - again some $\O_m^\alpha$? (Bear in mind that given $\A,\B \in \PPN$ their $\leq_\lo$-join is $\A \owedge \B$, which is finite if $\A,\B$ were finite.)
\end{openquiz}

\subsection{The example $\O^\omega$ is an atom among the basics}
\begin{proposition}\label{prop:atom}
$\O^\omega$ is the least basic topology $> \id$.
\end{proposition}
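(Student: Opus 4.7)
The plan is to decompose the statement into two assertions: (i) $\lo_{\O^\omega} \neq \id$, and (ii) every basic topology $\lo_\A$ strictly above $\id$ satisfies $\lo_{\O^\omega} \leq \lo_\A$. Assertion (i) is immediate from the earlier Calculation that $\lo_\A = \id$ iff $\bigcap \A \neq \emptyset$: since $\bigcap \O^\omega = \emptyset$, we get $\lo_{\O^\omega} \neq \id$, hence $\lo_{\O^\omega} > \id$ as $\id$ is the smallest local operator.

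For (ii), fix $\A \in \PPN$ with $\bigcap \A = \emptyset$. Unfolding the description of $\leq_\lo$ in terms of $\loF$, and using that $\Delta_{\O^\omega}(n) = \O^\omega$ for every $n$, it suffices to produce a single $z \in \N$ lying in $\loF_{\Delta_\A}(\N \setminus \{k\})$ for every $k \in \N$; by Proposition \ref{prop:loF} this reduces to exhibiting, for each $k$, a $(z, \Delta_\A, \N \setminus \{k\})$-dedicated sight.

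The construction is to take the uniform $z := \la 1, \la 0, e \ra \ra$, where $e$ is a code of the total recursive function $a \mapsto \la 0, a \ra$, and, invoking $\bigcap \A = \emptyset$, to pick for each $k$ some $A_k \in \A$ with $k \notin A_k$. Then the depth-one sight $S_k := (A_k, \lambda a . \nil)$ is $(z, \Delta_\A, \N \setminus \{k\})$-dedicated: the root condition $A_k \in \Delta_\A(0) = \A$ is met; and for each $a \in A_k$ we have $e(a) = \la 0, a \ra$ with $a \in \N \setminus \{k\}$ because $k \notin A_k$, so the subsight $\nil$ is $(e(a), \Delta_\A, \N \setminus \{k\})$-dedicated.

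The main obstacle is conceptual rather than technical: one must recognise that the sight $S_k$ is allowed to depend on $k$ while $z$ itself cannot, and that a uniform $z$ nevertheless suffices because the ``echo the leaf label'' operation $a \mapsto \la 0, a \ra$ is recursive and automatically outputs a value in $\N \setminus \{k\}$ whenever the chosen set $A_k \in \A$ misses $k$.
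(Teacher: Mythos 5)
Correct, and essentially the paper's argument: both halves rest on the Calculation that $\lo_\A = \id$ iff $\bigcap\A \neq \emptyset$, and on picking, for each $k$, some $A_k \in \A$ with $k \notin A_k$ so that the inclusion $A_k \subseteq \N\bs\{k\}$ is witnessed by the identity. The only (cosmetic) difference is that the paper packages step (ii) as $\O^\omega \leq_\mo \A$ realized by $\id$ and invokes $\leq_\mo\ \Rightarrow\ \leq_\lo$, whereas you unwind that reduction into the explicit depth-one dedicated sights $(A_k,\lambda a.\nil)$ with the echo code $a \mapsto \la 0,a\ra$.
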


\begin{proof}
We have $\O_1^\omega >_\lo \bot$, because $\bigcap \O_1^\omega = \emptyset$.

Let $\A \in \PPN$ be such that $\A >_\lo \bot$.
To have $\O^\omega \leq_\lo \A$, it suffices that $\O^\omega \leq_\mo \A$, i.e. that
$$\realizes \forall_{n \in \N} \exists_{A \in \mathcal{A}} (A \ri \N\bs\{n\}).$$
But $\id$ realizes this; given $n \in \N$, since $\bigcap \A = \emptyset$, we can choose $A \in \A$ with $n \notin A$.
Done.
\end{proof}

\begin{remark}
However, $\O_1^\omega$ is not an atom in $(\PPN^\N,\leq_\lo)$:
if it were so, then for any $D \subseteq \N$ non-recursive we have $\O^\omega \leq_\lo \rho_S$, which is impossible.
\end{remark}

\begin{openquiz}
Is there the least topology $> \id$?
\end{openquiz}

\subsection{The topologies $\O_m^\alpha$ do not add effective sets.}
\begin{proposition}\label{OnlyRecEffe}
Let $1 < 2m < \alpha \leq \omega$ (as always).
Let $D \subseteq \N$.
If $D$ is effective in $\O_m^\alpha$, then $D$ is recursive.
\end{proposition}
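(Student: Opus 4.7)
The plan is to convert the hypothesis into a computability-theoretic statement, then extract a decision procedure for $D$. By the proposition characterising effectivity of a set in a subtopos, ``$D$ effective in $\O_m^\alpha$'' is equivalent to $\rho_D \leq_\lo \Delta_{\O_m^\alpha}$; unpacking this via Propositions \ref{PittsLemma} and \ref{prop:loF}, we obtain a total recursive function $e\colon\N\to\N$ such that for every $n$ there exists an $(e(n),\Delta_{\O_m^\alpha},\{\chi_D(n)\})$-dedicated sight. I would split on whether $\alpha<\omega$ or $\alpha=\omega$, prove the finite case directly, and reduce the infinite case to a finite instance via the $\leq_\lo$-collapse established earlier for $\O_m^\omega$.

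For $\alpha<\omega$: note that $\O_m^\alpha$ is a finite collection of finite sets of size $\alpha-m$, so any $(z,\Delta_{\O_m^\alpha},p)$-dedicated sight is a well-founded tree of branching bounded by $\alpha-m$, hence finite by König's lemma. Set $V_b:=\loF_{\Delta_{\O_m^\alpha}}(\{b\})$ for $b\in\{0,1\}$. I claim $V_b$ is r.e.: enumerate all finite labelled candidate trees (each internal node carrying some $A\in\O_m^\alpha$) and, for each, semi-decide whether it witnesses a $(z,\Delta_{\O_m^\alpha},\{b\})$-dedicated sight by computing $z[s]$ at each node and waiting for the $\Sigma_1$ convergences of the subfunctions appearing in the decoding. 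Next, $V_0\cap V_1=\emptyset$: since $m<\alpha$ entails $\emptyset\notin\O_m^\alpha$, Proposition \ref{prop:degenerate}(a) forces every dedicated sight to be non-degenerate and therefore to have a leaf $d$, and then $z[d]\in\{b\}$ is determined by $z$, which rules out simultaneous membership in $V_0$ and $V_1$. Since $e(n)\in V_{\chi_D(n)}$ for every $n$, the sets $e^{-1}(V_0)$ and $e^{-1}(V_1)$ form a pair of complementary r.e.\ sets, hence each is recursive; in particular $D=e^{-1}(V_1)$ is recursive.

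For $\alpha=\omega$: the dedicated sights now have infinite branching, so King\"onig fails and the above r.e.\ enumeration breaks down. I would avoid this by reducing to the finite case already proved. The calculation in Subsection \ref{subs:between} gives $\O_m^\omega\cong_\lo\O_1^\omega$, and a direct $\leq_\mo$-comparison yields $\O_1^\omega\leq_\mo\O_1^3$: the identity realises it, for given $A=\N\setminus\{k\}\in\O_1^\omega$ one picks $B=\{0,1,2\}\setminus\{k\}\in\O_1^3$ when $k<3$ and any $B\in\O_1^3$ when $k\geq 3$, ensuring $B\subseteq A$. Hence $\O_m^\omega\leq_\lo\O_1^3$, and composing with the hypothesis $\rho_D\leq_\lo\O_m^\omega$ gives $\rho_D\leq_\lo\O_1^3$, i.e.\ $D$ is effective in $\O_1^3$. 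Since $(m',\alpha')=(1,3)$ satisfies $2m'<\alpha'$, the finite case applies and $D$ is recursive. The main obstacle in the whole argument is exactly the infinite branching in the $\alpha=\omega$ case; the key trick is that the $\lo$-collapse of $\O_m^\omega$ to a finite instance lets one sidestep it entirely rather than reason about infinite sights directly.
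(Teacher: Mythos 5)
Your argument has a real gap at the claim that $V_0 \cap V_1 = \emptyset$. The phrase ``$z[d]\in\{b\}$ is determined by $z$'' conflates the leaves of two \emph{different} dedicated sights: for a fixed $z$ there are in general several $(z,\Delta_{\O_m^\alpha},\cdot)$-dedicated sights, with distinct leaf-sets, and the r-value at a leaf of a $\{0\}$-witnessing sight need not coincide with the r-value at a leaf of a $\{1\}$-witnessing sight. To force a contradiction you must exhibit a leaf the two sights \emph{share}, and this is precisely where the paper's intersection machinery enters: $\O_m^\alpha$ has the $2$-intersection property exactly because $2m<\alpha$, so Corollary \ref{n-intersection} (with $n=2$) gives a node common to both sights that is a leaf of one of them, and Lemma \ref{all-complete} upgrades it to a leaf of both (since $z$ is r-defined on both sights), whence $z[d]\in\{0\}\cap\{1\}=\emptyset$. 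Once this step is repaired, the remainder of your finite-$\alpha$ case (enumerating finite sights and semi-deciding the $\Sigma_1$ convergences) is sound, and your reduction of $\alpha=\omega$ to $\alpha=3$ via $\O_m^\omega\cong_\lo\O_1^\omega\leq_\mo\O_1^3$ is a correct and pleasant observation. (Minor: the paper's convention has $\chi_D(n)=0$ when $n\in D$, so $D=e^{-1}(V_0)$ rather than $e^{-1}(V_1)$.)

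For comparison, the paper's own proof is quite different and avoids the case-split on $\alpha$ entirely. It constructs, via the recursion theorem, a single partial computable $f:\N\pto\N$ that on any $z\in\loF_{\O_m^\alpha}(\{\chi_D(n)\})$ outputs $\chi_D(n)$; the inductive step at $z=\la 1,\la n,e\ra\ra$ is a ``majority vote'': search for $m+1$ distinct arguments in $\alpha$ on which $e$ converges and $f\circ e$ agrees, and since the witnessing co-$m$-ton covers all but $m$ points of $\alpha$, at least one of these $m+1$ lies in it, so the inductive hypothesis pins the common value to $\chi_D(n)$. This is uniform in $\alpha$ (finite and $\omega$ are treated identically), whereas your route trades that uniformity for a reduction to a general recursion-theoretic fact about complementary r.e.\ sets and has to sidestep the non-finitary sights at $\alpha=\omega$ by a separate collapse.
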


\begin{proof}
Let us first do some prepatory work.
Choose a partial computable function $f: \N \pto \N$ subject to the following action.
\begin{quote}
Let $\la i,x \ra \in \N$.

If $i = 0$, then $f(\la i,x \ra) = x$.

If $i = 1$, then, writing $x = \la \_,e \ra$, find (with some algorithm) distinct $a_1,\ldots,a_{m+1} \in \alpha$ with $e(a_1),\ldots,e(a_{m+1})\cvg$ and $f(e(a_1)) = \ldots = f(e(a_{m+1}))$,
and (if found) put $f(\la i,x \ra) = f(e(a_1))$.
\end{quote}
We show the statement
\begin{equation}\label{ore1}
\text{\begin{tabular}{c}
For every sight $S$, for every $\la i,x \ra \in \N$, \\
if $S$ is $(\la i,x \ra,\O_m^\alpha,\{\chi_D(n)\})$-dedicated, \\
then $f(\la i,x \ra)$ is defined and is equal to $\chi_D(n)$.
\end{tabular}}
\end{equation}
by induction on $S$.

The case $S = \nil$.
Then, since $S$ is $(\la i,x \ra,\O_m^\alpha,\{\chi_D(n)\})$-dedicated, we have $i = 0$ and $x \in \{\chi_D(n)\}$.
So, by definition of $f$, we have $f(\la i,x \ra) = \chi_D(n)$ as desired.

The case $S = (A,\sigma)$.
We have $i = 1$, and $x = \la \_,e \ra$ for some $e \in \N$.
As $2m+1 \leq \alpha$, we have $m+1 \leq \alpha-m$.
So, since $A \in \O_m^\alpha$, we have $m+1 \leq |A|$.
For any $a \in A$, we have $e(a)\cvg$ and, by the IH, $f(e(a)) = \chi_D(n)$.
Let $a_1,\ldots,a_{m+1} \in \alpha$ as in the definition of $f$, so that $e(a_1),\ldots,e(a_{m+1})\cvg$ and $f(e(a_1)) = \ldots = f(e(a_{m+1}))$.
Since $A$ contains all but $m$ elements in $\alpha$, there must be some $1 \leq i_0 \leq m+1$ such that $a_{i_0} \in A$.
So, by definition of $f$, we have $f(\la i,x \ra) = f(e(a_1)) = f(e(a_{i_0})) = \chi_D(n)$, as desired.
Induction complete.

Let us now show the Proposition.
Assume $\rho_D \leq_\lo \O_m^\alpha$.
Take $\gamma \realizes \forall n \in \N . \{n\} \ri \loF_{\O_m^\alpha}(\{\chi_S(n)\})$.
From \eqref{ore1} it follows that
\begin{center}
for all $z \in \loF_{\O_m^\alpha}(\{\chi_D(n)\})$ we have $f(z) = \chi_D(n)$.
\end{center}
Therefore the computable function $f \circ \varphi_\gamma$ is the characteristic function of $D$.
So $D$ is decidable.
We are done.
\end{proof}

\section{The example $\F^*$}
\subsection{More on sights: sectors}
\begin{definition}
A sight $S$ is a \defn{sector} of a sight $T$ when, by induction on $S$,
\begin{itemize}
 \item if $S = \nil$, then $T = \nil$,
 \item if $S = (A,\sigma)$, then $T = (B,\tau)$ with $A \subseteq B$ and for each $a \in A$ the sight $\sigma(a)$ is a sector of $\tau(a)$.
\end{itemize}
A sight $S$ is \defn{finitary} when, inductively,
\begin{itemize}
 \item if $S = \nil$, then always,
 \item if $S = (A,\sigma)$, then $A$ is finite and for each $a \in A$ the sight $\sigma(a)$ is finitary.
\end{itemize}
For $n \in \N$, a sight $s$ is \defn{full $n$-ary} when, inductively,
\begin{itemize}
 \item if $S = \nil$, then always,
 \item if $S = (A,\sigma)$, then $|A| = n$ and for all $a \in A$ the sight $\sigma(a)$ is full $n$-ary.
\end{itemize}
\end{definition}

\begin{lemma}\label{HasFullBinary}
Let $n \in \N$, and let $\A \in \PPN$ such that each $A \in \A$ has at least $n$ elements.
Then a sight $S$ on $\A$ has a full $n$-ary sector.
\end{lemma}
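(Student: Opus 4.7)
The plan is to prove this by structural induction on the sight $S$, recursively pruning the tree to keep exactly $n$ children at every non-leaf node.

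First I would handle the base case: if $S = \nil$, then $\nil$ is itself a sector of $S$, and it is (vacuously) full $n$-ary since it has no non-leaf subsights. Nothing to do here.

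For the inductive step, suppose $S = (A,\sigma)$ is a sight on $\A$. By definition of ``on $\A$'', we have $A \in \A$, and hence $|A| \geq n$ by the hypothesis on $\A$. Moreover, for each $a \in A$ the subsight $\sigma(a)$ inherits the property of being on $\A$ (since every subsight of $\sigma(a)$ is also a subsight of $S$). Now I would choose $n$ distinct elements $a_1,\ldots,a_n \in A$ and apply the induction hypothesis to each $\sigma(a_i)$ to obtain a full $n$-ary sector $S_i$ of $\sigma(a_i)$. Setting $A' = \{a_1,\ldots,a_n\}$ and $\sigma'(a_i) = S_i$, the sight $(A',\sigma')$ is full $n$-ary at the root (since $|A'|=n$ and each $\sigma'(a_i)$ is full $n$-ary by construction) and is a sector of $S$ (since $A' \subseteq A$ and each $\sigma'(a_i) = S_i$ is a sector of $\sigma(a_i)$).

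There is no real obstacle here: the argument is essentially a matter of unfolding the definitions of ``sector'' and ``on $\A$'' and observing that the width condition $|A|\geq n$ is exactly what allows the recursive choice of $n$ children. The only thing to be pedantic about is the justification that structural induction on sights is legitimate, which is standard since sights are well-founded by construction.
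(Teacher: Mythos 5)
Your proof is correct and follows the same structural induction as the paper: base case $\nil$, and in the inductive case select an $n$-element subset of $A$ (using $A\in\A$ and $|A|\geq n$), apply the induction hypothesis to each chosen child, and assemble the resulting full $n$-ary sectors. The extra care you take in spelling out that being on $\A$ is inherited by subsights is sound but not needed as a separate remark in the paper's terser version.
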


\begin{proof}
By induction on $S$.

The case $S = \nil$.
Then $S$ is a full $n$-ary sector of itself.

The case $S = (A,\sigma)$.
By the premise, one can choose a $n$-elements subset $B$ of $A$.
For each $b \in B$, by the IH, one can choose a full $n$-ary sector $\tau(a)$ of $\sigma(a)$.
Then $(B,\tau)$ is a full $n$-ary sector of $(A,\sigma) = S$.
Done.
\end{proof}

\begin{proposition}[K\"onig's Lemma\footnote{But in our case the proof is constructive.} for sights]
A finitary sight $S$ has only finitely many nodes.
\end{proposition}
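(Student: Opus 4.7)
The plan is to proceed by structural induction on the finitary sight $S$, exploiting the fact that (unlike the classical König's Lemma for infinite trees) the predicate ``$S$ is a finitary sight'' is defined \emph{inductively}, so one may directly induct on this inductive definition. This is what the footnote alludes to when saying ``in our case the proof is constructive'': finitariness, as defined, is well-founded by construction, so no appeal to classical logic or dependent choice is needed.

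First I would state the induction hypothesis: for every finitary sight $S$, the set $\Nds(S)$ is finite. For the base case $S = \nil$, the only sequence that qualifies as a node is the empty sequence $()$, so $\Nds(\nil) = \{()\}$, which is finite. For the inductive step $S = (A,\sigma)$ with $A$ finite and each $\sigma(a)$ (for $a \in A$) finitary, I would observe, directly from the definition of node, that
$$\Nds(S) \;=\; \{()\} \;\cup\; \bigcup_{a \in A} \{\, a \cons s \mid s \in \Nds(\sigma(a)) \,\}.$$
By the induction hypothesis, each $\Nds(\sigma(a))$ is finite. Since $A$ itself is finite, the displayed right-hand side is a finite union of finite sets, hence finite. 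This closes the induction and yields the statement.

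There is no real obstacle here: the only point worth being careful about is \emph{not} to confuse this with the classical König's Lemma for infinite trees (where finiteness of each level does not suffice without well-foundedness). In our setting well-foundedness is built into the inductive definition of sight itself, so structural induction handles everything in one stroke. Consequently the proof is very short and can simply be presented as the induction above.
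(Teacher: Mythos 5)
Your proof is correct and uses the same structural induction on the sight as the paper's; the only cosmetic difference is that the paper counts the nodes as $1 + \sum_{a \in A} \nu(a)$ while you express $\Nds(S)$ as a finite union of finite sets, which is the same observation phrased set-theoretically.
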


\begin{proof}
By induction on $S$.

The case $S = \nil$.
Then the empty sequence $()$ is the only node of $S$.
So good.

The case $S = (A,\sigma)$.
For each $a \in A$, let $\nu(a)$ be the number of nodes of $\sigma(a)$, which is finite by the IH.
Then $\nu(a)$ is the number of nodes of $(A,\sigma)$ with first component equal to $a$.
Since the nodes of $(A,\sigma)$ are the empty sequence $()$ and for each $a \in A$ the sequences $a \cons d$ with $d$ a node of $\sigma(a)$,
we have that the number of nodes of $(A,\sigma)$ is $1 + \sum_{a \in A} \nu(a)$, which is finite as $A$ is finite.
Done.
\end{proof}

\begin{corollary}\label{FinitaryFinite}
Let $S$ be a finitary sight, and let $z \in \N$ be r-defined on $S$.
Then the r-image $z[S]$ is finite.
\end{corollary}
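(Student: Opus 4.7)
The plan is to deduce this immediately from the preceding K\"onig's Lemma for sights. By definition, the r-image $z[S]$ is the set $\{z[s] \mid s \text{ is a leaf of } S\}$, so it is the image of the leaf set $\Lvs(S)$ under the function $s \mapsto z[s]$ (which is well-defined on every leaf precisely because $z$ is r-defined on $S$). Thus it suffices to show that $\Lvs(S)$ is finite.

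First I would observe that leaves are in particular nodes, so $\Lvs(S) \subseteq \Nds(S)$. Then I would invoke the previous Proposition (K\"onig's Lemma for sights), which says that a finitary sight has only finitely many nodes, to conclude that $\Nds(S)$ is finite, whence $\Lvs(S)$ is finite. Taking the image of a finite set under any function yields a finite set, so $z[S]$ is finite, as desired.

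There is no real obstacle: the corollary is essentially a one-line consequence of the K\"onig's Lemma, combined with the elementary fact that functions preserve finiteness of sets. The author's ``\emph{Proof. Immediate from the previous Proposition.}'' would suffice, but for clarity I would spell out the two-step reasoning above.
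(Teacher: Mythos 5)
Your proof is correct and follows exactly the same route as the paper: invoke K\"onig's Lemma for sights to conclude $S$ has finitely many nodes (hence finitely many leaves), then observe $z[S]$ is the image of the finite leaf set and is therefore finite. The paper's version is terser but the reasoning is identical.
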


\begin{proof}
Clearly, $S$ has only finitely many leaves. So by the definition of r-image, $z[S]$ is finite.
\end{proof}

\subsection{Comparison to the $\O_m^\alpha$}
We prove that the example $\F^*$ is incomparable (w.r.t. $\leq_\lo$) to $\O_m^\alpha$ ($\alpha < \omega$).
A special case of this result, namely the incomparability of the collections $\ups \N$ and $\{\{0,1\},\{1,2\},\{2,0\}\}$, was provided as an exercise to the author by Jaap van Oosten.\footnote{This problem was the trigger for the author to formulate sights and define the collections $\O_m^\alpha$.}

\begin{calculation}\label{fNotLeqO}
Let $3 \leq \alpha \leq \omega$, and let $1 < 2m < \alpha$.
We have $\F^* \not\leq_\lo \O_m^\alpha$.
\end{calculation}

\begin{proof}
Suppose, for contradiction, that some $z \in \forall_{A \in \F^*} \loF_{\O_m^\alpha}(A)$.
Choose a $(z,\O_m^\alpha,\N)$-dedicated sight $S$.
Each $B \in \O_m^\alpha$ has at least $m+1$ elements\footnote{by the assumption $2m < \alpha$}, so $S$ has\footnote{See Lemma \ref{HasFullBinary}.} a full $(m+1)$-ary sector $S'$, which is clearly $(z,\{\text{the $m+1$-tons} \subseteq \alpha\},\N)$-dedicated.
Note that the image $z(S')$ is\footnote{See Corollary \ref{FinitaryFinite}.} finite.
Choose a $(z,\O_m^\alpha,\N \bs z(S'))$-dedicated sight $T$.
Since
\begin{itemize}
\item the sight $S'$ is on $\{\text{the $m+1$-tons} \subseteq \alpha\}$
\item the sight $T$ is on $\O_m^\alpha$,
\item $\{\text{the $m+1$-tons} \subseteq \alpha\}$ and $\O_m^\alpha$ have the joint intersection property,
\end{itemize}
there is\footnote{by Lemma \ref{joint-intersection}} a sequence $d$ that is a node of both $S'$ and $T$ and a leaf of one of them.
But since $z$ is r-defined on both $S'$ and $T$, the sequence $d$ must be\footnote{by Lemma \ref{all-complete}} a leaf of both $S'$ and $T$.
Therefore $z[d] \in z[S'] \cap z[T] \subseteq z[S'] \cap (\N \bs z[S']) = \emptyset$, an absurdity.
\end{proof}

\begin{calculation}
Let $1 < 2m < \alpha < \omega$.
Then $\O_m^\alpha \not\leq_\lo \F^*$.
\end{calculation}

\begin{proof}
Clearly $\F^*$ clearly has $|\O_m^\alpha|$-intersection property.
Moreover $\bigcap \O_m^\alpha = \emptyset$.
So Proposition \ref{IscnPropNleq} applies, and we have `$\not\leq_\lo$'.
\end{proof}

\begin{proposition}\label{IscOwedge}
Let $\A_1,\ldots,\A_k \in \PPN$ and $n_1,\ldots,n_k \in \N$ such that $\A_i$ has $n_i$-intersection property for each $1 \leq i \leq k$.
Then $\A_1 \owedge \ldots \owedge \A_k$ has $\min(n_1,\ldots,n_k)$-intersection property.
\end{proposition}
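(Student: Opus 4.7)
My proposal is that this proposition is essentially a bookkeeping exercise using one component of the disjoint-union-style $\vee$ operation; the only care needed is with the iterated $\vee$ and with the possibility $n < n_i$.

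First I would note the monotonicity of the intersection property in the exponent: if $\A_i$ has $n_i$-intersection property and $n \leq n_i$, then $\A_i$ has $n$-intersection property as well. Indeed, given $B_1,\ldots,B_n \in \A_i$, fix any $B \in \A_i$ (which exists provided $\A_i$ is nonempty --- and if $\A_i = \emptyset$ then $\A_1 \owedge \cdots \owedge \A_k = \emptyset$ and the conclusion of the proposition is vacuous), and apply the $n_i$-intersection property to the tuple $B_1,\ldots,B_n,B,\ldots,B$ (padded to length $n_i$) to get $B_1 \cap \cdots \cap B_n \cap B \neq \emptyset$, hence $B_1 \cap \cdots \cap B_n \neq \emptyset$. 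So set $n = \min(n_1,\ldots,n_k)$, and each $\A_i$ enjoys $n$-intersection property.

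Next I would unpack the definition of $\owedge$ for the $k$-ary case. Recall $A \vee B = \{\la 0,a\ra \mid a \in A\} \cup \{\la 1,b\ra \mid b \in B\}$; regardless of how we associate the $k$-fold $\vee$, there is a canonical injection $\iota_1 : A_1 \hookrightarrow A_1 \vee A_2 \vee \cdots \vee A_k$ given by prepending the tag $\la 0, \cdot \ra$ (once, or iteratively depending on the convention), and this injection depends only on the first factor. Now let $C_1,\ldots,C_n \in \A_1 \owedge \cdots \owedge \A_k$; by definition we may write $C_j = A_{j,1} \vee \cdots \vee A_{j,k}$ with $A_{j,i} \in \A_i$.

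The core step is then immediate: by the $n$-intersection property of $\A_1$, pick some $a \in \bigcap_{j=1}^n A_{j,1}$. Then $\iota_1(a)$ lies in $C_j$ for every $j$, since $\iota_1(a)$ depends only on $a$ and the first-factor tag, while $a \in A_{j,1}$. Hence $\iota_1(a) \in \bigcap_{j=1}^n C_j$, so the intersection is nonempty, proving the $n$-intersection property of $\A_1 \owedge \cdots \owedge \A_k$.

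There is no real obstacle; the only subtlety is the mild notational care for iterated $\vee$, which is handled once by observing that the ``first coordinate'' injection is well-defined irrespective of how one parses $A_1 \vee \cdots \vee A_k$. (Of course, by symmetry one could equally well use any of the $k$ coordinates --- any $\A_i$ with $n_i \geq n$ will do the job, which is automatically true for all $i$ by choice of $n$.)
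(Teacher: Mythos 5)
You have misidentified the operation. The proposition concerns $\owedge$, which the paper defines by
\[
\A \owedge \B = \{A \wedge B \mid A \in \A \ \&\ B \in \B\},
\qquad
A \wedge B = \{\la a,b\ra \mid a \in A \ \&\ b \in B\},
\]
that is, a \emph{product}-style operation on subsets of $\N$. You instead recall $A \vee B = \{\la 0,a\ra \mid a \in A\} \cup \{\la 1,b\ra \mid b \in B\}$, which is the disjoint-union operation underlying the \emph{other} construction $\ovee$ defined earlier in the same section (the $\leq_\mo$-meet). Your argument hinges on the first-coordinate injection $\iota_1 : A \hookrightarrow A \vee B$, which exists because $A \vee B$ contains a tagged copy of $A$ regardless of $B$. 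No such map $A \to A \wedge B$ exists: an element of $A \wedge B$ is a pair $\la a,b\ra$ and therefore requires an element of $B$ as well. In particular, choosing $a \in \bigcap_j A_{j,1}$ does \emph{not} give an element of $\bigcap_j C_j$ when $C_j = A_{j,1} \wedge \cdots \wedge A_{j,k}$, and the claim that ``any single $\A_i$ will do the job'' is false for $\owedge$.

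The correct argument must use the intersection property of \emph{every} factor simultaneously. Writing $n = \min(n_1,\ldots,n_k)$ and using your (correct) monotonicity observation to conclude each $\A_i$ has the $n$-intersection property, one picks, for each $i$, some $a_i \in A_{1,i} \cap \cdots \cap A_{n,i}$; then the coded tuple $\la a_1,\ldots,a_k\ra$ lies in every $C_j$, because each component $a_i$ lies in the corresponding factor $A_{j,i}$. This is exactly what the paper's proof does (stated there for $k=2$ and then iterated). So the $\min$ in the statement is not an artefact of picking one favourable coordinate; it is the genuine bottleneck coming from needing all $k$ coordinates at once.
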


\begin{proof}
By means of induction, it suffices to show that if $\A \in \PPN$ has $n$-intersection property and $\B \in \PPN$ has $m$-intersection property then $\A \owedge \B$ has $\min(n,m)$-intersection property.
Let $(A_1 \wedge B_1),\ldots,(A_{\min(n,n')} \wedge B_{\min(n,n')}) \in \A \owedge \B$.
We can choose some $a \in A_1 \cap \ldots \cap A_{\min(n,n')}$ and some $b \in B_1 \cap \ldots \cap B_{\min(n,n')}$.
Then $\la a,b \ra \in (A_1 \wedge B_1) \cap \ldots \cap (A_{\min(n,n')} \wedge B_{\min(n,n')})$.
Done.
\end{proof}

\begin{calculation}
Let $1 < 2m < \alpha \leq \omega$.
We have $\O_m^\alpha \vee_\lo \F^* <_\lo \negneg$.
\end{calculation}

\begin{proof}
Since $\negneg$ is an upper atom in $(\PPN^\N,\leq_\lo)$, it suffices to show that $\negneg \not\leq_\lo \O_m^\alpha \owedge \F^*$.
Since $\O_m^\alpha$ as well as $\F^*$ has $2$-intersection property, so does $\O_m^\alpha \owedge \F^*$.
By Proposition \ref{IscnPropNleq}, we have $\negneg \cong \{\{0\},\{1\}\} \not\leq_\lo \O_m^\alpha \owedge \F^*$ as desired.
\end{proof}

\begin{calculation}
Let $1 \leq k \in \N$.
We have $\bigvee_{1 \leq m \leq k} \O_m^{2m+1} <_\lo \negneg$.
\end{calculation}

\begin{proof}
Each $\O_m^{2m+1}$ has $2$-intersection property, so $\owedge_{1 \leq m \leq k} \O_m^{2m+1}$ does.
By Proposition \ref{IscnPropNleq}, we have $\bigvee_{1 \leq m \leq k} \cong \owedge_{1 \leq m \leq k} \O_m^{2m+1} \not\geq_\lo \negneg$.
Therefore we have `$<_\lo$'.
\end{proof}

\begin{proposition}\label{NoIscOwedge}
Let $n \geq 1$, and let $\B \in \PPN$ have $n$-intersection property.
Let $\A_1,\ldots,\A_k \in \PPN$ $(k \geq 1)$ be non-empty collections such that some $\A_i$ does not have $n$-intersection property.
Then $\A_1 \owedge \ldots \owedge \A_k \not\leq_\lo \B$.
\end{proposition}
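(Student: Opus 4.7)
The plan is to reduce this directly to Proposition \ref{IscnPropNleq} by showing that $\A_1 \owedge \ldots \owedge \A_k$ fails to have the $n$-intersection property. Once that is established, since $\B$ does have $n$-intersection property, the cited proposition immediately yields $\A_1 \owedge \ldots \owedge \A_k \not\leq_\lo \B$.

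To show the failure, assume without loss of generality that $\A_1$ is the factor lacking the $n$-intersection property, and pick $C_1, \ldots, C_n \in \A_1$ with $C_1 \cap \ldots \cap C_n = \emptyset$. Using non-emptiness of the other $\A_j$, fix an element $D_j \in \A_j$ for each $2 \leq j \leq k$. Then for each $1 \leq i \leq n$ the set
\[ E_i := C_i \wedge D_2 \wedge \ldots \wedge D_k \]
belongs to $\A_1 \owedge \ldots \owedge \A_k$.

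The key observation is that the pairing coding used in the $\wedge$ operation on $\PN$ allows one to read off the first coordinate: any element of $A \wedge X$ has the form $\la a, x \ra$ with $a \in A$. Iterating, any element of $E_i$ has the form $\la c, y \ra$ with $c \in C_i$. Therefore an element of $\bigcap_{i=1}^n E_i$ would decompose as $\la c, y \ra$ with $c \in \bigcap_{i=1}^n C_i = \emptyset$, which is impossible. So $\bigcap_{i=1}^n E_i = \emptyset$, witnessing that $\A_1 \owedge \ldots \owedge \A_k$ lacks the $n$-intersection property, as required.

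I do not anticipate any genuine obstacle here; the argument is essentially a one-line consequence of the definition of $\owedge$ together with Proposition \ref{IscnPropNleq}. The only thing to be mildly careful about is the associativity/coding convention for iterated $\wedge$, but since $\la a, b \ra$ always exposes its first component $a$, projecting to the first factor $\A_1$ suffices regardless of how the remaining factors are bracketed.
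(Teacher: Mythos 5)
Your proof is correct and follows essentially the same route as the paper: the paper also reduces to Proposition \ref{IscnPropNleq} by picking $Z_1,\ldots,Z_n\in\A_1$ with empty intersection and fixed $A_j\in\A_j$ for $j\ge 2$, and observes that the resulting sets in $\A_1\owedge\ldots\owedge\A_k$ have empty intersection. Your extra remark about reading off the $\A_1$-component from the pairing code is just the detail the paper leaves as ``clearly.''
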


\begin{proof}
Without loss of generality, assume that $\A_1$ does not have $n$-intersection property.
Choose $Z_1,\ldots,Z_n \in \A_1$ with $Z_1 \cap \ldots \cap Z_n = \emptyset$.
Choose $A_2 \in \A_2, \ldots, A_k \in \A_k$.
Then the $n$-sets $(Z_1 \wedge A_2 \wedge \ldots \wedge A_k),\ldots,(Z_n \wedge A_2 \wedge \ldots \wedge A_k)$ clearly have empty intersection.
So $\A_1 \owedge \ldots \owedge \A_k$ does not have $n$-intersection property.
Therefore by Proposition \ref{IscnPropNleq} we have `$\not\leq_\lo$'.
\end{proof}

\begin{calculation}
Let $1 \leq k \in \N$.
We have $\vee_{1 \leq m \leq k} \O_m^{2m+1} \not\leq_\lo \F^*$.
\end{calculation}

\begin{proof}
The collection $\F^*$ has $3$-intersection property, but $\O_1^3$ does not have $3$-intersection property.
Therefore Proposition \ref{NoIscOwedge} applies, and we have `$\not\leq_\lo$'.
\end{proof}

\begin{openquiz}
Can we (mimic the proof of Proposition \ref{fNotLeqO} to) show that
\begin{equation}\label{fNotLeqMowedge}
\F^* \not\leq_\lo \owedge_{1 \leq m \leq k} \O^m_{2m+1}?
\end{equation}
\end{openquiz}

\subsection{Arithmetical sets are effective in $\F^*$}\label{subs:arith}
In this subsection, we prove that any arithmetical set is effective in $\F^*$.
This problem was suggested to the author by Jaap van Oosten.
Clearly, it suffices to show that for each $k \in \N$ every $\Pi_k$-set is effective in $\F^*$.
Let $D \subseteq \N$, and let $\phi(x_0,x_1,\ldots,x_k)$ be a $\Delta_0$-formula such that for all $n \in \N$,
\begin{center}
$n \in D$ if and only if $\N \models \forall x_1 \exists x_n \cdots \forall x_{k-1} \exists x_k \phi(n,x_1,\ldots,x_k)$.
\end{center}
We present the proof that $D$ is effective in $\F^*$, as follows.

\begin{notation}
Let $(X,R)$ be a poset.
Let $S$ be a set of finite sequences in $X$, i.e. $S \subseteq X^*$.
We denote by $(S,R_\lex)$ the lexicographic order induced by $(X,R)$.
\end{notation}

\begin{proposition}
Let $(X,R)$ be a poset, and let $S \subseteq X^*$.
\claim{If $(X,R)$ is a total order, then so is $(S,R_\lex)$.
If $(X,R)$ is a well-order, then so is $(S,R_\lex)$.}
\end{proposition}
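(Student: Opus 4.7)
My plan is to establish both claims by inductive arguments that peel off sequences layer by layer.

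For the totality claim, given any $s,t \in S$, I would establish their comparability under $R_\lex$ by an induction on $\min(|s|,|t|)$. In the base case one of the sequences is empty and the lex convention settles the order directly; in the inductive step, totality of $R$ on $X$ either decides the comparison at the first coordinate (when $s_1 \neq t_1$) or, in the case of equal first coordinates, lets us invoke the inductive hypothesis on the tails. Reflexivity, antisymmetry, and transitivity then follow by parallel, routine inductions; each reduces an assertion about $R_\lex$ on sequences of length $n$ to the same assertion for tails of length $n-1$, plus an application of the corresponding property of $R$ on $X$.

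For the well-order claim, given any nonempty $T \subseteq S$, my plan is to exhibit a least element of $T$ in two passes. First, I would use the well-foundedness of $\N$ to pick $n := \min\{|s| : s \in T\}$ and restrict attention to $T_n := \{s \in T : |s| = n\}$. Second, I would prove by induction on $n$ that the lex order on $X^n$ is itself a well-order: the base case $n=0$ is immediate; for the step, use well-foundedness of $X$ to pick the minimum first coordinate $a \in X$ occurring among elements of $T_{n+1}$, restrict to the sub-collection of sequences starting with $a$, and apply the inductive hypothesis to their length-$n$ tails. The least element of $T_n$ so obtained is then the least element of $T$ in $R_\lex$, provided the lex convention consistently places every sequence of the minimal length $n$ below every strictly longer sequence.

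The main obstacle is the bookkeeping around sequences of different lengths: one has to verify from the definition of $R_\lex$ that the length stratification respects the lex order, i.e. that length-$n$ sequences are consistently positioned below longer ones. This is the crucial step, because a careless reading of lex order on $X^*$ can in principle admit infinite descending chains such as $(1) >_\lex (0,1) >_\lex (0,0,1) >_\lex \cdots$ in $\{0,1\}^*$ with $0 < 1$, which would defeat well-foundedness. Once the convention is pinned down to one under which length stratification works, the argument reduces cleanly to the standard well-ordering of the $n$-fold lex product $X^n$, which itself follows from the well-ordering of $X$ by the inductive argument sketched above.
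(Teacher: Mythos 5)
Your inductive treatment of totality and your two-pass well-foundedness argument (minimize length first over $\N$, then reduce to the well-ordering of the lex order on $X^n$, established by induction on $n$) are correct and line up with the argument used for the length-stratified relation defined immediately after this proposition; the paper itself offers no proof beyond ``elementary reasonings.'' You have also correctly diagnosed the real issue: under the usual convention that a proper prefix is strictly $\lex$-smaller and otherwise one compares at the first differing coordinate, the relation $R_\lex$ on an arbitrary $S \subseteq X^*$ is \emph{not} a well-order even when $X$ is, and your example $(1) >_\lex (0,1) >_\lex (0,0,1) >_\lex \cdots$ in $\{0,1\}^*$ is exactly the standard counterexample. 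This is a defect of the statement rather than of your argument. The only subsequent use of this proposition is to the level-$m$ subset $S_0$ consisting of sequences of a single fixed length $m$, for which your induction on $n$ is exactly what is needed and the conditional clause in your write-up is automatically satisfied. Read the well-order claim as being about equal-length sequences (or read $R_\lex$ as the shortlex order) and your proof is complete.
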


\begin{proof}
Elementary reasonings.
\end{proof}

\begin{definition}
Let $T \subseteq \N^*$.
We define a relation $\leq$ on $T$ by
\begin{center}
$t \leq t'$ $\iff$ $\len(t) < \len(t')$, or $\len(t) = \len(t')$ and $t \leq_\lex t'$.
\end{center}
\claim{This relation is a well-order.}
\end{definition}

\begin{proof}
Clearly, the relation $(T,\leq)$ is reflexive, antisymmetric and transitive.

Let us prove that $(T,\leq)$ is total.
Let $t,t' \in T$, and suppose that not $t \leq t'$.
Then $\len(t) \geq \len(t')$, and either $\len(t) \neq \len(t')$ or $t >_\lex t'$.
If the former is the case, then $\len(t) > \len(t')$, so that $t \geq t'$.
If the former is not the case, i.e. $\len(t) = \len(t')$, then the latter must hold, i.e., $t >_\lex t'$, so $t \geq t'$ as well.
This proves the totalness.

Last, we show that any non-empty $S \subseteq T$ has a least element.
Let $m = \min\{\len(s) \mid s \in S\}$, and consider the `level $m$ subset'
$$S_0 = \{s \in S \mid \len(s) = m\}$$
of $S$.
The set $S_0$ is clearly non-empty, so has a least element $s_0$ w.r.t. the well-ordering $\leq_\lex$.
Clearly $s_0 \leq s$ for all $s \in S$, so $s_0$ is a least element of $(S,\leq)$.
This proves the well-orderedness. We are done.
\end{proof}

\begin{notation}
Let $S \subseteq \N^*$.
For $m \in \N$, we write $S^{<m} = \{s \in S \mid \len(s) < m\}$.
Suppose that $S$ is a tree.
For $s \in S$, we write $\isLeaf(s)$ if $s$ is a leaf.
We write $\lLeaf(S)$ for the least leaf of $(S,\leq)$.
\end{notation}

\begin{construction}
Let $s = (c_1,\ldots,c_p) \in \N^*$.
We define a finite tree $T_s$, as follows by induction on $p$.
\begin{align*}
& T_{()} = \{()\} \\
& T_{(c_1,\ldots,c_p)} = T_{(c_1,\ldots,c_{p-1})} \cup \{\lLeaf(T_{(c_1,\ldots,c_{p-1})}) \cons x \mid 0 \leq x \leq c_p\}.
\end{align*}
We define the \defn{stage} of $s$, denoted $\stage(s)$, to be the least length of a leaf in the tree $T_s$.
\end{construction}

\begin{definition}\label{BN}
Let $n \in \N$.
Let $(d_1,\ldots,d_q) \in \N^*$ with $q < k$.
We define a number $b_n(d_1,\ldots,d_q) \in \N$ as follows.
\begin{itemize}
\item[(i)]
If $q$ is odd and $\N \models \exists x_{q+1} \forall x_{q+2} \ldots \exists x_k \phi(n,d_1,\ldots,d_q,x_{q+1},\ldots,x_k)$, \\
let $b_n(d_1,\ldots,d_q)$ be the least $y$ such that \\
$\N \models \forall x_{q+2} \ldots \forall x_{k-1} \exists x_k \phi(n,d_1,\ldots,d_q,y,x_{q+2},\ldots,x_k)$.
\item[(ii)]
If $q$ is odd and $\N \not\models \exists x_{q+1} \forall x_{q+2} \ldots \exists x_k \phi(n,d_1,\ldots,d_q,x_{q+1},\ldots,x_k)$, \\
let $b_n(d_1,\ldots,d_q) = 0$.
\item[(iii)]
If $q$ is even and $\N \models \forall x_{q+1} \exists x_{q+2} \ldots \forall x_{k-1} \exists x_k \phi(n,d_1,\ldots,y_q,x_{q+1},\ldots,x_k)$, \\
let $b_n(d_1,\ldots,d_q) = 0$.
\item[(iv)]
If $q$ is even and $\N \not\models \forall x_{q+1} \exists x_{q+2} \ldots \forall x_{k-1} \exists x_k \phi(n,d_1,\ldots,y_q,x_{q+1},\ldots,x_k)$, \\
let $b_n(_1,\ldots,d_q)$ be the least $y \in \N$ such that \\
$\N \not\models \exists x_{q+2} \ldots \forall x_{k-1} \exists x_k \phi(n,d_1,\ldots,d_q,y,x_{q+2},\ldots,x_k)$.
\end{itemize}
\end{definition}

\begin{construction}\label{SCon}
Let $n \in \N$.
We define $S_n \subseteq \N^*$, by the following inductive clauses.
\begin{itemize}
\item $() \in S_n$
\item Provided $(c_1,\ldots,c_p) \in S_n$, we have $(c_1,\ldots,c_p,c_{p+1}) \in S_n$ if
      \begin{itemize}
      \item $\stage(c_1,\ldots,c_p) < k$, and
      \item $b_n(\lLeaf(T_{(c_1,\ldots,c_p)})) \leq c_{p+1}$.
      \end{itemize}
\end{itemize}
We claim of this construction the following properties.
\claim{\begin{itemize}
 \item[(a)] $S_n$ is a sight on $\Pitts$.
 \item[(b)] The assignment $(n \in \N, s \in S_n) \mapsto (\isLeaf_{S_n}(s) \in 2)$ is effective.
\end{itemize}}
\end{construction}

\begin{proof}
(a)
Clearly, $S_n$ is a tree on $\Pitts$.
It remains to prove that the poset $(S_n,\preceq)$ has no infinite chain.
Suppose, for contradiction, that there is a sequence $(c_i \mid 1 \leq i)$ such that $(c_1,\ldots,c_p) \in S_n$ for each $p \in \N$.

We show the statement
\begin{equation}\label{sc1}\text{
For each $0 \leq m \leq k$, there is $p \in \N$ such that $\stage(c_1,\ldots,c_p) = m$.
}\end{equation}
by induction on $m$.

The case $m = 0$.
Clearly $\stage() = 0$, as desired.

The case $m > 0$.
By the IH, there is $r \in \N$ such that $\stage(c_1,\ldots,c_r) = m-1$.
Let $d$ be the number of length $m-1$ leaves in $T_{(c_1,\ldots,c_r)}$.
It is easy to see\footnote{but not very short if you work out formally} from the definitions of $S_n$ and of $T_\ph$ that $\stage(c_1,\ldots,c_{r+d}) = m$.
This proves \eqref{sc1}.

By \eqref{sc1}, there is $p \in \N$ such that $\stage(c_1,\ldots,c_p) = k$.
Since $(c_1,\ldots,c_p,c_{p+1}) \in S_n$, we have by definition of $S_n$ that $T_{(c_1,\ldots,c_p)}$ has a leaf of length $< k$, a contradiction.
We conclude that $S_n$ is a sight.

(b)
Convince yourself of this by understanding the definition of $S_n$.
\end{proof}

\begin{definition}\label{CNS}
Let $n \in \N$.
Let $s = (c_1,\ldots,c_p) \in S_n$.
Let $t = (d_1,\ldots,d_q) \in T_s$ with $q < \stage(s)$.
\claim{\begin{itemize}
\item[(a)] There is unique $r < p$ with $t = \lLeaf(T_{(c_1,\ldots,c_r)})$.
\end{itemize}}
\noindent Define $c_{n,s}(t) = c_{r+1}$.\footnote{Alternatively, we could define $c_{n,s}(t) = \max(\Out_{T_s}(t))$. But the description we have taken allows us to have the core property (b) immediately.}
\claim{\begin{itemize}
\item[(b)] We have $b_n(t) \leq c_s(t)$.
\item[(c)] The assignment $(n \in \N, s \in S_n, t \in T_s^{<\stage(s)}) \mapsto (c_{n,s}(t) \in \N)$ is effective.
\end{itemize}}
\end{definition}

\begin{proof}
All by definition of $S_n$.
\end{proof}

\begin{construction}\label{TauCon}
Let $n \in \N$, and let $s \in \Lvs(S_n)$.
We are about to construct an ordinary predicate $\tau_{n,s}$ on $T_s$.
For $(d_1,\ldots,d_q) \in T_s$, we define $\tau_{n,s}(d_1,\ldots,d_q) = \true$ if, by induction on $k-q$,
\begin{itemize}
\item[] (case $q = k$) $\N \models \phi(n,d_1,\ldots,d_k)$,
\item[] (case $q < k$ odd) there is $y \leq c_s(d_1,\ldots,d_q)$ with $\tau_{n,s}(d_1,\ldots,d_q,y)$,
\item[] (case $q < k$ even) for all $y \leq c_s(d_1,\ldots,d_q)$ one has $\tau_{n,s}(d_1,\ldots,d_q,y)$.
\end{itemize}
We claim of this construction the following properties.
\claim{\begin{itemize}
\item[(a)] We have $\tau_{n,s}() = \chi_D(n)$.
\item[(b)] The assignment $(n \in \N, s \in \Lvs(S_n), t \in T_s) \mapsto (\tau_{n,s}(t) \in 2)$ is effective.
\end{itemize}}
\end{construction}

\begin{proof}
(a)
We will prove the statement
\begin{eqnarray}
&\text{For all $(d_1,\ldots,d_q) \in T_s$, we have $\tau_{n,s}(d_1,\ldots,d_q)$ if and only if} \nonumber \\
&\begin{cases}
\N \models \exists x_{q+1} \forall x_{q+2} \cdots \forall x_{k-1} \exists x_k \phi(n,d_1,\ldots,d_q,x_{q+1},\ldots,x_k) & \text{if $q$ is odd} \\
\N \models \forall x_{q+1} \exists x_{q+2} \cdots \forall x_{k-1} \exists x_k \phi(n,d_1,\ldots,d_q,x_{q+1},\ldots,x_k) & \text{if $q$ is even.}
\end{cases} \label{tc1}
\end{eqnarray}
by induction on $k - q$.
The goal statement follows from this statement, as then: $\tau_{n,s}()$ if and only if $\N \models \forall x_{1} \exists x_{2} \cdots \forall x_{k-1} \exists x_k \phi(n,x_1,\ldots,x_k)$ if and only if $n \in D$.

Let us carry out the induction.
The case $q = k$ is immediate.

The case $q < k$ odd.
`if':
Write $y = b(d_1,\ldots,d_q)$.
By Definition \ref{BN} clause (i), we have
$\N \models \forall x_{q+2} \ldots \exists x_k \phi(n,d_1,\ldots,d_q,y,x_{q+2},\ldots,x_k)$.
By the IH, we have $\tau_{n,s}(d_1,\ldots,d_q,y)$.
By the claim (b) in Definition \ref{CNS}, $y \leq c_s(d_1,\ldots,d_q)$.
Therefore $\tau_{n,s}(d_1,\ldots,d_q)$ holds, as desired.
`only if':
Choose $y \leq c_s(d_1,\ldots,d_q)$ such that 
$\tau_{n,s}(d_1,\ldots,d_q,y)$.
By the IH, $\N \models \forall x_{q+2} \ldots \exists x_k \phi(n,d_1,\ldots,d_q,y,x_{q+2},\ldots,x_k)$.
Thus \eqref{tc1} follows immediately, as desired.

The case $q < k$ even.
`if':
Let $y \leq c_s(d_1,\ldots,d_q)$.
By \eqref{tc1}, we have $\N \models \exists x_{q+2} \cdots \forall x_{k-1} \exists x_k \phi(n,d_1,\ldots,d_q,y,x_{q+2},\ldots,x_k)$.
By the IH, we have $\tau(d_1,\ldots,d_q,y)$ as desired.
`only if':
We prove the contrapositive.
Write $y = b(d_1,\ldots,d_q)$.
As we have the negation of \eqref{tc1}, we have by Definition \ref{BN} clause (i) that
$\N \not\models \exists x_{q+2} \ldots \forall x_{k-1} \exists x_k \phi(n,d_1,\ldots,d_q,y,x_{q+2},\ldots,x_k)$.
By the IH, this means that $\neg \tau(d_1,\ldots,d_q,y)$.
But by the claim (b) in Definition \ref{CNS}, we have $y \leq c_s(d_1,\ldots,d_q)$.
It follows that $\neg \tau(d_1,\ldots,d_q)$, as desired.
Induction done.

(b)
Convince yourself of this by staring at the definition\footnote{which is in fact designed for this to be clear} of $\tau_{n,s}(t)$, using
\begin{itemize}
 \item that formula $\phi$ is $\Delta_0$, and
 \item that\footnote{See the claim (c) in Definition \ref{CNS}.} the mapping $(n \in \N, s \in \Lvs(S_n), t \in T_s^{<k}) \mapsto (c_{n,s}(t) \in \N)$ is effective.
\end{itemize}
We are done.
\end{proof}

\begin{lemma}
Let $n \in \N$.
Define a function $\epsilon_n: S_n \to \N$ by
$$\epsilon_n(s) =
\begin{cases}
\la 1,0 \ra & \text{if $\stage(s) < k$}, \\
\la 0,0 \ra & \text{if $\stage(s) = k$ and $\tau_{n,s}()$}, \\
\la 0,1 \ra & \text{if $\stage(s) = k$ and not $\tau_{n,s}()$}.
\end{cases}$$
\claim{\begin{itemize}
\item[(a)] The function $\epsilon_n: S_n \to \N$ is computable.
\item[(b)] The assignment $(n \in \N) \mapsto (\epsilon_n \in \N^{S_n})$ is computable.
\item[(c)] $\epsilon_n \realizes \loS_{\Pitts}(\{\chi_D(n)\})$.
\end{itemize}}
\end{lemma}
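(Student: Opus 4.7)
The strategy is to unpack the definitions and lean on the machinery already established in Constructions \ref{SCon} and \ref{TauCon}. The crucial observation that glues everything together is a leaf/non-leaf dichotomy for $S_n$: by the inductive clauses defining $S_n$, a node $s \in S_n$ admits an extension in $S_n$ precisely when $\stage(s) < k$, so the leaves of $S_n$ are exactly the nodes $s$ with $\stage(s) = k$. This dichotomy matches the three-case split in the definition of $\epsilon_n$.

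For (a) and (b), I plan to describe a single uniform algorithm computing $\epsilon_n(s)$ from $n$ and $s$. Given $s$, the finite tree $T_s$ is computable by its inductive construction; $\stage(s)$ is then the minimum leaf length in $T_s$, hence computable. If $\stage(s) < k$ we return $\la 1, 0 \ra$; otherwise $s \in \Lvs(S_n)$ and we invoke the effectivity of $\tau_{n,s}()$ asserted in Construction \ref{TauCon}(b) to decide between $\la 0, 0 \ra$ and $\la 0, 1 \ra$. Since this algorithm is uniform in $n$, both (a) and (b) follow at once.

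For (c), the plan is to verify that the sight $S_n$ itself witnesses the existence claim in the definition of $\loS_\Pitts(\{\chi_D(n)\})$; that is, $S_n$ is $(\epsilon_n, \Pitts, \{\chi_D(n)\})$-supporting (viewing $\Pitts \in \PPN$ as the constant sequence via Notation \ref{OmitDelta}). Construction \ref{SCon}(a) already gives that $S_n$ is a (non-degenerate) sight. For a leaf $s$ we have $\stage(s) = k$, so $\epsilon_n(s) = \la 0, y \ra$ with $y$ determined by $\tau_{n,s}()$; Construction \ref{TauCon}(a) identifies $\tau_{n,s}() = \chi_D(n)$, so $y = \chi_D(n) \in \{\chi_D(n)\}$, as the leaf clause demands. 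For a non-leaf $s = (c_1,\ldots,c_p)$ we have $\stage(s) < k$ and $\epsilon_n(s) = \la 1, 0 \ra$, so the non-leaf clause reduces to $\Out_{S_n}(s) \in \Pitts = \F^*$; but the defining rule of $S_n$ gives $\Out_{S_n}(s) = \{c_{p+1} \in \N \mid c_{p+1} \geq b_n(\lLeaf(T_s))\}$, an upper set of $\N$ and hence cofinite.

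The real mathematical content is already packaged in Constructions \ref{SCon} and \ref{TauCon}, so I do not expect any genuine obstacle. The only care needed is bookkeeping: confirming the leaf-iff-$\stage$-equals-$k$ equivalence from the inductive clauses of Construction \ref{SCon}, and ensuring that the boolean-to-$\{0,1\}$ identification implicit in Construction \ref{TauCon}(a) sends $\tau_{n,s}()$ to $\chi_D(n)$ in the sense that makes the leaf output $\la 0, \chi_D(n) \ra$ rather than its complement.
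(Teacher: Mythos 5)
Your proposal is correct and follows essentially the same route as the paper: parts (a) and (b) by the uniform effectivity of $\stage(s)$, $\isLeaf_{S_n}(s)$ and $\tau_{n,s}()$ established in Constructions \ref{SCon} and \ref{TauCon}, and part (c) by exhibiting $S_n$ itself as an $(\epsilon_n,\Pitts,\{\chi_D(n)\})$-supporting sight via the leaf-iff-$\stage(s)=k$ dichotomy. Your explicit check that $\Out_{S_n}(s)$ is an upper set, hence cofinite, is a detail the paper instead delegates to the assertion in Construction \ref{SCon}(a) that $S_n$ is a sight on $\Pitts$; both are fine.
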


\begin{proof}
For (a),(b), it suffices that the assignment $(n \in \N, s \in S_n) \mapsto (\epsilon_n(s) \in \N)$ is effective.
Convince yourself of this by staring at the definition of $\epsilon_n(s)$, using
\begin{itemize}
\item that\footnote{See the claim (b) of Construction \ref{SCon}.} the assignment $(n \in \N, s \in S_n) \mapsto (\isLeaf_{S_n}(s) \in 2)$ is effective, and
\item that\footnote{See the claim (b) of Construction \ref{TauCon}.} the assignment $(n \in \N, s \in S_n) \mapsto \tau_{n,s}()$ is effective.
\end{itemize}

(c)
To this end, we show that the sight $S_n$ is $(\epsilon_n,\Pitts,\{\chi_D(n)\})$-supporting.
If $s \in S_n$ is not a leaf, then $\stage(s) < k$, so $\epsilon_n(s) = \la 1,0 \ra$, as required.
If $s \in S_n$ is a leaf, then $\stage(s) = k$, so $\epsilon_n(s) = \la 0,\tau_{n,s}() \ra = \la 0,\chi_D(n) \ra$, as required.
This proves that $S_n$ is $(\epsilon_n,\Pitts,\{\chi_D(n)\})$-supporting, as desired.
\end{proof}

\begin{thm}
$D$ is effective in $\Pitts$.
\end{thm}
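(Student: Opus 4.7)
The plan is to bundle the previous lemma into the characterization of effectivity given earlier. Recall the Proposition that said: for $D\subseteq\N$ and a local operator $j$, the set $D$ is effective in $j$ iff $\ET \models \forall n^\N . \{n\} \RI j(\{\chi_D(n)\})$. Applied to $j = \lo_{\Pitts}$, and using that $\loS_{\Pitts}$ represents $\lo_{\Pitts}$, the theorem reduces to exhibiting a single number $r \in \N$ such that for every $n \in \N$ and every $m \in \{n\}$ (i.e.\ $m = n$), we have $r(m)\cvg$ and $r(m) \in \loS_{\Pitts}(\{\chi_D(n)\})$.

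First I would invoke claim (b) of the preceding Lemma: the assignment $n \mapsto \epsilon_n$ is computable, so fix an index $r$ for it. Then, by claim (c) of the same Lemma, for every $n$ we have $\epsilon_n \in \loS_{\Pitts}(\{\chi_D(n)\})$, since the sight $S_n$ witnesses $(\epsilon_n,\Pitts,\{\chi_D(n)\})$-supporting-ness. Consequently $r \realizes \forall n^\N . \{n\} \RI \loS_{\Pitts}(\{\chi_D(n)\})$, which is exactly $\realizes \forall n^\N . \{n\} \RI \lo_{\Pitts}(\{\chi_D(n)\})$.

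By the cited Proposition (condition (iv)$\BI$(iii)), this says precisely $\rho_D \leq \lo_{\Pitts}$ as local operators, i.e.\ $D$ is effective in $\Pitts$, which is the claim.

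There is essentially no obstacle left: all the real work was done in constructing, for each $n$, the sight $S_n$ on $\Pitts$ together with the predicate $\tau_{n,s}$ that computes $\chi_D(n)$ at the root (Constructions \ref{SCon} and \ref{TauCon}), and in verifying that the assembly of this data into the function $\epsilon_n$ is effectively indexed by $n$. The present theorem is just the externalization of those facts through the established dictionary between realizability of $\forall n . \{n\} \RI j(\{\chi_D(n)\})$ and effectivity of $D$ in $j$.
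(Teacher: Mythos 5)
Your proof is correct and follows the same route as the paper's one-line argument: invoke claims (b) and (c) of the preceding lemma to see that $\epsilon_{\ph}$ furnishes a realizer of $\forall n^\N . \{n\} \RI \loS_{\Pitts}(\{\chi_D(n)\})$, and then read off effectivity of $D$ in $\Pitts$ via the four-way characterization, using that $\loS_\Pitts$ represents $\lo_\Pitts$. You simply spell out the bookkeeping that the paper leaves implicit.
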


\begin{proof}
By the previous lemma, we have $\epsilon_\ph \realizes \forall n^\N . \{n\} \RI \loS_\Pitts(\{\chi_D(n)\})$.
\end{proof}

\begin{openquiz}
Two related questions:
\begin{itemize}
 \item (Jaap van Oosten) Are only the arithemtical sets effective in $\Pitts$?
 \item Consider the arithmetical degree $\Delta_1^1$, and choose $D \in \Delta_1^1$. Trivially we have $\Pitts \nleq_\lo \rho_D$. How about $\rho_D \leq_\lo \Pitts$?
\end{itemize}
\end{openquiz}

\bibliographystyle{amsalpha}   
\bibliography{mas}
\addcontentsline{toc}{chapter}{\protect\numberline{}Bibliography}

\end{document}